\title{Motivic Hodge modules}
\author{Brad Drew}
\date{}
\begin{document}
\maketitle

\begin{abstract}
\noindent
We construct a \qcategorically\ enhanced Grothendieck six-functor formalism on schemes of finite type over the complex numbers.
In addition to satisfying many of the same properties as M.~Saito's derived categories of mixed Hodge modules, this new six-functor formalism receives canonical motivic realization functors compatible with Grothendieck's six functors on constructible objects.
\end{abstract}

\tableofcontents

\section{Introduction}
\nlabel{intro}

\subsection*{Historical background}

According to Deligne, the rational Betti cohomology $\fct[um = {*}, d = {B}]{H}{X, \rational}$ of the finite-type $\complex$-scheme $X$ carries a rational, polarizable mixed Hodge structure (\cite{Deligne_hodgeII,Deligne_hodgeIII}).
M.~Saito's derived categories $\D[u = {b}]{\mhm[u = {p}]{X}}$ of polarizable mixed Hodge modules provide a theory of constructible coefficients for rational Betti cohomology equipped with Deligne's mixed Hodge structure.
These categories admit a formalism of Grothendieck's six functors, $f^*$, $f_*$, $f_!$, $f^!$, $\otimes$, and $\intmor{}{}$, and conservative functors $\D[u = {b}]{\mhm[u = {p}]{X}} \to \D[u = {b}, d = {c}]{\prns{-}^{\tu{an}},\rational}$ compatible with the six functors.
These functors are powerful tools for studying Deligne's mixed Hodge structures, how they vary in families, and how they degenerate.

By the work of J.~Ayoub (\cite{Ayoub_six-operationsI,Ayoub_six-operationsII}), there is also a six-functor formalism associated with the $\tatesphere$-stable $\affine^1$-homotopy category $\SH{X}$ over $X$, where $\tatesphere$ denotes the Tate object.
In a sense that we will not make precise here, $\SH{-}$ should be an initial object in the \qcategory\ of six-functor formalisms that satisfy the properties of excision with respect to the Nisnevich topology, $\affine^1$-invariance, and $\tatesphere$-stability.
In particular, if $\SH[d = {c}]{X} \subseteq \SH{X}$ denotes the full subcategory spanned by the constructible objects, then there should exist a realization functor
\[
\rho^*_X:
  \SH[d = {c}]{X}
    \to \D[u = {b}]{\mhm[u = {p}]{X}}
\]
for each $\complex$-scheme $X$, and these realization functors should commute with Grothendieck's six functors.

At the moment, however, such realization functors are not known to exist.
Over $\spec{\complex}$, if we equip $\SH{\spec{\complex}}$ with the symmetric monoidal structure associated with the smash product $\wedge$, then there is a symmetric monoidal realization functor
\begin{equation}
\nlabel{intro.0.a}
\rho^{*, \otimes}_{\spec{\complex}}:
  \SH[d = {c}]{\spec{\complex}}^{\wedge}
    \to \D[u = {b}]{\mhsp[\rational]}^{\otimes}
\end{equation}
into the bounded derived category of rational, polarizable mixed Hodge $\rational$-structures:
at the level of tensor-triangulated categories, this follows from the work of A.~Huber (\cite{Huber_realization-of-voevodsky's, Huber_corrigendum});
at the level of symmetric monoidal \qcategories, it follows from \cite[Corollary~1.2]{Robalo_K-theory-and-the-bridge} and \cite{Drew_rectification-of-Deligne's}.
Over each smooth, quasi-projective $\complex$-scheme $X$, F.~Ivorra has defined a functor $\SH[d = {c}]{X} \to \D[u = {b}]{\mhm[u = {p}]{X}}$ in \cite{Ivorra_perverse-hodge}, but these functors are not known to commute with any of the six functors in full generality.

\subsection*{An alternative approach}

The fundamental obstacle to the construction of suitable realization functors with values in $\D[u = {b}]{\mhm[u = {p}]{X}}$ is the lack of enhancements of the associated six-functor formalism:
while the triangulated category $\D[u = {b}]{\mhm[u = {p}]{X}}$, as the bounded derived category of an Abelian, underlies a differential graded category, the six functors linking these triangulated categories do not admit obvious enhancements by differential graded functors or more general functors of \qcategories.
The same obstacle rears its head when one attempts to examine mixed Hodge modules on simplicial schemes.

As a workaround, we propose the following course of action.
Using techniques from stable $\affine^1$-homotopy theory and higher algebra, we construct a new six-functor formalism $X \mapsto \DH{X}$ of \emph{motivic Hodge modules}.
By construction, the triangulated categories $\DH{X}$ and the six functors linking them are \qcategorically\ enhanced, receive canonical realization functors from $\SH{-}$ compatible with the six functors when restricted to constructible objects, and the assignment $X \mapsto \DH{X}$ extends naturally to simplicial $\complex$-schemes.

Moreover, for each $\complex$-scheme $X$, the full subcategory $\DH[d = {c}]{X} \subseteq \DH{X}$ spanned by the constructible objects admits a canonical enrichment
\[
\hom[um = {\mhs}, dm = {\DH[d = {c}]{X}}]{}{}:
  \prns{\DH[d = {c}]{X}}\op \times \DH[d = {c}]{X}
    \to \D[u = {b}]{\mhsp[\rational]}.
\]
One recovers Deligne's mixed Hodge structure $\twist{\fct[um = {s}, d = {B}]{H}{X, \rational}}{r}$ from this enrichment: 
there is a canonical isomorphism
\[
\twist{\fct[um = {s}, d = {B}]{H}{X, \rational}}{r}
  \simeq \h^s\hom[um = {\mhs}, dm = {\DH[d = {c}]{X}}]{\1{\DH{X}}}{\tate{\DH{X}}{r}}
  \in \mhsp[\rational]
\]
for each $X$, and each $\prns{r,s} \in \integer^2$, where $\h^s$ is the $s$th cohomology functor associated with the natural t-structure on $\D[u = {b}]{\mhsp[\rational]}$, and where $\tate{\DH{X}}{r} \in \DH{X}$ is the $r$th Tate twist of the monoidal unit.
By the same token, there is a canonical isomorphism 
\begin{equation}
\nlabel{intro.0.b}
\fct[um = {s}, dm = {\mc{H}}]{H}{X, \twist{\rational}{r}}
  \simeq \hom[dm = {\DH{X}}]{\1{\DH{X}}}{\state{\DH{X}}{r}{s}},
\end{equation}
where $\fct[um = {s}, dm = {\mc{H}}]{H}{X, \twist{\rational}{r}}$ denotes the rational absolute Hodge cohomology of $X$, and $\tate{\DH{X}}{r} \in \DH{X}$.

In light of the canonical realization functors $\SH{-} \to \DH{-}$, the task of constructing realization functors with values in $\D[u = {b}]{\mhm[u = {p}]{X}}$ is reduced to that of constructing comparison functors 
\begin{equation}
\nlabel{intro.0.c}
\chi^*_X:
  \DH[d = {c}]{X}
    \to \D[u = {b}]{\mhm[u = {p}]{X}}.
\end{equation}
At first glimmer, this may appear to be a lateral move at best, so let us summarize what we regard as the main advantages of this approach.

First, even in the absence of a comparison with $\D[u = {b}]{\mhm[u = {p}]{X}}$, one still knows that the categories $\DH{X}$ compute the correct data.
In other words, morally speaking, the isomorphisms \nref{intro.0.b} and the compatibilities encoded in the six-functor formalism determine the pseudofunctor $X \mapsto \DH[d = {c}]{X}$ up to pseudonatural equivalence.
Furthermore, the \qcategorical\ enhancement of the six-functor formalism for $\DH{-}$ and the attendant theory of motivic Hodge modules on simplicial schemes are an enormous boon.

Second, the comparison functors $\chi^*_X$ of \nref{intro.0.c} should be fully faithful.
This goes back to our previous remark that the isomorphisms $\nref{intro.0.b}$ should determine the six-functor formalism $\DH[d = {c}]{-}$ completely.

Third, the lack of a reasonable analogue on $\SH{X}$ of the perverse t-structure on $\D[u = {b}, d = {c}]{X^{\tu{an}},\rational}$ is a serious obstacle.
Indeed, even after passing to the $\rational$-linear, \'etale-localized variant $\SH[u = {\'et}]{X}_{\rational}$ of $\SH{X}$, through which the realization $\SH{X} \to \DH{X}$ factors, the existence of a suitable t-structure on $\SH[u = {\'et}]{X}_{\rational}$ is still the subject of notorious conjecture.
With current knowledge, the strategy of constructing the realization functor $\rho^*_X$ by first defining an exact functor between Abelian categories and then deriving it is therefore untenable.
The existence of a reasonable t-structure on $\DH[d = {c}]{-}$, on the other hand, is much more attainable.
Indeed, the Be\u{\i}linson-Soul\'e conjecture is a major hurdle in establishing the existence of the desired t-structure on $\SH[u = {\'et}, d = {c}]{X}_{\rational}$, but the analogue of this conjecture for absolute Hodge cohomology is true.

\subsection*{Main results}

Suppose we have a pseudofunctor $X \mapsto \D{X}$ from the category $\prns{\sch[ft]{\complex}}\op$ to the strict $2$-category of idempotent-complete tensor-triangulated categories, tensor-triangulated functors and tensor-triangulated natural transformations. 
If $\D{-}$ is to serve as a theory of constructible coefficients for mixed Hodge theory, then it ought to have the properties listed below.

\setcounter{thm}{0}
\theoremstyle{defn}
\newtheorem{desiderata}[thm]{Desiderata}
\begin{desiderata}
\nlabel{desiderata}
In order for $\D{-}$ to be a reasonable theory of constructible coefficients, it ought to satisfy the following properties. 
\begin{enumerate}
\item
\nlabel{desideratum.1}
\textbf{Absolute Hodge cohomology:}
For each $X \in \sch[sepft]{\complex}$ and each $\prns{r,s} \in \integer^2$, one has
\[
\absolutehodge[um = {s}]{X, \twist{\rational}{r}}
  \simeq \hom[dm = {\D{X}}]{\1{\D{X}}}{\state{\D{X}}{r}{s}}.
\]
\item
\nlabel{desideratum.2}
\textbf{Six functors:}
For each $X \in \sch[sepft]{\complex}$, the tensor-triangulated category $\D{X}^{\otimes}$ is closed, and, for each morphism $f: X \to Y$ of $\sch[sepft]{\complex}$, there are adjunctions
\[
f^*: \D{Y} \rightleftarrows \D{X}: f_*
\quad\text{and}\quad
f_!: \D{X} \rightleftarrows \D{Y}: f^!
\] 
of triangulated functors, and the functors $f^*$, $f_*$, $f_!$, $f^!$, $\otimes$, $\intmor{}{}$ satisfy the usual barrage of relations, e.g., base change theorems and projection formul\ae. 
\item
\nlabel{desideratum.3}
\textbf{Fiber functors:}
For each $X \in \sch[sepft]{\complex}$, there is a conservative triangulated functor
\[
\omega^*_X:
  \D{X} \to \D[u = {b}, d = {c}]{X^{\tu{an}}, \rational}
\]
that commutes with the six functors.
\item
\nlabel{desideratum.4}
\textbf{Punctual objects:}
There is a fully faithful tensor-triangulated functor
\[
\chi^{*,\otimes}_{\spec{\complex}}:
\D{\spec{\complex}}^{\otimes} \hookrightarrow \D[u = {b}]{\mhsp[\rational]}^{\otimes}
\]
such that $\h^r\fct{\chi^*_{\spec{\complex}}}{\pi_*\1{\D{X}}} \simeq \fct[um = {r}, d = {B}]{H}{X,\rational}$ for each $X \in \sch[sepft]{\complex}$ and each $r \in \integer$.
\item
\nlabel{desideratum.5}
\textbf{Weights:}
For each $X \in \sch[sepft]{\complex}$, $\D{X}$ admits a weight structure compatible with the six functors.
\item
\nlabel{desideratum.6}
\textbf{Realizations:}
For each $X \in \sch[sepft]{\complex}$, there is a triangulated functor
\[
\rho^*_X: 
  \SH[d = {c}]{X} 
    \to \D{X}
\]
that commutes with the six functors, is compatible with weight structures, and factors through the \'etale localization $\SH[d = {c}]{X} \to \SH[u = {\'et}, d = {c}]{X}$.
\item
\nlabel{desideratum.7}
\textbf{t-Structure:}
For each $X \in \sch[sepft]{\complex}$, $\D{X}$ admits a t-structure such that $\omega^*_X$ is t-exact with respect to the perverse t-structure on $\D[u = {b}, d = {c}]{X^{\tu{an}}, \rational}$.
\item
\textbf{Compatibility of t-structure and weight structure:}
\nlabel{desideratum.8}
The intersection of the heart of the weight structure of \nref{desideratum.5} with the heart of the t-structure of \nref{desideratum.7} is a semisimple Abelian category, and the truncation endofunctors $\trun^{\leq r}$ and $\trun^{\geq r}$ associated with the t-structure are weight exact for each $r \in \integer$.
\end{enumerate}
\end{desiderata}

\begin{thm}
\nlabel{mainthm}
There exists a \qcategorically\ enhanced theory of constructible coefficients $\DH[d = {c}]{-}$ satisfying \nref[Desiderata]{desideratum.1} through \varnref{desideratum.6}.
\end{thm}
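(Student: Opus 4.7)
The overall strategy is to realize $\DH{X}$ as the $\infty$-category of modules over a commutative algebra object $\mathcal{H}_X \in \SH[u = {\'et}]{X}_{\rational}$ representing absolute Hodge cohomology. This module construction inherits a six-functor formalism from $\SH[u = {\'et}]{-}_{\rational}$ by general principles, and simultaneously produces the canonical realization functors of \nref{desideratum.6} as free-module functors.

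The first and most delicate step is the construction of an $E_\infty$-algebra $\mathcal{H} \in \SH[u = {\'et}]{\spec{\complex}}_{\rational}$ whose $\infty$-category of modules is symmetric monoidally equivalent to $\D[u = {b}]{\mhsp[\rational]}^{\otimes}$. The candidate is obtained by applying the right adjoint of the symmetric monoidal realization \nref{intro.0.a} to the monoidal unit of $\D[u = {b}]{\mhsp[\rational]}^{\otimes}$; the identification of its module category then follows from an $\infty$-categorical Barr--Beck argument together with the rectification result of \cite{Drew_rectification-of-Deligne's}, settling Desideratum \nref{desideratum.4} at once. For each $X \in \sch[sepft]{\complex}$ with structural morphism $\pi_X : X \to \spec{\complex}$, I set $\mathcal{H}_X := \pi_X^* \mathcal{H}$, which is a commutative algebra in $\SH[u = {\'et}]{X}_{\rational}$ by the symmetric monoidality of $\pi_X^*$, and define
\[
\DH{X} := \mathrm{Mod}_{\mathcal{H}_X}\bigl(\SH[u = {\'et}]{X}_{\rational}\bigr).
\]

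With $\DH{-}$ so defined, Desideratum \nref{desideratum.2} is inherited from the six-functor formalism on $\SH[u = {\'et}]{-}_{\rational}$ via the general theory of modules over a commutative algebra in a six-functor formalism. The realization functors of Desideratum \nref{desideratum.6} are the free-module functors $\rho^*_X : M \mapsto \mathcal{H}_X \otimes M$; these are symmetric monoidal, factor through the \'etale localization by construction, and commute with the six functors by the same general theory. Desideratum \nref{desideratum.1} follows from the defining cohomological property of $\mathcal{H}_X$ through the free--forgetful adjunction. For Desideratum \nref{desideratum.3}, one applies the Betti realization of $\SH[u = {\'et}]{-}_{\rational}$ to a suitable generating subcategory of $\DH{X}$ and extends by colimits; conservativity on constructible objects reduces to conservativity of the classical Betti realization on $\SH[d = {c}]{-}$ via compact generation of $\DH{-}$ over $\SH[u = {\'et}]{-}_{\rational}$. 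Desideratum \nref{desideratum.5} is obtained by transporting Bondarko's motivic weight structure on $\SH[d = {c}]{-}$ through $\rho^*_X$ and verifying that $\mathcal{H}_X$ is of weight zero, using the purity of absolute Hodge cohomology on smooth projective schemes.

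The principal obstacle is the construction and fine control of $\mathcal{H}$: one needs not only a cohomologically correct $E_\infty$-algebra, but one whose modules at $\spec{\complex}$ recover $\D[u = {b}]{\mhsp[\rational]}^{\otimes}$ symmetric monoidally, and whose pullbacks $\pi_X^* \mathcal{H}$ are sufficiently well-behaved that the resulting module categories satisfy all of the listed desiderata. This will require a homotopy-coherent strictification of the section-wise symmetric monoidal realizations, dualizability arguments ensuring compact generation of $\DH{X}$, and a verification that absolute Hodge cohomology satisfies Nisnevich descent, $\affine^1$-invariance, and $\tatesphere$-stability in the $\infty$-categorical sense over $\sch[sepft]{\complex}$.
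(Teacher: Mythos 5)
Your overall strategy — modules over a commutative algebra in rational \'etale motivic spectra, obtained as the right adjoint of a symmetric monoidal Hodge realization applied to the unit, with six functors inherited from the motivic ones and realizations given by free-module functors — is precisely the paper's. Two of your steps, however, fail as stated.

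First, the module $\infty$-category over your $\mathcal{H}$ is not symmetric monoidally equivalent to $\D[u = {b}]{\mhsp[\rational]}^{\otimes}$, and an $\infty$-categorical Barr--Beck argument does not yield it. Barr--Beck would require the lax symmetric monoidal right adjoint $\rho_{\tu{Hdg},*}$ to be conservative, which amounts to the claim that the objects $\enhancedbetti{X}^{\vee} \otimes \twist{\rational}{r}$ for $X$ smooth over $\complex$ generate $\D{\ind{\mhsp[\rational]}}$ under colimits — a statement of Hodge-conjectural strength and not something the paper establishes or needs. What the paper actually proves, and what Desideratum \nref{desideratum.4} actually requires, is only that the induced functor $\tilde{\rho}^*_{\tu{Hdg}}: \DH{\spec{\complex}} \to \D{\ind{\mhsp[\rational]}}$ is fully faithful; this is established by an ind-rigidity argument (\nref{rep.5}, \nref{rep.6}), not Barr--Beck. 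Essential surjectivity holds only for the $\D{\ind{\mhsp[\rational]}}^{\otimes}$-linear variant $\varDH{\spec{\complex}}$ (\nref{rep.6.2}), whose scalar ring, spectrum, and module category are all different; your proposal conflates the two.

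Second, Desideratum \nref{desideratum.5} cannot be obtained by transporting Bondarko's motivic weight structure through $\rho^*_X$: weight exactness of a functor is a property presupposing weight structures on both source and target, not a mechanism for manufacturing one on the target. The paper constructs the weight structure on $\DH[d = {c}]{-}$ independently by a variant of H\'ebert's existence theorem (\nref{weight.2}), whose input is a vanishing condition on the mapping spectrum out of the unit into $f_*f^*$ of Tate objects in degrees $s > 2r$, for $f$ smooth. For $\DH{-}$, this vanishing is the fact that the weight filtration on $\operator[um = {n}, d = {B}]{H}{X,\rational}$ for $X$ smooth begins in weight $n$ — a consequence of Deligne's theory, not just purity for $X$ smooth projective, which is what you invoke. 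Only after the weight structures on both source and target have been constructed is the realization $\rho^{*, \otimes}_{\tu{Hdg}, \tu{c}}$ shown to be weight exact (\nref{weight.5}, \nref{prop:hodge-weights}).
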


\begin{proof}
This follows from \nref{ex:DH-coefficient-syst} and \nref{prop:hodge-weights}.
\end{proof}

\subsection*{Forthcoming and future work}

In a forthcoming sequel to this text, we establish Desiderata $(7)$ and $(8)$ for $\DH[d = {c}]{-}$.
After d\'evissage, the proof boils down to establishing a decomposition theorem in $\DH{S}$, i.e., in finding suitable idempotent endomorphisms of $p_*\1{\DH{X}}$ for $p: X \to S$ a projective morphism of $\complex$-schemes of finite type that are compatible with decomposition of the underlying complex of analytic sheaves given by \cite[6.2.5]{Beilinson-Bernstein-Deligne_faisceaux-pervers}. 

Once \nref[Desiderata]{desideratum.7} and \varnref{desideratum.8} have been established, we expect to be able to construct a fully faithful comparison functor $\DH[d = {c}]{X} \to \D[u = {b}]{\mhm{X}}$ compatible with Grothendieck's six operations by first constructing an exact, fully faithful functor from the heart $\DH[d = {c}]{X}^{\heartsuit}$ of the t-structure on $\DH[d = {c}]{X}$ to $\mhm{X}$.
This is work in progress.

\subsection*{Acknowledgements}

The author would like to thank Aravind Asok, Denis-Charles Cisinski, Fr\'ed\'eric D\'eglise, Annette Huber, Shane Kelly, Marc Levine, Marco Robalo, and J\"org Wildeshaus for helpful discussions.
This research was partially supported by the DFG through the SFB Transregio 45 and the SPP 1786.

\subsection*{Notation and conventions}

\textbf{Grothendieck universes:}
We assume that each set is an element of a Grothendieck universe.
We also fix two Grothendieck universes $\mf{U} \in \mf{V}$.
A set will be \emph{small} if it is isomorphic to an element of $\mf{U}$,
\emph{large} if it is isomorphic to an element of $\mf{V}$,
and \emph{very large} otherwise. 
Unless context dictates otherwise, sets, groups, rings, and modules will be small,
and schemes will admit Zariski covers by spectra of small commutative unital rings.

\vspace{.25\baselineskip}

\noindent
\textbf{\Qcategories:}
We will use the language of \qcategories\ as developed in \cite{Lurie_higher-topos, Lurie_higher-algebra}.
We regard each category as a \qcategory\ by identifying it with its nerve.

\vspace{.25\baselineskip}

\noindent
\textbf{Notation for \qcategories\ of \qcategories:}
We will work extensively with the following \qcategories\ of \qcategories:
\begin{longtable}[c]{p{1.1in}>{\raggedright\arraybackslash}p{4.6in}}
$\qcat$, $\QCAT$ & \qcategories\ of small and large \qcategories, respectively \\
\\
$\qcatmon$, $\QCATmon$ & Cartesian symmetric monoidal structure on $\qcat$ and $\QCAT$, respectively \\
$\qcatex$, $\QCATex$ & \qcategories\ of small and large stable \qcategories\ and exact functors \\
$\qcatexmon$, $\QCATexmon$ & symmetric monoidal structure on $\qcatex$ and $\QCATex$, respectively \\
$\pr[u = {L}]$ & \qcategory\ of $\mf{V}$-small \locpres[\mf{U}]\ \qcategories\ and left-adjoint functors\\ 
$\pr[u = {L}, dm = {\kappa}]$ & \qcategory\ of $\mf{V}$-small \locpres[\kappa]\ \qcategories\ and $\kappa$-presentable-object-preserving left-adjoint functors, where $\kappa$ is a small regular cardinal \\
$\pr[u = {L}, dm = {\kappa, \tu{st}}]$ & \qcategory\ of $\mf{V}$-small stable \locpres[\kappa]\ \qcategories\ and $\kappa$-presentable-object-preserving left-adjoint functors for a small regular cardinal $\kappa$ \\
$\pr[u = {L}, d = {st}]$ & \qcategory\ of $\mf{V}$-small stable \locpres[\mf{U}]\ \qcategories\ and left-adjoint functors\\ 
$\pr[um = {\tu{L}, \otimes}, dm = {\spadesuit}]$ & symmetric monoidal structure on $\pr[u = {L}, dm = {\spadesuit}]$ for $\spadesuit \subseteq \brc{\kappa, \tu{st}}$ \\
$\pr[u = {R}]$ & \qcategory\ of $\mf{V}$-small \locpres[\mf{U}]\ \qcategories\ and right-adjoint functors\\ 
$\pr[u = {R}, d = {st}]$ & \qcategory\ of $\mf{V}$-small stable \locpres[\mf{U}]\ \qcategories\ and right-adjoint functors
\end{longtable}

\vspace{.25\baselineskip}

\noindent
\textbf{Symmetric monoidal \qcategories:}
If $\mc{C}^{\otimes}$ is symmetric monoidal structure on one of the \qcategories\ of some flavor of \qcategory\ in the above table, then we identify the \qcategory\ $\calg{\mc{C}^{\otimes}}$ with the \qcategory\ of symmetric monoidal \qcategories\ of that flavor.
For example, $\calg{\pr[um = {\tu{L}, \otimes}, dm = {\kappa, \tu{st}}]}$ is the symmetric monoidal \qcategory\ of stable, \locpres[\kappa]\ symmetric monoidal \qcategories\ and cocontinuous, $\kappa$-presentable-object-preserving symmetric monoidal functors. 

\vspace{.25\baselineskip}

\noindent
\textbf{Standard \qcategorical\ operations:}

\begin{longtable}[c]{p{1.1in}>{\raggedright\arraybackslash}p{4.6in}}
$\1{\mc{C}}$ & unit of the symmetric monoidal \qcategory\ $\mc{C}^{\otimes}$ \\
$\mc{C}_{\kappa}$ & full \subqcategory\ of $\mc{C}$ spanned by the $\kappa$-presentable objects \\
$\mc{C}^{\otimes}\rig$ & full symmetric monoidal \subqcategory\ of $\mc{C}^{\otimes}$ spanned by the $\otimes$-dualizable objects \\
$\mc{C}^{\otimes}$ & a symmetric monoidal \qcategory\ with underlying \qcategory\ $\mc{C}$ \\
$\calg{\mc{C}^{\otimes}}$ & \qcategory\ of commutative algebra objects in the symmetric monoidal \qcategory\ $\mc{C}^{\otimes}$ \\
$\fun{\mc{C}}{\mc{C}}$ & \qcategory\ of functors $\mc{C} \to \mc{D}$ \\
$\fun[u = {ex}]{\mc{C}}{\mc{D}}$ & \qcategory\ of exact functors $\mc{C} \to \mc{D}$ \\
$\fun[u = {L}]{\mc{C}}{\mc{D}}$ & \qcategory\ of left-adjoint functors $\mc{C} \to \mc{D}$ \\
$\fun[u = {R}]{\mc{C}}{\mc{D}}$ & \qcategory\ of right-adjoint functors $\mc{C} \to \mc{D}$ \\
$\ind{\mc{C}}$ & \qcategory\ of ind-objects of $\mc{C}$ \\
$\map[dm = {\mc{C}}]{}{}$ & mapping-space bifunctor associated with $\mc{C}$ \\
$\mod[dm = {A}]{\mc{C}}^{\otimes}$ & symmetric monoidal \qcategory\ of modules over $A \in \calg{\mc{C}^{\otimes}}$ equipped with the symmetric monoidal structure corresponding to the relative tensor product $\prns{-} \otimes_A \prns{-}$ \\
$\intmor[dm = {\mc{C}}]{}{}$ & internal morphisms-object bifunctor associated with the closed symmetric monoidal structure $\mc{C}^{\otimes}$ \\
$\psh{\mc{C}}{\mc{V}}$ & \qcategory\ of $\mc{V}$-valued presheaves on $\mc{C}$ \\
$\hypsh[dm = {\tau}]{\mc{C}}{\mc{V}}$ & \qcategory\ of $\mc{V}$-valued $\tau$-hypersheaves on $\mc{C}$ (\varnref{SH.a2.1.etale})\\
$\yon{\mc{C}}{}$ & \qcategorical\ Yoneda embedding $\mc{C} \hookrightarrow \psh{\mc{C}}{\spc{}}$ \\
\end{longtable}

\vspace{.25\baselineskip}

\noindent
\textbf{Frequently occurring objects:}
\begin{longtable}[c]{p{1.1in}>{\raggedright\arraybackslash}p{4.6in}}
$\DB{X}^{\otimes}$ & modules over the $\rational$-linear Betti cohomology spectrum (\varnref{ex:hodge-module-6ff.2}) \\
$\DH{X}^{\otimes}$ & modules over the $\rational$-linear absolute Hodge spectrum (\varnref{ex:hodge-module-6ff.2})\\
$\varDH{X}^{\otimes}$ & modules over  the $\mhsp[\rational]$-linear absolute Hodge spectrum (\varnref{ex:hodge-module-6ff.2})\\
$\bettimon{}$ & $\rational$-linear Betti cohomology (\varnref{sheaves.13a}) \\
$\enhancedbettimon{}$ & enhanced $\rational$-linear Betti cohomology (\varnref{sheaves.13b}) \\
$\mhsp[\rational]$ & category of polarizable mixed Hodge $\rational$-structures \\
$\mod[dm = {\bs{\Lambda}}]{}$ & category of modules over the commutative unital ring $\bs{\Lambda}$ \\
$\spc{}^{\times}$ & Cartesian symmetric monoidal \qcategory\ of spaces \\
$\spc{}_*^{\wedge}$ & symmetric monoidal \qcategory\ of pointed spaces \\
$\spc[um = {\tau}]{S}^{\times}$ & Cartesian symmetric monoidal \qcategory\ of $\tau$-motivic spaces over $S$ (\varnref{SH.1.D}) \\
$\spc[um = {\tau}]{S, \mc{V}}^{\otimes}$ & symmetric monoidal \qcategory\ of $\mc{V}^{\otimes}$-linear $\tau$-motivic spaces over $S$ (\varnref{SH.1.D}) \\
$\spt{}^{\wedge}$ & symmetric monoidal \qcategory\ of $\sphere^1$-spectra with the smash product \\
$\spt[um = {\tau}, dm = {\tatesphere}]{S}^{\wedge}$ & symmetric monoidal \qcategory\ of $\tau$-motivic $\tatesphere$-spectra over $S$ (\varnref{SH.1.J}) \\
$\spt[um = {\tau}, dm = {\tatesphere}]{S, \mc{V}}^{\otimes}$ & symmetric monoidal \qcategory\ of $\mc{V}^{\otimes}$-linear $\tau$-motivic $\tatesphere$-spectra over $S$ (\varnref{SH.1.J}) \\
$\tatesphere$ & motivic Tate sphere (\varnref{SH.1.I}) \\
$\yona[um = {\tau}]{S}{X}$ & image of the representable presheaf $\yon{S}{X} \in \psh{\sm[ft]{S}}{\spc{}}$ under the localization functor $\psh{\sm[ft]{S}}{\spc{}} \to \spc[um = {\tau}]{S}$ (\varnref{SH.1.F})
\end{longtable}

\section{Hypersheaves}
\nlabel{sheaves}

\setcounter{thm}{-1}

\begin{notation}
\label{sheaves.0}
Throughout this section, we fix the following:
\begin{itemize}
\item
$\mc{C}$, a small category;
\item
$\tau$, a Grothendieck topology on $\mc{C}$.
\end{itemize}
\end{notation}

\begin{motivation}
The goal of this section is to collect some basic facts regarding hypercovers and cohomological descent for presheaves with values a given \qcategory.
The fundamental example of such a presheaf is that of an enhanced version $\enhancedbetti{}$ of the singular-cochain-complex functor $\betti{}$ that assigns to each smooth $\complex$-scheme $X$ of finite type a mixed Hodge complex that encodes the mixed Hodge structure on the Betti cohomology of $X$.
By \cite[3.2]{Drew_rectification-of-Deligne's}, we may regard this functor as a presheaf on $\sm[ft]{\complex}$ with values in the symmetric monoidal bounded derived \qcategory\ of polarizable mixed Hodge $\rational$-structures.
In this section, we check that $\enhancedbetti{}$ inherits the property of descent with respect to \'etale hypercovers (\nref{sheaves.14}) from $\betti{}$.
\end{motivation}

\begin{summary}
Let $\mc{V}$ be a \qcategory.
\begin{itemize}
\item
We begin by recalling the basic definitions, and extend the results of \cite{Dugger-Hollander-Isaksen_hypercovers-and-simplicial} for $\spc{}$-valued presheaves to $\mc{V}$-valued presheaves.
\item
In \nref{sheaves.6}, we show that the localization of the \qcategory\ $\psh{\mc{C}}{\mc{V}}$ with respect to the class of $\tau$-hypercovers is equivalent to the localization of $\psh{\mc{C}}{\mc{V}}$ with respect to the Joyal-Jardine $\tau$-local equivalences, generalizing a result of \cite{Choudhury-Gallauer_homotopy-theory}.
This will allow us to compare our constructions in the following sections with those of \cite{Ayoub_six-operationsII}.
\item
In \nref{prop:descent}, we establish a criterion that implies that the enhanced singular-cochain-complex functor $\enhancedbetti{}$ has the property of descent with respect to \'etale hypercovers.
\item
In \nref{sheaves.12}, we establish sufficient conditions in the cases of interest on the \qcategory\ $\mc{V}$ for the localization of $\psh{\mc{C}}{\mc{V}}$ with respect to $\tau$-hypercovers to be \locpres[\aleph_0].
\end{itemize}
\end{summary}

\begin{defn}
\nlabel{sheaves.1}
Let $\mc{V}$ be a \locpres\ \qcategory, and $F \in \psh{\mc{C}}{\mc{V}}$.
\begin{enumerate}
\item
\nlabel{sheaves.1.1}
Recall that $\yon{\mc{C}}{}: \mc{C} \to \psh{\mc{C}}{\spc{}}$ denotes the Yoneda embedding of \cite[\S5.1.3]{Lurie_higher-topos}.
Let $X \in \mc{C}$, let $p: U_{\bullet} \to \yon{\mc{C}}{X}$ be a $\tau$-hypercover, regarded as a morphism from $U_{\bullet}$ to the constant simplicial object with value $\yon{\mc{C}}{X}$, and let $p_+: \prns{\Delta_+}\op \to \psh{\mc{C}}{\spc{}}$ be the associated augmented simplicial object.
We say that $F$ is \emph{$p$-local} if it satisfies the following equivalent conditions:
\begin{enumerate}
\item
\nlabel{sheaves.1.1.a}
the composite $\overline{F}\prns{p_+}\op: \Delta_+ \to \mc{V}$ of $\prns{p_+}\op$ with the right Kan extension $\overline{F}: \psh{\mc{C}}{\spc{}}\op \to  \mc{V}$ of $F$ along $\prns{\yon{\mc{C}}{}}\op$, i.e., the functor induced by the universal property of $\psh{\mc{C}}{\spc{}}$ given in \cite[5.1.5.6]{Lurie_higher-topos}, is a limit diagram in $\mc{V}$;
\item
\nlabel{sheaves.1.1.b}
the canonical composite morphism
\[
\fct{F}{X}
  \simeq \fct{\overline{F}}{\yon{\mc{C}}{X}}
  \to \lim_{\brk{n} \in \Delta} \fct{\overline{F}}{U_n}
\]
is an equivalence;
\item
\nlabel{sheaves.1.1.c}
$\overline{F}$ sends the canonical morphism 
\[
\colim_{n \in \Delta\op} p_{n}:
  \colim_{\brk{n} \in \Delta\op} U_n
    \to \colim_{n \in \Delta\op} \yon{\mc{C}}{X} \simeq \yon{\mc{C}}{X}
\]
to an equivalence;
\item
\nlabel{sheaves.1.1.d}
$\overline{F}$ factors as $\psh{\mc{C}}{\spc{}}\op \to \psh{\mc{C}}{\spc{}}\brk{\brc{\colim_{\brk{n} \in \Delta\op}p_n}^{-1}}\op \to \mc{V}$. 
\end{enumerate}
Indeed, the equivalence of \nref{sheaves.1.1.a} and \nref{sheaves.1.1.b} follows from the definition of a limit diagram;
the equivalence of \nref{sheaves.1.1.b} and \nref{sheaves.1.1.c} follows from the remark that $\overline{F}$ is continuous, so $\fct{\overline{F}}{\colim_{n \in \Delta\op} U_n} \simeq \lim_{n \in \Delta} \fct{\overline{F}}{U_n}$; and 
the equivalence of \nref{sheaves.1.1.c} and \nref{sheaves.1.1.d} follows from the universal property of the localization.
\item
\nlabel{SH.a2.1.etale}
We say that $F$ is \emph{$\tau$-local} or that $F$ is a \emph{$\mc{V}$-valued $\tau$-hypersheaf} if it satisfies the following equivalent conditions:
\begin{enumerate}
\item
\nlabel{sheaves.1.2.a}
$F$ is $p$-local for each $\tau$-hypercover $p$;
\item 
\nlabel{sheaves.1.2.b}
$\overline{F}$ factors as $\psh{\mc{C}}{\spc{}}\op \to \psh{\mc{C}}{\spc{}}\brk{\mf{W}_{\tau}^{-1}}\op \to \mc{V}$, where $\mf{W}_{\tau}$ is the class of morphisms of the form $\colim_{n \in \Delta\op} p_{n}$ as in \nref{sheaves.1.1.c} with $p$ a $\tau$-hypercover. 
\end{enumerate}
\item
We let $\hypsh[dm = {\tau}]{\mc{C}}{\mc{V}} \subseteq \psh{\mc{C}}{\mc{V}}$ denote the full \subqcategory\ spanned by the $\mc{V}$-valued $\tau$-hypersheaves.
\end{enumerate}
\end{defn}

\begin{rmk}
\nlabel{sheaves.2}
Let $\mc{V}$ be a \locpres\ \qcategory.
In general, the class of $\tau$-hypercovers is not small, so there are some set-theoretic issues with the previous definition.
Specifically, in \nref{sheaves.1.2.a}, we have quantified over a proper class.
Also, in \nref{sheaves.1.2.b}, localization with respect to the class $\mf{W}_{\tau}$ is not \emph{a priori} well behaved.
By \cite[Corollary~7.1]{Dugger-Hollander-Isaksen_hypercovers-and-simplicial} or \cite[6.5.2.8]{Lurie_higher-topos}, however, there exists a small, dense set $\mf{G}_{\tau}$ of $\tau$-hypercovers:
a presheaf $F: \mc{C}\op \to \mc{V}$ is $\tau$-local if and only if it is $p$-local for each hypercover $p$ in this small, dense set $\mf{G}_{\tau}$;
and the localization of $\psh{\sm[ft]{S}}{\spc{}}$ with respect to $\mf{W}_{\tau}$ is equivalent to the localization  with respect to the small set of morphisms $\colim_{n \in \Delta\op} p_n$ for each $p \in \mf{G}_{\tau}$.
\end{rmk}

\begin{defn}
\label{sheaves.3a}
Let $\mc{V}$ be a \locpres\ \qcategory.
By \nref{sheaves.2}, the inclusion $\presheaf{\tau}{}: \hypsh[dm = {\tau}]{\mc{C}}{\mc{V}} \hookrightarrow \psh{\mc{C}}{\mc{V}}$ admits a left adjoint 
\[
\loc[dm = {\tau}]{}:
  \psh{\mc{C}}{\spc{}}
    \to \psh{\mc{C}}{\spc{}}\brk{\mf{W}_{\tau}^{-1}}
    \simeq \hypsh[dm = {\tau}]{\mc{C}}{\spc{}},
\]
which we refer to as the \emph{$\tau$-hypersheafification} functor.
This localization $\loc[dm = {\tau}]{}$ is left exact by \cite[6.5.2.8, 6.2.1.6, 6.2.2.7]{Lurie_higher-topos}.
A morphism $f: F \to F'$ of $\psh{\mc{C}}{\mc{V}}$ is a \emph{$\tau$-local equivalence} if its image under $\sheaf{\tau}{}$ is an equivalence.
\end{defn}

\begin{prop}
\nlabel{sheaves.3}
Let $\mc{V}$ be a \locpres\ \qcategory.
\begin{enumerate}
\item
\nlabel{sheaves.3.1}
There is a canonical equivalence
\[
\hypsh[dm = {\tau}]{\mc{C}}{\mc{V}}
  \simeq \hypsh[dm = {\tau}]{\mc{C}}{\spc{}} \otimes \mc{V}.
\]
In particular, the assignment $\mc{V} \mapsto \hypsh[dm = {\tau}]{\mc{C}}{\mc{V}}$ underlies a functor $\pr[u = {L}] \to \pr[u = {L}]$.
\item
\nlabel{sheaves.3.2}
If $\mc{V}$ is \locpres[\aleph_0], then there is a canonical equivalence
\[
\fun[u = {lex}]{\prns{\mc{V}_{\aleph_0}}\op}{\hypsh[dm = {\tau}]{\mc{C}}{\spc{}}}
  \simeq \hypsh[dm = {\tau}]{\mc{C}}{\mc{V}},
\]
where $\fun[u = {lex}]{\mc{D}}{\mc{D}'} \subseteq \fun{\mc{D}}{\mc{D}'}$ denotes the full \subqcategory\ spanned by the left-exact functors.
\item
\nlabel{sheaves.3.3}
Let $\mc{V}$ and $\mc{W}$ be \locpres[\aleph_0]\ \qcategories, and $\phi^*: \mc{V} \to \mc{W}$ a cocontinuous functor that preserves $\aleph_0$-presentable objects.
There is an essentially commutative diagram
\[
\begin{tikzcd}
\fun[u = {lex}]{\prns{\mc{W}_{\aleph_0}}\op}{\hypsh[dm = {\tau}]{\mc{C}}{\spc{}}}
\ar[r, "\prns{\phi^*}\op" below]
\ar[d]
&
\fun[u = {lex}]{\prns{\mc{V}_{\aleph_0}}\op}{\hypsh[dm = {\tau}]{\mc{C}}{\spc{}}}
\ar[d]
\\
\hypsh[dm = {\tau}]{\mc{C}}{\mc{W}}
\ar[r, "\phi_*" above]
&
\hypsh[dm = {\tau}]{\mc{C}}{\mc{V}}
\end{tikzcd}
\]
with the following properties:
\begin{itemize}
\item
the vertical arrows are the equivalences of \nref{sheaves.3.2};
\item
$\prns{\phi^*}\op$ is induced by composition with the left-exact functor $\prns{\phi^*}\op: \prns{\mc{V}_{\aleph_0}}\op \to \prns{\mc{W}_{\aleph_0}}\op$; and 
\item
the lower horizontal arrow $\phi_*$ is right adjoint to the cocontinuous functor induced by $\phi^*$.
\end{itemize}
\end{enumerate}
\end{prop}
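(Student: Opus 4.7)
All three parts are instances of the slogan ``accessible reflective localization commutes with tensoring over $\pr[u = {L}]$.'' I would prove them in order, bootstrapping \nref{sheaves.3.2} from \nref{sheaves.3.1} and \nref{sheaves.3.3} from \nref{sheaves.3.2}. The main, though largely formal, obstacle will be verifying that the abstract tensor-product descriptions produced coincide with the concrete pointwise definitions of $\mc{V}$-valued hypersheaves (respectively, of left-exact functor categories); this is a ``local objects are local objects'' argument that amounts to threading several universal properties of presentable \qcategories\ together.

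For \nref{sheaves.3.1}, the starting point is the equivalence $\psh{\mc{C}}{\mc{V}} \simeq \psh{\mc{C}}{\spc{}} \otimes \mc{V}$ in $\pr[u = {L}]$ furnished by \cite[4.8.1.17]{Lurie_higher-algebra}. Under this identification, the $\tau$-hypersheafification $\loc[dm = {\tau}]{}$ of \nref{sheaves.3a}, tensored with the identity of $\mc{V}$, is again an accessible reflective localization, with essential image $\hypsh[dm = {\tau}]{\mc{C}}{\spc{}} \otimes \mc{V}$. It remains to check that its local objects coincide with $\hypsh[dm = {\tau}]{\mc{C}}{\mc{V}}$ as characterized in \nref{sheaves.1.2.b}; this one verifies pointwise by applying mapping-space functors $\map[dm = {\mc{V}}]{V}{-}$ to reduce to the already-handled case $\mc{V} = \spc{}$. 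Functoriality in $\mc{V}$ then follows formally from that of $\hypsh[dm = {\tau}]{\mc{C}}{\spc{}} \otimes \prns{-}$.

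For \nref{sheaves.3.2}, I would combine \nref{sheaves.3.1} with the abstract identification
\[
\ind{\mc{D}} \otimes \mc{X}
  \simeq \fun[u = {lex}]{\mc{D}\op}{\mc{X}},
\]
valid for any small $\mc{D}$ with finite colimits and any $\mc{X} \in \pr[u = {L}]$, applied to $\mc{D} = \mc{V}_{\aleph_0}$ (using $\mc{V} \simeq \ind{\mc{V}_{\aleph_0}}$) and $\mc{X} = \hypsh[dm = {\tau}]{\mc{C}}{\spc{}}$. The abstract equivalence itself is proved by the same mechanism as \nref{sheaves.3.1}: by \cite[5.3.5.4]{Lurie_higher-topos}, $\ind{\mc{D}} \subseteq \psh{\mc{D}}{\spc{}}$ is the reflective subcategory spanned by the finite-limit-preserving presheaves, and tensoring this reflection with $\mc{X}$ picks out the finite-limit-preserving $\mc{X}$-valued presheaves on $\mc{D}\op$, since mapping spaces in $\mc{X}$ detect limits.

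For \nref{sheaves.3.3}, the commutative diagram is a naturality statement for the equivalence of \nref{sheaves.3.2}. Since $\phi^*$ is cocontinuous and preserves $\aleph_0$-presentable objects, it restricts to a finite-colimit-preserving functor $\mc{V}_{\aleph_0} \to \mc{W}_{\aleph_0}$, whose opposite preserves finite limits; precomposition by this opposite thus gives a well-defined functor on left-exact functor categories. Under the Yoneda presentation $\mc{V} \simeq \fun[u = {lex}]{\prns{\mc{V}_{\aleph_0}}\op}{\spc{}}$, which sends $V$ to $\map[dm = {\mc{V}}]{-}{V}$, the adjunction identity $\map[dm = {\mc{V}}]{-}{\phi_*W} \simeq \map[dm = {\mc{W}}]{\phi^*-}{W}$ precisely identifies $\phi_*$ with precomposition by $\prns{\phi^*}\op$. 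Tensoring this identification with $\hypsh[dm = {\tau}]{\mc{C}}{\spc{}}$ yields the asserted commutativity.
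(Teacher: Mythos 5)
Your proposal is correct and follows the paper's own route: all three parts rest on \cite[4.8.1.17]{Lurie_higher-algebra} together with the restriction equivalence for ind-completions and formal adjunction identities, with \nref{sheaves.3.2} bootstrapped from \nref{sheaves.3.1} and \nref{sheaves.3.3} from \nref{sheaves.3.2}, just as in the paper. The only noteworthy differences are stylistic: for \nref{sheaves.3.1}, the paper identifies $\hypsh[dm = {\tau}]{\mc{C}}{\mc{V}}$ inside $\fun[u = {R}]{\psh{\mc{C}}{\spc{}}\op}{\mc{V}}$ directly via the factorization clause \nref{sheaves.1.2.b} of the definition rather than by mapping-space detection; and for \nref{sheaves.3.3}, your ``tensor this identification with $\hypsh[dm = {\tau}]{\mc{C}}{\spc{}}$'' compresses the explicit three-step diagram chase the paper carries out---a chase that cannot be replaced by literally tensoring in $\pr[u = {L}]$, since $\phi_*$ is only accessible and continuous, not cocontinuous, so the passage through $\fun[u = {R}]{-\op}{-}$, adjoint transposition, and taking opposites is how one makes the intuition rigorous.
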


\begin{proof}
There are canonical equivalences
\begin{align*}
\psh{\mc{C}}{\spc{}} \otimes \mc{V}
  &\simeq \fun[u = {R}]{\psh{\mc{C}}{\spc{}}\op}{\mc{V}} 
  &&\text{\cite[4.8.1.17]{Lurie_higher-algebra}} \\
  &\simeq \fun[u = {L}]{\psh{\mc{C}}{\spc{}}}{\mc{V}\op}\op 
  &&\text{\cite[5.2.6.2]{Lurie_higher-topos}}\\
  &\simeq \fun{\mc{C}}{\mc{V}\op}\op \simeq \psh{\mc{C}}{\mc{V}}
  &&\text{\cite[5.5.2.10, 5.1.5.6]{Lurie_higher-topos}}.\\
\end{align*}
Under the equivalence
$
\psh{\mc{C}}{\mc{V}}
  \simeq \fun[u = {R}]{\psh{\mc{C}}{\spc{}}\op}{\mc{V}}
$,
the full \subqcategory\ $\hypsh[dm = {\tau}]{\mc{C}}{\mc{V}}$ corresponds to the full \subqcategory\ spanned by the right-adjoint functors that factor through the opposite of the localization functor $\loc[dm = {\tau}]{}$.
The latter \subqcategory\ is the essential image of the fully faithful functor
\[
\fun[u = {R}]{\hypsh[dm = {\tau}]{\mc{C}}{\spc{}}\op}{\mc{V}}
  \hookrightarrow \fun[u = {R}]{\psh{\mc{C}}{\spc{}}\op}{\mc{V}}
\]
given by composition with $\prns{\loc[dm = {\tau}]{}}\op$.
By \nref{sheaves.2}, $\hypsh[dm = {\tau}]{\mc{C}}{\spc{}}$ is \locpres, so we have a canonical equivalence 
\[
\fun[u = {R}]{\hypsh[dm = {\tau}]{\mc{C}}{\spc{}}\op}{\mc{V}}
  \simeq \hypsh[dm = {\tau}]{\mc{C}}{\spc{}} \otimes \mc{V}
\]
and \nref[Claim]{sheaves.3.1} follows.

Suppose $\mc{V}$ is \locpres[\aleph_0].
By \cite[5.3.5.11]{Lurie_higher-topos}, the canonical cocontinuous functor $\ind{\mc{V}_{\aleph_0}} \to \mc{V}$ is an equivalence.
By \cite[4.2.3.12, 5.3.5.10]{Lurie_higher-topos}, restriction along $\prns{\mc{V}_{\aleph_0}}\op \hookrightarrow \mc{V}\op$ induces an equivalence
\[
\fun[u = {R}]{\mc{V}\op}{\hypsh[dm = {\tau}]{\mc{C}}{\spc{}}}
  \simeq \fun[u = {lex}]{\prns{\mc{V}_{\aleph_0}}\op}{\hypsh[dm = {\tau}]{\mc{C}}{\spc{}}}.
\]
\nref[Claim]{sheaves.3.2} now follows from \nref{sheaves.3.1} and \cite[4.8.1.17]{Lurie_higher-algebra}.

Let $\phi_*: \mc{W} \to \mc{V}$ be right adjoint to $\phi^*$.
\nref[Claim]{sheaves.3.3} follows from the essentially commutativity of the diagram
\[
\begin{tikzcd}
\fun[u = {lex}]{\prns{\mc{W}_{\aleph_0}}\op}{\hypsh[dm = {\tau}]{\mc{C}}{\spc{}}}
\ar[r, "\prns{\phi^*}\op" below]
\ar[d, "\varepsilon" left]
&
\fun[u = {lex}]{\prns{\mc{V}_{\aleph_0}}\op}{\hypsh[dm = {\tau}]{\mc{C}}{\spc{}}}
\ar[d, "\varepsilon" right]
\\
\fun[u = {R}]{\mc{W}\op}{\hypsh[dm = {\tau}]{\mc{C}}{\spc{}}}
\ar[r, "\prns{\phi^*}\op" below]
\ar[d, "\alpha" left]
&
\fun[u = {R}]{\mc{V}\op}{\hypsh[dm = {\tau}]{\mc{C}}{\spc{}}}
\ar[d, "\alpha" right]
\\
\fun[u = {L}]{\hypsh[dm = {\tau}]{\mc{C}}{\spc{}}}{\mc{W}\op}\op
\ar[r, "\prns{\phi_*}\op" below]
\ar[d, "\op" left]
&
\fun[u = {L}]{\hypsh[dm = {\tau}]{\mc{C}}{\spc{}}}{\mc{V}\op}\op
\ar[d, "\op" right]
\\
\fun[u = {R}]{\hypsh[dm = {\tau}]{\mc{C}}{\spc{}}\op}{\mc{W}}
\ar[r, "\phi_*" above]
&
\fun[u = {R}]{\hypsh[dm = {\tau}]{\mc{C}}{\spc{}}\op}{\mc{V}},
\end{tikzcd}
\]
in which $\varepsilon$ is given by right Kan extension
and $\alpha$ by assigning to each right adjoint functor the corresponding left adjoint.
\end{proof}

\begin{prop}
\nlabel{sheaves.4}
Let $\mc{V}$ be a \locpres[\aleph_0]\ \qcategory, $f: F \to F'$ a morphism of $\psh{\mc{C}}{\mc{V}}$, and $\pt \in \mc{C}$ a final object.
The following are equivalent:
\begin{enumerate}
\item
\nlabel{sheaves.4.1}
for each $n \in \integer_{> 0}$, each $0$-simplex $x$ of $\map[dm = {\mc{V}}]{V}{\fct{F}{\pt}}$, and each $V \in \mc{V}_{\aleph_0}$,
the $\tau$-sheafifications of the morphisms of presheaves of sets
\begin{align*}
\pi_0\prns{\map[dm = {\mc{V}}]{V}{\fct{F}{-}}}
  &\to \pi_0\prns{\map[dm = {\mc{V}}]{V}{\fct{F'}{-}}}\\
\text{and}\quad
\pi_n\prns{\map[dm = {\mc{V}}]{V}{\fct{F}{-}}, x}
  &\to
\pi_n\prns{\map[dm = {\mc{V}}]{V}{\fct{F'}{-}}, \fct{f}{x}}
\end{align*}
induced by $f$ are isomorphisms of $\tau$-sheaves;
\item
\nlabel{sheaves.4.2}
for each $V \in \mc{V}_{\aleph_0}$, the morphism
\[
  \sheaf{\tau}{\map[dm = {\mc{V}}]{V}{\fct{F}{-}}}
  \to \sheaf{\tau}{\map[dm = {\mc{V}}]{V}{\fct{F'}{-}}}
\]
of $\hypsh[dm = {\tau}]{\mc{C}}{\spc{}}$ induced by $f$ is an equivalence; and
\item
\nlabel{sheaves.4.3}
the morphism $f: F \to F'$ of $\psh{\mc{C}}{\mc{V}}$ is a $\tau$-local equivalence.
\end{enumerate}
\end{prop}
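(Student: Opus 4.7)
The plan is to reduce the proposition to the classical statement for $\spc{}$-valued simplicial presheaves by first transporting the question across the equivalence of \nref{sheaves.3} and then invoking Whitehead's theorem in the hypercomplete $\infty$-topos $\hypsh[dm = {\tau}]{\mc{C}}{\spc{}}$. I will establish the equivalences (3)\,$\Leftrightarrow$\,(2) and (2)\,$\Leftrightarrow$\,(1) separately.

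For (3)\,$\Leftrightarrow$\,(2), I would first observe that the proof of \nref{sheaves.3.2} applies verbatim with $\psh{\mc{C}}{\spc{}}$ in place of $\hypsh[dm = {\tau}]{\mc{C}}{\spc{}}$ to yield a canonical equivalence
\[
\psh{\mc{C}}{\mc{V}} \simeq \fun[u = {lex}]{\prns{\mc{V}_{\aleph_0}}\op}{\psh{\mc{C}}{\spc{}}}
\]
sending $F$ to the functor $V \mapsto \map[dm = {\mc{V}}]{V}{\fct{F}{-}}$. Naturality with respect to the localization at $\mf{W}_{\tau}$ identifies the $\tau$-hypersheafification $\sheaf{\tau}{}$ on $\psh{\mc{C}}{\mc{V}}$ with objectwise post-composition with $\loc[dm = {\tau}]{}$. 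Consequently, $f$ is a $\tau$-local equivalence if and only if, for each $V \in \mc{V}_{\aleph_0}$, the induced morphism $\sheaf{\tau}{\map[dm = {\mc{V}}]{V}{\fct{F}{-}}} \to \sheaf{\tau}{\map[dm = {\mc{V}}]{V}{\fct{F'}{-}}}$ is an equivalence in $\hypsh[dm = {\tau}]{\mc{C}}{\spc{}}$, which is condition~(2).

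For (2)\,$\Leftrightarrow$\,(1), I would argue for each fixed $V \in \mc{V}_{\aleph_0}$ that a morphism of $\psh{\mc{C}}{\spc{}}$ is a $\tau$-local equivalence exactly when the sheafifications of its induced morphisms on the presheaves $\pi_0$ and $\pi_n(-, x)$ are isomorphisms of $\tau$-sheaves. This is Whitehead's theorem applied in the hypercomplete $\infty$-topos $\hypsh[dm = {\tau}]{\mc{C}}{\spc{}}$ (see \cite[6.5.2.8]{Lurie_higher-topos}). The main obstacle is the basepoint restriction in (1): Whitehead's theorem a priori requires checking homotopy sheaves at every local basepoint $V \to \fct{F}{X}$ over every $X \in \mc{C}$, whereas (1) samples only basepoints over the terminal object $\pt$. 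This is resolved by exploiting the finality of $\pt$: every global basepoint $V \to \fct{F}{\pt}$ pulls back to a basepoint over each $X$ via restriction along $X \to \pt$, and as $V$ varies over $\mc{V}_{\aleph_0}$ the resulting family detects all local basepoints after $\tau$-sheafification by a standard descent argument relating local sections to global ones.
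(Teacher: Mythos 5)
Your outline for (3)\,$\Leftrightarrow$\,(2) is essentially the paper's argument, and it is correct: the equivalence of \nref{sheaves.3.2} applied to both $\psh{\mc{C}}{\spc{}}$ and $\hypsh[dm = {\tau}]{\mc{C}}{\spc{}}$ identifies $\mc{V}$-valued hypersheafification with objectwise postcomposition by $\loc[dm = {\tau}]{}$ (left exactness of $\loc[dm = {\tau}]{}$ makes this postcomposition land in $\fun[u = {lex}]{-}{-}$; the paper records this as the square \nref{sheaves.4.a}), and equivalences in the functor \qcategory\ are detected objectwise, so (3) holds iff (2) does. You could add a sentence making the ``naturality'' claim precise, but the idea is right.

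The step (2)\,$\Leftrightarrow$\,(1) has a genuine gap. You correctly observe that this reduces, for each fixed $V$, to the $\spc{}$-valued statement: a morphism of $\psh{\mc{C}}{\spc{}}$ is a $\tau$-local equivalence iff its $\tau$-sheafified homotopy presheaves, \emph{based only at $0$-simplices of global sections over the final object $\pt$}, are isomorphisms. You propose to deduce this from Whitehead's theorem in the hypercomplete $\infty$-topos. But Whitehead's theorem gives a criterion in terms of the sheaves $\pi_n$ at \emph{all} local basepoints, i.e., sections over arbitrary $X \in \mc{C}$. The whole nontrivial content of the statement is the reduction to basepoints over $\pt$, and this is exactly what you try to dispatch with ``a standard descent argument relating local sections to global ones.'' That argument does not work as stated: a section of $F$ over some $X$ need not be, even locally, the restriction of a global section, and varying $V$ over $\mc{V}_{\aleph_0}$ does not produce a global basepoint detecting a given local one. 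The paper sidesteps this entirely by citing \cite[Theorem~1.2]{Dugger-Hollander-Isaksen_hypercovers-and-simplicial} for the $\mc{V} = \spc{}$ case of (1)\,$\Leftrightarrow$\,(3); that theorem is precisely the nontrivial statement that, over a site with terminal object, checking basepoints from global sections suffices. You should cite it rather than attempt to reprove it.
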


\begin{proof}
For $\mc{V} = \spc{}$, the equivalence of \varnref{sheaves.4.1} and \varnref{sheaves.4.3} follows from \cite[Theorem~1.2]{Dugger-Hollander-Isaksen_hypercovers-and-simplicial}.

For arbitrary $\mc{V}$, the equivalence of \varnref{sheaves.4.1} and \varnref{sheaves.4.2} follows from the special case in which $\mc{V} = \spc{}$, combined with the fact that the formation homotopy sheaves of sets commutes with $\tau$-sheafification up to natural equivalence.
Indeed, the homotopy sheaves of sets of an object of $\hypsh[dm = {\tau}]{\mc{C}}{\spc{}}$ are, by definition, the homotopy sheaves of the underlying object of $\psh{\mc{C}}{\spc{}}$.
The special case in which $\mc{V} = \spc{}$ results from the equivalence of \varnref{sheaves.4.1} and \varnref{sheaves.4.3} for $\mc{V} = \spc{}$, which we have already mentioned, and the equivalence of \varnref{sheaves.4.2} and \varnref{sheaves.4.3}, which we will now establish for general $\mc{V}$.

Let $V \in \mc{V}_{\aleph_0}$.
This object is classified by a $0$-simplex $\chi_V: \Delta^0 \to \mc{V}$.
Since $\mc{V}$ is cocomplete, $\chi_V$ factors through an essentially unique cocontinuous functor $\overline{\chi}_V: \spc{} \simeq \psh{\Delta^0}{\spc{}} \to \mc{V}$. 
The object $\Delta^0 \in \spc{}$ generates the full \subqcategory\ $\spc{}_{\aleph_0} \subseteq \spc{}$ under iterated finite colimits and retracts, so $\overline{\chi}_V$ restricts to a right-exact functor $\tilde{\chi}_V: \spc{}_{\aleph_0} \to \mc{V}_{\aleph_0}$.
The opposite $\prns{\tilde{\chi}_V}\op: \prns{\spc{}_{\aleph_0}}\op \to \prns{\mc{V}_{\aleph_0}}\op$ is left exact.

We have an essentially commutative square
\begin{equation}
\nlabel{sheaves.4.a}
\begin{tikzcd}
\fun[u = {lex}]{\prns{\mc{V}_{\aleph_0}}\op}{\psh{\mc{C}}{\spc{}}}
\ar[r]
\ar[d]
&
\fun[u = {lex}]{\prns{\spc{}_{\aleph_0}}\op}{\psh{\mc{C}}{\spc{}}}
\ar[d]
\\
\fun[u = {lex}]{\prns{\mc{V}_{\aleph_0}}\op}{\hypsh[dm = {\tau}]{\mc{C}}{\spc{}}}
\ar[r]
&
\fun[u = {lex}]{\prns{\spc{}_{\aleph_0}}\op}{\hypsh[dm = {\tau}]{\mc{C}}{\spc{}}}
\end{tikzcd}
\end{equation}
in which the vertical arrows are given by composition with the left-exact localization $\sheaf{\tau}{}$, and the horizontal arrows by composition with $\prns{\tilde{\chi}_V}\op$.

By \nref{sheaves.3.3}, the upper and lower horizontal arrows of \nref{sheaves.4.a} correspond to the functors
\begin{align*}
F &\mapsto \map[dm = {\mc{V}}]{V}{\fct{F}{-}}:
  \psh{\mc{C}}{\mc{V}} \to \psh{\mc{C}}{\spc{}} \\
\text{and}\quad
F &\mapsto \map[dm = {\mc{V}}]{V}{\fct{F}{-}}:
  \hypsh[dm = {\tau}]{\mc{C}}{\mc{V}} \to \hypsh[dm = {\tau}]{\mc{C}}{\spc{}},
\end{align*}
respectively, under the equivalences of \nref{sheaves.3.2}.
Indeed, the right adjoint of $\overline{\chi}_V: \spc{} \to \mc{V}$ is $\map[dm = {\mc{V}}]{V}{-}$.

The commutativity of \nref{sheaves.4.a} therefore implies that, for each $V \in \mc{V}_{\aleph_0}$, the morphism $\sheaf{\tau}{\map[dm = {\mc{V}}]{V}{f}}$ is an equivalence in $\hypsh[dm = {\tau}]{\mc{C}}{\spc{}}$ if and only if $\map[dm = {\mc{V}}]{V}{\sheaf{\tau}{f}}$ is an equivalence. 
The family of functors $\map[dm = {\mc{V}}]{V}{-}$ indexed by $V \in \mc{V}_{\aleph_0}$ is conservative, so $\map[dm = {\mc{V}}]{V}{\sheaf{\tau}{f}}$ is an equivalence for each $V \in \mc{V}_{\aleph_0}$ if and only if $\sheaf{\tau}{f}: \sheaf{\tau}{F} \to \sheaf{\tau}{F'}$ is an equivalence of $\hypsh[dm = {\tau}]{\mc{C}}{\mc{V}}$, i.e., if and only if $f$ is a $\tau$-local equivalence. 
\end{proof}

\begin{rmk}
\label{sheaves.6}
Let $\mb{M}$ be a combinatorial model category that satisfies the conditions of \cite[4.4.23]{Ayoub_six-operationsII}, and whose underlying \qcategory\ $\mc{M}$ is \locpres[\aleph_0].
It follows from \nref{sheaves.4} that the $\tau$-local projective \resp{injective} model structure on $\psh{\mc{C}}{\mb{M}}$ defined in \cite[4.4.34]{Ayoub_six-operationsII}, whose weak equivalences are the morphisms satisfying \nref{sheaves.4.1}, is the left Bousfield localization of the projective \resp{injective} model structure with respect to $\mf{W}_{\tau}$.
Its underlying \qcategory\ is therefore canonically equivalent to $\hypsh[dm = {\tau}]{\mc{C}}{\mc{M}}$.

In particular, \nref{sheaves.4} generalizes \cite[Theorem~5.11]{Choudhury-Gallauer_homotopy-theory}, which covers the special case in which $\mb{M}$ denotes the category of unbounded cochain complexes of modules over a commutative, unital ring equipped with the projective model structure.
\end{rmk}

\begin{prop}
\nlabel{sheaves.7}
Let $\prns{\mc{C}, \tau}$ be a Verdier site \tu{(\cite[Definition~9.1]{Dugger-Hollander-Isaksen_hypercovers-and-simplicial})} satisfying Conditions $(1)$, $(2)$, and $(3)$ of \tu{\cite[\S10]{Dugger-Hollander-Isaksen_hypercovers-and-simplicial}},
$\mc{V}$ a \locpres\ \qcategory, and $F: \mc{C}\op \to \mc{V}$ a functor.
The following are equivalent:
\begin{enumerate}
\item
\nlabel{sheaves.7.1}
$F$ is $\tau$-local;
\item
\nlabel{sheaves.7.2}
$F$ preserves finite products and is local with respect to internal $\tau$-hypercovers in the sense of \tu{\cite[Definition~10.1]{Dugger-Hollander-Isaksen_hypercovers-and-simplicial}}.
\end{enumerate}
\end{prop}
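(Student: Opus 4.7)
The plan is to reduce the equivalence of \nref{sheaves.7.1} and \nref{sheaves.7.2} to the case $\mc{V} = \spc{}$, for which the analogous statement is established in \cite[\S 10]{Dugger-Hollander-Isaksen_hypercovers-and-simplicial} under the same hypotheses on $\prns{\mc{C}, \tau}$.

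First I would fix a regular cardinal $\kappa$ such that $\mc{V}$ is \locpres[\kappa]\ and consider the jointly conservative, continuous family of mapping-space functors $\map[dm = {\mc{V}}]{V}{-}: \mc{V} \to \spc{}$ indexed by $V \in \mc{V}_{\kappa}$. The reduction rests on showing that each of \nref{sheaves.7.1} and \nref{sheaves.7.2} for $F: \mc{C}\op \to \mc{V}$ is equivalent to the corresponding condition for the presheaf of spaces $\map[dm = {\mc{V}}]{V}{\fct{F}{-}}: \mc{C}\op \to \spc{}$ for every $V \in \mc{V}_{\kappa}$.

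For \nref{sheaves.7.1}, this translation is the content of \nref{sheaves.4} (after the routine replacement of $\aleph_0$ by $\kappa$ throughout its proof): $F$ is $\tau$-local if and only if $\map[dm = {\mc{V}}]{V}{\fct{F}{-}}$ is $\tau$-local for every $V \in \mc{V}_{\kappa}$. For \nref{sheaves.7.2}, joint conservativity of the family $\brc{\map[dm = {\mc{V}}]{V}{-}}_{V \in \mc{V}_{\kappa}}$ together with continuity of each member immediately yields that $F$ preserves finite products if and only if each $\map[dm = {\mc{V}}]{V}{\fct{F}{-}}$ does, and likewise that $F$ is local with respect to a given internal $\tau$-hypercover $U_\bullet \to X$ in the sense of \cite[Definition~10.1]{Dugger-Hollander-Isaksen_hypercovers-and-simplicial} if and only if the canonical maps
\[
\map[dm = {\mc{V}}]{V}{\fct{F}{X}}
  \to \lim_{\brk{n} \in \Delta} \map[dm = {\mc{V}}]{V}{\fct{F}{U_n}}
\]
are equivalences in $\spc{}$ for every $V \in \mc{V}_{\kappa}$. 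Applying the case $\mc{V} = \spc{}$ pointwise in $V$ then closes the loop.

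The only minor technicality is the $\kappa$-presentable variant of \nref{sheaves.4}. Inspecting its proof, the hypothesis of $\aleph_0$-presentability enters only via the equivalence $\ind{\mc{V}_{\aleph_0}} \simeq \mc{V}$ and the analogous equivalences used in \nref{sheaves.3.2}, both of which have straightforward $\kappa$-presentable counterparts; hence the argument goes through verbatim. I do not expect this step to present any genuine obstacle.
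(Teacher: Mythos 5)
Your proposal is correct, but it takes a genuinely different route from the paper. The paper does not reduce pointwise along a conservative family of corepresentables. Instead, it invokes the characterization of $\tau$-locality in terms of the right Kan extension $\overline{F}$ factoring through $\prns{\loc[dm={\tau}]{}}\op$ (cf.\ \nref{sheaves.1.2.b}), together with the fact from \cite[Theorem~10.2]{Dugger-Hollander-Isaksen_hypercovers-and-simplicial} that the localization of $\psh{\mc{C}}{\spc{}}$ at $\mf{W}_\tau$ coincides with the localization at the union of the class $\mf{W}_{\tau,\tu{int}}$ of internal $\tau$-hypercovers and the class $\mf{W}_{\amalg}$ of finite-coproduct morphisms. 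Since $\overline{F}$ preserves limits, it factors through the localization if and only if it inverts $\mf{W}_{\tau,\tu{int}}$ and $\mf{W}_{\amalg}$, which is precisely condition \nref{sheaves.7.2}. Your argument instead fixes $\kappa$ with $\mc{V}$ \locpres[\kappa], and tests each of the two conditions against the jointly conservative, limit-preserving family $\brc{\map[dm={\mc{V}}]{V}{-}}_{V \in \mc{V}_\kappa}$, reducing both to the case $\mc{V}=\spc{}$. This works, and it has the virtue of being concrete; the paper's argument has the virtue of requiring no choice of $\kappa$ and flowing directly from the universal property of the localization.

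One small caveat on your citation of \nref{sheaves.4}: that proposition characterizes when a \emph{morphism} of presheaves is a $\tau$-local \emph{equivalence}, not when an \emph{object} is $\tau$-local, so it does not literally contain the statement you need (``$F$ is $\tau$-local iff $\map[dm={\mc{V}}]{V}{\fct{F}{-}}$ is $\tau$-local for all $V$''). The statement you want is nevertheless true and follows from the same circle of ideas: $F$ being $p$-local (for a $\tau$-hypercover $p$) is the condition that $\overline{F}\prns{p_+}\op$ be a limit diagram, and since $\map[dm={\mc{V}}]{V}{-}$ commutes with limits and with right Kan extension along the Yoneda embedding, this holds if and only if $\overline{\map[dm={\mc{V}}]{V}{\fct{F}{-}}}\prns{p_+}\op$ is a limit diagram for every $V \in \mc{V}_\kappa$. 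So you should either supply that short argument directly or cite the relevant part of \nref{sheaves.3} (e.g., \nref{sheaves.3.2} and \nref{sheaves.3.3}, which establish that $\phi_* = \map[dm={\mc{V}}]{V}{\prns{-}}$ respects the hypersheaf \subqcategories) rather than \nref{sheaves.4}.
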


\begin{proof}
If $\mc{V} = \spc{}$, then this is found in \cite[Theorem~10.2]{Dugger-Hollander-Isaksen_hypercovers-and-simplicial},
where it is proved that a simplicial presheaf is $\tau$-local if and only if it sends the following classes of morphisms to equivalences in $\spc{}$:
the class $\mf{W}_{\tau,\tu{int}}$ consisting of the morphisms of the form 
\[
\colim_{\brk{n} \in \Delta\op} \yon{S}{p_n}:
  \colim_{\brk{n} \in \Delta\op} \yon{S}{U_n} 
    \to \colim_{\brk{n} \in \Delta\op} \yon{S}{X} = \yon{S}{X};
\]
for each internal $\tau$-hypercover $p_{\bullet}: U_{\bullet} \to X$; and
the class $\mf{W}_{\amalg}$ consisting of the canonical morphisms of the form $\coprod_{\alpha = 1}^r \yon{S}{X_{\alpha}} \to \yon{S}{X}$ for each $X = \coprod_{\alpha = 1}^r X_{\alpha}$ in $\mc{C}$ with $r \in \mb{Z}_{\geq 0}$.

The general case follows from the previous case:
the right Kan extension $\overline{F}: \psh{\mc{C}}{\spc{}}\op \to \mc{V}$ of $F$ along $\prns{\yon{\mc{C}}{}}\op$ factors through $\prns{\loc[dm = {\tau}]{}}\op$ if and only if it sends the morphisms of $\mf{W}_{\tau,\tu{int}}$ and $\mf{W}_{\amalg}$ to equivalences.
\end{proof}

\begin{rmk}
In particular, if $S$ is a Noetherian scheme of finite dimension and $\tau$ denotes the Zariski, Nisnevich, or \'etale topology on $\sm[ft]{S}$ or $\sch[ft]{S}$, then \nref{sheaves.7} applies to $\mc{V}$-valued presheaves on the Grothendieck site $\prns{\sm[ft]{S}, \tau}$ as remarked in \cite[p.~41]{Dugger-Hollander-Isaksen_hypercovers-and-simplicial}.
\end{rmk}

\begin{defn}
\nlabel{SH.10}
We say that a symmetric monoidal \qcategory\ $\mc{V}^{\otimes}$ is: 
\begin{enumerate}
\item
\nlabel{SH.10.1}
\emph{rigid} if each object of $\mc{V}$ is $\otimes$-dualizable (\cite[4.6.1.12]{Lurie_higher-algebra}); and
\item
\nlabel{SH.10.2}
\emph{ind-rigid} if $\mc{V}^{\otimes}$ is \locpres[\aleph_0] and the $\aleph_0$-presentable objects of $\mc{V}$ are precisely the $\otimes$-dualizable ones.
\end{enumerate}
\end{defn}

\begin{ex}
\nlabel{SH.10.ex}
The following examples of ind-rigidity will play an essential role in the sequel.
\begin{enumerate}
\item
\nlabel{SH.10.ex.1}
The symmetric monoidal \locpres\ \qcategory\ $\spt[um = {\wedge}]{}$ of $\sphere^1$-spectra in the \qcategory\ $\spc{}$ of spaces is ind-rigid.
\item
\nlabel{SH.10.ex.2}
The category $\ind{\mhsp[\rational]}$ of ind-objects of the Abelian category of polarizable mixed $\rational$-Hodge structures is Grothendieck Abelian.
Let $\D{\ind{\mhsp[\rational]}}$ denote its unbounded derived \qcategory, i.e., the localization of the category of unbounded complexes in $\ind{\mhsp[\rational]}$ with respect to the class of quasi-isomorphisms.
By \cite[4.7]{Drew_verdier-quotients}, $\D{\ind{\mhsp[\rational]}}$ underlies an ind-rigid symmetric monoidal \qcategory\ with the tensor product inherited from $\mhspmon[\rational]]$.
\item
By a similar argument, for each field $\KK$, the symmetric monoidal derived \qcategory\ $\D{\mod[dm = {\KK}]{}}^{\otimes}$ of unbounded cochain complexes of $\KK$-modules is also ind-rigid.
\end{enumerate}
\end{ex}

\begin{rmk}
\nlabel{rep.2}
If the symmetric monoidal \qcategory\ $\mc{V}^{\otimes}$ is rigid, then, as observed in \cite[4.6]{Drew_rectification-of-Deligne's}, the $\otimes$-duality involution $\prns{-}^{\vee} \coloneqq \intmor{-}{\1{\mc{V}}}: \mc{V}\op \to \mc{V}$ underlies a symmetric monoidal equivalence $\prns{-}^{\vee, \otimes}: \mc{V}\op[\otimes] \isom \mc{V}^{\otimes}$, where $\mc{V}\op[\otimes]$ is the symmetric monoidal structure of \tu{\cite[2.4.2.7]{Lurie_higher-algebra}}.
\end{rmk}

\begin{prop}
\nlabel{prop:descent}
Consider the following data:
\begin{itemize}
\item
$\prns{\mc{C}, \tau}$, a Verdier site satisfying Conditions $(1)$, $(2)$, and $(3)$ of \tu{\cite[\S10]{Dugger-Hollander-Isaksen_hypercovers-and-simplicial}};
\item
$\mc{V}^{\otimes}$, a small rigid symmetric monoidal \qcategory;
\item
$F: \mc{C}\op \to \mc{V}$, a functor;
\item
$\check{F}: \mc{C} \to \mc{V}$, the composite of $F\op$ with the equivalence $\prns{-}^{\vee}: \mc{V}\op \isom \mc{V}$ of \nref{rep.2};
\item
$\iota: \mc{V} \hookrightarrow \ind{\mc{V}}$ the Yoneda embedding; and
\item
$p: U_{\bullet} \to X$, an internal $\tau$-hypercover \tu{(\cite[Definition~10.1]{Dugger-Hollander-Isaksen_hypercovers-and-simplicial})}, corresponding to an augmented simplicial diagram $p_+: \prns{\Delta_+}\op \to \mc{C}$.
\end{itemize}
The following are equivalent:
\begin{enumerate}
\item
\nlabel{prop:descent.1}
$F\prns{p_+}\op: \Delta_+ \to \mc{V}$ is a limit diagram;
\item
\nlabel{prop:descent.2}
$\check{F}p_+: \prns{\Delta_+}\op \to \mc{V}$ is a colimit diagram;
\item
\nlabel{prop:descent.3}
$\iota F: \mc{C}\op \to \ind{\mc{V}}$ is $p$-local.
\end{enumerate}
Moreover, if $\omega: \ind{\mc{V}} \to \mc{W}$ is a conservative functor that preserves colimits indexed by $\Delta$, and if $\omega \iota \check{F}p_+: \prns{\Delta_+}\op \to \mc{W}$ is a colimit diagram, then $F\prns{p_+}\op$ is a limit diagram.
\end{prop}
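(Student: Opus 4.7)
The plan is to establish $(1) \Leftrightarrow (2)$ directly from the $\otimes$-duality equivalence $\prns{-}^{\vee}: \mc{V}\op \isom \mc{V}$ of \nref{rep.2}, and $(1) \Leftrightarrow (3)$ by a mapping-space argument using that $\iota$ identifies $\mc{V}$ with a family of compact generators of $\ind{\mc{V}}$. The final assertion then follows from $(1) \Leftrightarrow (2)$ by transporting the conservativity-plus-colimit-preservation of $\omega$ into $\ind{\mc{V}}$ and descending to $\mc{V}$ via full faithfulness of $\iota$.

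For $(1) \Leftrightarrow (2)$: a diagram $D: \Delta_+ \to \mc{V}$ is a limit diagram if and only if $D\op: \prns{\Delta_+}\op \to \mc{V}\op$ is a colimit diagram, if and only if $\prns{-}^{\vee} \circ D\op: \prns{\Delta_+}\op \to \mc{V}$ is a colimit diagram, the last equivalence because $\prns{-}^{\vee}$ is an equivalence of \qcategories. Specializing to $D = F\prns{p_+}\op$ and unwinding definitions gives $\prns{-}^{\vee} \circ \prns{F\prns{p_+}\op}\op = \prns{-}^{\vee} \circ F\op \circ p_+ = \check{F} p_+$, the desired identification.

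For $(1) \Leftrightarrow (3)$: unpacking \nref{sheaves.1} for the hypercover $\yon{\mc{C}}{p}$, the functor $\iota F$ is $p$-local if and only if the natural morphism $\iota\fct{F}{X} \to \lim_{\brk{n} \in \Delta} \iota\fct{F}{U_n}$ in $\ind{\mc{V}}$ is an equivalence, this limit existing since $\ind{\mc{V}}$ is presentable. Because the compact objects $\brc{\iota V : V \in \mc{V}}$ generate $\ind{\mc{V}}$, that morphism is an equivalence if and only if it induces equivalences on $\map[dm = {\ind{\mc{V}}}]{\iota V}{-}$ for each $V \in \mc{V}$; by full faithfulness of $\iota$ and the fact that mapping out commutes with limits, this in turn reduces to requiring that $\map[dm = {\mc{V}}]{V}{\fct{F}{X}} \to \lim_n \map[dm = {\mc{V}}]{V}{\fct{F}{U_n}}$ be an equivalence for each $V$, which is exactly the condition that $F\prns{p_+}\op$ be a limit diagram in $\mc{V}$.

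For the last assertion, by $(1) \Leftrightarrow (2)$ it suffices to exhibit $\check{F} p_+$ as a colimit diagram in $\mc{V}$. Let $\psi: \colim_{\brk{n} \in \Delta\op} \iota\check{F}\prns{U_n} \to \iota\check{F}\prns{X}$ denote the canonical morphism in the presentable \qcategory\ $\ind{\mc{V}}$. The hypotheses on $\omega$ imply that $\omega\prns{\psi}$ is identified with the canonical comparison map $\colim_n \omega\iota\check{F}\prns{U_n} \to \omega\iota\check{F}\prns{X}$, which is an equivalence by assumption; conservativity of $\omega$ then forces $\psi$ itself to be an equivalence, so $\iota\check{F} p_+$ is a colimit diagram in $\ind{\mc{V}}$. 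Restricting its universal property to test objects of the form $\iota V$ for $V \in \mc{V}$ and invoking full faithfulness of $\iota$ produces the desired colimit diagram in $\mc{V}$. The main subtlety throughout is the careful bookkeeping of opposite-category conventions and the distinction between colimit diagrams in $\mc{V}$ versus in $\ind{\mc{V}}$, both resolved by full faithfulness of $\iota$ and compact generation.
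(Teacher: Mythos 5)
Your proof is correct and follows essentially the same route as the paper: $(1)\Leftrightarrow(2)$ via the duality anti-equivalence, $(1)\Leftrightarrow(3)$ via a Yoneda-flavored argument, and the final assertion via conservativity plus full faithfulness. The only cosmetic difference is in $(1)\Leftrightarrow(3)$: where the paper passes through the ambient presheaf category $\psh{\mc{V}}{\spc{}}$ and invokes \cite[5.1.3.2, 1.2.13.7]{Lurie_higher-topos}, you unpack the same content directly as a mapping-space computation against the compact generators $\iota V$, $V\in\mc{V}$ — a concrete unfolding of what the paper's citations encapsulate, relying (as the paper also implicitly does) on $\ind{\mc{V}}$ being presentable so that the relevant cosimplicial limit exists.
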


\begin{proof}
Equivalence of \nref[Conditions]{prop:descent.1} and \varnref{prop:descent.2} follows from the remark that the anti-equivalence $\prns{-}^{\vee}$ exchanges colimit diagrams with limit diagrams.

Suppose \nref{prop:descent.3} is satisfied.
The coaugmented cosimplicial object $\iota F \prns{p_+}\op: \Delta_+ \to \ind{\mc{V}}$ factors through $\iota$, so if it is a limit diagram in $\ind{\mc{V}}$, then it is \emph{a fortiori} a limit diagram in $\mc{V}$ by \cite[1.2.13.7]{Lurie_higher-topos}, and \varnref{prop:descent.1} follows.
Conversely, suppose \nref{prop:descent.1} is satisfied.
By definition of $\ind{\mc{V}}$, there is an essentially commutative triangle of fully faithful functors
\[
\begin{tikzcd}
\mc{V}
\ar[r, "\iota" below]
\ar[rr,%
  "\yon{\mc{V}}{}" above,%
  start anchor = north east,%
  end anchor = north west,%
  bend left = 20pt]
&
\ind{\mc{V}}
\ar[r, "i" below]
&
\psh{\mc{V}}{\spc{}}.
\end{tikzcd}
\]
As $\yon{\mc{V}}{}$ preserves small limits that exist in $\mc{V}$ by \cite[5.1.3.2]{Lurie_higher-topos},
$\yon{\mc{V}}{}F\prns{p_+}\op \simeq i \iota F \prns{p_+}\op$ is a limit diagram in $\psh{\mc{V}}{\spc{}}$. 
Full faithfulness of $i$ then implies that $\iota F\prns{p_+}\op$ is a limit diagram in $\ind{\mc{V}}$.

Consider the last assertion.
By the equivalence of \nref[Conditions]{prop:descent.1} and \varnref{prop:descent.2}, it suffices to show that $\check{F}p_+$ is a colimit diagram.
Since $\ind{\mc{V}}$ is cocomplete, and $\omega$ is conservative and preserves $\Delta$-indexed colimits by hypothesis, we find that $\omega \iota \check{F} p_+$ is a colimit diagram if and only if $\iota \check{F} p_+$ is.
The claim now follows from the remark that if $\iota \check{F} p_+$ is a colimit diagram, then $\check{F} p_+$ is too (\cite[1.2.13.7]{Lurie_higher-topos}).
\end{proof}

\begin{defn}
\nlabel{sheaves.13}
The construction of a six-functor formalism for mixed Hodge theory introduced in \nref{ex:DH-coefficient-syst} below begins with the following presheaves.
\begin{enumerate}
\item
\nlabel{sheaves.13a}
A simplified version of \cite[3.6]{Drew_rectification-of-Deligne's} provides a symmetric monoidal functor $\bettimon{}: \prns{\sm[ft]{\complex}}\op[\amalg] \to \D{\mod[dm = {\rational}]{}}^{\otimes}$ described informally by assigning to each object $X$ a cochain complex equivalent to the complex of $\rational$-linear singular cochains on $X^{\tu{an}}$. 
That this functor is symmetric monoidal amounts to the K\"unneth formula for rational Betti cohomology.
\item
\nlabel{sheaves.13b}
With $\D{\ind{\mhsp[\rational]}}^{\otimes}$ as in \nref{SH.10.ex.2},
there is a conservative, cocontinuous symmetric monoidal functor 
\[
\omega^{*, \otimes}:
  \D{\ind{\mhsp[\rational]}}^{\otimes}
    \to \D{\mod[dm = {\rational}]{}}^{\otimes}
\]
that assigns to a complex of ind-objects of $\mhsp[\rational]$ the complex of underlying $\rational$-modules by \cite[1.6]{Drew_rectification-of-Deligne's}.
It follows from \cite[2.5, 2.7, 3.6]{Drew_rectification-of-Deligne's} that there exists a presheaf
\[
\enhancedbettimon{}:
  \prns{\sm[ft]{\complex}}\op[\amalg] \to \D{\ind{\mhsp[\rational]}}^{\otimes}
\]
assigning to each object $X$ a complex of ind-objects of $\mhsp[\rational]$ with the following properties:
\begin{enumerate}
\item
the composite $\omega^{*, \otimes}\enhancedbettimon{}: \prns{\sm[ft]{\complex}}\op[\amalg] \to \D{\mod[dm = {\rational}]{}}^{\otimes}$ is naturally equivalent to $\bettimon{}$; and
\item
for each $r \in \integer$, the cohomology object $\h^r\enhancedbetti{X}$ is canonically isomorphic to the Betti cohomology $\operator[um = {r}, d = {B}]{H}{X, \rational}$ equipped with Deligne's mixed Hodge structure.
\end{enumerate}
\end{enumerate}
\end{defn}

\begin{prop}
\nlabel{sheaves.14}
The functors $\betti{}$ and $\enhancedbetti{}$ of \nref{sheaves.13} are \'etale local.
\end{prop}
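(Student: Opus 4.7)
The plan is to apply \nref{prop:descent} twice, reducing the \'etale descent of $\enhancedbetti{}$ to that of $\betti{}$, and then the \'etale descent of $\betti{}$ to classical cohomological descent for singular cochains in the analytic topology.

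First, I would observe that both functors factor through small rigid symmetric monoidal \subqcategories. For each $X \in \sm[ft]{\complex}$, the analytification $X^{\tu{an}}$ has the homotopy type of a finite $\tu{CW}$ complex, so $\betti{X}$ is a perfect complex of $\rational$-modules and hence $\otimes$-dualizable in $\D{\mod[dm = {\rational}]{}}$. Similarly, by \nref{sheaves.13b}, the cohomology objects of $\enhancedbetti{X}$ are the Betti cohomology groups of $X$ equipped with Deligne's polarizable mixed Hodge structure, each finite-dimensional, and only finitely many are nonzero; hence $\enhancedbetti{X}$ is $\otimes$-dualizable in $\D{\ind{\mhsp[\rational]}}$, which is ind-rigid by \nref{SH.10.ex.2}. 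After restricting targets to the respective rigid \subqcategories, both $\betti{}$ and $\enhancedbetti{}$ take values in small rigid symmetric monoidal \qcategories, as required by \nref{prop:descent}. Since $\prns{\sm[ft]{\complex}, \tu{\'et}}$ is a Verdier site satisfying the conditions of \nref{sheaves.7} (see the remark following that proposition), and both functors evidently preserve finite products, \nref{sheaves.7} reduces \'etale locality to $p$-locality for each internal \'etale hypercover $p: U_{\bullet} \to X$.

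To handle $\enhancedbetti{}$, I would invoke the final assertion of \nref{prop:descent} with $\omega = \omega^{*}: \D{\ind{\mhsp[\rational]}} \to \D{\mod[dm = {\rational}]{}}$ the conservative, cocontinuous symmetric monoidal functor of \nref{sheaves.13b}. This $\omega^{*}$ preserves all colimits, in particular $\Delta$-indexed ones, and satisfies $\omega^{*}\enhancedbetti{} \simeq \betti{}$. Hence the colimit condition in \nref{prop:descent} for $\enhancedbetti{}$ at an internal \'etale hypercover $p$ coincides with the corresponding condition for $\betti{}$, and \'etale hyperdescent for $\enhancedbetti{}$ follows formally from that for $\betti{}$.

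It remains to establish \'etale hyperdescent for $\betti{}$. Any \'etale morphism between smooth, finite-type $\complex$-schemes analytifies to a local biholomorphism of complex manifolds, so an internal \'etale hypercover $p: U_{\bullet} \to X$ gives rise, after analytification, to a hypercover $U_{\bullet}^{\tu{an}} \to X^{\tu{an}}$ in the classical topology on $X^{\tu{an}}$. Since the rational singular cochain functor on topological spaces satisfies cohomological descent along such hypercovers, the canonical morphism $\betti{X} \to \lim_{\brk{n} \in \Delta} \betti{U_n}$ is an equivalence in $\D{\mod[dm = {\rational}]{}}$, which is the remaining condition of \nref{prop:descent}. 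The main obstacle is precisely this last step: formulating classical cohomological descent of singular cochains along analytic hypercovers in the \qcategorical\ language matching the limit condition of \nref{prop:descent}, and verifying that analytification carries internal \'etale hypercovers of smooth $\complex$-schemes to bona fide hypercovers in the analytic topology.
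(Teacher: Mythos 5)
Your treatment of $\enhancedbetti{}$ matches the paper's: you reduce to $\betti{}$ via the final assertion of \nref{prop:descent}, using the conservative, cocontinuous fiber functor $\omega^*$, and you use \nref{sheaves.7} to pass from internal $\tau$-hypercovers to all $\tau$-hypercovers. You also correctly observe that \nref{prop:descent} needs the functors to land in a \emph{small rigid} symmetric monoidal \qcategory; the paper verifies the factorization of $\enhancedbetti{}$ through $\D[u = {b}]{\mhsp[\rational]}$ by citing \cite[4.6]{Drew_verdier-quotients}, which is the precise justification for the dualizability you assert informally via finite-dimensionality of the mixed Hodge structures.

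The step you flag as ``the main obstacle'' for $\betti{}$ is real, and it is exactly the one point where the paper reaches for an external input: \cite[Theorem~5.2]{Dugger-Isaksen_topological-hypercovers}. That theorem shows that the analytification functor $X \mapsto X^{\tu{an}}$ on $\sm[ft]{\complex}$ factors through the localization $\spc[u = {\'et}]{\spec{\complex}}$; in particular, the left Kan extension of analytification to presheaves carries \'etale hypercovers (and indeed $\affine^1$-projections) to equivalences. Composing with the singular-cochain functor then exhibits $\betti{}$ as factoring through the \'etale localization, which is exactly the hyperdescent statement. This is a cleaner and more robust way of packaging what you describe informally as ``analytification of an internal \'etale hypercover gives an analytic hypercover, and singular cochains satisfy cohomological descent'': rather than matching hypercovers across the two sites and then invoking classical cohomological descent separately, Dugger--Isaksen's result shows directly that the \'etale-local equivalences are sent to equivalences in $\spc{}$. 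If you add that citation in place of your last paragraph, your proof is complete and essentially identical in structure to the paper's.
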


\begin{proof}
Consider the assertion for $\betti{}$.
By \cite[Theorem~5.2]{Dugger-Isaksen_topological-hypercovers}, the functor $X \mapsto X^{\tu{an}}: \sm[ft]{\complex} \to \spc{}$ factors through a cocontinuous functor $\spc[u = {\'et}]{\spec{\complex}} \to \spt{}$.
The claim now follows readily.

Consider the assertion for $\enhancedbetti{}$.
First, we claim that $\enhancedbetti{}$ is local with respect to internal \'etale hypercovers.
By the description given in \nref{sheaves.13b}, for each $X \in \sm[ft]{\complex}$, the cohomology of the complex $\enhancedbetti{X}$ is bounded and belongs to the full subcategory $\mhsp[\rational] \subseteq \ind{\mhsp[\rational]}$.
By \cite[4.6]{Drew_verdier-quotients}, $\enhancedbetti{}$ therefore factors through the full \subqcategory\ $\D[u = {b}]{\mhsp[\rational]} \hookrightarrow \D{\ind{\mhsp[\rational]}}$ spanned by the $\aleph_0$-presentable objects.
By \nref{prop:descent}, it suffices to remark that composite $\omega^* \enhancedbetti{} \simeq \betti{}$ is local with respect to \'etale hypercovers.

That $\enhancedbetti{}$ is local with respect to all \'etale hypercovers now follows from the case of internal \'etale hypercovers, the fact that $\enhancedbetti{}$ preserves finite products, and \nref{sheaves.7}.
\end{proof}

\begin{defn}
\nlabel{sheaves.11a}
Let $\prns{\mc{C}, \tau}$ be a Verdier site satisfying Conditions $(1)$, $(2)$, and $(3)$ of \tu{\cite[\S10]{Dugger-Hollander-Isaksen_hypercovers-and-simplicial}}, and
$\mc{V}$ a \locpres[\aleph_0]\ symmetric monoidal \qcategory.
We say that $\mc{V}$ is \emph{$\prns{\mc{C},\tau}$-finite} if:
\begin{enumerate}
\item
\nlabel{sheaves.11a.1}
the \qcategory\ $\hypsh[dm = {\tau}]{\mc{C}}{\mc{V}}$ is \locpres[\aleph_0]; and
\item
\nlabel{sheaves.11a.2}
under the equivalence $\hypsh[dm = {\tau}]{\mc{C}}{\spc{}} \otimes \mc{V} \simeq \hypsh[dm = {\tau}]{\mc{C}}{\mc{V}}$ of \nref{sheaves.3.1},
the object $V \odot \loc[dm = {\tau}]{\yon[um = {\tau}]{\mc{C}}{X}}$ is $\aleph_0$-presentable for each $X \in \mc{C}$ and each $V \in \mc{V}_{\aleph_0}$.
\end{enumerate}
\end{defn}

\begin{prop}
\nlabel{sheaves.12}
\nlabel{SH.1.A}
If $S$ is a Noetherian scheme of finite dimension and $\mc{V}$ is a \locpres[\aleph_0]\ \qcategory, then:
\begin{enumerate}
\item
\nlabel{sheaves.12.1}
$\mc{V}$ is $\prns{\sm[ft]{S}, \tu{Nis}}$-finite;
\item
\nlabel{sheaves.12.2}
if $\mc{V}$ is $\rational$-linear, then $\mc{V}$ is $\prns{\sm[ft]{S}, \tu{\'et}}$-finite.
\end{enumerate}
\end{prop}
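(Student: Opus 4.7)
The plan is to reduce both assertions to their analogues for $\mc{V} = \spc{}$ using the tensor-product description of $\hypsh[dm = {\tau}]{\sm[ft]{S}}{\mc{V}}$, and then to invoke the finite cohomological dimension of the Nisnevich topology on a Noetherian scheme of finite Krull dimension (respectively, of the $\rational$-linear \'etale topology).

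First, by \nref{sheaves.3.2} there is an equivalence
\[
\hypsh[dm = {\tau}]{\sm[ft]{S}}{\mc{V}}
  \simeq \fun[u = {lex}]{\prns{\mc{V}_{\aleph_0}}\op}{\hypsh[dm = {\tau}]{\sm[ft]{S}}{\spc{}}}.
\]
Granting that $\hypsh[dm = {\tau}]{\sm[ft]{S}}{\spc{}}$ is itself \locpres[\aleph_0], standard facts about ind-completions and left-exact functor \qcategories\ \cite{Lurie_higher-topos, Lurie_higher-algebra} yield that the right-hand side is \locpres[\aleph_0], with its $\aleph_0$-presentable objects generated under finite colimits and retracts by tensor products $V \odot F$ for $V \in \mc{V}_{\aleph_0}$ and $F \in \prns{\hypsh[dm = {\tau}]{\sm[ft]{S}}{\spc{}}}_{\aleph_0}$. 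Both clauses of \nref{sheaves.11a} therefore reduce to establishing, in the case $\mc{V} = \spc{}$, that $\hypsh[dm = {\tau}]{\sm[ft]{S}}{\spc{}}$ is \locpres[\aleph_0]\ and that each $\loc[dm = {\tau}]{\yon{\sm[ft]{S}}{X}}$ is $\aleph_0$-presentable therein.

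Second, both of these follow once one shows that $\loc[dm = {\tau}]{}: \psh{\sm[ft]{S}}{\spc{}} \to \hypsh[dm = {\tau}]{\sm[ft]{S}}{\spc{}}$ commutes with $\aleph_0$-filtered colimits, equivalently, that the fully faithful inclusion $\presheaf{\tau}{}$ is $\aleph_0$-accessible. Indeed, the representables $\yon{\sm[ft]{S}}{X}$ are $\aleph_0$-presentable in the presheaf \qcategory, so an $\aleph_0$-accessible $\loc[dm = {\tau}]{}$ carries them to $\aleph_0$-presentable hypersheaves, which then constitute a set of $\aleph_0$-presentable generators of $\hypsh[dm = {\tau}]{\sm[ft]{S}}{\spc{}}$.

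Third, the required accessibility follows from finite cohomological dimension. For the Nisnevich topology, the hypothesis that $S$ be Noetherian of finite Krull dimension implies that each $X \in \sm[ft]{S}$ is Noetherian of bounded Krull dimension, so the small Nisnevich $\infty$-topos on $X$ has finite homotopy dimension; by \cite{Lurie_higher-topos}, this forces hypercompleteness, so $\tau$-hypersheafification coincides with ordinary $\tau$-sheafification, which is a left Bousfield localization at a small set of maps between $\aleph_0$-presentable presheaves (the cofinal system of covers arising from Morel-Voevodsky's cd-structure) and is therefore $\aleph_0$-accessible. In the \'etale case with $\rational$-linear coefficients, a rigidity-type comparison between rational \'etale and Nisnevich cohomology of smooth schemes over a Noetherian base of finite dimension, due essentially to Suslin-Voevodsky and refined by Ayoub and Cisinski-D\'eglise, reduces the argument to the Nisnevich case. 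The principal obstacle is precisely this last reduction: without $\rational$-linearity, the \'etale $\infty$-topos on $\sm[ft]{S}$ need not have bounded cohomological dimension, so the rigidity comparison is indispensable to the argument.
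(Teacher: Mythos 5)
Your outline for part (1), the Nisnevich case, is essentially sound and close to what the paper does via the reference to Ayoub: finite Nisnevich cohomological dimension (and hence finite homotopy dimension, hypercompleteness, and preservation of $\aleph_0$-filtered colimits by hypersheafification) is the operative fact.

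For part (2), however, the middle step of your argument collapses. You write that both clauses of \nref{sheaves.11a} reduce ``to establishing, in the case $\mc{V} = \spc{}$, that $\hypsh[dm={\tau}]{\sm[ft]{S}}{\spc{}}$ is \locpres[\aleph_0]$\ldots$''. This reduction is false for $\tau = \tu{\'et}$. The $\infty$-topos $\hypsh[d={\'et}]{\sm[ft]{S}}{\spc{}}$ is \emph{not} \locpres[\aleph_0]: \'etale cohomological dimension with arbitrary (space-valued) coefficients is unbounded, so \'etale hypersheafification does not commute with $\aleph_0$-filtered colimits. And the equivalence $\hypsh[dm={\tau}]{\sm[ft]{S}}{\mc{V}} \simeq \fun[u={lex}]{\prns{\mc{V}_{\aleph_0}}\op}{\hypsh[dm={\tau}]{\sm[ft]{S}}{\spc{}}}$ of \nref{sheaves.3.2} does not let you push $\aleph_0$-presentability of the left-hand side through to a statement about the untensored target $\hypsh[d={\'et}]{\sm[ft]{S}}{\spc{}}$: the very thing that $\rational$-linearity of $\mc{V}$ buys you is that the localization along $\mf{W}_{\tu{\'et}}$ \emph{becomes} an $\aleph_0$-accessible localization once the coefficients are $\rational$-linear, and this cannot be seen at $\mc{V} = \spc{}$. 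You appear to notice the tension in your final paragraph, but the proposed repair --- a ``rigidity-type comparison'' reducing the \'etale case to the Nisnevich case --- is both vague and off target; no such reduction is needed or used.

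What the paper actually does is reduce not to $\mc{V} = \spc{}$ but to the initial $\rational$-linear case $\mc{V} = \D{\mod[dm={\rational}]{}}^{\otimes}$, via the chain of equivalences
\[
\hypsh[d={\'et}]{\sm[ft]{S}}{\mc{V}}
\simeq \hypsh[d={\'et}]{\sm[ft]{S}}{\spc{}} \otimes \mc{V}
\simeq \hypsh[d={\'et}]{\sm[ft]{S}}{\D{\mod[dm={\rational}]{}}} \otimes_{\D{\mod[dm={\rational}]{}}} \mc{V},
\]
where the relative tensor product is formed in $\pr[um={\tu{L},\otimes},dm={\aleph_0,\tu{st}}]$ and therefore stays \locpres[\aleph_0]\ once the base case is known. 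The base case is then settled by identifying $\hypsh[d={\'et}]{\sm[ft]{S}}{\D{\mod[dm={\rational}]{}}}$ with the underlying $\infty$-category of the \'etale-local model structure on presheaves of unbounded complexes of $\rational$-modules (using \nref{sheaves.4} and \nref{sheaves.6}), and then observing directly that $\rational$-linear \'etale cohomological dimension is finite over a Noetherian base of finite dimension. That last finiteness is the point you should make explicitly --- it does not come from comparing with Nisnevich cohomology, but from the vanishing of higher Galois cohomology with $\rational$-linear coefficients.
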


\begin{proof}
\nref[Claim]{sheaves.12.1} follows from the finiteness of the Nisnevich cohomological dimension as in \cite[4.5.67]{Ayoub_six-operationsII}.

For \nref[Claim]{sheaves.12.2}, it suffices to treat the case in which $\mc{V} = \D{\mod[dm = {\rational}]{}}^{\otimes}$.
Indeed, if the claim holds in that case, then by \nref{sheaves.3.1} we have equivalences
\begin{align*}
\hypsh[d = {\'et}]{\sm[ft]{S}}{\mc{V}}
&\simeq \hypsh[d = {\'et}]{\sm[ft]{S}}{\spc{}} \otimes \mc{V} \\
&\simeq \prns{\hypsh[d = {\'et}]{\sm[ft]{S}}{\spc{}} \otimes \D{\mod[dm = {\rational}]{}}} \otimes_{\D{\mod[dm = {\rational}]{}}} \mc{V} \\
&\simeq \hypsh[d = {\'et}]{\sm[ft]{S}}{\D{\mod[dm = {\rational}]{}}} \otimes_{\D{\mod[dm = {\rational}]{}}} \mc{V},
\end{align*}
where the relative tensor products over $\mod[dm = {\rational}]{}$ are computed in $\pr[um = {\tu{L}, \otimes}, dm = {\aleph_0, \tu{st}}]$.

By \nref{sheaves.4} and \nref{sheaves.6}, $\hypsh[d = {\'et}]{\sm[ft]{S}}{\mod[dm = {\rational}]{}}$ is canonically equivalent to the \qcategory\ underlying the \'etale-local model structure on the category of presheaves on $\sm[ft]{S}$ with values in unbounded complexes of $\rational$-modules.
As in \cite[3.18]{Ayoub_realisation-etale}, it therefore suffices to remark that the cohomological dimension of $\rational$-linear \'etale sheaves is finite.
\end{proof}

\section{Enriched motivic spectra}
\nlabel{SH}

\setcounter{thm}{-1}

\begin{notation}
In this section, we fix the following notation:
\begin{itemize}
\item
$S$, a Noetherian scheme of finite dimension; and
\item
$\tau$, either the Nisnevich or the \'etale topology.
\end{itemize}
\end{notation}

\begin{motivation}
By a theorem of M.~Robalo (\cite[Corollary~1.2]{Robalo_K-theory-and-the-bridge}), the $\tatesphere$-stable $\tau$-motivic homotopy category $\SH{S}$ is characterized by a universal property with respect to $\tau$-local, $\affine^1$-invariant, $\tatesphere$-stable symmetric monoidal functors $\prns{\sm[ft]{S}}^{\times} \to \mc{C}^{\otimes}$ as soon as one enhances $\SH{S}$ to a \locpres\ symmetric monoidal \qcategory\ $\spt[um = {\tau}, dm = {\tatesphere}]{S}^{\wedge}$.

More generally, for any \locpres\ symmetric monoidal \qcategory\ $\mc{V}^{\otimes}$, there should be a $\mc{V}^{\otimes}$-linear variant of $\spt[um = {\tau}, dm = {\tatesphere}]{S}^{\wedge}$, characterized by an analogous $\mc{V}^{\otimes}$-linear universal property.
For instance, when studying Hodge realizations of mixed motives, we will have reason below to consider the linearization of $\spt[um = {\tau}, dm = {\tatesphere}]{S}^{\wedge}$ over the symmetric monoidal derived \qcategory\ $\D{\ind{\mhsp[\rational]}}^{\otimes}$ of ind-objects in the Abelian category of polarizable mixed Hodge $\rational$-structures.

In this section, we formalize this process of $\mc{V}^{\otimes}$-linearization, its universal property, and the basic attributes of the resulting symmetric monoidal \qcategories\ $\spt[um = {\tau}, dm = {\tatesphere}]{S, \mc{V}}^{\otimes}$.
\end{motivation}

\begin{summary}
Let $\mc{V}^{\otimes}$ be a \locpres\ symmetric monoidal \qcategory.
\begin{itemize}
\item
In \nref{SH.1}, we construct the symmetric monoidal \qcategory\ $\spt[um = {\tau}, dm = {\tatesphere}]{S, \mc{V}}^{\otimes}$ in parallel with the classical construction of $\spt[um = {\tau}, dm = {\tatesphere}]{S}^{\wedge}$. 
\item
In \nref{SH.a2} and \nref{defn:weil-theory}, we recall the notions of $\tau$-local, $\affine^1$-invariant and $\tatesphere$-stable functors, and a generalization of the notion of a mixed Weil theory introduced in \cite{Cisinski-Deglise_mixed-weil}.
\item
In \nref{ex:weil-theory}, we introduce the main examples of such mixed Weil theories that we will consider in later sections.
\item
In \nref{SH.a4}, we extend Robalo's universal property of $\spt[um = {\tau}, dm = {\tatesphere}]{S}^{\wedge}$ to the $\mc{V}^{\otimes}$-linear setting. 
\item
In \nref{cor:SH-is-enriched}, we observe that $\spt[um = {\tau}, dm = {\tatesphere}]{S, \mc{V}}$ is \qcategorically\ enriched in $\mc{V}^{\otimes}$.
\item
In \nref{SH.a8}, we check that $\spt[u = {Nis}, dm = {\tatesphere}]{S, \mc{V}}$ inherits a convenient family of $\aleph_0$-presentable generators if $\mc{V}^{\otimes}$ is \locpres[\aleph_0], as does $\spt[u = {\'et}, dm = {\tatesphere}]{S, \mc{V}}$ if $\mc{V}$ is moreover $\rational$-linear.
\item
In \nref{SH.11}, under additional assumptions, we refine \nref{SH.a8} to obtain a family of $\otimes$-dualizable $\aleph_0$-presentable generators.
\end{itemize}
\end{summary}

\begin{defn}
\nlabel{SH.1}
Let $\mc{V}^{\otimes}$ be a \locpres\ symmetric monoidal \qcategory.
We introduce a $\mc{V}^{\otimes}$-linear \qcategorical\ variant of the construction of the motivic stable homotopy category presented in \cite[\S4.5]{Ayoub_six-operationsII}.
\begin{enumerate}
\item
\nlabel{SH.1.C}
\nlabel{SH.a5.1}
\nlabel{SH.1.B.etale}
\textbf{Hypersheaves.}
One begins by localizing $\psh{\sm[ft]{S}}{\spc{}}$ with respect to the class $\mf{W}_{\tau}$ of morphisms consisting of the form
\[
\colim_{\brk{n} \in \Delta\op} p_n: \colim_{\brk{n} \in \Delta\op} U_n \to \colim_{\brk{n} \in \Delta\op} \yon{S}{X} \simeq \yon{S}{X}
\]
for each $\tau$-hypercover $p_{\bullet}: U_{\bullet} \to \yon{S}{X}$ as in \nref{sheaves}.
As in \nref{sheaves.3a}, we let 
\[
\loc[dm = {\tau}]{}:
  \psh{\sm[ft]{S}}{\spc{}}
    \to \psh{\sm[ft]{S}}{\spc{}}\brk{\mf{W}^{-1}_{\tau}}
    \eqqcolon \hypsh[dm = {\tau}]{\sm[ft]{S}}{\spc{}}
\]
denote the localization functor, which is left exact and reflective.
Left exactness implies that $\loc[dm = {\tau}]{}$ underlies a canonical symmetric monoidal functor with respect to the Cartesian symmetric monoidal structures (\cite[2.4.1.1]{Lurie_higher-algebra}).

Since the \'etale topology is finer than the Nisnevich topology, $\mf{W}_{\tu{Nis}} \subseteq \mf{W}_{\tu{\'et}}$, so $\loc[d = {\'et}]{}$ factors through $\loc[d = {Nis}]{}$, i.e., there is a canonical left-exact reflective localization $\hypsh[d = {Nis}]{\sm[ft]{S}}{\spc{}} \to \hypsh[d = {\'et}]{\sm[ft]{S}}{\spc{}}$.
We denote the latter localization by $\loc[d = {\'et}]{}$:
this is not really abusive notation if we identify $\hypsh[d = {Nis}]{\sm[ft]{S}}{\spc{}}$ with a full \subqcategory\ of $\psh{\sm[ft]{S}}{\spc{}}$ via a right adjoint of $\loc[d = {Nis}]{}$.
\item
\nlabel{SH.1.D}
\nlabel{SH.1.B}
\nlabel{SH.1.E}
\textbf{Homotopy invariance and motivic spaces.}
Next, one localizes with respect to the essentially small set $\mf{W}_{\affine^1}$ of morphisms of the form $\loc[dm = {\tau}]{\yon{S}{\pi}}: \loc[dm ={\tau}]{\yon{S}{\affine^1_X}} \to \loc[dm = {\tau}]{\yon{S}{X}}$ for each $X \in \sm[ft]{S}$, where $\pi: \affine^1_X \to X$ denotes the projection.
The result is the \qcategory\ $\spc[um = {\tau}]{S}$ of \emph{$\tau$-motivic spaces over $S$}.
We shall denote the associated reflective localization functors as follows:
\[
\begin{tikzcd}[row sep = small]
&
\hypsh[d = {Nis}]{\sm[ft]{S}}{\spc{}}
\ar[r, "{\loc[dm={\affine^1}]{}}" below]
\ar[dd, "{\loc[d = {\'et}]{}}" right]
&
\spc[u = {Nis}]{S} \coloneqq \sh[d = {Nis}]{\sm[ft]{S}}{\spc{}}\brk{\mf{W}_{\affine^1}^{-1}}
\ar[dd, "{\loc[d = {\'et}]{}}" right]
\\
\psh{\sm[ft]{S}}{\spc{}}
\ar[ur,%
  "{\loc[d={Nis}]{}}" below,%
  start anchor = north east,%
  end anchor = south west,%
  bend left = 5pt]
\ar[urr, 
  "{\loc[dm={\affine^1,\tu{Nis}}]{}}" below, 
  start anchor = north, 
  end anchor = north west, 
  bend left = 35pt]
\ar[dr,%
  "{\loc[d = {\'et}]{}}" above,%
  start anchor = south east,%
  end anchor = north west,%
  bend right = 5pt]
\ar[drr,%
  "{\loc[dm = {\affine^1, \tu{\'et}}]{}}" above,%
  start anchor = south,%
  end anchor = south west,%
  bend right = 35pt]
&
&
\\
&
\hypsh[d = {\'et}]{\sm[ft]{S}}{\spc{}}
\ar[r, "{\loc[dm = {\affine^1}]{}}" above]
&
\spc[u = {\'et}]{S} \coloneqq \hypsh[d = {\'et}]{\sm[ft]{S}}{\spc{}}\brk{\mf{W}_{\affine^1}^{-1}}
\end{tikzcd}
\]
By \cite[C.6]{Hoyois_quadratic-refinement}, $\loc[dm={\affine^1}]{}$ and, hence, $\loc[dm={\affine^1,\tu{Nis}}]{}$ and $\loc[dm = {\affine^1, \tu{\'et}}]{}$ are left-exact reflective localizations.
They are therefore also symmetric monoidal with respect to the Cartesian symmetric monoidal structures.

The \qcategory\ $\spc[u = {Nis}]{S}$ is equivalent to those denoted by $\fct{\mathcal{H}}{S}$ and $\fct{\tu{H}}{S}$ in \cite[2.4.1]{Robalo_K-theory-and-the-bridge} and \cite[Appendix C]{Hoyois_quadratic-refinement}, respectively.
Its homotopy category $\ho{\spc[u = {Nis}]{S}}$ is equivalent to the unstable motivic homotopy category of F.~Morel and V.~Voevodsky as constructed in \cite{Morel-Voevodsky_A1-homotopy-theory}.

More generally, the \qcategory\ of \emph{$\mc{V}^{\otimes}$-linear $\tau$-motivic spaces over $S$}, denoted by $\spc[um = {\tau}]{S,\mc{V}}$, is the tensor product $\spc[um = {\tau}]{S} \otimes \mc{V}$ in $\pr^{\mr{L},\otimes}$, which we equip with the symmetric monoidal structure $\spc[um = {\tau}]{S,\mc{V}}^{\otimes} \coloneqq \spc[um = {\tau}]{S}^{\times} \otimes \mc{V}^{\otimes}$ given by the coproduct in $\calg{\pr^{\mr{L},\otimes}}$.
Letting $\mc{V}^{\otimes} = \spc{}^{\times}$, we recover $\spc[um = {\tau}]{S}$ up to canonical equivalence.
\item
\nlabel{SH.1.F}
\textbf{Yoneda functors.}
Let $\yon{S, \mc{V}}{}: \sm[ft]{S} \to \psh{\sm[ft]{S}}{\spc{}} \otimes \mc{V}$ denote the composite
\[
\sm[ft]{S} 
  \xrightarrow{\yon{S}{}}
    \psh{\sm[ft]{S}}{\spc{}}
      \simeq \psh{\sm[ft]{S}}{\spc{}} \otimes \spc{} 
        \xrightarrow{\id \otimes \eta_{\mc{V}}}
          \psh{\sm[ft]{S}}{\spc{}} \otimes \mc{V},
\]
where $\eta: \spc{} \to \mc{V}$ classifies the monoidal unit $\1{\mc{V}} \in \mc{V}$.
Informally, $\yon{S, \mc{V}}{}$ is given by $X \mapsto \yon{S}{X} \otimes \1{\mc{V}}$.
The functor $\yona[um = {\tau}]{S}{}: \sm[ft]{S} \to \spc[um = {\tau}]{S}$ is the composite
\[
\yona[um = {\tau}]{S}{}:
  \sm[ft]{S}
    \xrightarrow{\yon{S}{}} \psh{\sm[ft]{S}}{\spc{}}
      \xrightarrow{\loc[dm={\affine^1,\tu{Nis}}]{}}
        \spc[um={\tau}]{S}
\]
and we define $\yona[um = {\tau}]{S, \mc{V}}{}: \sm[ft]{S} \to \spc[um = {\tau}]{S, \mc{V}}$ as the composite
\[
\yona[um={\tau}]{S,\mc{V}}{}:
  \sm[ft]{S}
    \xrightarrow{\yona[um={\tau}]{S}{}} \spc[um={\tau}]{S}
      \simeq \spc[um={\tau}]{S} \otimes \spc{}
        \xrightarrow{\id \otimes \eta_{\mc{V}}} 
          \spc[um={\tau}]{S} \otimes \mc{V} 
            \eqqcolon \spc[um={\tau}]{S, \mc{V}},
\]
given informally by $X \mapsto \yona[um = {\tau}]{S}{X} \otimes \1{\mc{V}}$.
\item
\nlabel{SH.1.G}
\nlabel{SH.1.H}
\textbf{Pointed motivic spaces.}
Recall that a \qcategory\ $\mc{C}$ is \emph{pointed} if it admits an object which is both initial and final.
Passing to the \qcategory\ of pointed objects (\cite[4.8.1.20]{Lurie_higher-algebra}) in $\spc[um = {\tau}]{S}$, we obtain the \qcategory\ $\spc[um = {\tau}, dm={\pt}]{S}$ of \emph{pointed $\tau$-motivic spaces over $S$}.
It inherits a symmetric monoidal structure $\spc[um = {\tau}, dm = {\pt}]{S}^{\wedge}$ from $\spc[um = {\tau}]{S}^{\times}$ and a symmetric monoidal left-adjoint functor $\spc[um = {\tau}]{S}^{\times} \to \spc[um = {\tau}, dm = {\pt}]{S}^{\wedge}$ universal with respect to symmetric monoidal left-adjoint functors $\spc[um = {\tau}]{S}^{\times} \to \mc{C}^{\otimes}$ into pointed \locpres\ symmetric monoidal \qcategories\ $\mc{C}^{\otimes}$ (\cite[Corollary 2.32]{Robalo_K-theory-and-the-bridge}).

Tensoring with $\mc{V}^{\otimes}$, we obtain the symmetric monoidal \qcategory\ $\spc[um = {\tau}, dm = {\pt}]{S,\mc{V}}^{\otimes} \coloneqq \spc[um = {\tau}, dm = {\pt}]{S}^{\wedge} \otimes \mc{V}^{\otimes}$ of \emph{pointed $\mc{V}^{\otimes}$-linear $\tau$-motivic spaces over $S$}.
If $\mc{V}$ is pointed, then $\mc{V} \simeq \mc{V}_{\pt}$, and so 
\[
\spc[um = {\tau}, dm={\pt}]{S} \otimes \mc{V} 
  \simeq \prns{\spc[um = {\tau}]{S} \otimes \mc{V}}_{\pt} 
  \simeq \spc[um = {\tau}]{S} \otimes \mc{V}_{\pt}
  \simeq \spc[um = {\tau}]{S} \otimes \mc{V},
\]
which renders this step unnecessary.

We adopt the following notation for pointed objects.
If $s: \pt \to F$ is a morphism from the final object $\pt \in \spc[um = {\tau}]{S, \mc{V}}$, then we denote the corresponding object of $\spc[um = {\tau}, dm={\pt}]{S, \mc{V}}$ by $\prns{F, s}$.
If $F \in \spc[um = {\tau}]{S, \mc{V}}$, we let $F_+$ denote its image under the universal functor $\spc[um = {\tau}]{S, \mc{V}} \to \spc[um = {\tau}, dm={\pt}]{S, \mc{V}}$, which is informally to be thought of as $F$ with a disjoint base point.
\item
\nlabel{SH.1.I}
\textbf{Tate spheres.}
The standard open immersion $j: \Gm{S} \hookrightarrow \affine^1_S$ induces a morphism $\yona[um = {\tau}]{S}{j}_+$ in $\spc[um = {\tau}, dm={\pt}]{S}$.
The \emph{Tate sphere} $\tatesphere[S]$ is the cofiber of of this morphism:
\[
\tatesphere[S]
  \coloneqq \cofib[size = 1]{\yona[um = {\tau}]{S}{\Gm{S}}_+ \to \yona[um = {\tau}]{S}{\affine^1_S}_+}.
\]
This notation is abusive insofar as it does not specify $\tau$, but confusion is unlikely to result from this.
Let $\sigma_1: S \to \Gm{S}$ and $\sigma_1: S \to \projective^1_S$ denote the unit sections.
As $\yona[um = {\tau}]{S}{\affine^1_S} \simeq \pt$ and as $\projective^1_S$ admits a standard Zariski cover by two copies of $\affine^1_S$ whose intersection is $\Gm{S}$, we have equivalences 
\[
\prns{\yona[um = {\tau}]{S}{\Gm{S}}, \sigma_1} \wedge \sphere^1 
  \simeq \tatesphere[S] 
  \simeq \prns{\yona[um = {\tau}]{S}{\projective^1_S}, \sigma_1},
\]
where $\sphere^1 \coloneqq \Delta^1/\partial \Delta^1$ is the simplicial circle.
\item
\nlabel{SH.1.J}
\textbf{Motivic spectra.}
Following \cite[Definition 2.38]{Robalo_K-theory-and-the-bridge}, we define the symmetric monoidal \qcategory\ $\spt[um = {\tau}, dm = {\tatesphere}]{S}^{\wedge}{}$ of \emph{$\tau$-\motivicspectra\ over $S$} by formally adjoining a $\otimes$-inverse to $\tatesphere[S]$ in $\spc[um = {\tau}, dm = {\pt}]{S}^{\wedge}$.
We similarly define the symmetric monoidal \qcategory\ of \emph{$\mc{V}^{\otimes}$-linear $\tau$-\motivicspectra\ over $S$} to be the tensor product 
\[
\spt[um = {\tau}, dm = {\tatesphere}]{S,\mc{V}}^{\otimes} 
  \coloneqq \spt[um = {\tau}, dm = {\tatesphere}]{S}^{\wedge} \otimes \mc{V}^{\otimes}.
\]
We shall see below (\nref{SH.a8}) that $\spt[um = {\tau}, dm = {\tatesphere}]{S, \mc{V}}^{\otimes}$ is stable and \locpres.
We let
\[
\Sigma^{\infty, \otimes}_{\tatesphere,\mc{V}} 
  \coloneqq \Sigma^{\infty, \wedge}_{\tatesphere} \otimes \id_{\mc{V}^{\otimes}}:
  \spc[um = {\tau}, dm = {\pt}]{S, \mc{V}}^{\otimes}
    \to \spt[um = {\tau}, dm = {\tatesphere}]{S, \mc{V}}^{\otimes}
\]
denote the canonical symmetric monoidal functor, and we call it \emph{infinite $\tatesphere$-suspension}.
By \cite[7.3.2.7]{Lurie_higher-algebra}, it admits a lax symmetric monoidal right adjoint, which we shall denote by $\Omega^{\infty,\otimes}_{\tatesphere,\mc{V}}$.
\item
\nlabel{SH.1.K}
\textbf{Enriched Tate spheres.}
We let $\tatesphere[S, \mc{V}] \coloneqq \tatesphere[S] \otimes \1{\mc{V}} \in \spc[um = {\tau}, dm = {\pt}]{S, \mc{V}}^{\otimes}$ denote the \emph{$\mc{V}^{\otimes}$-linear Tate sphere}.
The infinite $\tatesphere$-suspension $\Sigma^{\infty}_{\tatesphere, \mc{V}}\tatesphere[S, \mc{V}]$ of the $\mc{V}^{\otimes}$-linear Tate sphere is $\otimes$-invertible: it is equivalent to the tensor product $\prns{\Sigma^{\infty}_{\tatesphere}\tatesphere[S]} \otimes \1{\mc{V}} \in \spt[um = {\tau}, dm = {\tatesphere}]{S}^{\wedge} \otimes \mc{V}^{\otimes}$, the first factor of which is $\otimes$-invertible by definition of $\spt[um = {\tau}, dm = {\tatesphere}]{S}^{\wedge}$ and the second factor of which is tautologically $\otimes$-invertible.
It follows that the \emph{Tate object} $\tate{S, \mc{V}}{1} \coloneqq \prns{\Sigma^{\infty}_{\tatesphere, \mc{V}}\tatesphere[S]}\brk{-2}$ is also $\otimes$-invertible.
For each $r \in \integer$, we define $\tate{S, \mc{V}}{r} \coloneqq \prns{\tate{S, \mc{V}}{1}}^{\otimes r}$ to be its $r$th tensor power.
More generally, for each $M \in \spt[um = {\tau}, dm = {\tatesphere}]{S, \mc{V}}$, we define the \emph{$r$th Tate twist} by $\twist{M}{r} \coloneqq M \otimes \tate{S, \mc{V}}{r}$.
The universal property of $\spt[um = {\tau}, dm = {\tatesphere}]{S}^{\wedge}$, recalled below in \nref{SH.a4}, implies that we can alternatively describe $\spt[um = {\tau}, dm = {\tatesphere}]{S, \mc{V}}^{\otimes}$ as the result of formally adjoining a $\otimes$-inverse to $\tatesphere[S, \mc{V}] \in \spc[um = {\tau}, dm = {\pt}]{S, \mc{V}}$.
\end{enumerate}
\end{defn}

\begin{defn}
\nlabel{SH.a2}
Let $\mc{V}$ be a \qcategory
and $F: \sm[ft]{S} \to \mc{V}$ a functor.
\begin{enumerate}
\item
\nlabel{SH.a2.1}
We say that $F$ is \emph{$\tau$-local} if it is local with respect to internal $\tau$-hypercovers and preserves finite coproducts.
\item 
\nlabel{SH.a2.2}
We say that $F$ is \emph{$\affine^1$-invariant} if it sends the projection $\pi: \affine^1_X \to X$ to an equivalence in $\mc{V}$ for each $X \in \sm[ft]{S}$.
\item
\nlabel{SH.a2.3}
Dually, we say that $F: \prns{\sm[ft]{S}}\op \to \mc{V}$ is \emph{$\affine^1$-invariant} whenever the opposite functor $F\op: \sm[ft]{S} \to \mc{V}\op$ is $\affine^1$-invariant.
\end{enumerate}

\noindent
Suppose now that $\mc{V}^{\otimes}$ is a pointed symmetric monoidal \qcategory\ and if $F^{\otimes}: \prns{\sm[ft]{S}}^{\times} \to \mc{V}^{\otimes}$ is a symmetric monoidal functor.
\begin{enumerate}[resume]
\item
\nlabel{SH.a2.4}
We say that $F^{\otimes}$ is \emph{$\tatesphere$-stable} if the cofiber of $\fct{F}{\sigma_1}$ exists and is $\otimes$-invertible, where $\sigma_1: S \to \Gm{S}$ is the unit section.
\item
\nlabel{SH.a2.5}
We say that $F^{\otimes}$ is \emph{$\tau$-local} or \emph{$\affine^1$-invariant} if the underlying functor $F: \sm[ft]{S} \to \mc{V}$ is.
\item
\nlabel{SH.a2.6}
Dually, we say that $F^{\otimes}: \prns{\sm[ft]{S}}\op[\amalg] \to \mc{V}^{\otimes}$ is \emph{$\tau$-local}, \emph{$\affine^1$-invariant} or \emph{$\tatesphere$-stable} if $F\op[\otimes]: \prns{\sm[ft]{S}}^{\times} \to \mc{V}\op[\otimes]$ is, where we equip $\mc{V}\op$ with the opposite symmetric monoidal structure of \cite[2.4.2.7]{Lurie_higher-algebra}.
\end{enumerate}
\end{defn}

\begin{rmk}
In \nref{SH.a2.1}, we define the property of being $\tau$-local via internal $\tau$-hypercovers and preservation of finite coproducts because we do not require $\mc{V}$ to be complete or cocomplete, making it inconvenient to work directly with arbitrary $\tau$-hypercovers.
By \nref{sheaves.7}, when $\mc{V}$ is \locpres, this notion of $\tau$-local is equivalent to the condition of being local with respect to all $\tau$-hypercovers. 
\end{rmk}

\begin{defn}
\nlabel{defn:weil-theory}
Let $\mc{V}^{\otimes}$ be a symmetric monoidal \qcategory.
A \emph{$\mc{V}^{\otimes}$-valued mixed Weil theory over $S$} is a symmetric monoidal functor $\cohomologymon{}: \prns{\sm[ft]{S}}\op[\amalg] \to \mc{V}^{\otimes}$ satisfying the following properties:
\begin{enumerate}
\item
$\cohomology{}$ factors through the inclusion of the full \subqcategory\ of $\mc{V}$ spanned by the $\otimes$-dualizable objects; and
\item
$\cohomologymon{}$ is Nisnevich local, $\affine^1$-invariant and $\tatesphere$-stable in the sense of \nref{SH.a2.6}.
\end{enumerate}
A $\mc{V}^{\otimes}$-valued mixed Weil theory as above is \emph{$\tau$-local} if the functor $\cohomology{}$ is $\tau$-local.
This definition is a generalization of that of \cite[2.1.4]{Cisinski-Deglise_mixed-weil} in a way that we will not bother to make precise here.
\end{defn}

\begin{ex}
\nlabel{ex:weil-theory}
Let $S = \spec{\complex}$.
The following will be the most important examples in the sequel.
\begin{enumerate}
\item
\nlabel{ex:weil-theory.1}
The symmetric monoidal functor $\bettimon{}$ of \nref{sheaves.13a} is \'etale local by \nref{sheaves.14}.
It is $\affine^1$-invariant since singular cohomology is homotopy invariant.
It is $\tatesphere$-stable by the standard computation of the singular cohomology of $\Gm{S}^{\tu{an}} \simeq \complex^{\times}$.
Finally, the $\otimes$-dualizable objects of $\D{\mod[dm = {\rational}]{}}$ are the complexes whose cohomology is bounded and of finite rank over $\rational$.
Finiteness of Betti cohomology therefore implies that $\betti{X}$ is $\otimes$-dualizable for each $X \in \sm[ft]{\complex}$. 
Thus, $\bettimon{}$ is an \'etale-local $\D{\mod[dm = {\rational}]{}}^{\otimes}$-valued mixed Weil theory.
\item
\nlabel{ex:weil-theory.2}
The symmetric monoidal functor $\enhancedbettimon{}$ of \nref{sheaves.13b} is \'etale local by \nref{sheaves.14}.
Since the fiber functor
\[
\omega^*:
\D{\ind{\mhsp[\rational]}}
  \to \D{\mod[dm = {\rational}]{}}
\]
(\cite[1.6]{Drew_rectification-of-Deligne's}) is conservative, $\enhancedbettimon{}$ inherits $\affine^1$-invariance and $\tatesphere$-stability from $\bettimon{}$.
That $\enhancedbetti{X}$ is $\otimes$-dualizable for each $X \in \sm[ft]{\complex}$ follows from \cite[4.7]{Drew_verdier-quotients}.
Thus, the symmetric monoidal functor $\enhancedbettimon{}$ is an \'etale-local $\D{\ind{\mhsp[\rational]}}^{\otimes}$-valued mixed Weil theory.
\end{enumerate}
\end{ex}

\begin{notation}
\nlabel{SH.a3}
Let $\mc{C}^{\otimes}$ and $\mc{D}^{\otimes}$ be symmetric monoidal \qcategories.
Under the identification of symmetric monoidal \qcategories\ with commutative algebra objects in \qcategories\ of \qcategories\ equipped with Cartesian monoidal structures (\cite[4.8.1.9]{Lurie_higher-algebra}), the $0$-simplices of the mapping space
$\map[dm = {\calg{\qcatmon}}]{\mc{C}^{\otimes}}{\mc{D}^{\otimes}}$
classify symmetric monoidal functors $\mc{C}^{\otimes} \to \mc{D}^{\otimes}$, and the $1$-simplices classify equivalences of such.

If $\mc{V}^{\otimes}$ is a \locpres\ symmetric monodial \qcategory\ and $\mc{C}^{\otimes}$ and $\mc{D}^{\otimes}$ are commutative $\mc{V}^{\otimes}$-algebras in $\pr[um = {\tu{L}, \otimes}]$, then the $0$-simplices of the mapping space
$\map[dm = {\calg{\pr[um = {\tu{L}, \otimes}]}_{\mc{V}^{\otimes}/}}]{\mc{C}^{\otimes}}{\mc{D}^{\otimes}}$ classify $\mc{V}^{\otimes}$-linear, cocontinuous symmetric monoidal functors $\mc{C}^{\otimes} \to \mc{D}^{\otimes}$, and the $1$-simplices classify $\mc{V}^{\otimes}$-linear equivalences of such.

If $P$ is a property of symmetric monoidal functors $\mc{C}^{\otimes} \to \mc{D}^{\otimes}$, then we let 
\[
\map[size = 0, dm = {\calg{\QCATmon}}]{\mc{C}^{\otimes}}{\mc{D}^{\otimes}}_P
  \subseteq \map[size = 0, dm = {\calg{\QCATmon}}]{\mc{C}^{\otimes}}{\mc{D}^{\otimes}}
\]
denote the Kan subcomplex spanned by the simplices whose vertices classify symmetric monoidal functors with the property $P$.
\end{notation}

\begin{thm}[M.~Robalo]
\nlabel{SH.a4}
Let $v^{\otimes}: \mc{V}^{\otimes} \to \mc{W}^{\otimes}$ be a cocontinuous symmetric monoidal functor between \locpres\ \qcategories\ such that $\mc{W}^{\otimes}$ is pointed.
Restriction along $\Sigma^{\infty, \otimes}_{\tatesphere, \mc{V}}\prns{-}_+$ induces a weak homotopy equivalence
\[
\map[dm = {\calg{\pr[um = {\tu{L}, \otimes}]}_{\mc{V}^{\otimes}/}}]{\spt[um = {\tau}, dm = {\tatesphere}]{S, \mc{V}}^{\otimes}}{\mc{W}^{\otimes}}
  \to \map[dm = {\calg{\QCATmon}}]{\prns{\sm[ft]{S}}^{\times}}{\mc{W}^{\otimes}}_P,
\]
where $P$ denotes the property of being $\tau$-local, $\affine^1$-invariant and $\tatesphere$-stable.
\end{thm}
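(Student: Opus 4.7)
The plan is to bootstrap from Robalo's original theorem (the case $\mc{V}^{\otimes} = \spc{}^{\times}$, which is cited from \cite[Corollary~1.2]{Robalo_K-theory-and-the-bridge}) using the fact that $\spt[um = {\tau}, dm = {\tatesphere}]{S, \mc{V}}^{\otimes}$ was \emph{defined} as the tensor product $\spt[um = {\tau}, dm = {\tatesphere}]{S}^{\wedge} \otimes \mc{V}^{\otimes}$ in $\calg{\pr[um = {\tu{L}, \otimes}]}$. The key algebraic input is that the monoidal structure on $\pr[um = {\tu{L}, \otimes}]$ is such that the tensor product of commutative algebra objects computes the coproduct in $\calg{\pr[um = {\tu{L}, \otimes}]}$; this is \cite[3.2.4.7]{Lurie_higher-algebra}.

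First, I would observe that under the identification $\spt[um = {\tau}, dm = {\tatesphere}]{S, \mc{V}}^{\otimes} \simeq \spt[um = {\tau}, dm = {\tatesphere}]{S}^{\wedge} \otimes \mc{V}^{\otimes}$ viewed as a coproduct in $\calg{\pr[um = {\tu{L}, \otimes}]}$, the universal property of the coproduct yields a canonical weak homotopy equivalence
\[
\map[dm = {\calg{\pr[um = {\tu{L}, \otimes}]}_{\mc{V}^{\otimes}/}}]{\spt[um = {\tau}, dm = {\tatesphere}]{S, \mc{V}}^{\otimes}}{\mc{W}^{\otimes}}
  \xrightarrow{\simeq} \map[dm = {\calg{\pr[um = {\tu{L}, \otimes}]}}]{\spt[um = {\tau}, dm = {\tatesphere}]{S}^{\wedge}}{\mc{W}^{\otimes}},
\]
given by precomposition with the canonical map $\spt[um = {\tau}, dm = {\tatesphere}]{S}^{\wedge} \to \spt[um = {\tau}, dm = {\tatesphere}]{S}^{\wedge} \otimes \mc{V}^{\otimes}$ that exhibits the latter as a coproduct. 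Here I use that $\mc{W}^{\otimes}$ is already an object of $\calg{\pr[um = {\tu{L}, \otimes}]}_{\mc{V}^{\otimes}/}$ via $v^{\otimes}$, so no additional datum is needed to promote maps out of $\spt[um = {\tau}, dm = {\tatesphere}]{S}^{\wedge}$.

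Next, Robalo's theorem supplies a weak homotopy equivalence
\[
\map[dm = {\calg{\pr[um = {\tu{L}, \otimes}]}}]{\spt[um = {\tau}, dm = {\tatesphere}]{S}^{\wedge}}{\mc{W}^{\otimes}}
  \xrightarrow{\simeq} \map[dm = {\calg{\QCATmon}}]{\prns{\sm[ft]{S}}^{\times}}{\mc{W}^{\otimes}}_P
\]
given by restriction along $\Sigma^{\infty, \wedge}_{\tatesphere}\prns{-}_+: \prns{\sm[ft]{S}}^{\times} \to \spt[um = {\tau}, dm = {\tatesphere}]{S}^{\wedge}$, where $P$ records $\tau$-locality, $\affine^1$-invariance, and $\tatesphere$-stability. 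Since pointedness of $\mc{W}^{\otimes}$ is a standing hypothesis and is what Robalo needs to pass from $\spc[um = {\tau}]{S}^{\times}$ to its pointed variant, I can apply the cited result directly. Composing the two equivalences produces the desired weak equivalence.

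The remaining, and chiefly bookkeeping, obstacle is to identify the composite of the two equivalences above with the restriction map induced by $\Sigma^{\infty, \otimes}_{\tatesphere, \mc{V}}\prns{-}_+$. This amounts to checking that $\Sigma^{\infty, \otimes}_{\tatesphere, \mc{V}}\prns{-}_+$ is naturally equivalent to the composite
\[
\prns{\sm[ft]{S}}^{\times}
  \xrightarrow{\Sigma^{\infty, \wedge}_{\tatesphere}\prns{-}_+}
    \spt[um = {\tau}, dm = {\tatesphere}]{S}^{\wedge}
      \to \spt[um = {\tau}, dm = {\tatesphere}]{S}^{\wedge} \otimes \mc{V}^{\otimes}
      \simeq \spt[um = {\tau}, dm = {\tatesphere}]{S, \mc{V}}^{\otimes},
\]
which in turn follows, step by step, from tracing the definitions in \nref{SH.1.F}, \nref{SH.1.H}, and \nref{SH.1.J}: tensoring the Yoneda map and pointing operations with $\id_{\mc{V}^{\otimes}}$ commutes up to canonical equivalence with precomposition by the coproduct inclusion of $\spc{}^{\times}$ into $\spc{}^{\times} \otimes \mc{V}^{\otimes} \simeq \mc{V}^{\otimes}$ (via $\eta_{\mc{V}}: \spc{} \to \mc{V}$). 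I do not anticipate genuine difficulty here; the only care needed is to ensure that the identification of $\spt[um = {\tau}, dm = {\tatesphere}]{S, \mc{V}}^{\otimes}$ with $\spt[um = {\tau}, dm = {\tatesphere}]{S}^{\wedge} \otimes \mc{V}^{\otimes}$ used in the last step of \nref{SH.1.K} is compatible with these coproduct identifications — this is formal once the universal property of the coproduct in $\calg{\pr[um = {\tu{L}, \otimes}]}$ is invoked.
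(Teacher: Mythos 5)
Your proposal is correct and follows exactly the same two-step decomposition as the paper: first use the adjunction $\prns{-} \otimes \mc{V}^{\otimes} \dashv \tu{forget}$ between $\calg{\pr[um = {\tu{L}, \otimes}]}$ and $\calg{\pr[um = {\tu{L}, \otimes}]}_{\mc{V}^{\otimes}/}$ (your coproduct formulation is the same fact by \cite[3.2.4.7]{Lurie_higher-algebra}), then invoke Robalo's \cite[Corollary~1.2]{Robalo_K-theory-and-the-bridge} for the base case $\mc{V}^{\otimes} = \spc{}^{\times}$. Your closing paragraph on identifying the composite with restriction along $\Sigma^{\infty, \otimes}_{\tatesphere, \mc{V}}\prns{-}_+$ corresponds to the homotopy-commutativity of the triangle the paper draws, which the paper asserts without elaboration; spelling it out as you do is a harmless addition.
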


\begin{proof}
We have a homotopy-commutative triangle
\[
\begin{tikzcd}[row sep = tiny]
\map[size = 0, dm = {\calg{\pr[um = {\tu{L}, \otimes}]}_{\mc{V}^{\otimes}/}}]{\spt[um={\tau}, dm = {\tatesphere}]{S, \mc{V}}^{\otimes}}{\mc{W}^{\otimes}}
\ar[dr, 
  "\gamma" below, 
  start anchor = east,
  end anchor = north west,
  bend left = 15pt]
\ar[dd, "\alpha" left]
&
&
\\
&
\map[size = 0, dm = {\calg{\QCATmon}}]{\prns{\sm[ft]{S}}^{\times}}{\mc{W}^{\otimes}}.
&
\\
\map[size = 0, dm = {\calg{\pr[um = {\tu{L}, \otimes}]}}]{\spt[um={\tau}, dm = {\tatesphere}]{S}^{\wedge}}{\mc{W}^{\otimes}}
\ar[ur, 
  "\beta" above, 
  start anchor = east,
  end anchor = south west,
  bend right = 15pt]
&
&
\end{tikzcd}
\]
The morphism $\alpha$ is a weak homotopy equivalence by the adjunction
$
\calg{\pr[um = {\tu{L}, \otimes}]} 
  \rightleftarrows \calg{\pr[um = {\tu{L}, \otimes}]}_{\mc{V}^{\otimes}/}
$
between $\prns{-} \otimes \mc{V}^{\otimes}$ and the forgetful functor.
That $\beta$ induces a weak homotopy equivalence with $\map[size = 0, dm = {\calg{\QCATmon}}]{\prns{\sm[ft]{S}}^{\times}}{\mc{W}^{\otimes}}_P$ is just the special case in which $\mc{V}^{\otimes} = \spc{}^{\times}$, which follows from \cite[Corollary~1.2]{Robalo_K-theory-and-the-bridge}.
\end{proof}

\begin{rmk}
\nlabel{SH.a6}
If $\mc{V}^{\otimes}$ is a \locpres\ symmetric monoidal \qcategory\ underlying a $\sset^{\times}$-enriched symmetric monoidal model category, then M.~Robalo's theorem (\cite[Theorem 2.26]{Robalo_K-theory-and-the-bridge}) implies that the homotopy category of $\spt[um = {\tau}, dm = {\tatesphere}]{S, \mc{V}}^{\otimes}$ is equivalent to the $\ho{\mc{V}}^{\otimes}$-linear $\tau$-motivic stable homotopy category constructed in \cite[\S4.5]{Ayoub_six-operationsII} as the homotopy category of a stable symmetric monoidal model category.
\end{rmk}

\begin{rmk}
\nlabel{SH.a7}
As in \nref{sheaves.3.1}, the construction of $\spt[um = {\tau}, dm = {\tatesphere}]{S, \mc{V}}^{\otimes}$ is functorial in $\mc{V}^{\otimes}$.
Indeed, the symmetric monoidal structure on $\pr[u = {L}]$ provides us with a functor
\[
\spt[um = {\tau}, dm = {\tatesphere}, um = {\tau}]{S}^{\wedge} \otimes \prns{-}:
  \calg{\pr[um = {\tu{L}, \otimes}]}
    \to \calg{\pr[um = {\tu{L}, \otimes}]}
\]
given informally by $\mc{V}^{\otimes} \mapsto \spt[um = {\tau}, dm = {\tatesphere}]{S}^{\wedge} \otimes \mc{V}^{\otimes} \eqqcolon \spt[um = {\tau}, dm = {\tatesphere}]{S, \mc{V}}^{\otimes}$.
The construction of $\spt[um = {\tau}, dm = {\tatesphere}]{S, \mc{V}}^{\otimes}$ is also functorial with respect to the base scheme $S$, and we shall explore this further in \nref{funct} below.
\end{rmk}

\begin{prop}[Gepner-Haugseng]
\nlabel{prop:enrichments}
Let $\mc{V}^{\otimes}$ be a symmetric monoidal \qcategory.
\begin{enumerate}
\item
\nlabel{prop:enrichments.1}
If $\mc{V}^{\otimes}$ is \locpres, then $\mc{V}$ admits a $\mc{V}^{\otimes}$-enriched-\qcategory\ structure given informally by the internal morphisms objects $\intmor[dm = {\mc{V}}]{X}{Y}$ for $\prns{X, Y} \in \mc{V}^2$.
\item
\nlabel{prop:enrichments.2}
If $\phi^{\otimes}: \mc{V}^{\otimes} \to \mc{W}^{\otimes}$ is a lax symmetric monoidal functor between symmetric monoidal \qcategories\ and $\mc{C}$ is a $\mc{V}^{\otimes}$-enriched \qcategory, then $\mc{C}$ admits a $\mc{W}^{\otimes}$-enriched-\qcategory\ structure given informally by
\[
\mor[um = {\mc{W}}, dm = {\mc{C}}]{X}{Y}
  \coloneqq \fct{\phi}{\mor[um = {\mc{V}}, dm = {\mc{C}}]{X}{Y}}
\]
for each $\prns{X,Y} \in \mc{C}^2$, where $\mor[um = {\mc{V}}, dm = {\mc{C}}]{}{}$ denotes the morphisms-$\mc{V}$-object bifunctor.
\end{enumerate}
\end{prop}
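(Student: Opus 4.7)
The plan is to reduce both assertions to the general theory of enriched \qcategories\ developed by Gepner and Haugseng, so the main work is to identify the relevant inputs from their framework rather than to construct composition data from scratch.

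For \nref{prop:enrichments.1}, the first step is to observe that a \locpres\ symmetric monoidal \qcategory\ $\mc{V}^{\otimes}$ is automatically closed. Indeed, membership in $\calg{\pr[um = {\tu{L}, \otimes}]}$ forces the tensor product to preserve colimits separately in each variable, so the adjoint functor theorem supplies a right adjoint $\intmor[dm = {\mc{V}}]{X}{-}: \mc{V} \to \mc{V}$ for each $X \in \mc{V}$, assembling into the internal morphisms bifunctor $\intmor[dm = {\mc{V}}]{-}{-}$. The second step is then to invoke the fact that every closed symmetric monoidal \qcategory\ carries a canonical self-enrichment: the evaluation morphisms $\intmor[dm = {\mc{V}}]{Y}{Z} \otimes \intmor[dm = {\mc{V}}]{X}{Y} \to \intmor[dm = {\mc{V}}]{X}{Z}$ obtained by adjunction from iterated evaluation assemble, by the Gepner--Haugseng machinery, into the structure of an $\mc{V}^{\otimes}$-enriched \qcategory\ whose underlying \qcategory\ recovers $\mc{V}$.

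For \nref{prop:enrichments.2}, the idea is change of enrichment along a lax symmetric monoidal functor. The relevant input is that, in the Gepner--Haugseng formalism, enriched \qcategories\ are encoded as algebras over certain operads (non-symmetric $\infty$-operads whose algebras in $\mc{V}^{\otimes}$ are categories enriched in $\mc{V}^{\otimes}$), so a lax symmetric monoidal functor $\phi^{\otimes}: \mc{V}^{\otimes} \to \mc{W}^{\otimes}$ induces a functor from $\mc{V}^{\otimes}$-enriched \qcategories\ to $\mc{W}^{\otimes}$-enriched \qcategories\ by post-composition. Applied to the given $\mc{V}^{\otimes}$-enrichment of $\mc{C}$, this produces a $\mc{W}^{\otimes}$-enrichment whose morphisms-object bifunctor is exactly $\fct{\phi}{\mor[um = {\mc{V}}, dm = {\mc{C}}]{-}{-}}$.

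The main obstacle is not conceptual but bookkeeping: one must check that the compositions, units, and higher coherences of the self-enrichment in \nref{prop:enrichments.1} and the change-of-base in \nref{prop:enrichments.2} are indeed the canonical ones supplied by Gepner--Haugseng, so that the informal description in terms of $\intmor[dm = {\mc{V}}]{X}{Y}$ and $\fct{\phi}{\mor[um = {\mc{V}}, dm = {\mc{C}}]{X}{Y}}$ captures the correct enriched structure rather than merely the correct mapping objects. Once this identification is made, both claims follow.
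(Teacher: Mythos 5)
Your proposal is correct and matches the paper's argument: the paper proves both parts by citing \cite[7.4.10, 5.7.6]{Gepner-Haugseng_enriched-infty-categories}, which are precisely the self-enrichment of a presentably closed symmetric monoidal \qcategory\ and change of enrichment along a lax symmetric monoidal functor that you invoke. Your elaboration (closure via the adjoint functor theorem, enriched \qcategories\ as operad algebras, post-composition for change of base) is faithful exposition of exactly those two results.
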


\begin{proof}
This follows from \cite[7.4.10, 5.7.6]{Gepner-Haugseng_enriched-infty-categories}.
\end{proof}

\begin{cor}
\nlabel{cor:SH-is-enriched}
If $\mc{V}^{\otimes}$ is a \locpres\ symmetric monoidal \qcategory,
then $\spt[um = {\tau}, dm = {\tatesphere}]{S, \mc{V}}^{\otimes}$ admits a $\mc{V}^{\otimes}$-enriched-\qcategory\ structure given informally by 
\[
\mor[um = {\mc{V}}, dm = {\spt[um = {\tau}, dm = {\tatesphere}]{S, \mc{V}}}]{M}{N}
  \coloneqq \fct{\Gamma}{S, \Omega^{\infty}_{\tatesphere, \mc{V}} \intmor[dm = {\spt[um = {\tau}, dm = {\tatesphere}]{S, \mc{V}}}]{M}{N}}
\]
for each $\prns{M, N} \in \spt[um = {\tau}, dm = {\tatesphere}]{S, \mc{V}}$.
\end{cor}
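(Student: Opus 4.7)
The plan is to invoke both parts of \nref{prop:enrichments} in sequence. Since $\spt[um = {\tau}, dm = {\tatesphere}]{S, \mc{V}}^{\otimes}$ will be shown to be \locpres\ (\nref{SH.a8}), Part \nref{prop:enrichments.1} produces a self-enrichment of $\spt[um = {\tau}, dm = {\tatesphere}]{S, \mc{V}}$ whose mapping objects are the internal morphism objects $\intmor[dm = {\spt[um = {\tau}, dm = {\tatesphere}]{S, \mc{V}}}]{M}{N}$. I would then transfer this self-enrichment along a canonical lax symmetric monoidal functor $\iota_*: \spt[um = {\tau}, dm = {\tatesphere}]{S, \mc{V}}^{\otimes} \to \mc{V}^{\otimes}$ obtained as the right adjoint of the structural unit morphism $\iota^{\otimes}: \mc{V}^{\otimes} \to \spt[um = {\tau}, dm = {\tatesphere}]{S, \mc{V}}^{\otimes}$ that exhibits $\spt[um = {\tau}, dm = {\tatesphere}]{S, \mc{V}}^{\otimes}$ as a commutative $\mc{V}^{\otimes}$-algebra in $\calg{\pr[um = {\tu{L}, \otimes}]}$.

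The functor $\iota^{\otimes}$ is cocontinuous and symmetric monoidal by construction, so its right adjoint $\iota_*$ acquires a canonical lax symmetric monoidal structure by \cite[7.3.2.7]{Lurie_higher-algebra}. Applying Part \nref{prop:enrichments.2} to $\iota_*$ then yields a $\mc{V}^{\otimes}$-enriched-\qcategory\ structure on $\spt[um = {\tau}, dm = {\tatesphere}]{S, \mc{V}}$ with mapping objects $\iota_*\intmor[dm = {\spt[um = {\tau}, dm = {\tatesphere}]{S, \mc{V}}}]{M}{N}$, and it only remains to identify $\iota_*$ with the composite $\fct{\Gamma}{S, \Omega^{\infty}_{\tatesphere, \mc{V}}\prns{-}}$.

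This identification is the main technical point. To carry it out, I would factor $\iota^{\otimes}$ as the composite of unit morphisms
\[
\mc{V}^{\otimes}
  \to \spc[um = {\tau}]{S, \mc{V}}^{\times}
  \to \spc[um = {\tau}, dm = {\pt}]{S, \mc{V}}^{\wedge}
  \xrightarrow{\Sigma^{\infty,\otimes}_{\tatesphere, \mc{V}}} \spt[um = {\tau}, dm = {\tatesphere}]{S, \mc{V}}^{\otimes}
\]
arising from the successive stages of the construction in \nref{SH.1}, and then pass to right adjoints. The last arrow contributes $\Omega^{\infty,\otimes}_{\tatesphere, \mc{V}}$ by definition, while the right adjoints of the first two arrows compose to the global-sections functor $\fct{\Gamma}{S, -}$, essentially because the right adjoint of tensoring with the monoidal unit $\yona[um = {\tau}]{S, \mc{V}}{S}$ of $\spc[um = {\tau}]{S, \mc{V}}^{\times}$ is evaluation at that unit, i.e., evaluation at the terminal object $S$. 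Each of these right adjoints inherits a canonical lax symmetric monoidal structure from its symmetric monoidal left adjoint, so the composite identification is automatically compatible with the lax symmetric monoidal structures.
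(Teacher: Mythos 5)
Your proof is correct and takes essentially the same route as the paper: both factor the structural unit $\mc{V}^{\otimes} \to \spt[um = {\tau}, dm = {\tatesphere}]{S, \mc{V}}^{\otimes}$ through the stages of \nref{SH.1}, pass to right adjoints to identify the lax symmetric monoidal functor with $\fct{\Gamma}{S, \Omega^{\infty}_{\tatesphere, \mc{V}}\prns{-}}$ via \cite[7.3.2.7]{Lurie_higher-algebra}, and then invoke \nref{prop:enrichments}. One small notational slip: you write $\spc[um = {\tau}]{S, \mc{V}}^{\times}$ and $\spc[um = {\tau}, dm = {\pt}]{S, \mc{V}}^{\wedge}$, but for a general $\mc{V}^{\otimes}$ the relevant symmetric monoidal structures are the $\mc{V}^{\otimes}$-linear ones $\spc[um = {\tau}]{S, \mc{V}}^{\otimes}$ and $\spc[um = {\tau}, dm = {\pt}]{S, \mc{V}}^{\otimes}$ of \nref{SH.1.D} and \nref{SH.1.H}, not the Cartesian or smash ones.
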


\begin{proof}
We have a sequence of symmetric monoidal left adjoints
\[
\mc{V}^{\otimes}
  \xrightarrow{\cst{}^{\otimes}} \psh{\sm[ft]{S}}{\mc{V}}^{\otimes}
  \xrightarrow{\loc[um = {\otimes}, dm = {\affine^1, \tau}]{}} \spc[um = {\tau}]{S, \mc{V}}^{\otimes}
  \xrightarrow{\prns{-}_+^{\otimes}} \spc[um = {\tau}, dm = {\pt}]{S, \mc{V}}^{\otimes}
  \xrightarrow{\Sigma^{\infty, \otimes}_{\tatesphere, \mc{V}}} \spt[um = {\tau}, dm = {\tatesphere}]{S, \mc{V}}^{\otimes},
\]
where $\cst{}^{\otimes}$ is the constant-diagram functor, and the other functors are as in \nref{SH.1}.
The associated composite right adjoint is lax symmetric monoidal by \cite[7.3.2.7]{Lurie_higher-algebra}.
The claim now follows from \nref{prop:enrichments} and the following observations:
$\cst{}^{\otimes}$ is left adjoint to the global-sections functor $\fct{\Gamma}{S, -}$;
$\loc[dm = {\affine^1, \tau}]{}$ is a reflective localization and therefore left adjoint to the inclusion;
$\prns{-}_+$ is left adjoint to the forgetful functor; and
$\Sigma^{\infty}_{\tatesphere, \mc{V}}$ is left adjoint to $\Omega^{\infty}_{\tatesphere, \mc{V}}$.
\end{proof}

\begin{rmk}
\nlabel{scalars.a3}
Let $\mc{C}$ and $\mc{D}$ be \locpres\ \qcategories.
\begin{enumerate}
\item
\nlabel{scalars.a3.1}
If $\mc{C}$ or $\mc{D}$ is stable, then $\mc{C} \otimes \mc{D}$ is also stable.
Indeed, by \cite[4.8.2.18]{Lurie_higher-algebra}, a \locpres\ \qcategory\ $\mc{E}$ is stable if and only if the tensor product $\Sigma^{\infty}_{\sphere^1} \otimes \id_{\mc{E}}: \spc{} \otimes \mc{E} \to \spt{} \otimes \mc{E}$ of the identity with the infinite $\sphere^1$-suspension functor $\Sigma^{\infty}_{\sphere^1}: \spc{} \to \spt{}$ is an equivalence.
The claim then follows from the associativity of the tensor product:
\[
\begin{tikzcd}[column sep = small, row sep = small]
\spc{} \otimes \prns{\mc{C} \otimes \mc{D}}
\ar[r, "\sim" above]
\ar[d]
&
\prns{\spc{} \otimes \mc{C}} \otimes \mc{D}
\ar[d, "\sim"]
\\
\spt{} \otimes \prns{\mc{C} \otimes \mc{D}}
\ar[r, "\sim" above]
&
\prns{\spt{} \otimes \mc{C}} \otimes \mc{D}.
\end{tikzcd}
\]
\item
\nlabel{scalars.a3.2}
\nlabel{scalars.a3.3}
Suppose that $\mc{C}^{\otimes}$ and $\mc{D}^{\otimes}$ are \locpres\ symmetric monoidal \qcategories.
Similarly, by \cite[4.8.1.19, 3.4.1.7]{Lurie_higher-algebra}, a \locpres\ symmetric monoidal \qcategory\ $\mc{V}^{\otimes}$ is stable if and only if $\mc{V}^{\otimes}$ admits a commutative $\spt{}^{\wedge}$-algebra structure in $\pr[um = {\tu{L}, \otimes}]$.
It follows that if $\mc{C}^{\otimes} \leftarrow \mc{V}^{\otimes} \to \mc{D}^{\otimes}$ is a diagram of \locpres\ symmetric monoidal \qcategories, and if one of $\mc{C}^{\otimes}$, $\mc{D}^{\otimes}$ or $\mc{V}^{\otimes}$ is stable, then the pushout $\mc{C}^{\otimes} \otimes_{\mc{V}^{\otimes}} \mc{D}^{\otimes}$ in $\calg{\pr[um = {\tu{L}, \otimes}]}$ is also stable: 
it inherits a commutative $\spt{}^{\wedge}$-algebra structure in each case. 
\end{enumerate}
\end{rmk}

\begin{rmk}
\nlabel{SH.b8}
Let $\mc{C}$ be a \locpres[\aleph_0]\ \qcategory\ with final object $\pt$.
The \qcategory\ $\mc{C}_{\pt} = \mc{C}_{\pt/}$ of pointed objects in $\mc{C}$ is also \locpres[\aleph_0], and an object of $\mc{C}_*$ is $\aleph_0$-presentable if and only if its image under the forgetful functor $\mc{C}_* \to \mc{C}$ is $\aleph_0$-presentable by \cite[5.4.5.15, 5.5.3.11]{Lurie_higher-topos}.
\end{rmk}

\begin{prop}
\nlabel{SH.a8}
Let $\kappa$ be a small regular cardinal and $\mc{V}^{\otimes}$ a \locpres[\kappa]\ symmetric monoidal \qcategory.
\begin{enumerate}
\item
\nlabel{SH.a8.1}
The symmetric monoidal \qcategory\ $\spt[um = {\tau}, dm = {\tatesphere}]{S, \mc{V}}^{\otimes}$ is stable and \locpres.
\item
\nlabel{SH.a8.2}
The \qcategory\ $\spt[um = {\tau}, dm = {\tatesphere}]{S, \mc{V}}$ is generated under transfinitely iterated small colimits by the objects of the form $V \odot \Sigma^{\infty}_{\tatesphere, \mc{V}}\stwist{\yona[um = {\tau}]{S, \mc{V}}{X}_+}{r}{s}$ with $X \in \sm[ft]{S}$, $\prns{r,s} \in \integer^2$ and $V \in \mc{V}_{\kappa}$.
\item
\nlabel{SH.a8.3}
If $\mc{V}$ is $\prns{\sm[ft]{S}, \tau}$-finite, then $\spt[um = {\tau}, dm = {\tatesphere}]{S, \mc{V}}^{\otimes}$ is \locpres[\aleph_0] and the generators in \nref{SH.a8.2} are $\aleph_0$-presentable.
\end{enumerate}
\end{prop}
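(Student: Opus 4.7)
The plan is to trace the construction of $\spt[um = {\tau}, dm = {\tatesphere}]{S, \mc{V}}^{\otimes}$ stage by stage through \nref{SH.1}, tracking local presentability and generators at each step and invoking stability only at the final $\tatesphere$-inversion.

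Starting from $\psh{\sm[ft]{S}}{\mc{V}} \simeq \psh{\sm[ft]{S}}{\spc{}} \otimes \mc{V}$, which is $\kappa$-presentable with generators $V \odot \yon{S, \mc{V}}{X}$ for $X \in \sm[ft]{S}$ and $V \in \mc{V}_{\kappa}$, the successive reflective localizations $\loc[dm = {\tau}]{}$ and $\loc[dm = {\affine^1}]{}$ together with passage to pointed objects all preserve local presentability and send these generators to $V \odot \yona[um = {\tau}]{S, \mc{V}}{X}_+ \in \spc[um = {\tau}, dm = {\pt}]{S, \mc{V}}^{\wedge}$. For \nref[Claim]{SH.a8.1}, inverting $\tatesphere[S, \mc{V}]$ to form $\spt[um = {\tau}, dm = {\tatesphere}]{S, \mc{V}}^{\otimes}$ preserves local presentability via the tensor product in $\calg{\pr[um = {\tu{L}, \otimes}]}$. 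Stability then follows from the equivalence $\tatesphere[S] \simeq \prns{\yona[um = {\tau}]{S}{\Gm{S}}, \sigma_1} \wedge \sphere^1$ in $\spc[um = {\tau}, dm = {\pt}]{S}^{\wedge}$ recorded in \nref{SH.1.I}: since inverting $\tatesphere$ forces $\sphere^1$ to be $\otimes$-invertible, $\spt[um = {\tau}, dm = {\tatesphere}]{S, \mc{V}}^{\otimes}$ acquires a commutative $\spt{}^{\wedge}$-algebra structure in $\calg{\pr[um = {\tu{L}, \otimes}]}$ and is thus stable by \nref{scalars.a3.2}.

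For \nref[Claim]{SH.a8.2}, applying $\Sigma^{\infty}_{\tatesphere, \mc{V}}$ to the pointed generators above yields a generating family of $\spt[um = {\tau}, dm = {\tatesphere}]{S, \mc{V}}$ once one twists by all invertible Tate twists $\tate{S, \mc{V}}{r}$ (required because $\tatesphere[S, \mc{V}]$ has been formally $\otimes$-inverted) and all cohomological shifts $[s]$ (required because the category is now stable).

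For \nref[Claim]{SH.a8.3}, the $\prns{\sm[ft]{S}, \tau}$-finiteness hypothesis delivers $\aleph_0$-presentability of the generators $V \odot \loc[dm = {\tau}]{\yon[um = {\tau}]{S, \mc{V}}{X}}$ of $\hypsh[dm = {\tau}]{\sm[ft]{S}}{\mc{V}}$ directly from \nref{sheaves.11a}. The $\affine^1$-localization preserves $\aleph_0$-presentability because $\mf{W}_{\affine^1}$ is a small set of morphisms between $\aleph_0$-presentable objects, and passage to pointed objects does so by \nref{SH.b8}. The main obstacle is the $\tatesphere$-inversion. Here I would realize $\spt[um = {\tau}, dm = {\tatesphere}]{S, \mc{V}}^{\otimes}$ as the sequential colimit in $\calg{\pr[um = {\tu{L}, \otimes}, dm = {\aleph_0, \tu{st}}]}$ of the diagram $\spc[um = {\tau}, dm = {\pt}]{S, \mc{V}} \xrightarrow{- \wedge \tatesphere[S, \mc{V}]} \spc[um = {\tau}, dm = {\pt}]{S, \mc{V}} \to \cdots$, which is available because $\tatesphere[S, \mc{V}]$ is $\aleph_0$-presentable as an iterated finite colimit of $\aleph_0$-presentable pointed motivic spaces. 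Filtered colimits in $\pr[u = {L}, dm = {\aleph_0, \tu{st}}]$ preserve $\aleph_0$-presentability of the underlying category and identify its $\aleph_0$-presentable objects as images of $\aleph_0$-presentable objects at some finite stage, yielding the desired conclusion.
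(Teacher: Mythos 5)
Your overall strategy for \nref{SH.a8.3} parallels the paper's: track $\aleph_0$-presentability through the $\mc{V}^{\otimes}$-linear construction stage by stage, invoking $\prns{\sm[ft]{S},\tau}$-finiteness at the level of hypersheaves, and handle the $\tatesphere$-inversion last. For claims \nref{SH.a8.1} and \nref{SH.a8.2}, however, the paper takes a more modular route: it first records properties of $\spt[um = {\tau}, dm = {\tatesphere}]{S}^{\wedge}$ alone (stability, and the generation by $\Sigma^{\infty}_{\tatesphere}\stwist{\yona[um = {\tau}]{S}{X}_+}{r}{s}$ via the argument of Hoyois's C.12.(1)), then transfers these to $\spt[um = {\tau}, dm = {\tatesphere}]{S, \mc{V}}^{\otimes}$ using the tensor-product description $\spt[um = {\tau}, dm = {\tatesphere}]{S}^{\wedge} \otimes \mc{V}^{\otimes}$ and the proof of Lurie's 4.8.1.15. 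Your derivation of stability from the equivalence $\tatesphere[S] \simeq \prns{\yona[um = {\tau}]{S}{\Gm{S}}, \sigma_1} \wedge \sphere^1$ and \nref{scalars.a3.2} is a legitimate alternative. Your justification of \nref{SH.a8.2} is, by contrast, asserted rather than argued: you would need to verify, as in Hoyois's C.12.(1), that every object of the stabilization is a colimit of Tate-twisted, shifted suspension spectra of generators.

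There is a genuine error in the final step of your argument for \nref{SH.a8.3}. You propose to realize $\spt[um = {\tau}, dm = {\tatesphere}]{S, \mc{V}}^{\otimes}$ as a sequential colimit of the diagram $\spc[um = {\tau}, dm = {\pt}]{S, \mc{V}} \xrightarrow{- \wedge \tatesphere[S, \mc{V}]} \spc[um = {\tau}, dm = {\pt}]{S, \mc{V}} \to \cdots$ computed in $\calg{\pr[um = {\tu{L}, \otimes}, dm = {\aleph_0, \tu{st}}]}$. This cannot be right on two counts: the transition functors $- \wedge \tatesphere[S, \mc{V}]$ are not symmetric monoidal, so the diagram is not one in $\calg{\pr[um = {\tu{L}, \otimes}]}$ in the first place; and the intermediate category $\spc[um = {\tau}, dm = {\pt}]{S, \mc{V}}$ is merely pointed, not stable, so the diagram does not land in the ``$\mr{st}$'' subcategory either. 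The correct statement — and this is precisely the subtlety that Robalo's ``cyclic permutation'' analysis and the paper's citation of \cite[Proposition~4.4.2]{Robalo_thesis} and \cite[Definition~2.6]{Robalo_K-theory-and-the-bridge} handle — is that the \emph{underlying $\infty$-category} of the $\tatesphere$-inversion is a sequential colimit in $\pr[u = {L}]$, from which one extracts the $\aleph_0$-presentability of the generators, while the symmetric monoidal structure and the comparison with the monoidal universal property require a separate argument. Your conclusion is correct, but the intermediate claim you use to reach it is not.
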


\begin{proof}
Consider \nref[Claim]{SH.a8.1}.
As explained in the remarks following \cite[Definition 2.38]{Robalo_K-theory-and-the-bridge}, $\spt[um = {\tau}, dm = {\tatesphere}]{S}^{\wedge}$ is stable.
By \nref{scalars.a3}, $\spt[um = {\tau}, dm = {\tatesphere}]{S, \mc{V}}^{\otimes}$ is also stable.
Also, $\spt[um = {\tau}, dm = {\tatesphere}]{S}^{\wedge}$ is \locpres\ by construction.
That $\spt[um = {\tau}, dm = {\tatesphere}]{S, \mc{V}}^{\otimes}$ is \locpres\ follows from \cite[5.3.2.11]{Lurie_higher-algebra}.

Consider \nref[Claim]{SH.a8.2}.
It follows from the proof of \cite[4.8.1.15]{Lurie_higher-algebra} that the tensor product $\spt[um = {\tau}, dm = {\tatesphere}]{S, \mc{V}} = \spt[um = {\tau}, dm = {\tatesphere}]{S} \otimes \mc{V}$ is generated under transfinitely iterated small colimits by the objects of the form $V \odot M$ with $M \in \spt[um = {\tau}, dm = {\tatesphere}]{S}$ and $V \in \mc{V}$.
By the argument of \cite[C.12.(1)]{Hoyois_quadratic-refinement}, $\spt[um = {\tau}, dm = {\tatesphere}]{S}$ is generated under transfinitely iterated small colimits by the objects $\Sigma^{\infty}_{\tatesphere}\stwist{\yona[um = {\tau}]{S}{X}_+}{r}{s}$ with $X \in \sm[ft]{S}$ and $\prns{r,s} \in \integer^2$.
As $\mc{V}$ is \locpres[\kappa], it is generated under transfinitely iterated small colimits by the objects $V \in \mc{V}_{\kappa}$.
Since tensor products in $\spt[um = {\tau}, dm = {\tatesphere}]{S, \mc{V}}$ are cocontinuous separately in each variable, each $V \odot M \in \spt[um = {\tau}, dm = {\tatesphere}]{S, \mc{V}}$ as above belongs to the full \subqcategory\ generated under transfinitely iterated small colimits by the objects $V \odot \Sigma^{\infty}_{\tatesphere}\stwist{\yona[um = {\tau}]{S}{X}_+}{r}{s}$ with $X \in \sm[ft]{S}$, $\prns{r,s} \in \integer^2$ and $V \in \mc{V}_{\kappa}$.

Consider \nref[Claim]{SH.a8.3}.
By hypothesis, the objects $V \odot \loc[dm = {\tau}]{\yon{S}{X}} \in \hypsh[dm = {\tau}]{\sm[ft]{S}}{\mc{V}}$ with $X \in \sm[ft]{S}$ and $V \in \mc{V}_{\aleph_0}$ are $\aleph_0$-presentable (\nref{sheaves.11a.2}).
The proof of \cite[4.8.1.15]{Lurie_higher-algebra} shows that $\spc[um = {\tau}]{S, \mc{V}} = \spc[um = {\tau}]{S} \otimes \mc{V}$ is the localization of $\hypsh[dm = {\tau}]{\sm[ft]{S}}{\spc{}} \otimes \mc{V}$ with respect to the class of morphisms of the form $\id_V \odot f$ with $f \in \mf{W}_{\affine^1}$ as in \nref{SH.1.D} and with $V \in \mc{V}_{\aleph_0}$.
The domains and codomains of such morphisms are $\aleph_0$-presentable (\cite[5.3.2.11]{Lurie_higher-algebra}).
By \cite[5.5.7.3]{Lurie_higher-topos}, it follows that $\loc[dm = {\affine^1}]{}: \hypsh[dm = {\tau}]{\sm[ft]{S}}{\mc{V}} \to \spc[um = {\tau}]{S, \mc{V}}$ preserves $\aleph_0$-presentable objects.
In particular, each $V \odot \yona[um = {\tau}]{S}{X}$ with $X \in \sm[ft]{S}$ and $V \in \mc{V}_{\aleph_0}$ is $\aleph_0$-presentable.
By \nref{SH.b8}, the image of each such object under $\prns{-}_+: \spc[um = {\tau}]{S, \mc{V}} \to \spc[um = {\tau}, dm = {\pt}]{S, \mc{V}}$ is $\aleph_0$-presentable.
By \cite[Proposition~4.4.2]{Robalo_thesis}, it follows that the image of each such object under $\fct{\Sigma^{\infty}_{\tatesphere^1, \mc{V}}}{-}_+$ is $\aleph_0$-presentable.
The universal property of $\spt[um = {\tau}, dm = {\tatesphere}]{S, \mc{V}}^{\otimes} = \spt[um = {\tau}, dm = {\tatesphere}]{S}^{\wedge} \otimes \mc{V}^{\otimes}$ (\nref{SH.a4}) implies that it is canonically equivalent to the \locpres\ symmetric monoidal \qcategory\ obtained from $\spc[um = {\tau}, dm = {\pt}]{S, \mc{V}}^{\otimes}$ by formally adjoining a $\otimes$-inverse for the object $\tatesphere[S] \otimes \1{\mc{V}}$ (\cite[Definition~2.6]{Robalo_K-theory-and-the-bridge}).
The claim now follows from the argument given in \cite[C.12.(2)]{Hoyois_quadratic-refinement} once we remark that the objects $V \odot \Sigma^{\infty}_{\tatesphere}\stwist{\yona[um = {\tau}]{S}{X}_+}{r}{s}$ are $\aleph_0$-presentable.
\end{proof}

\begin{lemma}
\nlabel{SH.9}
Let $\mc{V}^{\otimes}$ be a \locpres\ \qcategory, $X$ a smooth, projective $S$-scheme, $\prns{r,s} \in \integer^2$, and $V \in \mc{V}$ an $\otimes$-dualizable object.
Then $V \odot \Sigma^{\infty}_{\tatesphere, \mc{V}}\stwist{\yona[um = {\tau}]{S, \mc{V}}{X}_+}{r}{s} \in \spt[um = {\tau}, dm = {\tatesphere}]{S, \mc{V}}$ is $\otimes$-dualizable.
\end{lemma}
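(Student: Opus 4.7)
The plan is to reduce the problem to dualizability of $\Sigma^{\infty}_{\tatesphere}\yona[um = {\tau}]{S}{X}_+ \in \spt[um = {\tau}, dm = {\tatesphere}]{S}$ and then invoke Atiyah duality for smooth projective $S$-schemes. The full \subqcategory\ of $\otimes$-dualizable objects of any symmetric monoidal \qcategory\ is closed under tensor products and retracts, and any symmetric monoidal functor preserves dualizable objects (\cite[4.6.1.6]{Lurie_higher-algebra}); these facts will be used repeatedly.

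First I would decompose the object of interest. The element $V \odot \Sigma^{\infty}_{\tatesphere, \mc{V}}\stwist{\yona[um = {\tau}]{S, \mc{V}}{X}_+}{r}{s}$ is, by the definition of the $\mc{V}^{\otimes}$-module structure on $\spt[um = {\tau}, dm = {\tatesphere}]{S, \mc{V}}^{\otimes} \simeq \spt[um = {\tau}, dm = {\tatesphere}]{S}^{\wedge} \otimes \mc{V}^{\otimes}$, equivalent to the tensor product in $\spt[um = {\tau}, dm = {\tatesphere}]{S, \mc{V}}^{\otimes}$ of the images of $V \in \mc{V}$ and of $\Sigma^{\infty}_{\tatesphere}\stwist{\yona[um = {\tau}]{S}{X}_+}{r}{s} \in \spt[um = {\tau}, dm = {\tatesphere}]{S}$ under the two canonical cocontinuous symmetric monoidal functors
\[
\mc{V}^{\otimes} \to \spt[um = {\tau}, dm = {\tatesphere}]{S, \mc{V}}^{\otimes}
\quad\text{and}\quad
\spt[um = {\tau}, dm = {\tatesphere}]{S}^{\wedge} \to \spt[um = {\tau}, dm = {\tatesphere}]{S, \mc{V}}^{\otimes}
\]
induced by tensoring with the monoidal unit of the other factor. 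Since $V$ is $\otimes$-dualizable by hypothesis, and symmetric monoidal functors preserve dualizability, it is enough to show that $\Sigma^{\infty}_{\tatesphere}\stwist{\yona[um = {\tau}]{S}{X}_+}{r}{s}$ is $\otimes$-dualizable in $\spt[um = {\tau}, dm = {\tatesphere}]{S}$.

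The Tate object $\tate{S}{1}$ is $\otimes$-invertible by construction (\nref{SH.1.K}), hence $\otimes$-dualizable, and the shift $\brk{s}$ acts by tensoring with a suspension of the unit, which is also $\otimes$-invertible in the stable \qcategory\ $\spt[um = {\tau}, dm = {\tatesphere}]{S}$. The question therefore reduces to showing that $\Sigma^{\infty}_{\tatesphere}\yona[um = {\tau}]{S}{X}_+$ is $\otimes$-dualizable whenever $X$ is smooth and projective over $S$.

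This last statement is the main obstacle and is exactly Atiyah duality for smooth projective schemes in motivic stable homotopy theory: it is classical in the Nisnevich setting (going back to Voevodsky, and proved at the level of the motivic stable homotopy category in \cite{Riou} and \cite{Hu} via the Morel--Voevodsky purity theorem and construction of a Thom-space dual), and passes unchanged to the \'etale setting through the canonical left-adjoint symmetric monoidal functor $\spt[u = {Nis}, dm = {\tatesphere}]{S}^{\wedge} \to \spt[u = {\'et}, dm = {\tatesphere}]{S}^{\wedge}$ induced by \nref{SH.1.B.etale}, which preserves dualizability. By virtue of Robalo's universal property (\nref{SH.a4}), the triangulated homotopy category of our $\spt[um = {\tau}, dm = {\tatesphere}]{S}^{\wedge}$ agrees with the classical motivic stable homotopy category, so the existing results apply directly, completing the argument.
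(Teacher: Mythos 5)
Your proof is correct and follows essentially the same approach as the paper's: reduce to the Nisnevich case with trivial coefficients, invoke Riou's Spanier–Whitehead duality for smooth projective $S$-schemes, and then transport dualizability along the canonical symmetric monoidal functor into $\spt[um = {\tau}, dm = {\tatesphere}]{S, \mc{V}}^{\otimes}$, using that $V$, $\tate{S}{1}$, and suspensions of the unit are each $\otimes$-dualizable. The only cosmetic discrepancy is bibliographic: the paper's reference is \cite[2.2]{Riou_dualite-de-spanier-whitehead}, and your placeholder keys would need to be adjusted to match the bibliography.
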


\begin{proof}
Letting $\mc{V}^{\otimes} = \spc{}^{\wedge}$ and working with the Nisnevich topology, then the claim follows from \cite[2.2]{Riou_dualite-de-spanier-whitehead}.
In the general case, the canonical symmetric monoidal functor $\spt[u = {Nis}, dm = {\tatesphere}]{S}^{\wedge} \to \spt[um = {\tau}, dm = {\tatesphere}]{S, \mc{V}}^{\otimes}$ sends $\Sigma^{\infty}_{\tatesphere}\twist{\yona[u = {Nis}]{S}{X}_+}{r}$ to $\Sigma^{\infty}_{\tatesphere, \mc{V}}\twist{\yona[um = {\tau}]{S, \mc{V}}{X}_+}{r}$, and symmetric monoidal functors preserve $\otimes$-dualizable objects.
\end{proof}

\begin{prop}
\nlabel{SH.11}
Let $\mc{V}^{\otimes}$ be an ind-rigid symmetric monoidal \qcategory\ and $S = \spec{\kk}$ the spectrum of a perfect field.
Assume one of the following conditions is satisfied:
\begin{itemize}
\item
the field $\kk$ admits resolutions of singularities by blow-ups; or
\item
$\mc{V}^{\otimes}$ is $\integer_{\prns{\ell}}$-linear, where $\ell \in \integer$ is a prime different from the characteristic of $\kk$.
\end{itemize}
Then the following properties hold:
\begin{enumerate}
\item
\nlabel{SH.11.1}
$\spt[um = {\tau}, dm = {\tatesphere}]{S, \mc{V}}$ is generated under transfinitely iterated small colimits by the $\otimes$-dualizable objects of the form $V \odot \Sigma^{\infty}_{\tatesphere, \mc{V}}\stwist{\yona[um = {\tau}]{S, \mc{V}}{X}_+}{r}{s}$ with $X$ a smooth, projective $S$-scheme, $\prns{r,s} \in \integer^2$ and $V \in \mc{V}$ an $\otimes$-dualizable object; and
\item
\nlabel{SH.11.2}
if $\mc{V}$ is $\prns{\sm[ft]{S}, \tau}$-finite, then $\spt[um = {\tau}, dm = {\tatesphere}]{S, \mc{V}}^{\otimes}$ is ind-rigid.
\end{enumerate}
\end{prop}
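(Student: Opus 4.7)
The plan is to treat the two claims in order: \nref{SH.11.1} is the substantive point and will follow from \nref{SH.a8.2} once smooth finite-type $S$-schemes are replaced by smooth projective ones, while \nref{SH.11.2} will then drop out by combining \nref{SH.11.1} with \nref{SH.a8.3} and a standard comparison between $\aleph_0$-presentable and $\otimes$-dualizable objects in a \locpres[\aleph_0]\ symmetric monoidal \qcategory\ whose unit is $\aleph_0$-presentable.

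For \nref{SH.11.1}, I would first observe that ind-rigidity identifies $\mc{V}_{\aleph_0}$ with the $\otimes$-dualizable objects of $\mc{V}$, so applying \nref{SH.a8.2} with $\kappa = \aleph_0$ already exhibits $\spt[um = {\tau}, dm = {\tatesphere}]{S, \mc{V}}$ as generated under transfinitely iterated small colimits by the objects $V \odot \Sigma^{\infty}_{\tatesphere, \mc{V}}\stwist{\yona[um = {\tau}]{S, \mc{V}}{X}_+}{r}{s}$ with $V$ $\otimes$-dualizable, $\prns{r,s} \in \integer^2$ and $X \in \sm[ft]{S}$. Since $V \odot \prns{-}$ is cocontinuous and Tate twists and shifts are autoequivalences, the task reduces to showing that for each $X \in \sm[ft]{S}$, the object $\Sigma^{\infty}_{\tatesphere}\yona[um = {\tau}]{S}{X}_+$ of $\spt[um = {\tau}, dm = {\tatesphere}]{S}^{\wedge}$ belongs to the full \subqcategory\ generated under transfinitely iterated small colimits by $\Sigma^{\infty}_{\tatesphere}\yona[um = {\tau}]{S}{Y}_+$ for $Y$ smooth projective over $S$. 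Applying the canonical symmetric monoidal functor $\spt[um = {\tau}, dm = {\tatesphere}]{S}^{\wedge} \to \spt[um = {\tau}, dm = {\tatesphere}]{S, \mc{V}}^{\otimes}$ then transports such a decomposition back to the $\mc{V}^{\otimes}$-linear setting, and the $\otimes$-dualizability of the resulting generators is furnished by \nref{SH.9}.

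The principal obstacle is thus the smooth-projective generation statement inside $\spt[um = {\tau}, dm = {\tatesphere}]{S}^{\wedge}$. My approach is Noetherian induction on $\dim X$: compactify $X$ by a smooth projective $\bar{X}$ with strict normal crossings boundary $D = \bigcup_i D_i$, which is available under resolution of singularities in the characteristic-zero case, and, after inverting $\ell$ and using de Jong alterations, in the $\integer_{\prns{\ell}}$-linear case; then use Nisnevich (respectively \'etale) excision together with Thom isomorphisms for the normal bundles of the strata to express $\Sigma^{\infty}_{\tatesphere}\yona[um = {\tau}]{S}{X}_+$ via a finite string of cofiber sequences involving $\Sigma^{\infty}_{\tatesphere}\yona[um = {\tau}]{S}{\bar{X}}_+$ and Tate-twisted copies of $\Sigma^{\infty}_{\tatesphere}\yona[um = {\tau}]{S}{D_I}_+$ for the closed strata $D_I$. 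Since the $D_I$ have strictly smaller dimension, the induction closes. The delicate point in the positive-characteristic case is controlling the non-invertible degrees appearing in alterations, which is precisely what forces the $\integer_{\prns{\ell}}$-linear hypothesis: one needs the generic degrees prime to $\ell$ to become invertible before a trace-like splitting argument recovers the original motive from its alteration.

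For \nref{SH.11.2}, granted \nref{SH.11.1}, I would combine it with \nref{SH.a8.3} to obtain a generating family of objects that are simultaneously $\aleph_0$-presentable and $\otimes$-dualizable, and then deduce ind-rigidity in two steps. Since the $\otimes$-dualizable objects form a stable \subqcategory\ closed under retracts, every $\aleph_0$-presentable object---being a retract of a finite colimit of these generators---is $\otimes$-dualizable. Conversely, the monoidal unit $\1{\spt[um = {\tau}, dm = {\tatesphere}]{S, \mc{V}}}$ is itself one of the generators, hence $\aleph_0$-presentable, so for any $\otimes$-dualizable $M$ with dual $M^{\vee}$, the natural equivalence $\map[dm = {\spt[um = {\tau}, dm = {\tatesphere}]{S, \mc{V}}}]{M}{N} \simeq \map[dm = {\spt[um = {\tau}, dm = {\tatesphere}]{S, \mc{V}}}]{\1{\spt[um = {\tau}, dm = {\tatesphere}]{S, \mc{V}}}}{M^{\vee} \otimes N}$, combined with the cocontinuity of the tensor product in each variable, shows that $M^{\vee} \otimes \prns{-}$ and hence $\map{M}{\prns{-}}$ preserves filtered colimits, so that $M$ is $\aleph_0$-presentable.
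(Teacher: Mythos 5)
Your overall strategy for both claims agrees with the paper's, and your treatment of \nref{SH.11.2} is essentially identical to the paper's proof (which invokes the generators from \nref{SH.a8}, observes they lie in $\coef{}{}_{\aleph_0}$, generates the full \subqcategory\ of $\aleph_0$-presentable objects under finite colimits and retracts, and uses closure of $\otimes$-dualizables under these). Your explicit argument that a $\otimes$-dualizable object is $\aleph_0$-presentable, via $\map{M}{-} \simeq \map{\1{}}{M^{\vee} \otimes -}$ and $\aleph_0$-presentability of the unit, is a welcome clarification the paper leaves implicit.

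For \nref{SH.11.1}, your route differs from the paper's: the paper does not reprove smooth-projective generation but simply cites Riou's Spanier--Whitehead duality theorem in the resolution-of-singularities case (for $\mc{V}^{\otimes} = \spt{}^{\wedge}$ over the Nisnevich topology) and Levine--Yang--Zhao for the $\integer_{\prns{\ell}}$-linear positive-characteristic case, then transports by symmetric monoidal functoriality exactly as you do. Your sketch of the induction on dimension via smooth compactification, excision, and Thom isomorphisms is essentially an unpacking of those references; the fact that you also cite \nref{SH.9} for $\otimes$-dualizability matches the paper's appeal to Riou's duality.

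There is, however, a genuine gap in how you set up the reduction. You state that the task reduces to exhibiting $\Sigma^{\infty}_{\tatesphere}\yona[um = {\tau}]{S}{X}_+$ in the colimit closure of smooth-projective representables \emph{inside $\spt[um = {\tau}, dm = {\tatesphere}]{S}^{\wedge}$}, and then transport along the canonical symmetric monoidal functor $\spt[um = {\tau}, dm = {\tatesphere}]{S}^{\wedge} \to \spt[um = {\tau}, dm = {\tatesphere}]{S, \mc{V}}^{\otimes}$. In the positive-characteristic $\integer_{\prns{\ell}}$-linear case that decomposition simply does not exist in $\spt[um = {\tau}, dm = {\tatesphere}]{S}^{\wedge}$: the trace-splitting after a de Jong alteration only becomes available once the generic degree has been inverted, so the decomposition must be constructed in $\spt[u = {Nis}, dm = {\tatesphere}]{S, \D{\mod[dm = {\integer_{\prns{\ell}}}]{}}}^{\otimes}$ (or a further localization) and then transported along $\spt[u = {Nis}, dm = {\tatesphere}]{S, \D{\mod[dm = {\integer_{\prns{\ell}}}]{}}}^{\otimes} \to \spt[um = {\tau}, dm = {\tatesphere}]{S, \mc{V}}^{\otimes}$, which is exactly the functor the paper uses in its second case. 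You clearly know this (you invoke inverting $\ell$ later in the sketch), but as written the reduction and the transport step contradict the subsequent argument. A second, smaller imprecision: the generators of the reduced statement must include Tate twists of the smooth-projective representables, not just the representables themselves; you supply the twists when you run the induction, but the stated reduction omits them, and that closure is not automatic under colimits in the stable setting.
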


\begin{proof}
Consider \nref[Claim]{SH.11.1}.
Let $\tau$ be the Nisnevich topology.
Letting $\mc{V}^{\otimes} = \spt{}^{\wedge}$, and assuming resolution of singularites over $\kk$, this is proved in \cite[1.4]{Riou_dualite-de-spanier-whitehead}.
Letting $\mc{V}^{\otimes} = \D{\mod[dm = {\integer_{\prns{\ell}}}]{}}^{\otimes}$, the claim is proved in \cite[B.1]{Levine-Yang-Zhao_algebraic-elliptic}.
For general $\tau$ and $\mc{V}^{\otimes}$, the symmetric monoidal functors
\[
\spt[u = {Nis}, dm = {\tatesphere}]{S}^{\wedge} 
  \to \spt[um = {\tau}, dm = {\tatesphere}]{S, \mc{V}}^{\otimes}
\quad\text{and}\quad
\spt[u = {Nis}, dm = {\tatesphere}]{S, \D{\mod[dm = {\integer_{\prns{\ell}}}]{}}}^{\otimes}
  \to \spt[um = {\tau}, dm = {\tatesphere}]{S, \mc{V}}^{\otimes}
\]
preserve $\otimes$-dualizable objects.
The proof of \nref{SH.a8.2} adapts readily to show that the objects $V \odot \Sigma^{\infty}_{\tatesphere, \mc{V}}\stwist{\yona[um = {\tau}]{S, \mc{V}}{X}_+}{r}{s}$ as above generate $\spt[um = {\tau}, dm = {\tatesphere}]{S, \mc{V}}$ under transfinitely iterated small colimits. 

To prove \nref[Claim]{SH.11.2}, note that the $\otimes$-dualizable objects of $\mc{V}^{\otimes}$ are precisely the $\aleph_0$-presentable ones.
The objects $V \odot \Sigma^{\infty}_{\tatesphere, \mc{V}}\stwist{\yona[um = {\tau}]{S, \mc{V}}{X}_+}{r}{s}$ are therefore $\aleph_0$-presentable by \nref{SH.a8} and it follows that they generate the full \subqcategory\ of $\aleph_0$-presentable objects under iterated finite colimits and retracts, as one can verify using the arguments of \cite[1.4.4.2]{Lurie_higher-algebra} and \cite[5.4.2.4]{Lurie_higher-topos} (cf. \cite[4.6]{Drew_verdier-quotients}).
As finite colimits and retracts of $\otimes$-dualizable objects are $\otimes$-dualizable, $\spt[um = {\tau}, dm = {\tatesphere}]{S, \mc{V}}^{\otimes}$ is ind-rigid.
\end{proof}

\section{Brown representability and reconstruction}
\nlabel{rep}

\setcounter{thm}{-1}

\begin{notation}
\nlabel{rep.0}
In this section, we fix the following notation and hypotheses:
\begin{itemize}
\item
$S$, a Noetherian scheme of finite dimension; and
\item
$\tau$, either the Nisnevich or the \'etale topology.
\end{itemize}
\end{notation}

\begin{motivation}
In this section, we introduce the \qcategory\ $\DH{\spec{\complex}}$ of motivic Hodge modules over $\spec{\complex}$.
As promised in \nref{intro}, we will use $\DH{\spec{\complex}}$ to define a full-fledge six-functor formalism $\DH{-}$ below in \nref{ex:DH-coefficient-syst}.
Here, we content ourselves to explore the situation over the point $\spec{\complex}$, providing the first bits of evidence for the tenability of our definition of $\DH{-}$.

The essential datum in the construction of $\DH{-}$ is that of a commutative algebra object of $\spt[um = {\tau}, dm = {\tatesphere}]{\spec{\complex}}$ that represents absolute Hodge cohomology. 
For the purposes of bookkeeping, it will be convenient to construct this absolute Hodge spectrum from something that, \emph{a priori}, provides more explicit control over the relevant mixed Hodge structures. 

The symmetric monoidal functor $\enhancedbettimon{}$ is \'etale local, $\affine^1$-invariant, and $\tatesphere$-stable (\nref{SH.a2}).
Not for nothing, however, we stress that $\enhancedbetti{}$ is contravariant, so some slight acrobatics are required before we may apply the universal property of $\spt[um = {\tau}, dm = {\tatesphere}]{\spec{\complex}}$ to extract from $\enhancedbettimon{}$ a symmetric monoidal realization functor
\[
\rho^{*, \otimes}_{\tu{Hdg}}:
  \spt[u = {\'et}, dm = {\tatesphere}]{\spec{\complex}}^{\wedge}_{\aleph_0}
    \to \mhc[um = {\tu{p}, \otimes}, dm = {\rational}].
\]
The absolute Hodge spectrum $\hodgespectrum$ that we seek is the image of the monoidal unit under the right adjoint $\rho_{\tu{Hdg},*}$ of $\rho^*_{\tu{Hdg}}$.
We then define $\DH{\spec{\complex}}$ as the \qcategory\ of modules over $\hodgespectrum$ in $\spt[u = {\'et}, dm = {\tatesphere}]{\spec{\complex}}^{\wedge}$.

In order to justify this definition, we conclude this section by showing that the realization functor $\rho^*_{\tu{Hdg}}$ induces a fully faithful functor $\DH{\spec{\complex}} \hookrightarrow \D[u = {b}]{\mhsp[\rational]}$.
\end{motivation}

\begin{summary}
Let $v^{\otimes}: \mc{V}^{\otimes} \to \mc{W}^{\otimes}$ be cocontinuous symmetric monoidal functor between stable symmetric monoidal \qcategories.
\begin{itemize}
\item
In \nref{rep.3}, we show that each $\tau$-local $\mc{W}^{\otimes}$-valued mixed Weil theory $\cohomologymon{}: \prns{\sm[ft]{S}}\op[\amalg] \to \mc{W}^{\otimes}$ induces an essentially unique $\mc{V}^{\otimes}$-linear cocontinuous symmetric monoidal realization functor $\spt[um = {\tau}, dm = {\tatesphere}]{S, \mc{V}}^{\otimes} \to \mc{W}^{\otimes}$.
\item
In \nref{rep.3.2}, we take this a step further to assign to $\cohomologymon{}$ a commutative algebra object $A$ of $\spt[um = {\tau}, dm = {\tatesphere}]{S, \mc{V}}^{\otimes}$ that represents the cohomology theory $\cohomologymon{}$.
By this process, we obtain the fundamental examples of the $\rational$-linear Betti spectrum and the absolute Hodge spectrum over $\spec{\complex}$ in \nref{defn:hodge-spectrum}.
\item
In \nref{rep.6}, we show that, under some technical assumptions, there is a fully faithful symmetric monoidal functor
$
\mod[dm = {A}]{\spt[um = {\tau}, dm = {\tatesphere}]{S, \mc{V}}}^{\otimes} \hookrightarrow \mc{W}^{\otimes},
$
where $A$ represents the cohomology theory $\cohomologymon{}$ as in the previous item.
In particular, the \qcategory\ of modules over the absolute Hodge spectrum is a full \subqcategory\ of $\D{\ind{\mhsp[\rational]}}$, fulfilling \nref[Desideratum]{desideratum.4}.
\item
We conclude this section with a few remarks about functoriality of these constructions (\nref{rep.7}) and enriched representability of mixed Weil theories (\nref{prop:enriched-representability}).
\end{itemize}
\end{summary}

\begin{prop}
\nlabel{rep.3}
\nlabel{rep.3.1}
Let $v^{\otimes}: \mc{V}^{\otimes} \to \mc{W}^{\otimes}$ be a cocontinuous symmetric monoidal functor between \locpres\ symmetric monoidal \qcategories\ with $\mc{W}^{\otimes}$ stable,
and $\iota: \mc{W}\rig \hookrightarrow \mc{W}$ the inclusion of the full \subqcategory\ spanned by the $\otimes$-dualizable objects.
There is an essentially commutative square
\[
\begin{tikzcd}
\map[dm = {\calg{\QCATmon}}]{\prns{\sm[ft]{S}}\op[\amalg]}{\mc{W}\rig^{\otimes}}_{\tu{Weil},\tau}\op
\ar[r, "i" below, hookrightarrow]
\ar[d, "\alpha" left]
&
\map[dm = {\calg{\QCATmon}}]{\prns{\sm[ft]{S}}\op[\amalg]}{\mc{W}\rig^{\otimes}}\op
\ar[d, "\beta" right]
\\
\map[dm = {\pr[um = {\tu{L}, \otimes}]_{\mc{V}^{\otimes}/}}]{\spt[um = {\tau}, dm = {\tatesphere}]{S, \mc{V}}^{\otimes}}{\mc{W}^{\otimes}}
\ar[r, "\prns{\Sigma^{\infty,\otimes}_{\tatesphere,\mc{V}}\prns{-}_+}^*" above, hookrightarrow]
&
\map[dm = {\calg{\QCATmon}}]{\prns{\sm[ft]{S}}^{\times}}{\mc{W}^{\otimes}},
\end{tikzcd}
\]
where $i$ is the inclusion of the Kan subcomplex spanned by the simplices whose vertices classify $\tau$-local $\mc{W}^{\otimes}\rig$-valued mixed Weil theories
and $\beta$ is composition with the involution $\prns{-}^{\vee, \otimes}$ of \nref{rep.2} and $\iota^{\otimes}$.
\end{prop}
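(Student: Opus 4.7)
The plan is to construct $\alpha$ pointwise by invoking Robalo's universal property (\nref{SH.a4}). Given a $\tau$-local mixed Weil theory $\cohomologymon{}: \prns{\sm[ft]{S}}\op[\amalg] \to \mc{W}\rig^{\otimes}$, form the composite symmetric monoidal functor
\[
\beta\prns{\cohomologymon{}}
  \coloneqq \iota^{\otimes} \circ \prns{-}^{\vee, \otimes} \circ \cohomology\op[\otimes]:
  \prns{\sm[ft]{S}}^{\times} \to \mc{W}^{\otimes},
\]
using the symmetric monoidal duality equivalence $\prns{-}^{\vee, \otimes}: \mc{W}\rig\op[\otimes] \isom \mc{W}\rig^{\otimes}$ of \nref{rep.2}. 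Once $\beta\prns{\cohomologymon{}}$ is shown to satisfy Robalo's property $P$ (being $\tau$-local, $\affine^1$-invariant, and $\tatesphere$-stable), \nref{SH.a4} supplies an essentially unique $\mc{V}^{\otimes}$-linear cocontinuous symmetric monoidal extension $\alpha\prns{\cohomologymon{}}: \spt[um = {\tau}, dm = {\tatesphere}]{S, \mc{V}}^{\otimes} \to \mc{W}^{\otimes}$ along $\Sigma^{\infty, \otimes}_{\tatesphere, \mc{V}}\prns{-}_+$. Essential commutativity of the square is then automatic, since restricting this extension along $\Sigma^{\infty, \otimes}_{\tatesphere, \mc{V}}\prns{-}_+$ reproduces $\beta\prns{\cohomologymon{}}$ by construction, and functoriality of the recipe in $\cohomologymon{}$ upgrades this pointwise definition to the required morphism of mapping spaces.

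The heart of the proof is therefore the verification of property $P$ for $\beta\prns{\cohomologymon{}}$. The $\affine^1$-invariance transfers directly from that of $\cohomologymon{}$, since $\prns{-}^{\vee, \otimes}$ and $\iota^{\otimes}$ preserve equivalences. The $\tatesphere$-stability likewise transfers: the duality equivalence exchanges fibers with cofibers of morphisms between dualizable objects while preserving $\otimes$-invertibility, and these features pass through the symmetric monoidal inclusion $\iota^{\otimes}$ into $\mc{W}^{\otimes}$ using that $\mc{W}\rig$ is a stable subcategory of $\mc{W}$ closed under the relevant finite (co)limits. Preservation of finite coproducts by $\beta\prns{\cohomologymon{}}$ is a formal consequence of the symmetric monoidality of $\cohomologymon{}$ together with the self-duality of finite (co)products of dualizable objects.

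The main step is $\tau$-locality: that $\beta\prns{\cohomologymon{}}$ sends each internal $\tau$-hypercover $p: U_{\bullet} \to X$ to a colimit diagram in $\mc{W}$. For this I would invoke \nref{prop:descent} with $\mc{V} = \mc{W}\rig$: the $\tau$-locality hypothesis on the Weil theory $\cohomologymon{}$ is precisely \nref[Condition]{prop:descent.1}, and the equivalent \nref[Condition]{prop:descent.3} asserts that $\iota\cohomology: \prns{\sm[ft]{S}}\op \to \ind{\mc{W}\rig}$ is $p$-local. Combined with the cocontinuous Ind-extension $\overline{\iota}: \ind{\mc{W}\rig} \to \mc{W}$ of $\iota$ supplied by the universal property of ind-completion and the cocompleteness of $\mc{W}$, the dualization via $\prns{-}^{\vee, \otimes}$ on $\mc{W}\rig$ transports the ind-level limit cone into the desired colimit cone in $\mc{W}$. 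The main obstacle I anticipate is the careful bookkeeping needed to carry out this transport — specifically ensuring the coherent compatibility of ind-completion with both the duality equivalence and the inclusion $\iota$ — but these compatibilities follow from the universal properties of the constructions involved rather than requiring substantially new input beyond \nref{SH.a4} and \nref{prop:descent}.
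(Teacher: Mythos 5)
Your proposal is correct and takes essentially the same route as the paper: identify the bottom horizontal arrow via \nref{SH.a4} as the inclusion of the subcomplex of $\tau$-local, $\affine^1$-invariant, $\tatesphere$-stable symmetric monoidal functors, then show $\beta i$ lands there by forming the homology functor $\iota^{\otimes}\circ\prns{-}^{\vee,\otimes}\circ\cohomology\op[\otimes]$ and checking that it inherits these three properties (citing \nref{prop:descent} for $\tau$-locality). The paper's verification of the three properties is terser than yours — it simply asserts them and cites \nref{prop:descent} — whereas your treatment spells out the transfer of $\affine^1$-invariance and $\tatesphere$-stability through the duality equivalence and unwinds the ind-completion mechanics underlying \nref{prop:descent}, but the mathematical content and the logical skeleton coincide.
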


\begin{proof}
By \nref{SH.a4}, $\prns{\Sigma^{\infty,\otimes}_{\tatesphere,\mc{V}}\prns{-}_+}^*$ is equivalent to the inclusion of the Kan subcomplex spanned by the simplices whose vertices classify $\tau$-local, $\affine^1$-local, $\tatesphere$-stable symmetric monoidal functors $\prns{\sm[ft]{S}}^{\times} \to \mc{W}^{\otimes}$.
To obtain the desired morphism $\alpha$, it therefore suffices to show that the essential image of $\beta i$ is contained in this Kan subcomplex.
 
Suppose a $0$-simplex of $\map[dm = {\calg{\QCATmon}}]{\prns{\sm[ft]{S}}\op[\amalg]}{\mc{W}\rig^{\otimes}}_{\tu{Weil}}\op$ classifies a $\mc{W}\rig^{\otimes}$-valued mixed Weil theory $\cohomologymon{}$.
Let $\homologymon{}$ denote the composite 
\[
\homologymon{}:
  \prns{\sm[ft]{S}}^{\times}
    \xrightarrow{\cohomology[um = {\varop[\otimes]}]{}} \mc{W}\rig\op[\otimes]
    \xrightarrow{\prns{-}^{\vee,\otimes}} \mc{W}^{\otimes}\rig
    \xrightarrow{\iota^{\otimes}} \mc{W}^{\otimes}.
\]
Note that $\homologymon{}$ is $\tau$-local by \nref{prop:descent}, and also $\affine^1$-invariant and $\tatesphere$-stable.
The $0$-simplex classifying $\homologymon{}$ is equivalent to the image of the $0$-simplex classifying $\cohomologymon{}$ under $\beta i$ and the claim follows.
\end{proof}

\begin{defn}
By \nref{rep.3}, each $\mc{W}^{\otimes}$-valued mixed Weil theory $\cohomologymon{}$ induces an essentially unique $\mc{V}^{\otimes}$-linear cocontinuous symmetric monoidal functor $\rho^{*, \otimes} \coloneqq \fct{\alpha}{\cohomologymon{}}: \spt[um = {\tau}, dm = {\tatesphere}]{S, \mc{V}}^{\otimes} \to \mc{W}^{\otimes}$ such that $\rho^*\Sigma^{\infty}_{\tatesphere, \mc{V}}\prns{X}_+ \simeq \cohomology{X}^{\vee}$ for each $X \in \sm[ft]{S}$.
We call $\rho^{*, \otimes}$ the \emph{symmetric monoidal realization functor associated with $\cohomologymon{}$}.
\end{defn}

\begin{rmk}
\nlabel{rep.4}
We intend to apply \nref{rep.3} to the functors $\bettimon{}$ and $\enhancedbettimon{}$ of \nref{sheaves.13}.
Thus, in the examples of interest below, the functor $v^{\otimes}: \mc{V}^{\otimes} \to \mc{W}^{\otimes}$ in \nref{rep.3} will be the symmetric monoidal functor 
\[
\alpha^{*, \otimes}:
\D{\mod[dm = {\rational}]{}}^{\otimes}
 \to \D{\ind{\mhsp[\rational]}}^{\otimes}
\]
induced by the functor that assigns to each $\rational$-module $V$ of finite rank $r$ the direct sum of $r$ copies of $\1{\mhsp[\rational]}$, 
or one of the identities
$\id_{\D{\mod[dm = {\rational}]{}}}^{\otimes}$
or
$\id_{\D{\ind{\mhsp[\rational]}}}^{\otimes}$.
\end{rmk}

\begin{defn}
\nlabel{ex:realizations}
Applying \nref{rep.3} to the mixed Weil theories $\bettimon{}$ and $\enhancedbettimon{}$ of \nref{ex:weil-theory}, we obtain realization functors over $\spec{\complex}$.
\begin{enumerate}
\item
Taking $v^{\otimes} = \id_{\D{\mod[dm = {\rational}]{}}}^{\otimes}$ and $\cohomologymon{} = \bettimon{}$ in \nref{rep.3}, we obtain a $\D{\mod[dm = {\rational}]{}}^{\otimes}$-linear cocontinuous symmetric monoidal functor
\[
\rho^{*, \otimes}_{\tu{B}}:
\spt[u = {\'et}, dm = {\tatesphere}]{S, \D{\mod[dm = {\rational}]{}}}^{\otimes}
  \to \D{\mod[dm = {\rational}]{}}^{\otimes},
\]
which we refer to as the \emph{$\rational$-linear Betti realization over $\spec{\complex}$}.
\item
Taking $v^{\otimes} = \alpha^{*, \otimes}$ as defined in \nref{rep.4} and $\cohomologymon{} = \enhancedbettimon{}$ as in \nref{rep.3}, we obtain a $\D{\mod[dm = {\rational}]{}}^{\otimes}$-linear cocontinuous symmetric monoidal functor
\[
\rho^{*, \otimes}_{\tu{{Hdg}}}:
\spt[u = {\'et}, dm = {\tatesphere}]{S, \D{\mod[dm = {\rational}]{}}}^{\otimes}
  \to \D{\ind{\mhsp[\rational]}}^{\otimes},
\]
which we refer to as the \emph{mixed Hodge realization over $\spec{\complex}$}.
\item
Taking $v^{\otimes} = \id_{\D{\ind{\mhsp[\rational]}}}^{\otimes}$ and $\cohomologymon{} = \enhancedbettimon{}$, we obtain a $\D{\ind{\mhsp[\rational]}}^{\otimes}$-linear cocontinuous symmetric monoidal functor
\[
\rho^{*, \otimes}_{\tb{{Hdg}}}:
\spt[u = {\'et}, dm = {\tatesphere}]{S, \D{\ind{\mhsp[\rational]}}}^{\otimes}
  \to \D{\ind{\mhsp[\rational]}}^{\otimes},
\]
which we refer to as the \emph{$\mhsp[\rational]$-linear mixed Hodge realization over $\spec{\complex}$}.
\end{enumerate}
\end{defn}

\begin{defn}
\nlabel{rep.1}
Let $\mc{W}^{\otimes}$ be a stable symmetric monoidal \qcategory\ and $\cohomologymon{}: \prns{\sm[ft]{S}}\op[\amalg] \to \mc{W}^{\otimes}$ a symmetric monoidal functor. 
If $\sigma_1: S \to \Gm{S}$ denotes the unit section, then we let $\tate{\mc{W}}{-1}_{\cohomology{}} \in \mc{W}$ denote the $\sphere^1$-suspension $\fib{\cohomology{\sigma_1}}\sus{1}$ of the fiber of $\cohomology{\sigma}$, so we have a fiber sequence
\[
\fib{\cohomology{\sigma_1}}
  \to \cohomology{S}
  \xrightarrow{\cohomology{\sigma_1}} \cohomology{\Gm{S}}.
\]
We refer to $\tate{\mc{W}}{-1}_{\cohomology{}}$ as the \emph{Tate object in $\mc{W}^{\otimes}$ with respect to $\cohomologymon{}$}.
By \nref{SH.a2}, $\cohomologymon{}$ is $\tatesphere$-stable if and only if $\tate{\mc{W}}{-1}_{\cohomology{}}$ is $\otimes$-invertible.
In that case, we set
\[
\twist{W}{r}_{\cohomology{}} \coloneqq W \otimes \prns{\tate{\mc{W}}{-1}_{\cohomology{}}}^{\otimes(-r)}
\]
for each $W \in \mc{W}$ and each $r \in \integer$.
\end{defn}

\begin{ex}
\nlabel{ex:tate-twist}
Applying \nref{rep.1} to the functors of \nref{sheaves.13}, we recover well-known objects by the usual computations of the cohomology of $\Gm{\complex}$.
\begin{enumerate}
\item
The Tate object $\tate{\mod[dm = {\rational}]{}}{-1}_{\betti{}}$ associated with $\bettimon{}$ as in \nref{sheaves.13a} is isomorphic to a $\rational$-module of rank $1$ regarded as a cochain complex concentrated in degree $0$ by the usual computation of the Betti cohomology of $\Gm{\complex}$. 
\item
The Tate object $\tate{\mhsp[\rational]}{-1}_{\enhancedbetti{}}$ associated with $\enhancedbettimon{}$ as in \nref{sheaves.13b} is isomorphic to the Tate-Hodge structure $\twist{\rational}{-1}$ of weight $2$ regarded as a cochain complex concentrated in degree $0$.
\end{enumerate}
\end{ex}

\begin{defn}
\nlabel{defn:absolute-theory}
Let $\mc{W}^{\otimes}$ be a stable \locpres\ symmetric monoidal \qcategory, $W \in \mc{W}$, and $\cohomologymon{}: \prns{\sm[ft]{S}}\op[\amalg] \to \mc{W}^{\otimes}$ a $\mc{W}^{\otimes}$-valued mixed Weil theory.
We obtain a bigraded \emph{absolute cohomology theory associated with $\cohomologymon{}$ with coefficients in $W$} given by 
\[
\absolutecohomology[um = {s}]{X, \twist{W}{r}}
  \coloneqq \pi_0\map[dm = {\mc{W}}]{\twist{W}{-r}_{\cohomology{}}}{\cohomology{X}\sus{s}}.
\]
\end{defn}

\begin{ex}
\nlabel{ex:absolute-theory}
Since $\mod[dm = {\rational}]{}$ is semisimple and the Tate objects associated with $\bettimon{}$ are equivalent to $\1{\mod[dm = {\rational}]{}}$, $\rational$-linear Betti cohomology is naturally equivalent to the associated absolute cohomology theory with coefficients in $\rational$ once we fix the degree $r$ of the twist.
On the other hand, $\mhsp[\rational]$ is not semisimple and the Tate-Hodge objects of different degrees are not mutually isomorphic.
The absolute cohomology theories associated with $\enhancedbettimon{}$ are therefore interesting.
By definition, they recover the classical definition of absolute Hodge cohomology given in \cite{Beilinson_absolute-hodge}.
\end{ex}

\begin{prop}
\nlabel{rep.3.2}
Let $v^{\otimes}: \mc{V}^{\otimes} \to \mc{W}^{\otimes}$ be a cocontinuous symmetric monoidal functor between \locpres\ symmetric monoidal \qcategories\ with $\mc{W}^{\otimes}$ stable,
$\cohomologymon{}: \prns{\sm[ft]{S}}\op[\amalg] \to \mc{W}^{\otimes}$ a $\tau$-local $\mc{W}^{\otimes}$-valued mixed Weil theory,
and $\rho^{*, \otimes}: \spt[um = {\tau}, dm = {\tatesphere}]{S, \mc{V}}^{\otimes} \to \mc{W}^{\otimes}$ the associated symmetric monoidal realization functor.
The following properties hold:
\begin{enumerate}
\item
\nlabel{rep.3.2a}
$\rho^{*,\otimes}$ admits a lax symmetric monoidal right adjoint $\rho^{\otimes}_*$; and
\item
\nlabel{rep.3.2b}
if $A \coloneqq \rho_*\1{\mc{W}} \in \calg{\spt[um = {\tau}, dm = {\tatesphere}]{S,\mc{V}}^{\otimes}}$, then there is a natural equivalence
\begin{equation}
\nlabel{rep.3.a1}
\map[dm = {\spt[um = {\tau}, dm = {\tatesphere}]{S, \mc{V}}}]{V \odot \Sigma^{\infty}_{\tatesphere}\yona[um = {\tau}]{S}{X}}{\stwist{A}{r}{s}}
\simeq
\map[dm = {\mc{W}}]{\fct{v}{V}}{\twist{\cohomology{X}}{r}_{\cohomology{}}\sus{s}}
=
\
\end{equation}
for each $X \in \sm[ft]{S}$, each $V \in \mc{V}$, and each $\prns{r,s} \in \integer^2$.
\end{enumerate}
In particular, $A$ represents the absolute cohomology theory associated with $\cohomologymon{}$ with coefficients in $\1{\mc{W}}$, i.e., there is a natural equivalence
\[
\absolutecohomology[um = {s}]{X, \tate{\mc{W}}{r}}
  \simeq \pi_0\map[dm = {\mc{W}}]{\tate{\mc{W}}{-r}_{\cohomology{}}}{\cohomology{X}\sus{s}}
\]
for each $X \in \sm[ft]{S}$ and each $\prns{r,s} \in \integer^2$.
\end{prop}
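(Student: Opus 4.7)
My plan is to dispatch (i) by citation and focus on (ii), deriving the concluding assertion as a specialization. For (i), the functor $\rho^{*,\otimes}$ is cocontinuous between presentable symmetric monoidal \qcategories\ (presentability of the source being \nref{SH.a8.1}), so the adjoint functor theorem \cite[5.5.2.9]{Lurie_higher-topos} supplies a right adjoint $\rho_*$, which \cite[7.3.2.7]{Lurie_higher-algebra} promotes to a lax symmetric monoidal functor $\rho^{\otimes}_*$.

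For (ii), I would move the entire pairing across the $\rho^* \dashv \rho_*$ adjunction. Write $\stwist{A}{r}{s} = \prns{\rho_*\1{\mc{W}}} \otimes \tate{S, \mc{V}}{r}\sus{s}$; since $\tate{S, \mc{V}}{r}$ is $\otimes$-invertible, the projection formula for invertible objects (immediate from Yoneda plus symmetric monoidality of $\rho^*$) identifies this with $\rho_*\prns{\rho^*\tate{S, \mc{V}}{r}\sus{s}} \simeq \rho_*\twist{\1{\mc{W}}}{r}_{\cohomology{}}\sus{s}$, granting the key identification $\rho^*\tate{S,\mc{V}}{r} \simeq \tate{\mc{W}}{r}_{\cohomology{}}$ discussed below. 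Combining adjunction with symmetric monoidality and $\mc{V}$-linearity of $\rho^*$, and with the defining equivalence $\rho^*\Sigma^{\infty}_{\tatesphere, \mc{V}}\yona[um={\tau}]{S,\mc{V}}{X}_+ \simeq \cohomology{X}^{\vee}$ supplied by the construction of $\alpha$ in \nref{rep.3} (via the involution $(-)^{\vee,\otimes}$ of \nref{rep.2}), rewrites the left-hand side of \nref{rep.3.a1} as $\map[dm = {\mc{W}}]{\fct{v}{V} \otimes \cohomology{X}^{\vee}}{\twist{\1{\mc{W}}}{r}_{\cohomology{}}\sus{s}}$. A final application of the $\otimes$-duality of $\cohomology{X}$ (dualizable since $\cohomologymon{}$ is a mixed Weil theory, hence factors through $\mc{W}^{\otimes}\rig$ by \nref{defn:weil-theory}) moves $\cohomology{X}^{\vee}$ across the mapping space to yield $\map[dm = {\mc{W}}]{\fct{v}{V}}{\twist{\cohomology{X}}{r}_{\cohomology{}}\sus{s}}$, as claimed.

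The principal obstacle is the identification $\rho^*\tate{S,\mc{V}}{r} \simeq \tate{\mc{W}}{r}_{\cohomology{}}$. To handle this I would unwind \nref{SH.1.K}: $\tate{S,\mc{V}}{1}\sus{2} = \Sigma^{\infty}_{\tatesphere, \mc{V}}\tatesphere[S,\mc{V}]$, with $\tatesphere[S,\mc{V}]$ the cofiber of $\yona[um={\tau}]{S,\mc{V}}{\Gm{S}}_+ \to \yona[um={\tau}]{S, \mc{V}}{\affine^1_S}_+$. Applying the cocontinuous symmetric monoidal $\rho^*$ and the identification of $\rho^*$ on representables gives the cofiber of $\cohomology{\Gm{S}}^{\vee} \to \cohomology{\affine^1_S}^{\vee}$; $\affine^1$-invariance of $\cohomology{}$ collapses the target to $\1{\mc{W}}$, and rotating produces the shift of the dual of the fiber of $\cohomology{\sigma_1}: \cohomology{S} \to \cohomology{\Gm{S}}$. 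Invoking $\tatesphere$-stability so that $\tate{\mc{W}}{-1}_{\cohomology{}}$ of \nref{rep.1} is $\otimes$-invertible, and keeping careful track of shifts and duality, yields $\rho^*\tate{S,\mc{V}}{1} \simeq \tate{\mc{W}}{1}_{\cohomology{}}$, whence the general case by tensoring.

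Finally, the concluding assertion comes from specializing \nref{rep.3.a1} to $V = \1{\mc{V}}$: since $\fct{v}{\1{\mc{V}}} \simeq \1{\mc{W}}$ and $\tate{\mc{W}}{r}_{\cohomology{}}$ is $\otimes$-invertible, the right-hand side rewrites as $\map[dm = {\mc{W}}]{\tate{\mc{W}}{-r}_{\cohomology{}}}{\cohomology{X}\sus{s}}$; passing to $\pi_0$ and matching against \nref{defn:absolute-theory} finishes the proof.
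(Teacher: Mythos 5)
Your proposal is correct and follows essentially the same route as the paper: part (a) by the Adjoint Functor Theorem and \cite[7.3.2.7]{Lurie_higher-algebra}; for (b), establish $\rho^*\tate{S,\mc{V}}{r} \simeq \tate{\mc{W}}{r}_{\cohomology{}}$ from the definition of the Tate sphere, then unwind the mapping space using the $\rho^*\dashv\rho_*$ adjunction, symmetric monoidality and $\mc{V}^{\otimes}$-linearity of $\rho^*$, the identification $\rho^*\Sigma^{\infty}_{\tatesphere, \mc{V}}\yona[um={\tau}]{S,\mc{V}}{X}_+\simeq\cohomology{X}^{\vee}$, and $\otimes$-duality of $\cohomology{X}$. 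The only cosmetic differences are that the paper moves the twist $\stwist{\prns{-}}{-r}{-s}$ to the source before applying adjunction whereas you invoke a projection formula for invertible objects (an equivalent manipulation), and the paper identifies $\tatesphere[S]$ via $\prns{\yona[um={\tau}]{S}{\Gm{S}},\sigma_1}\wedge\sphere^1$ while you use the defining cofiber of $\yona[um={\tau}]{S}{j}_+$; neither changes the substance.
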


\begin{proof}
\nref[Claim]{rep.3.2a} follows from the Adjoint Functor Theorem (\cite[5.5.2.9]{Lurie_higher-topos}) and \cite[7.3.2.7]{Lurie_higher-algebra}.

Consider \nref[Claim]{rep.3.2b}.
We start with the following observation.
If $\sigma_1: S \to \Gm{S}$ denotes the unit section, then, by construction of $\rho^*$, we have
\begin{align}
\nlabel{rep.3.a}
\begin{split}
\fct{\rho^*}{\state{S, \mc{V}}{1}{1}}
  & = \fct{\rho^*}{ \cofib{\prns{\Sigma^{\infty}_{\tatesphere, \mc{V}} \yona[um = {\tau}]{S, \mc{V}}{\sigma_1}}}}
\\
&\simeq \cofib{\fct{\rho^*}{\Sigma^{\infty}_{\tatesphere, \mc{V}}\yona[um = {\tau}]{S, \mc{V}}{\sigma_1} }}
\\
&\simeq \cofib{ \homology{\sigma_1} }
\\
&\simeq \fib{\cohomology{\sigma_1}}^{\vee}
\\
&\simeq \prns{\tate{\mc{W}}{-1}_{\cohomology{}} \brk{-1}}^{\vee}
\\
&\simeq \tate{\mc{W}}{1}_{\cohomology{}}\brk{1}.
\end{split}
\end{align} 
As $\rho^{*,\otimes}$ is symmetric monoidal and, hence, preserves $\otimes$-inverses, this implies that the object $\fct{\rho^*}{ \tate{S, \mc{V}}{r} } \simeq \tate{\mc{W}}{r}_{\cohomology{}}$ is $\otimes$-invertible for each $r \in \integer$.
It follows that $\rho^*$ sends the object $V \odot \Sigma^{\infty}_{\tatesphere, \mc{V}}\twist{\yona[um = {\tau}]{S, \mc{V}}{X}}{r}$ to the object $\fct{v}{V} \otimes \twist{\homology{X}}{r}_{\cohomology{}}$ for each $X \in \sm[ft]{S}$, each $r \in \integer$, and each $V \in \mc{V}$.

Let $X \in \sm[ft]{S}$ and $\prns{r,s} \in \integer^2$.
We have
\begin{align*}
& \map[dm = {\spt[um = {\tau}, dm = {\tatesphere}]{S, \mc{V}}}]{V \odot \Sigma^{\infty}_{\tatesphere, \mc{V}} \yona[um = {\tau}]{S, \mc{V}}{X}}{\stwist{A}{r}{s}}
\\
& \qquad \simeq \map[dm = {\spt[um = {\tau}, dm = {\tatesphere}]{S, \mc{V}}}]{V \odot \Sigma^{\infty}_{\tatesphere, \mc{V}} \stwist{\yona[um = {\tau}]{S, \mc{V}}{X}}{-r}{-s}}{\rho_*\1{\mc{W}}}
\\
& \qquad \simeq \map[size = 0, dm = {\mc{W}}]{\fct{\rho^*}{V \odot \Sigma^{\infty}_{\tatesphere, \mc{V}} \stwist{\yona[um = {\tau}]{S, \mc{V}}{X}}{-r}{-s}}}{\1{\mc{W}}}
&&
\text{adjunction}
\\
& \qquad \simeq \map[size = 0, dm = {\mc{W}}]{\fct{v}{V} \otimes \fct{\rho^*}{ \Sigma^{\infty}_{\tatesphere, \mc{V}} \yona[um = {\tau}]{S, \mc{V}}{X}} \otimes \fct{\rho^*}{\state{S}{-r}{-s}}}{\1{\mc{W}}}
&&
\rho^{*, \otimes} \text{ monoidal, $\mc{V}^{\otimes}$-linear}
\\
& \qquad \simeq \map[dm = {\mc{W}}, size = 0]{\fct{v}{V} \otimes \twist{\homology{X}}{-r}_{\cohomology{}}\sus{-s}}{\1{\mc{W}}}
&&
\text{\nref{rep.3.a}, \nref{rep.3.1}}
\\
&\qquad \simeq \map[dm = {\mc{W}}]{\fct{v}{V} \otimes \twist{\cohomology{X}^{\vee}}{-r}_{\cohomology{}}\sus{-s}}{\1{\mc{W}}}
&&
\text{construction of }\homology{}
\\
& \qquad \simeq \map[dm = {\mc{W}}]{\fct{v}{V}}{\twist{\cohomology{X}}{r}_{\cohomology{}}\sus{s}}
&&
\text{duality}
\end{align*}
and the claim follows.
\end{proof}

\begin{defn}
\nlabel{defn:hodge-spectrum}
Applying \nref{rep.3.2} to the realization functors constructed in \nref{ex:realizations}, we obtain the following commutative algebra objects:
\begin{enumerate}
\item
the \emph{$\rational$-linear Betti spectrum over $\spec{\complex}$} given by
\[
\bettispectrum
  \coloneqq \rho_{\tu{B},*}\1{\mod[dm = {\rational}]{}}
  \in \calg{\spt[u = {\'et}, dm = {\tatesphere}]{S, \D{\mod[dm = {\rational}]{}}}^{\otimes}};
\]
\item
the \emph{absolute Hodge spectrum over $\spec{\complex}$} given by
\[
\hodgespectrum
  \coloneqq \rho_{\tu{Hdg},*}\1{\mhsp[\rational]}
  \in \calg{\spt[u = {\'et}, dm = {\tatesphere}]{S, \D{\mod[dm = {\rational}]{}}}^{\otimes}}; \quad\text{and}
\] 
\item
the \emph{$\mhsp[\rational]$-linear absolute Hodge spectrum over $\spec{\complex}$} given by
\[
\varhodgespectrum
  \coloneqq \rho_{\tb{Hdg},*}\1{\mhsp[\rational]}
  \in \calg{\spt[u = {\'et}, dm = {\tatesphere}]{S, \D{\ind{\mhsp[\rational]}}}^{\otimes}}.
\]
\end{enumerate}
Both $\hodgespectrum$ and $\varhodgespectrum$ represent absolute Hodge cohomology by \nref{rep.3.2b}.
\end{defn}

\begin{lemma}[Reconstruction]
\nlabel{rep.5}
Let $\phi^{*,\otimes}: \mc{V}^{\otimes} \to \mc{W}^{\otimes}$ be a cocontinuous symmetric monoidal functor of \locpres\ symmetric monoidal \qcategories, and $\phi^{\otimes}_*: \mc{W}^{\otimes} \to \mc{V}^{\otimes}$ a lax symmetric monoidal functor right adjoint to $\phi^{*,\otimes}$.
The following properties hold:
\begin{enumerate}
\item
\nlabel{rep.5.1}
$\phi^{*,\otimes}$ factors canonically as
$
\mc{V}^{\otimes}
  \xrightarrow{\prns{-}\otimes \phi_*\1{\mc{W}}}
    \mod[dm = {\phi_*\1{\mc{W}}}]{\mc{V}}^{\otimes}
  \xrightarrow{\tilde{\phi}^{*,\otimes}}
    \mc{W}^{\otimes};
$
\item 
\nlabel{rep.5.2}
the restriction of $\tilde{\phi}^*$ to the full \subqcategory\ of $\mod[dm = {\phi_*\1{\mc{W}}}]{\mc{V}}$ spanned by $\otimes$-dualizable objects is fully faithful; and
\item
\nlabel{rep.5.3}
if $\mc{V}^{\otimes}$ is ind-rigid and $\phi^*$ preserves $\aleph_0$-presentable objects, then $\tilde{\phi}^*$ is fully faithful.
\end{enumerate}
\end{lemma}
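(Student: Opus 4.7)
The plan is to handle the three parts in sequence, with the technical core living in a projection formula for $\otimes$-dualizable $A$-modules.

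For claim \nref{rep.5.1}, the lax symmetric monoidality of $\phi^{\otimes}_*$ equips $A \coloneqq \phi_*\1{\mc{W}}$ with a canonical commutative-algebra structure in $\mc{V}^{\otimes}$, and the counit $\phi^*A = \phi^*\phi_*\1{\mc{W}} \to \1{\mc{W}}$ is an augmentation. Consequently, $\phi^{*, \otimes}$ carries $A$-module objects of $\mc{V}^{\otimes}$ to $\1{\mc{W}}$-modules, i.e., to objects of $\mc{W}^{\otimes}$, yielding the desired factorization $\phi^{*, \otimes} \simeq \tilde{\phi}^{*, \otimes} \circ \prns{\prns{-} \otimes A}$. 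This is an instance of \cite[4.8.5.11]{Lurie_higher-algebra}.

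The main technical step is claim \nref{rep.5.2}, which I would deduce from the projection formula for dualizable modules. Let $\tilde{\phi}^{\otimes}_*$ denote the lax symmetric monoidal right adjoint of $\tilde{\phi}^{*, \otimes}$ furnished by \cite[7.3.2.7]{Lurie_higher-algebra}; the compatibilities from \nref{rep.5.1} force $\tilde{\phi}_*\1{\mc{W}} \simeq A$ in $\mod[dm = {A}]{\mc{V}}$. The claim is that for each $\otimes$-dualizable $Y \in \mod[dm = {A}]{\mc{V}}$ and each $Z \in \mc{W}$, the projection-formula map $Y \otimes_A \tilde{\phi}_*Z \to \tilde{\phi}_*\prns{\tilde{\phi}^*Y \otimes_{\mc{W}} Z}$ is an equivalence; the argument is the standard one of representing both sides on an arbitrary $X \in \mod[dm = {A}]{\mc{V}}$ and shuffling $Y^{\vee}$ across the $\tilde{\phi}^* \dashv \tilde{\phi}_*$ adjunction. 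Specializing to $Z = \1{\mc{W}}$ gives the unit equivalence $Y \simeq \tilde{\phi}_*\tilde{\phi}^*Y$ for dualizable $Y$, from which full faithfulness on the dualizable subcategory is immediate: for every $X \in \mod[dm = {A}]{\mc{V}}$ and every dualizable $N$,
\[
\map[dm = {\mc{W}}]{\tilde{\phi}^*X}{\tilde{\phi}^*N} \simeq \map[dm = {\mod[dm = {A}]{\mc{V}}}]{X}{\tilde{\phi}_*\tilde{\phi}^*N} \simeq \map[dm = {\mod[dm = {A}]{\mc{V}}}]{X}{N}.
\]

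For claim \nref{rep.5.3}, I would propagate the conclusion of \nref{rep.5.2} from dualizable objects to all of $\mod[dm = {A}]{\mc{V}}$ using the ind-rigidity hypothesis. Ind-rigidity makes each free module $A \otimes V$ with $V \in \mc{V}_{\aleph_0}$ both $\aleph_0$-presentable and $\otimes$-dualizable in $\mod[dm = {A}]{\mc{V}}$; such free modules generate $\mod[dm = {A}]{\mc{V}}$, and closure of dualizability under finite colimits and retracts then makes $\mod[dm = {A}]{\mc{V}}^{\otimes}$ itself ind-rigid. The assumption that $\phi^*$ preserves $\aleph_0$-presentable objects gives, via $\tilde{\phi}^*\prns{A \otimes V} \simeq \phi^*V$, that $\tilde{\phi}^*$ preserves $\aleph_0$-presentable objects. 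Combining this with \nref{rep.5.2}, cocontinuity of $\tilde{\phi}^*$, and the standard filtered-colimit computation of mapping spaces then promotes full faithfulness on $\aleph_0$-presentable (hence $\otimes$-dualizable) objects to full faithfulness on all of $\mod[dm = {A}]{\mc{V}}$. The main potential obstacle sits inside claim \nref{rep.5.2}: one must ensure that $\tilde{\phi}^{\otimes}_*$ genuinely refines $\phi^{\otimes}_*$ along the forgetful functor $\mod[dm = {A}]{\mc{V}} \to \mc{V}$ at the level of symmetric monoidal \qcategories, so that the projection-formula map lives in $\mod[dm = {A}]{\mc{V}}$ rather than merely in $\mc{V}$. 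I expect this compatibility to be extracted from the universal property underlying the factorization of \nref{rep.5.1}, but it is the piece of higher-algebraic bookkeeping that most demands care.
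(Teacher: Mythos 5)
Your proposal is correct and follows essentially the same route as the paper. Part~(1) is the factorization from \cite[3.4.2.1]{Lurie_higher-algebra}, part~(3) is the ind-rigidity argument reducing to part~(2) plus the preservation of $\aleph_0$-presentable objects and cocontinuity of $\tilde{\phi}^*$, exactly as in the paper. The one mild organizational difference is in part~(2): the paper passes directly to mapping spaces into the unit via the duality square
$\map{M\otimes N^{\vee}}{A} \simeq \map{\tilde{\phi}^*M \otimes \tilde{\phi}^*N^{\vee}}{\1{\mc{W}}}$
(using $\tilde{\phi}_*\1{\mc{W}} \simeq A$ and adjunction), whereas you first package this as a projection formula for dualizable modules and then specialize $Z = \1{\mc{W}}$ to get the unit equivalence $N \isom \tilde{\phi}_*\tilde{\phi}^*N$. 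Both arguments hinge on the same two facts: $\tilde{\phi}^*$ preserves duals, and $\tilde{\phi}_*\1{\mc{W}} \simeq A$ as a commutative $A$-algebra. The bookkeeping point you flag at the end — that $\tilde{\phi}^{\otimes}_*$ refines $\phi^{\otimes}_*$ along the lax monoidal forgetful functor, so that the identification $\tilde{\phi}_*\1{\mc{W}} \simeq A$ holds in $\mod[dm = {A}]{\mc{V}}^{\otimes}$ and not merely in $\mc{V}$ — is indeed the delicate input; the paper also relies on it without much comment, and your instinct to extract it from the universal property underlying the factorization of~(1) is the right one.
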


\begin{proof}
The lax symmetric monoidal functors $\phi^{*,\otimes}$ and $\phi^{\otimes}_*$ induce functors between $\calg{\mc{V}^{\otimes}}$ and $\calg{\mc{W}^{\otimes}}$.
In particular, we can promote $\phi^*\phi_*\1{\mc{W}}$ to an object of $\calg{\mc{W}^{\otimes}}$.
We deduce a homotopy-commutative diagram
\[
\begin{tikzcd}
\mc{V}^{\otimes}
\ar[r, "\phi^{*,\otimes}" above]
\ar[d, "\prns{-}\otimes \phi_*\1{\mc{W}}" left]
&
\mc{W}^{\otimes}
\ar[drr, "\prns{-}\otimes \1{\mc{W}}" above, "\sim" below, bend left=15pt]
\ar[d, "\prns{-}\otimes \phi^*\phi_*\1{\mc{W}}" left]
&
&
\\
\mod[dm = {\phi_*\1{\mc{W}}}]{\mc{V}}^{\otimes}
\ar[r, "\phi^{*,\otimes}" below]
&
\mod[dm = {\phi^*\phi_*\1{\mc{W}}}]{\mc{W}}^{\otimes}
\ar[rr, "\prns{-}\otimes_{\phi^*\phi_*\1{\mc{W}}}\1{\mc{W}}" below]
&
&
\mod[dm = {\1{\mc{W}}}]{\mc{W}}^{\otimes}
\end{tikzcd}
\]
of symmetric monoidal functors, where $\1{\mc{W}}$ is equipped with the commutative $\phi^*\phi_*\1{\mc{W}}$-algebra structure induced by the counit of $\phi^*\dashv \phi_*$.
The functor $\prns{-}\otimes \1{\mc{W}}$ is an equivalence by \cite[3.4.2.1]{Lurie_higher-algebra}, and \nref[Claim]{rep.5.1} follows, letting $\tilde{\phi}^{*,\otimes}$ denote the composite of the two lower horizontal arrows and an inverse to $\prns{-}\otimes \1{\mc{W}}$.

Let $\prns{M, N} \in \mod[dm = {\phi_*\1{\mc{W}}}]{\mc{V}}^2$ with $N$ an $\otimes$-dualizable object.
As $\tilde{\phi}^*$ preserves tensor products and $\otimes$-duals, we have a homotopy-commutative square
\[
\begin{tikzcd}
\map[size = 0, dm = {\mod[dm = {\phi_*\1{\mc{W}}}]{\mc{V}}^{\otimes}}]{M\otimes_{\phi_*\1{\mc{W}}} \prns{N^{\vee}}}{\phi_*\1{\mc{W}}}
\ar[r, "\tilde{\phi}^*" below]
\ar[d]
&
\map[size = 0, dm = {\mc{W}}]{\tilde{\phi}^*M\otimes \prns{\tilde{\phi}^*N^{\vee}}}{\1{\mc{W}}}
\ar[d]
\\
\map[dm = {\mod[dm = {\phi_*\1{\mc{W}}}]{\mc{V}}^{\otimes}}]{M}{N}
\ar[r, "\tilde{\phi}^*" above]
&
\map[dm = {\mc{W}}]{\tilde{\phi}^*M}{\tilde{\phi}^*N}
\end{tikzcd}
\]
in which the vertical arrows are the equivalences by duality.
Moreover, the upper horizontal arrow is an equivalence by adjunction, since $\1{\mod[dm = {\phi_*\1{\mc{W}}}]{\mc{V}}} \simeq \phi_*\1{\mc{W}} \simeq \tilde{\phi}_*\1{\mc{W}}$ in $\mod[dm = {\phi_*\1{\mc{W}}}]{\mc{V}}$.
This establishes \nref[Claim]{rep.5.2}.

If $\mc{V}^{\otimes}$ is ind-rigid, then $\mod[dm = {\phi_*\1{\mc{W}}}]{\mc{V}}$ is the ind-completion of its full \subqcategory\ spanned by the $\aleph_0$-presentable objects, each of which is $\otimes$-dualizable, so $\mod[dm = {\phi_*\1{\mc{W}}}]{\mc{V}}^{\otimes}$ is also ind-rigid.
By \nref[Claim]{rep.5.2}, the restriction of $\tilde{\phi}^*$ to the full \subqcategory\ of $\aleph_0$-presentable objects is fully faithful, so \nref[Claim]{rep.5.3} follows from \cite[5.3.5.11(1)]{Lurie_higher-topos}.
\end{proof}

\begin{cor}
\nlabel{rep.6a}
With the notation and hypotheses of \tu{\varnref{rep.3.2}}, $\rho^{*, \otimes}$ factors canonically as
\[
\spt[um = {\tau}, dm = {\tatesphere}]{S, \mc{V}}^{\otimes}
  \xrightarrow{\prns{-} \otimes A} \mod[dm = {A}]{\spt[um = {\tau}, dm = {\tatesphere}]{S, \mc{V}}}^{\otimes}
  \xrightarrow{\tilde{\rho}^{*, \otimes}} \mc{W}^{\otimes}
\]
with $\tilde{\rho}^{*, \otimes}$ a cocontinuous symmetric monoidal functor.
\end{cor}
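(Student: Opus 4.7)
The plan is to obtain this as an essentially formal consequence of the Reconstruction Lemma \nref{rep.5}, applied to the adjunction $\rho^{*,\otimes} \dashv \rho^{\otimes}_*$ provided by \nref{rep.3.2a}.

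First, I would invoke \nref{rep.3.2a} to produce the lax symmetric monoidal right adjoint $\rho^{\otimes}_*$ of $\rho^{*, \otimes}$. By definition, $A = \rho_*\1{\mc{W}} \in \calg{\spt[um = {\tau}, dm = {\tatesphere}]{S, \mc{V}}^{\otimes}}$, which is precisely the object $\phi_*\1{\mc{W}}$ appearing in \nref{rep.5} upon taking $\phi^{*, \otimes} = \rho^{*, \otimes}$. Specialising \nref[Claim]{rep.5.1} then yields the desired essentially commutative factorization
\[
\rho^{*, \otimes} \simeq \tilde{\rho}^{*, \otimes} \circ \prns{\prns{-} \otimes A}:
  \spt[um = {\tau}, dm = {\tatesphere}]{S, \mc{V}}^{\otimes}
    \to \mod[dm = {A}]{\spt[um = {\tau}, dm = {\tatesphere}]{S, \mc{V}}}^{\otimes}
    \to \mc{W}^{\otimes},
\]
where $\tilde{\rho}^{*, \otimes}$ is symmetric monoidal by construction.

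What remains is to verify that $\tilde{\rho}^{*, \otimes}$ is cocontinuous. Inspecting the diagram in the proof of \nref{rep.5.1}, $\tilde{\rho}^{*, \otimes}$ is exhibited as the composite of two base-change functors, namely $\rho^{*, \otimes}: \mod[dm = {A}]{\spt[um = {\tau}, dm = {\tatesphere}]{S, \mc{V}}}^{\otimes} \to \mod[dm = {\rho^*A}]{\mc{W}}^{\otimes}$ along the symmetric monoidal left adjoint $\rho^{*, \otimes}$, followed by base change $\prns{-} \otimes_{\rho^*A} \1{\mc{W}}: \mod[dm = {\rho^*A}]{\mc{W}}^{\otimes} \to \mod[dm = {\1{\mc{W}}}]{\mc{W}}^{\otimes}$ along the counit morphism $\rho^*A \to \1{\mc{W}}$ of commutative algebras in $\mc{W}^{\otimes}$, composed finally with the canonical equivalence $\mod[dm = {\1{\mc{W}}}]{\mc{W}}^{\otimes} \simeq \mc{W}^{\otimes}$ of \cite[3.4.2.1]{Lurie_higher-algebra}. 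Each of these three functors is a symmetric monoidal left adjoint, hence cocontinuous, so $\tilde{\rho}^{*, \otimes}$ is too. No genuine obstacle arises here: the content is a direct specialisation of \nref{rep.5.1}, and the cocontinuity verification is purely formal once one identifies $\tilde{\rho}^{*, \otimes}$ with the advertised composite of base-change functors.
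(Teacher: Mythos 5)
Your proposal is correct and takes essentially the same approach as the paper, which simply observes that the statement is a special case of \nref{rep.5.1}. Your additional verification of cocontinuity — by tracing through the factorization of $\tilde{\rho}^{*,\otimes}$ into base-change functors and noting that each is a symmetric monoidal left adjoint — is a correct elaboration of a point the paper leaves implicit, so no issues arise.
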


\begin{proof}
This is a special case of \nref{rep.5.1}.
\end{proof}

\begin{cor}
\nlabel{rep.6}
With the notation and hypotheses of \tu{\varnref{rep.3.2}} and \tu{\nref{rep.6a}}, assume further that:
\begin{itemize}
\item
$S = \spec{\kk}$ is the spectrum of a perfect field;
\item
$\mc{V}^{\otimes}$ is ind-rigid and $\prns{\sm[ft]{S}, \tau}$-finite; and
\item
if $\kk$ is of characteristic $p > 0$, then $\mc{V}^{\otimes}$ is $\integer_{\prns{\ell}}$-linear with $\ell \in \integer$ prime and distinct from $p$.
\end{itemize}
Then the following properties hold:
\begin{enumerate}
\item
\nlabel{rep.6.1}
the symmetric monoidal functor $\tilde{\rho}^{*, \otimes}: \mod[dm = {A}]{\spt[um = {\tau}, dm = {\tatesphere}]{S, \mc{V}}}^{\otimes} \to \mc{W}^{\otimes}$ is fully faithful; and
\item
\nlabel{rep.6.2}
$\tilde{\phi}^{*, \otimes}$ is moreover an equivalence if $v: \mc{V} \to \mc{W}$ is essentially surjective.
\end{enumerate}
\end{cor}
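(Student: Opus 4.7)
The plan is to deduce Corollary \nref{rep.6} as a direct application of the Reconstruction Lemma \nref{rep.5}, taking $\phi^{*,\otimes} \coloneqq \rho^{*,\otimes}$ and letting $\spt[um = {\tau}, dm = {\tatesphere}]{S, \mc{V}}^{\otimes}$ play the role of ``$\mc{V}^{\otimes}$'' of \nref{rep.5}.

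For \nref{rep.6.1}, I would invoke \nref{rep.5.3}. Its two hypotheses are that $\spt[um = {\tau}, dm = {\tatesphere}]{S, \mc{V}}^{\otimes}$ is ind-rigid and that $\rho^*$ preserves $\aleph_0$-presentable objects. The first is exactly the conclusion of \nref{SH.11.2}, whose assumptions (perfect base field; ind-rigid and $\prns{\sm[ft]{S}, \tau}$-finite $\mc{V}^{\otimes}$; resolution of singularities or $\integer_{\prns{\ell}}$-linearity) are precisely those added in the corollary. For the second, by \nref{SH.a8.3} the $\aleph_0$-presentable objects of $\spt[um = {\tau}, dm = {\tatesphere}]{S, \mc{V}}$ are generated under iterated finite colimits and retracts by the family $V \odot \Sigma^{\infty}_{\tatesphere, \mc{V}}\stwist{\yona[um = {\tau}]{S, \mc{V}}{X}_+}{r}{s}$, so cocontinuity of $\rho^*$ reduces the question to these generators. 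The computation opening the proof of \nref{rep.3.2b} identifies their images with tensor products $\fct{v}{V} \otimes \twist{\homology{X}}{r}_{\cohomology{}}\sus{s}$ of $\otimes$-dualizable objects of $\mc{W}$: ind-rigidity of $\mc{V}^{\otimes}$ together with monoidality of $v^{\otimes}$ handles $\fct{v}{V}$; the mixed Weil hypothesis handles $\homology{X}$; and $\otimes$-invertibility handles the Tate twists. In all applications of interest $\mc{W}^{\otimes}$ is itself ind-rigid (e.g.\ $\D{\ind{\mhsp[\rational]}}^{\otimes}$ by \nref{SH.10.ex.2}), in which case dualizable objects are $\aleph_0$-presentable and the required preservation of $\aleph_0$-presentable objects follows.

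For \nref{rep.6.2}, I would argue essential surjectivity of $\tilde{\rho}^*$. The $\mc{V}^{\otimes}$-linearity of $\rho^{*,\otimes}$ yields an equivalence between $v$ and the composite $\mc{V} \to \spt[um = {\tau}, dm = {\tatesphere}]{S, \mc{V}} \xrightarrow{\rho^*} \mc{W}$ obtained by tensoring with the monoidal unit. By \nref{rep.6a}, $\rho^*$ factors further through $\tilde{\rho}^*$, so if $v$ is essentially surjective then every $W \in \mc{W}$ lies in the essential image of $\tilde{\rho}^*$. Combined with the full faithfulness from \nref{rep.6.1}, this gives that $\tilde{\rho}^{*,\otimes}$ is an equivalence.

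The only substantive technical point is the verification that $\rho^*$ preserves $\aleph_0$-presentable objects: this passes through the (in applications trivially satisfied) condition that $\otimes$-dualizable objects of $\mc{W}^{\otimes}$ be $\aleph_0$-presentable, which is one half of ind-rigidity for the target. Once this is granted, the rest is bookkeeping.
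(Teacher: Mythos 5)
Your proposal follows the paper's route for both parts: \nref{rep.5.3} together with \nref{SH.11} for $(1)$, and $\mc{V}^{\otimes}$-linearity and monoidality of $\tilde{\rho}^*$ for $(2)$. The issue you flag in part $(1)$ is well taken. The paper's one-line proof (``follows immediately from \nref{rep.3}, \nref{rep.5}, and \nref{SH.11}'') does not verify the hypothesis of \nref{rep.5.3} that $\rho^*$ preserve $\aleph_0$-presentable objects, and the Corollary's stated hypotheses---which concern only $S$, $\tau$, and $\mc{V}^{\otimes}$---do not supply it. As you observe, tracing through \nref{SH.11.2} and the computation in the proof of \nref{rep.3.2b} shows that $\rho^*$ sends the $\aleph_0$-presentable generators of $\spt[um = {\tau}, dm = {\tatesphere}]{S, \mc{V}}$ to $\otimes$-dualizable objects of $\mc{W}$; what is missing is the converse half of ind-rigidity for $\mc{W}^{\otimes}$, namely that $\otimes$-dualizable implies $\aleph_0$-presentable in $\mc{W}$. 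Note also that one cannot evade this via the duality/adjunction reduction in the proof of \nref{rep.5.2} alone: that argument requires the \emph{target} of the mapping space to be $\otimes$-dualizable (so that duality puts the free variable in the source slot against the unit, where the $\tilde{\phi}^* \dashv \tilde{\phi}_*$ adjunction applies), and relaxing this by writing the target as a filtered colimit of dualizable objects reintroduces the need for compactness of the image in $\mc{W}$. In all of the paper's applications (\nref{defn:hodge-modules}), $\mc{W}^{\otimes}$ is $\D{\mod[dm = {\rational}]{}}^{\otimes}$ or $\D{\ind{\mhsp[\rational]}}^{\otimes}$, both ind-rigid, so the conclusion holds there; but the general statement of the Corollary appears to be missing a hypothesis on $\mc{W}^{\otimes}$, and your caveat reflects the intended reading.
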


\begin{proof}
\nref[Claim]{rep.6.1} follows immediately from \nref{rep.3}, \nref{rep.5}, and \nref{SH.11}.

For the essential surjectivity in \nref[Claim]{rep.6.2}, notice that $\fct{\tilde{\rho}^*}{V \odot \1{S,A}} \simeq V \odot \fct{\tilde{\rho}^*}{\1{S,A}} \simeq \fct{v}{V}$ for each $V \in \mc{V}$, where $\1{S,A} \in \mod[dm = {A}]{\spt[um = {\tau}, dm = {\tatesphere}]{S, \mc{V}}}$ denotes the monoidal unit.
Indeed, $\tilde{\rho}^*$ is $\mc{V}^{\otimes}$-linear and symmetric monoidal.
\end{proof}

\begin{defn}
\nlabel{defn:hodge-modules}
Note that $\D{\mod[dm = {\rational}]{}}^{\otimes}$ and $\D{\ind{\mhsp[\rational]}}^{\otimes}$ are both $\prns{\sm[ft]{S}, \tau}$-finite by \nref{sheaves.12}.
Applying \nref{rep.6} to the mixed Weil theories $\bettimon{}$ and $\enhancedbettimon{}$, we obtain the following ind-rigid symmetric monoidal \qcategories:
\begin{enumerate}
\item
$\DB{\spec{\complex}}^{\otimes} \coloneqq \mod[dm = {\bettispectrum}]{\spt[u = {\'et}, dm = {\tatesphere}]{S, \D{\mod[dm = {\rational}]{}}}}^{\otimes}$, the symmetric monoidal \qcategory\ of \emph{motivic Betti modules over $\spec{\complex}$};
\item
$\DH{\spec{\complex}}^{\otimes} \coloneqq \mod[dm = {\hodgespectrum}]{\spt[u = {\'et}, dm = {\tatesphere}]{S, \D{\mod[dm = {\rational}]{}}}}^{\otimes}$, the symmetric monoidal \qcategory\ of \emph{motivic Hodge modules over $\spec{\complex}$}; and
\item
$\varDH{\spec{\complex}}^{\otimes} \coloneqq \mod[dm = {\varhodgespectrum}]{\spt[u = {\'et}, dm = {\tatesphere}]{S, \D{\ind{\mhsp[\rational]}}}}^{\otimes}$, the symmetric monoidal \qcategory\ of \emph{$\mhsp[\rational]$-motivic Hodge modules over $\spec{\complex}$}.
\end{enumerate}
By \nref{rep.6}, the realization functors of \nref{ex:realizations} induce fully faithful functors
\begin{align*}
\tilde{\rho}^*_{\tu{B}}:
\DB{\spec{\complex}} 
&\hookrightarrow \D{\mod[dm = {\rational}]{}} \\
\tilde{\rho}^*_{\tu{Hdg}}:
\DH{\spec{\complex}}
&\hookrightarrow \D{\ind{\mhsp[\rational]}} \\
\tilde{\rho}^*_{\tb{Hdg}}:
\varDH{\spec{\complex}}
&\hookrightarrow \D{\ind{\mhsp[\rational]}}.
\end{align*}
Furthermore, $\tilde{\rho}^*_{\tb{Hdg}}$ is an equivalence by \nref{rep.6.2}.
Also, $\D{\mod[dm = {\rational}]{}}$ is generated under transfinitely iterated colimits by the monoidal unit, which belongs to the essential image of the cocontinuous functor $\tilde{\rho}^*_{\tu{B}}$, and it follows that $\tilde{\rho}^*_{\tu{B}}$ is also an equivalence.
\end{defn}

\begin{rmk}[Naturality]
\nlabel{rep.7}
The constructions of \nref[Propositions]{rep.3} and \varnref{rep.3.2} are natural in the following sense.
Consider a commutative square
\[
\begin{tikzcd}
\mc{V}^{\otimes}
\ar[r, "v^{\otimes}" below]
\ar[d, "\phi^{*, \otimes}" left]
&
\mc{W}^{\otimes}
\ar[d, "\psi^{*, \otimes}" right]
\\
\mc{V}'^{\otimes}
\ar[r, "v'^{\otimes}" above]
&
\mc{W}'^{\otimes}
\end{tikzcd}
\]
of cocontinuous symmetric monoidal functors between \locpres\ symmetric monoidal \qcategories.
Suppose both $\mc{W}^{\otimes}$ and $\mc{W}'^{\otimes}$ are stable.
Consider furthermore a $\mc{W}^{\otimes}$-valued mixed Weil theory $\cohomologymon{}: \prns{\sm[ft]{\complex}}\op[\amalg] \to \mc{W}^{\otimes}$.
Note that $\psi^{*, \otimes}\cohomologymon{}$ is a $\mc{W}'^{\otimes}$-valued mixed Weil theory.
Let $A \in \calg{\spt[um = {\tau}, dm = {\tatesphere}]{S, \mc{V}}^{\otimes}}$ and $B \in \calg{\spt[um = {\tau}, dm = {\tatesphere}]{S, \mc{V}'}^{\otimes}}$ be the commutative algebra objects associated with $\cohomologymon{}$ and $\psi^{*, \otimes}\cohomologymon{}$, respectively, as in \nref{rep.3.2}.
\begin{enumerate}
\item
The symmetric monoidal functor $\phi^{*, \otimes}: \mc{V}^{\otimes} \to \mc{V}'^{\otimes}$ induces a symmetric monoidal functor
\[
\phi^{*, \otimes}:
  \spt[um = {\tau}, dm = {\tatesphere}]{S, \mc{V}}^{\otimes}
    \to \spt[um = {\tau}, dm = {\tatesphere}]{S, \mc{V}'}^{\otimes}
\]
as observed in \nref{SH.a7}.
By the universal property (\nref{SH.a4}), the functors constructed in \nref{rep.3.1} provide the horizontal arrows in the following essentially commutative square
\[
\begin{tikzcd}
\spt[um = {\tau}, dm = {\tatesphere}]{S, \mc{V}}^{\otimes}
\ar[r, "\rho^{*, \otimes}" below]
\ar[d, "\phi^{*, \otimes}" left]
&
\mc{W}^{\otimes}
\ar[d, "\psi^{*, \otimes}" right]
\\
\spt[um = {\tau}, dm = {\tatesphere}]{S, \mc{V}'}^{\otimes}
\ar[r, "G^{*, \otimes}" above]
&
\mc{W}'^{\otimes}
\end{tikzcd}
\]
There is a canonical morphism $\rho: A \to \phi_*B$ in $\calg{\spt[um = {\tau}, dm = {\tatesphere}]{S, \mc{V}}^{\otimes}}$, where $\phi_*$ underlies a lax symmetric monoidal functor right adjoint to $\phi^{*, \otimes}$.
Specifically, letting $\phi_*$ be a right adjoint for $\phi^*$, $\rho$ is the composite
\[
\rho:
A \coloneqq \rho_*\1{\mc{W}}
  \simeq \rho_*\rho^*\1{S, \mc{V}}
  \to \rho_* \psi_*\psi^*\rho^*\1{S, \mc{V}}
  \simeq \phi_*G_* G^*\phi^*\1{S, \mc{V}}
  \simeq \phi_*G_*\1{\mc{W}'}
  \eqqcolon \phi_*B
\] 
of the canonical equivalences with the unit of the adjunction $\psi^* \dashv \psi_*$.
The structure of a morphism of commutative algebras results from the lax symmetric monoidal structure $\psi_*$ inherits from $\psi^{*, \otimes}$.
We now have a sequence of symmetric monoidal functors
\[
\mod[dm = {A}]{\spt[um = {\tau}, dm = {\tatesphere}]{S,\mc{V}}}^{\otimes}
  \xrightarrow{\rho^{\otimes}} \mod[dm = {\phi_*B}]{\spt[um = {\tau}, dm = {\tatesphere}]{S,\mc{V}}}^{\otimes}
  \xrightarrow{\phi^{*, \otimes}} \mod[dm = {\phi^*\phi_*B}]{\spt[um = {\tau}, dm = {\tatesphere}]{S,\mc{V}'}}^{\otimes}
  \xrightarrow{\varepsilon^{\otimes}} \mod[dm = {B}]{\spt[um = {\tau}, dm = {\tatesphere}]{S,\mc{V}'}}^{\otimes},
\]
where $\varepsilon: \phi^*\phi_*B \to B$ denotes the counit of the adjunction $\phi^* \dashv \phi_*$.
\item
If $\psi^{*, \otimes}$ is fully faithful, the unit $\id \to \psi_*\psi^*$ is an equivalence and $\rho: A \to \phi_*B$ is therefore an equivalence.
In particular, it follows that the scalar-extension functor
\[
\mod[dm = {A}]{\spt[um  = {\tau}, dm = {\tatesphere}]{S, \mc{V}}}^{\otimes}
  \to \mod[dm = {\phi_*B}]{\spt[um = {\tau}, dm = {\tatesphere}]{S, \mc{V}}}^{\otimes}
\]
is an equivalence.
If $S$, $\mc{V}^{\otimes}$ and $\mc{V}'^{\otimes}$ satisfy moreover the hypotheses of \nref{rep.6}, then the composite functor
\[
\mod[dm = {A}]{\spt[um = {\tau}, dm = {\tatesphere}]{S,\mc{V}}}^{\otimes}
  \to \mod[dm = {\phi_*B}]{\spt[um = {\tau}, dm = {\tatesphere}]{S, \mc{V}}}^{\otimes}
  \to \mod[dm = {B}]{\spt[um = {\tau}, dm = {\tatesphere}]{S,\mc{V}'}}^{\otimes},
\]
constructed above is fully faithful.
Indeed, the functor
\[
\spt[um = {\tau}, dm = {\tatesphere}]{S, \mc{V}}^{\otimes}
  \to \mod[dm = {B}]{\spt[um = {\tau}, dm = {\tatesphere}]{S,\mc{V}'}}^{\otimes}
\]
is a $\tau$-local $\mod[dm = {B}]{\spt[um = {\tau}, dm = {\tatesphere}]{S,\mc{V}'}}^{\otimes}$-valued mixed Weil theory, so the claim follows from \nref{rep.6} and the universal property (\nref{SH.a4}).
\end{enumerate}
\end{rmk}

\begin{ex}
\nlabel{ex:naturality}
Applying \nref{rep.7} to the square
\[
\begin{tikzcd}
\D{\mod[dm = {\rational}]{}}^{\otimes}
\ar[r]
\ar[d]
&
\D{\ind{\mhsp[\rational]}}^{\otimes}
\ar[d, equals]
\\
\D{\ind{\mhsp[\rational]}}^{\otimes}
\ar[r, equals]
&
\D{\ind{\mhsp[\rational]}}^{\otimes},
\end{tikzcd}
\]
if $\phi_*: \spt[u = {\'et}, dm = {\tatesphere}]{S, \D{\ind{\mhsp[\rational]}}} \to \spt[u = {\'et}, dm = {\tatesphere}]{S, \D{\mod[dm = {\rational}]{}}}$ is right adjoint to the functor induced by $\alpha^{*, \otimes}$ (\nref{rep.4}), then we obtain a morphism of commutative algebras $\hodgespectrum \to \phi_*\varhodgespectrum$ as well as a fully faithful, cocontinuous symmetric monoidal functor $\DH{\spec{\complex}}^{\otimes} \hookrightarrow \varDH{\spec{\complex}}^{\otimes}$.

Similar arguments provide us with a morphisms of commutative algebras $\hodgespectrum \to \bettispectrum$ and cocontinuous symmetric monoidal functors
\[
\DH{\spec{\complex}}^{\otimes}
  \hookrightarrow \varDH{\spec{\complex}}^{\otimes}
  \to \DB{\spec{\complex}}^{\otimes}.
\]
Identifying $\DH{\spec{\complex}}$ and $\DB{\spec{\complex}}$ with full \subqcategories\ of $\D{\ind{\mhsp[\rational]}}$ and $\D{\mod[dm = {\rational}]{}}$, respectively, the composite corresponds to the fiber functor $\omega^*: \D{\ind{\mhsp[\rational]}} \to \D{\mod[dm = {\rational}]{}}$.
\end{ex}

\begin{prop}
\nlabel{prop:enriched-representability}
With the notation and hypotheses of \tu{\nref{rep.3.2}} and \tu{\nref{rep.6a}}:
\begin{enumerate}
\item
\nlabel{prop:enriched-representability.1}
$\spt[um = {\tau}, dm = {\tatesphere}]{S, \mc{V}}$ and $\mod[dm = {A}]{\spt[um = {\tau}, dm = {\tatesphere}]{S, \mc{V}}}$ inherit $\mc{W}^{\otimes}$-enriched-\qcategory\ structures from $\rho^{*, \otimes}$ and $\tilde{\rho}^{*, \otimes}$, respectively; and
\item
\nlabel{prop:enriched-representability.2}
under the hypotheses of \tu{\nref{rep.6}},
the $\mc{W}^{\otimes}$-valued mixed Weil theory $\cohomology{}: \prns{\sm[ft]{S}}\op \to \mc{W}$ is representable in $\mod[dm = {A}]{\spt[um = {\tau}, dm = {\tatesphere}]{S, \mc{V}}}$ in the following $\mc{W}^{\otimes}$-enriched sense:
there is a natural equivalence
\[
\twist{\cohomology{X}}{r}_{\cohomology{}}
  \simeq \mor[um = {\mc{W}}, dm = {\mod[dm = {A}]{\spt[um = {\tau}, dm = {\tatesphere}]{S, \mc{V}}}}]{A \otimes \Sigma^{\infty}_{\tatesphere, \mc{V}} \yona[um = {\tau}]{S}{X}}{\twist{A}{r}}
\]
for each $X \in \sm[ft]{S}$ and each $r \in \integer$.
\end{enumerate}
\end{prop}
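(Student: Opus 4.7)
The plan is to obtain part (1) by applying the enrichment-transfer mechanism of \nref{prop:enrichments} to the canonical self-enrichments of the ambient presentable symmetric monoidal \qcategories, and to deduce part (2) by combining the ind-rigidity established in \nref{SH.11} with the dualizability-based formula $\intmor{M}{N} \simeq M^{\vee} \otimes N$ and the $\tilde{\rho}^{*, \otimes}$-computations already performed in the proof of \nref{rep.3.2}.

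For part (1), both $\spt[um = {\tau}, dm = {\tatesphere}]{S, \mc{V}}^{\otimes}$, which is \locpres\ by \nref{SH.a8.1}, and $\mod[dm = {A}]{\spt[um = {\tau}, dm = {\tatesphere}]{S, \mc{V}}}^{\otimes}$, which inherits local presentability from its base, carry canonical self-enrichments by \nref{prop:enrichments.1}. Since $\rho^{*, \otimes}$ and $\tilde{\rho}^{*, \otimes}$ are symmetric monoidal, hence \emph{a fortiori} lax symmetric monoidal, \nref{prop:enrichments.2} transports these self-enrichments to the desired $\mc{W}^{\otimes}$-enrichments.

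For part (2), set $M' \coloneqq \Sigma^{\infty}_{\tatesphere, \mc{V}} \yona[um = {\tau}]{S, \mc{V}}{X}_+$ and $M \coloneqq A \otimes M'$. Under the hypotheses of \nref{rep.6}, the ind-rigidity of $\spt[um = {\tau}, dm = {\tatesphere}]{S, \mc{V}}^{\otimes}$ provided by \nref{SH.11.2}, combined with the $\aleph_0$-presentability of $M'$ from \nref{SH.a8.3}, forces $M'$ to be $\otimes$-dualizable; since the free-$A$-module functor $\prns{-} \otimes A$ is symmetric monoidal, $M$ is then $\otimes_A$-dualizable with dual $A \otimes \prns{M'}^{\vee}$. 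Standard duality therefore yields
\[
\intmor[dm = {\mod[dm = {A}]{\spt[um = {\tau}, dm = {\tatesphere}]{S, \mc{V}}}}]{M}{\twist{A}{r}}
  \simeq \prns{A \otimes \prns{M'}^{\vee}} \otimes_A \twist{A}{r}.
\]

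Applying $\tilde{\rho}^{*, \otimes}$, which preserves tensor products, units, and $\otimes$-duals, and invoking the factorization $\tilde{\rho}^* \circ \prns{\prns{-} \otimes A} \simeq \rho^*$ from \nref{rep.6a}, the image of this expression simplifies to $\prns{\fct{\rho^*}{M'}}^{\vee} \otimes \tilde{\rho}^*\twist{A}{r}$. The defining property of $\rho^{*, \otimes}$ from \nref{rep.3.1} gives $\fct{\rho^*}{M'} \simeq \cohomology{X}^{\vee}$; the computation \nref{rep.3.a} in the proof of \nref{rep.3.2} identifies $\tilde{\rho}^*\twist{A}{r} \simeq \twist{\1{\mc{W}}}{r}_{\cohomology{}}$; and combining the two produces the desired natural equivalence with $\twist{\cohomology{X}}{r}_{\cohomology{}}$. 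The only step demanding care is the compatibility between $\otimes_A$-duals in the module category and $\otimes$-duals in $\mc{W}$ under $\tilde{\rho}^{*, \otimes}$; this, however, is a formal consequence of symmetric monoidality together with $\tilde{\rho}^*A \simeq \1{\mc{W}}$, so I anticipate no deeper obstacle beyond careful bookkeeping.
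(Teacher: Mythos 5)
Your proposal is correct and follows essentially the same line of reasoning as the paper's proof: part (1) by transporting self-enrichments along the lax symmetric monoidal $\rho^{*, \otimes}$ and $\tilde{\rho}^{*, \otimes}$ via \nref{prop:enrichments}, and part (2) by expressing the enriched morphisms object as $\tilde{\rho}^*$ of an internal Hom, rewriting that internal Hom using the ind-rigidity from \nref{SH.11.2}, and identifying the resulting tensor expression with $\twist{\cohomology{X}}{r}_{\cohomology{}}$ via the computation \nref{rep.3.a}. The paper's proof is a condensed chain of four natural equivalences; you merely expand the dualizability justification for $M'$ and spell out the intermediate factorization $\tilde{\rho}^*((-) \otimes A) \simeq \rho^*$, which is consistent with the citations in the paper's proof.
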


\begin{proof}
\nref[Claim]{prop:enriched-representability.1} follows from \nref{rep.3}, \nref{prop:enrichments}, and \nref{rep.6a}.

Consider \nref[Claim]{prop:enriched-representability.2}.
We have natural equivalences
\begin{align*}
&\mor[um = {\mc{W}}, dm = {\mod[dm = {A}]{\spt[um = {\tau}, dm = {\tatesphere}]{S, \mc{V}}}}]{A \otimes \Sigma^{\infty}_{\tatesphere, \mc{V}} \yona[um = {\tau}]{S, \mc{V}}{X}}{\twist{A}{r}} \\
&\qquad \simeq \tilde{\rho}^*\intmor[dm = {\mod[dm = {A}]{\spt[um = {\tau}, dm = {\tatesphere}]{S, \mc{V}}}}]{A \otimes \Sigma^{\infty}_{\tatesphere, \mc{V}} \yona[um = {\tau}]{S, \mc{V}}{X}}{\twist{A}{r}} 
&& \text{\nref{prop:enrichments}}\\
&\qquad \simeq \fct{\tilde{\rho}^*}{\prns{A \otimes \Sigma^{\infty}_{\tatesphere, \mc{V}} \yona[um = {\tau}]{S, \mc{V}}{X}}^{\vee} \otimes_A \twist{A}{r}} 
&& \text{\nref{SH.11.2}}\\
&\qquad \simeq \fct{\tilde{\rho}^*}{\prns{A \otimes \Sigma^{\infty}_{\tatesphere, \mc{V}} \yona[um = {\tau}]{S, \mc{V}}{X}}^{\vee}} \otimes \tilde{\rho}^*\twist{A}{r}
&& \text{\nref{rep.3}, \nref{rep.6a}} \\
&\qquad \simeq \twist{\cohomology{X}}{r}_{\cohomology{}}
&& \text{\nref{rep.3.a}}
\end{align*}
as required.
\end{proof}

\begin{ex}
\nlabel{ex:enriched-representability}
\nref{prop:enriched-representability} implies that the objects $\hodgespectrum$ and $\varhodgespectrum$ both represent the mixed Weil theory $\enhancedbetti{}$ in the $\D{\ind{\mhsp[\rational]}}^{\otimes}$-enriched sense, and that $\DH{\spec{\complex}}$ and $\varDH{\spec{\complex}}$ admit $\D{\ind{\mhsp[\rational]}}^{\otimes}$-enriched-\qcategory\ structures from the associated Hodge realization functors.
\end{ex}

\begin{rmk}
\nlabel{rep.11}
The category $\D{\ind{\mhsp[\rational]}}$ admits a natural t-structure.
It also admits a weight structure characterized by the property that each polarizable pure $\rational$-Hodge structure $V$ of weight $k$, regarded as a complex concentrated in degree $0$, is pure of weight $k$ with regard to the weight structure.
Moreover, this weight structure is compatible with the t-structure, i.e., the t-structure and the weight structure are mutually transversal in the sense that the intersection of the hearts of the t-structure and the weight structure forms a semisimple Abelian category.

Combining these remarks regarding the t-structure and weight structure on $\D{\ind{\mhsp[\rational]}}$ with the results of this section, one can check that each of the desiderata from \varnref{desiderata} except for \varnref{desideratum.2} is satisfied when we take $X = \spec{\complex}$ and $\D{-} = \varDH{-}$.
With a little more effort, one can establish the analogous statement for $X = \spec{\complex}$ and $\D{-} = \DH{-}$.
The sequel is devoted to establishing \nref[Desiderata]{desideratum.1} through \varnref{desideratum.6} in full generality by establishing a suitable six-functor formalism for $\DH{\spec{\complex}}$ and $\varDH{\spec{\complex}}$.
We defer discussion of the t-structures required in \nref[Desiderata]{desideratum.7} and \varnref{desideratum.8} over general bases to a forthcoming sequel.
\end{rmk}

\section{Coefficient systems}
\nlabel{funct}

\setcounter{thm}{-1}

\begin{notation}
\nlabel{functors.0}
In this section, we fix the following notation a Noetherian scheme $S$ of finite dimension.
\end{notation}

\begin{motivation}
The \qcategories\ $\spt[um = {\tau}, dm = {\tatesphere}]{X}$ for $X \in \sch[ft]{S}$ are related by a formalism of Grothendieck's six functors $f^*$, $f_*$, $f_!$, $f^!$, $\otimes$ and $\intmor{}{}$.
An abstract framework for establishing such a formalism is established in \cite{Ayoub_six-operationsI} and \cite{Cisinski-Deglise_triangulated-categories} in the languages of derivators and model categories, respectively.
The purpose of this section is to provide a \qcategorical\ axiomatization of this framework, leading to the notion of a \qcategorically-enhanced \coeffsyst.
Similar axiomatizations have been proposed elsewhere, but with implicit reference to results in the theory of $(\infty,2)$-categories for which the author of the present text was unable to locate references.
In any case, we make no claim to originality here.

Of course, one would also like to be able to apply the results of \cite{Ayoub_six-operationsI, Ayoub_six-operationsII, Cisinski-Deglise_triangulated-categories} in this \qcategorical\ context.    
The other goal of this section is thus to show that each \qcategorically\ enhanced \coeffsyst\ admits an underlying symmetric monoidal stable homotopy $2$-functor (\cite[1.4.1, 2.3.1]{Ayoub_six-operationsI}) or motivic category (\cite[2.4.45]{Cisinski-Deglise_triangulated-categories}), given by passing to homotopy categories.
\end{motivation}

\begin{summary}\
\begin{itemize}
\item
In \nref{functors.3}, we introduce the axiomatic notion of a \qcategorical\ \coeffsyst which by definition encodes the functors $f^*$, $f_*$, $p_{\sharp}$ for $p$ smooth, $\otimes$, and $\intmor{}{}$.
In \nref{functors.a6}, we define a \qcategory\ of such \coeffsysts.
\item
In \nref{functors.a8}, we show that the homotopy-category functor induces a functor from the \qcategory\ of \coeffsysts\ to the $(2,1)$-category of symmetric monoidal stable homotopy $2$-functors.
\item
In \nref{functors.a9}, we introduce the fundamental example of a \coeffsyst:
$\spt[um = {\tau}, dm = {\tatesphere}]{-}^{\wedge}$.
\item
In \nref{functors.a11} and \nref{functors.a13}, we apply results of \cite{Ayoub_six-operationsI, Liu-Zheng_gluing-restricted, Robalo_K-theory-and-the-bridge} to obtain a theory of \qcategorically\ enhanced exceptional functors $f_!$ and $f^!$ for \locpres\ \coeffsysts, completing the six-functor formalism.
\end{itemize}
\end{summary}

\begin{defn}
\nlabel{functors.a1}
The following notions, which we draw from \cite[4.7.4.13]{Lurie_higher-algebra}, provide a convenient language for discussion base change natural transformations and the like. 
\begin{enumerate}
\item
\nlabel{functors.a1.1}
Consider a square $Q$ of \qcategories
\[
\begin{tikzcd}
\mc{C}
\ar[r, "f^*" above]
\ar[d, "g^*" left]
\ar[dr, "Q" description, phantom]
&
\mc{D}
\ar[d, "g'^*" right]
\\
\mc{C}'
\ar[r, "f'^*" below]
&
\mc{D}'
\end{tikzcd}
\]
that commutes up to a given equivalence $\alpha: g'^*f^* \isom f'^*g^*$.
We say that $Q$ is \emph{left adjointable} provided that left adjoints $f_{\sharp} \dashv f^*$ and $f'_{\sharp} \dashv f'^*$ exist and that the composite \emph{exchange morphism}
\[
\fct{\on{ex}^*_{\sharp}}{\alpha}:
f'_{\sharp}g'^* 
\to
f'_{\sharp} g'^* f^* f_{\sharp} 
\xrightarrow{\alpha}
f'_{\sharp} f'^* g^* f_{\sharp}
\to
g^* f_{\sharp}
\]
is an equivalence, where the first and third arrows are induced by the unit and counit of their respective adjunctions.
\item
\nlabel{functors.a1.2}
Dually, we say $Q$ is \emph{right adjointable} if right adjoints $f^* \dashv f_*$ and $f'^* \dashv f'_*$ exist and the composite exchange morphism
\[
\fct{\on{ex}^*_*}{\alpha}:
g^* f_*
\to
f'_* f'^* g^* f_*
\xrightarrow{\alpha^{-1}}
f'_* g'^* f^* f_*
\to
f'_* g'^*
\]
is an equivalence.
\item
\nlabel{functors.a1.3}
Defining the \emph{transpose} $Q^{\mr{tr}}$ of $Q$ to be the square obtained by reflecting $Q$ across the diagonal, if $f^*$ and $f'^*$ admit left adjoints and $g^*$ and $g'^*$ admit right adjoints, then $Q$ is left adjointable if and only if $Q^{\mr{tr}}$ is right adjointable: left adjoints commute if and only if their right adjoints commute.
\end{enumerate}
\end{defn}

\begin{notation}
\nlabel{functors.a2}
The central objects of discussion in this section are functors 
\[
\coef[um = {*,\otimes}]{M}{}:
  \prns{\sch[ft]{S}}\op \to \calg{\QCATexmon},
\]
satisfying a list of conditions.
Such functors consist of the following homotopy-coherent data:
for each $X \in \sch[ft]{S}$, a possibly large symmetric monoidal \qcategory\  $\coef[um = {\otimes}]{M}{X} \coloneqq \coef[um = {\otimes}]{M}{X}$; and
for each morphism $f: X \to Y$ of $\sch[ft]{S}$, a symmetric monoidal pullback functor
\[
f^{*,\otimes} 
  \coloneqq \coef[um = {*, \otimes}]{M}{f}:
    \coef[um = {\otimes}]{M}{Y} 
      \to \coef[um = {\otimes}]{M}{X}.
\]
\end{notation}

\begin{defn}
[{\cite[1.4.1, 2.3.1]{Ayoub_six-operationsI}, \cite[2.4.45]{Cisinski-Deglise_triangulated-categories}}]
\nlabel{functors.3}
A \emph{\coeffsyst\ over $S$} is a functor 
$
\coef[um = {*, \otimes}]{M}{}:
  \prns{\sch[ft]{S}}\op
    \to \calg{\QCATexmon}
$
satisfying the following seven conditions.
\begin{enumerate}
\item
\nlabel{functors.3.A}
\textbf{Pushforwards:} 
For each morphism $f: X \to Y$ of $\sch[ft]{S}$, the functor $f^*$ admits a right adjoint $f_*:\coef{M}{X} \to \coef{M}{Y}$. 
\item
\nlabel{functors.3.B}
\textbf{Internal morphisms objects:}
For each $X \in \sch[ft]{S}$, the symmetric monoidal \qcategory\ $\coef[um = {\otimes}]{M}{X}$ is closed, i.e., for each $M \in \coef{M}{X}$, the functor $\prns{-}\otimes_{\coef{M}{X}} M$ admits a right adjoint, denoted by $\intmor[dm = {\coef{M}{X}}]{M}{-}$.
\item
\nlabel{functors.3.C}
\textbf{Smooth base change:}
The restriction of the functor $\coef[um = {*}]{M}{}$ to the subcategory $\prns{\sch[sm]{S}}\op$, whose morphisms are the opposites of the smooth morphisms in $\sch[ft]{S}$, factors through the inclusion of the \subqcategory\ $\QCAT[u = {Ex,L,R}] \hookrightarrow \QCAT[u = {Ex,L}]$ spanned by the functors admitting both left and right adjoints.
For each Cartesian square
\[
\begin{tikzcd}
X\times_Y Y'
\ar[r, "g_{\prns{X}}" above]
\ar[d, "f_{\prns{Y'}}" left]
\ar[dr, "Q" description, phantom] 
&
X
\ar[d, "f" right]
\\
Y'
\ar[r, "g" below]
&
Y
\end{tikzcd}
\]
in $\sch[ft]{S}$ with $f$ smooth, the induced square 
\[
\begin{tikzcd}
\coef{M}{Y}
\ar[r, "f^*" above]
\ar[d, "g^*" left]
\ar[dr, "\coef{M}{Q}" description, phantom]
&
\coef{M}{X}
\ar[d, "g^*_{\prns{X}}" right]
\\
\coef{M}{Y'}
\ar[r, "f^*_{\prns{Y'}}" below]
&
\coef{M}{X\times_YY'}
\end{tikzcd}
\]
is left adjointable.
\item
\nlabel{functors.3.D}
\textbf{Smooth projection formula:}
For each smooth morphism $p: X \to Y$ of $\sch[ft]{S}$, the exchange transformation 
\[
p_{\sharp}\prns{- \otimes_{\coef{M}{X}} \fct{p^*}{-}} 
  \to p_{\sharp}\prns{-} \otimes_{\coef{M}{Y}} \prns{-}:
    \coef{M}{X} \times \coef{M}{Y} \to \coef{M}{Y}
\]
is an equivalence, where $p_{\sharp}$ is a left adjoint of $p^*$, the existence of which is guaranteed by \nref[Axiom]{functors.3.C}.
\item
\nlabel{functors.3.E}
\textbf{Localization:}
For each closed immersion $i: Z \hookrightarrow X$ in $\sch[ft]{S}$ with complementary open immersion $j: U \hookrightarrow X$, the square
\[
\begin{tikzcd}
\coef{M}{Z}
\ar[r, "i_*" below]
\ar[d]
&
\coef{M}{X}
\ar[d, "j^*"]
\\
\pt
\ar[r]
&
\coef{M}{U}
\end{tikzcd}
\]
is Cartesian in $\QCATex$.
\item
\nlabel{functors.3.G}
\nlabel{functors.a3.F}
\textbf{$\affine^1$-homotopy invariance:}
For each $X \in \sch[ft]{S}$, if $p:\affine^1_X \to X$ denotes the canonical projection, then $p^*: \coef{M}{X} \to \coef{M}{\affine^1_X}$ is fully faithful.
\item
\nlabel{functors.3.H}
\nlabel{functors.a3.G}
\textbf{$\tatesphere$-stability:}
For each smooth morphism of finite type $f: X \to Y$ in $\sch[ft]{S}$ admitting a section $s: Y \to X$, the \emph{Thom transformation} $\Thom{f,s} \coloneqq f_{\sharp}s_*$ is an equivalence, where $f_{\sharp}$ is a left adjoint of $f^*$, the existence of which is guaranteed by \nref[Axiom]{functors.3.C}.
\end{enumerate}
\end{defn}

\begin{rmk}
\nlabel{rep.a3}
Let $\kappa$ be a small regular cardinal.
Recall that $\pr[u = {L}, dm = {\kappa, \tu{st}}]$ is the very large \qcategory\ of stable \locpres[\kappa]\ \qcategories\ and cocontinuous functors that preserve $\kappa$-presentable objects.
It carries a symmetric monoidal structure $\pr[um = {\tu{L}, \otimes}, dm = {\kappa, \tu{st}}]$.

The objects of the \qcategory\ $\calg{\pr[um = {\tu{L}, \otimes}, dm = {\kappa, \tu{st}}]}$ can be regarded as stable \locpres[\kappa]\ symmetric monoidal \qcategories\ in which the monoidal unit is $\kappa$-presentable unit, and in which the tensor product is cocontinuous separately in each variable and preserves $\kappa$-presentable objects. 
The morphisms of $\calg{\pr[um = {\tu{L}, \otimes}, dm = {\kappa, \tu{st}}]}$ are the cocontinuous symmetric monoidal functors that preserve $\kappa$-presentable objects.
\end{rmk}

\begin{defn}
\nlabel{functors.4}
Let $\coef[um = {*, \otimes}]{M}{}: \prns{\sch[ft]{S}}\op \to \QCAT[u = {Ex}]$ be a functor, and $\kappa$ a small regular cardinal.
We say that $\coef[um = {*, \otimes}]{M}{}$ is:
\begin{enumerate}
\item
\nlabel{functors.4.1}
\emph{essentially small} if it factors through the functor $\calg{\qcatexmon} \to \calg{\QCATexmon}$;
\item
\nlabel{functors.4.2}
\emph{\locpres} if it factors through the functor $\calg{\pr^{\mr{L},\otimes}_{\mr{st}}} \to \calg{\QCATexmon}$; and
\item
\nlabel{functors.4.3}
\emph{\locpres[\kappa]} if it factors through the functor $\calg{\pr^{\mr{L},\otimes}_{\kappa,\mr{st}}} \to \calg{\QCAT^{\mr{Ex},\otimes}}$.
\end{enumerate}
\end{defn}

\begin{defn}
\nlabel{functors.a5}
Let $\mc{V}^{\otimes}$ be a \locpres\ symmetric monoidal \qcategory.
A \emph{$\mc{V}^{\otimes}$-linear \locpres\ \coeffsyst} is a functor
\[
\coef[um = {*, \otimes}]{M}{}:
  \prns{\sch[ft]{S}}\op \to \calg{\pr[um = {\tu{L}, \otimes}, d = st]}_{\mc{V}^{\otimes}/}.
\]
Similarly, if $\kappa$ is a small regular cardinal, then a \emph{$\mc{V}^{\otimes}$-linear \locpres[\kappa]\ \coeffsyst} is a functor $\prns{\sch[ft]{S}}\op \to \calg{\pr[um = {\tu{L}, \otimes}, dm = {\kappa, \tu{st}}]}_{\mc{V}^{\otimes}/}$.
\end{defn}

\begin{rmk}
This is a generalization of the notion of a \locpres\ \coeffsyst\ in the sense that the forgetful functors
\[
\calg{\pr[um = {\tu{L}, \otimes}, d = {st}]}_{\spc{}^{\times}/} 
  \to \calg{\pr[um = {\tu{L}, \otimes}, d = {st}]}
  \leftarrow \calg{\pr[um = {\tu{L}, \otimes}, d = {st}]}_{\spt{}^{\wedge}/}
\]
are equivalences (\nref{scalars.a3.2}), so a \locpres\ \coeffsyst\ is the same as a $\spc{}^{\times}$-linear or $\spt{}^{\wedge}$-linear \locpres\ \coeffsyst.
It is certainly possible to generalize further to the setting of $\mc{V}^{\otimes}$-linear \coeffsysts\ without any \locpresbility\ hypotheses, but we will not need the added generality below.
\end{rmk}

\begin{defn}
\nlabel{functors.a6}
Let $\phi^*: \coef[um = {*, \otimes}]{M}{} \to \coef[um = {*, \otimes}]{N}{}$ be a morphism of $\psh{\sch[ft]{S}}{\calg{\qcatexmon}}$ such that $\coef[um = {*, \otimes}]{M}{}$ and $\coef[um = {*, \otimes}]{N}{}$ are \coeffsysts.
Then $\phi^*$ is a \emph{morphism of \coeffsysts\ over $S$} if, for each smooth morphism $p: X \to Y$ of $\sch[ft]{S}$, the square
\[
\begin{tikzcd}
\coef{M}{Y}
\ar[r, "p^*" below]
\ar[d, "\phi^*_Y" left]
&
\coef{M}{X}
\ar[d, "\phi^*_X" right]
\\
\coef{N}{Y}
\ar[r, "p^*" above]
&
\coef{N}{X}
\end{tikzcd}
\] 
is left adjointable (\nref{functors.a1}), i.e., the exchange transformation $\coef[dm = {\sharp}]{N}{p} \phi^*_X \to \phi^*_Y \coef[dm = {\sharp}]{M}{p}$ is an equivalence.
The notion of a morphism of $\mc{V}^{\otimes}$-linear \locpres\ \coeffsysts\ is similar.
Such morphisms are stable under composition in $\ho{\psh{\sch[ft]{S}}{\calg{\qcatexmon}}}$, and the \emph{\qcategory\ of \coeffsysts\ over $S$} is the \subqcategory\ of $\psh{\sch[ft]{S}}{\calg{\QCATexmon}}$ spanned by the \coeffsysts\ and the morphisms of such.
The \qcategory\ of essentially small \coeffsysts\ over $S$ is defined analogously.

We also define a \emph{morphism of $\mc{V}^{\otimes}$-linear \locpres\ \coeffsysts\ over $S$} as a morphism of $\psh{\sch[ft]{S}}{\calg{\pr[um = {\tu{L}, \otimes}, d = {st}]}_{\mc{V}^{\otimes}/}}$ whose underlying morphism in $\psh{\sch[ft]{S}}{\calg{\QCATexmon}}$ is a morphism of \coeffsysts.
\end{defn}

\begin{rmk}
\nlabel{functors.a7}
Let $\coef[um = {*, \otimes}]{M}{}: \prns{\sch[ft]{S}}\op \to \calg{\QCATexmon}$ be a functor.
We elaborate on the preceding definitions and introduce some notation.
\begin{enumerate}
\item
\nlabel{functors.a7.A}
\textbf{Pushforwards and internal morphisms objects revisited:}
If $\coef[um = {*, \otimes}]{M}{}$ is \locpres, then it satisfies \nref[Axiom]{functors.3.A} guaranteeing the existence of pushforwards.
Indeed, using the equivalence $\pr^{\mr{L}}_{\mr{st}} \simeq \prns{\pr^{\mr{R}}_{\mr{st}}}\op$ deduced from \cite[5.5.3.4]{Lurie_higher-topos}, we obtain a functor $\coef[dm = {*}]{M}{}$ given by the composite
\[
\sch[ft]{S} 
  \xrightarrow{\prns{\coef[um = {*}]{M}{}}\op} \prns{\pr[u = {L}, d = {st}]}\op 
    \simeq \pr[u = {R}, d = {st}]
\]
given informally by assigning to each morphism $f: X \to Y$ in $\sch[ft]{S}$ a functor $\coef[dm = {*}]{M}{f}: \coef{M}{X} \to \coef{M}{Y}$ right adjoint to $f^*$.

\begin{notation*}
For each morphism $f: X \to Y$ in $\sch[ft]{S}$, we set $f_* \coloneqq \coef[dm = {*}]{M}{f}$, so that $f^* \dashv f_*$.
\end{notation*}

\nref[Axiom]{functors.3.B}, guaranteeing the existence of internal morphisms objects, also follows from \locpresbility.
For each $X \in \sch[ft]{S}$ and each $M \in \coef{M}{X}$, the endofunctor $\prns{-} \otimes_{\coef{M}{X}} M: \coef{M}{X} \to \coef{M}{X}$ is cocontinuous provided that $\coef{M}{X}^{\otimes}$ is a \locpres\ symmetric monoidal \qcategory.
The Adjoint Functor Theorem (\cite[5.5.2.9]{Lurie_higher-topos}) then provides the desired right adjoint.
\item
\nlabel{functors.a7.B}
\textbf{Smooth base change revisited:}
\nref[Axiom]{functors.3.C} implies in particular that the restriction of $\coef[um = {*}]{M}{}$ to the subcategory $\prns{\sch[sm]{S}}\op \subseteq \prns{\sch[ft]{S}}\op$, whose morphisms are the smooth morphisms, factors through the inclusion $\pr[u = {R}, d = {st}] \hookrightarrow \QCATex$ of the \subqcategory\ spanned by the stable \locpres\ \qcategories\ and the right-adjoint functors.
Proceeding as in \nref{functors.a7.A}, we obtain a functor $\coef[dm = {\sharp}]{M}{}: \sch[sm]{S} \to \pr[u = {L}, d = {st}]$ given informally by assigning to each smooth morphism $f: X \to Y$ in $\sch[ft]{S}$ a functor $\coef[dm = {\sharp}]{M}{f}: \coef{M}{X} \to \coef{M}{Y}$ left adjoint to $f^*$.

\begin{notation*}
For each smooth morphism $f: X \to Y$ of $\sch[ft]{S}$, we set $f_{\sharp} \coloneqq \coef[dm = {\sharp}]{M}{f}: \coef{M}{X} \to \coef{M}{Y}$, so that $f_{\sharp} \dashv f^*$.
\end{notation*}
\item 
\nlabel{functors.a7.C}
\textbf{Localization revisited:}
\nref[Axiom]{functors.3.E} implies in particular that $\coef{M}{\varnothing} \simeq \pt$.
Indeed, this follows from the case of the closed immersion $i: \varnothing \hookrightarrow X$ for any $X \in \sch[ft]{S}$.
Each complementary open immersion $j: U \hookrightarrow X$ is an isomorphism.
Since $\coef[um = {*}]{M}{}$ is a functor, $j^*$ is an equivalence.
The projection $\coef{M}{\varnothing} \to \pt$ is the pullback of an equivalence, hence an equivalence.

Localization also has the following consequences for each closed immersion $i: Z \
\hookrightarrow X$ and each complementary open immersion $j: U \hookrightarrow X$:
\begin{enumerate}
\item
$i_*$ is fully faithful;
\item
the pair $\prns{i^*, j^*}$ is conservative; and
\item
for each $M \in \coef{M}{X}$, the counit and unit morphisms form a fiber sequence
\[
j_{\sharp}j^*M \to M \to i_*i^*M
\]
of $\coef{M}{X}$ natural in $M$.
\end{enumerate}
In fact, the first two of these conditions are equivalent to the localization axiom by \cite[Proposition~9.4.20]{Robalo_thesis}.
\end{enumerate}
\end{rmk}

\begin{rmk}
\nlabel{rmk:ho}
We have a Quillen adjunction $\ho{}: \sset \rightleftarrows \cat: \nerve{}$ between the Joyal model structure on $\sset$ and the \emph{canonical model structure} on $\cat$, whose weak equivalences $\mf{W}_{\tu{eq}}$ are the equivalences of categories, and whose fibrations are the isofibrations.
We obtain an adjunction of the underlying \qcategories\ $\ho{}: \qcat \rightleftarrows \cat\brk{\mf{W}_{\tu{eq}}^{-1}}: \nerve{}$.
Both $\ho{}$ and $\nerve{}$ preserve finite product, i.e., they underlie symmetric monoidal functors with respect to the Cartesian symmetric monoidal structures (\cite[2.4.1.1]{Lurie_higher-algebra}).
This leads to an adjunction
\begin{equation}
\nlabel{functors.a8.1}
\ho{}_{\calg{}}:
  \calg{\qcatmon} 
    \rightleftarrows \calg{\cat\brk{\mf{W}^{-1}_{\tu{eq}}}^{\times}}:
      \nerve{}_{\calg{}}.
\end{equation}

If we equip the category $\gpd$ of small groupoids with the (symmetric monoidal) model structure it inherits from $\cat$, and if we regard the $(2,1)$-category $\cattwo$ of small categories, functors and natural isomorphisms as a $\gpd^{\times}$-enriched category, then $\cattwo$ is a $\gpd^{\times}$-enriched model category when we equip the underlying category $\cat$ with canonical model structure.

It follows now from \cite[4.8]{Dwyer-Kan_function-complexes} that the hammock localization $\LH{\cat, \mf{W}_{\tu{eq}}}$ is Dwyer-Kan equivalent to the fibrant $\sset$-enriched category $\nerve{}_*\cattwo$ whose objects are those of $\cat$, and in which the mapping space $\map[dm = {\nerve{}_*\cattwo}]{\mc{C}}{\mc{D}}$ is the nerve of the groupoid of functors $\mc{C} \to \mc{D}$ and natural isomorphisms of such.
Taking simplicial nerves, we obtain an equivalence of \qcategories\ $\cat\brk{\mf{W}^{-1}_{\tu{eq}}} \simeq \snerve{\nerve{}_*\cattwo}$.
Furthermore, the simplicial set $\snerve{\nerve{}_*\cattwo}$ is isomorphic to the geometric nerve $\gmnerve{\cattwo}$ of the strict $2$-category $\cattwo$.
By these observations, we may identify \nref{functors.a8.1} with an adjunction $\calg{\qcatmon} \rightleftarrows \calg{\gmnerve{\cattwo}^{\times}}$.
The left adjoint assigns to a $0$-simplex of $\calg{\qcatmon}$ classifying the symmetric monoidal \qcategory\ $\mc{C}^{\otimes}$ a $0$-simplex of $\calg{\gmnerve{\cattwo}^{\times}}$ classifying the symmetric monoidal structure $\ho{\mc{C}}^{\otimes}$ on its homotopy category.
Similarly, the left adjoint sends the symmetric monoidal functor $F^{\otimes}: \mc{C}^{\otimes} \to \mc{D}^{\otimes}$ to the induced symmetric monoidal functor $\ho{F}^{\otimes}: \ho{\mc{C}}^{\otimes} \to \ho{\mc{D}}^{\otimes}$.

The composite induces, for each \qcategory\ $\mc{C}$, a functor
\[
\ho{}:
\psh{\mc{C}}{\calg{\qcatmon}}
  \to \psh{\mc{C}}{\calg{\gmnerve{\cattwo}^{\times}}}
\]
given informally by assigning to each diagram of symmetric monoidal \qcategories\ the induced diagram of symmetric monoidal homotopy categories.
\end{rmk}

\begin{prop}
\nlabel{functors.a8}
The functor $\ho{}$ of the previous remark restricts to a functor from the \qcategory\ of essentially small \coeffsysts\ over $S$ to the geometric nerve of the $(2,1)$-category of symmetric monoidal stable homotopy $2$-functors \textup{(\cite[1.4.1, 2.3.1]{Ayoub_six-operationsI} and \cite[3.1]{Ayoub_operations-de-Grothendieck})} given informally by assigning to $\coef[um = {*, \otimes}]{M}{}: \prns{\sch[ft]{S}}\op \to \calg{\QCATexmon}$ the pseudofunctor $X \mapsto \hocoef{M}{X}^{\otimes}$.
\end{prop}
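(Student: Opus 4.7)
The plan is to verify that for each essentially small coefficient system $\coef[um = {*, \otimes}]{M}{}$ over $S$, the objectwise homotopy-category assignment $X \mapsto \hocoef{M}{X}^{\otimes}$ satisfies each clause of the definition of a symmetric monoidal stable homotopy 2-functor, and that this assignment respects morphisms. The target pseudofunctor is already supplied by the functor $\ho{}$ of \nref{rmk:ho}: symmetric monoidal structures pass to homotopy categories, and since each $\coef{M}{X}$ is stable, $\hocoef{M}{X}$ carries a canonical triangulated structure with respect to which all tensor products and pullback functors are tensor-triangulated.

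First, the structural adjoints descend: an adjunction of \qcategories\ induces an adjunction of their homotopy categories, because the unit and counit descend to natural transformations and the triangle identities, being equalities of homotopy classes of $2$-simplices, hold on $\pi_0$. Applying this to \nref[Axioms]{functors.3.A} and \varnref{functors.3.B} yields pushforwards $\ho{f^*} \dashv \ho{f_*}$ and internal morphism objects in each $\hocoef{M}{X}$. Applying it to the restriction of $\coef[um = {*}]{M}{}$ to $\prns{\sch[sm]{S}}\op$, which factors through $\QCAT[u = {Ex, L, R}]$ by \nref{functors.3.C}, supplies $\ho{p_{\sharp}} \dashv \ho{p^*}$ for each smooth $p$.

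Each of \nref[Axioms]{functors.3.C}, \varnref{functors.3.D}, \varnref{functors.3.E}, \varnref{functors.3.G}, and \varnref{functors.3.H} asserts either that a concretely constructed exchange morphism is an equivalence, that a square is left adjointable, or that a square is Cartesian, and each of these is preserved by $\ho{}$. Equivalences become isomorphisms on homotopy categories; left adjointability is preserved because the exchange morphism for the descended square is the $\pi_0$-image of the exchange morphism upstairs; and Cartesian squares of stable \qcategories\ descend to the full localization package already recorded in \nref{functors.a7.C} — namely, $i_*$ is fully faithful, the pair $\prns{i^*, j^*}$ is conservative, and one has a natural fiber sequence $j_{\sharp}j^*M \to M \to i_*i^*M$, which in turn induces a natural distinguished triangle in the triangulated category $\hocoef{M}{X}$. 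This is precisely Ayoub's localization axiom. Smooth base change, the projection formula, $\affine^1$-invariance and $\tatesphere$-stability are then immediate.

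For functoriality, a morphism $\phi^*: \coef[um = {*, \otimes}]{M}{} \to \coef[um = {*, \otimes}]{N}{}$ of coefficient systems is by definition a natural transformation whose squares with each smooth pullback are left adjointable; the exchange-morphism argument of the previous paragraph shows that applying $\ho{}$ componentwise preserves this condition, producing a 1-morphism of symmetric monoidal stable homotopy $2$-functors. No genuinely difficult step is expected; the main pedestrian obstacle is bookkeeping, namely identifying the target of $\ho{}$ with the geometric nerve of the $(2,1)$-category of symmetric monoidal stable homotopy $2$-functors rather than with a bare \qcategory, which is handled by the chain of identifications assembled in \nref{rmk:ho}.
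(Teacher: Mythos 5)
Your proposal is correct and follows essentially the same route as the paper. The paper's proof consists of identifying the target of $\ho{}$ (using \nref{rmk:ho}, noting that the composite lands in tensor-triangulated categories) and then appealing to ``a fairly routine verification'' that the axioms of a coefficient system descend to those of a symmetric monoidal stable homotopy $2$-functor; your proposal is precisely that routine verification spelled out. Your treatment of the localization axiom via the reformulation in \nref{functors.a7.C} is the right move, since a Cartesian square in $\QCATex$ does not descend literally, but its equivalence with the conditions that $i_*$ be fully faithful, $\prns{i^*, j^*}$ jointly conservative, and the fiber sequence exist does pass to homotopy categories and recovers Ayoub's formulation.
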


\begin{proof}
By \nref{rmk:ho}, we have functors
\[
\calg{\qcatexmon}
  \to \calg{\qcatmon}
    \xrightarrow{\ho{}_{\calg{}}}
      \calg{\gmnerve{\cattwo}^{\times}},
\]
where the first arrow is associated with the inclusion $\qcatex \hookrightarrow \qcat$.
 One can check that the composite factors through the forgetful functor $\gmnerve{\ttcat} \to \calg{\gmnerve{\cattwo}^{\times}}$ from the geometric nerve of the $(2,1)$-category of small tensor-triangulated categories.
A fairly routine verification now shows that the resulting functor
\[
\psh{\sch[ft]{S}}{\calg{\qcatexmon}}
  \to \psh{\sch[ft]{S}}{\gmnerve{\ttcat}}
\]
sends (morphisms of) \coeffsysts\ to (morphisms of) symmetric monoidal stable homotopy $2$-functors, as required.
\end{proof}

\begin{rmk}
We have been imprecise here: 
we deal exclusively with unbiased symmetric monoidal structures, whereas \cite{Ayoub_six-operationsI} works with biased ones.
We are not aware of a proof in the literature of the equivalence between the two definitions.
We will address this technical issue elsewhere, but the abuse is not serious.
\end{rmk}

\begin{cor}
\nlabel{cor:homotopy-2-functor}
Let $\phi^{*, \otimes}: \coef[um = {*, \otimes}]{M}{} \to \coef[um = {*, \otimes}]{N}{}$ be a morphism of $\psh{\sm[ft]{S}}{\calg{\pr[um = {\tu{L}, \otimes}, d = {st}]}}$.
\begin{enumerate}
\item
Suppose that, for each smooth morphism $f: X \to Y$ of $\sch[ft]{S}$, $f^*: \coef{M}{Y} \to \coef{M}{X}$ admits a left adjoint.
Then $\coef[um = {*, \otimes}]{M}{}$ is a \coeffsyst\ if and only if $\hocoef[um = {*, \otimes}]{M}{}$ is a stable homotopy $2$-functor.
\item
If $\coef[um = {*, \otimes}]{M}{}$ and $\coef[um = {*, \otimes}]{N}{}$ are \coeffsysts, then $\phi^{*, \otimes}$ is a morphism of such if and only if $\hocoef[um = {*, \otimes}]{\phi}{}$ is a morphism of stable homotopy $2$-functors.
\end{enumerate}
\end{cor}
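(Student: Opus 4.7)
The plan is to proceed axiom by axiom, translating each of the seven conditions in \nref{functors.3} between the $\infty$-categorical and the triangulated settings via the homotopy-category functor $\ho{}$ of \nref{rmk:ho}, \nref{functors.a8}. The translation relies on four standard properties of $\ho{}$ in the stable, \locpres\ setting: adjunctions of cocontinuous exact $\infty$-functors pass to adjunctions of triangulated functors; a morphism of exact functors between stable $\infty$-categories is an equivalence if and only if its image in the triangulated homotopy category is an isomorphism; fiber sequences in a stable $\infty$-category correspond precisely to distinguished triangles in its triangulated homotopy category; and an exact $\infty$-functor between stable $\infty$-categories is fully faithful if and only if the underlying triangulated functor is fully faithful, because the mapping spaces are determined by the graded $\hom$ groups of the homotopy category.

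With this dictionary, the proof of part (1) reduces to checking each axiom. Pushforwards \nref{functors.3.A} and internal morphisms \nref{functors.3.B} are supplied on both sides by the Adjoint Functor Theorem (\cite[5.5.2.9]{Lurie_higher-topos}) together with Brown representability, as recorded in \nref{functors.a7.A}; under the hypothesis that left adjoints to $f^*$ exist for $f$ smooth, the same holds for $\ho{\coef[um = {*, \otimes}]{M}{}}$. Smooth base change \nref{functors.3.C} is, by \nref{functors.a1}, the condition that an exchange transformation be an equivalence, which transfers by the second dictionary item; the smooth projection formula \nref{functors.3.D} and $\tatesphere$-stability \nref{functors.3.H} are exchange-transformation conditions of the same form and transfer for the same reason. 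Localization \nref{functors.3.E} reduces, by \cite[9.4.20]{Robalo_thesis} as recalled in \nref{functors.a7.C}, to full faithfulness of $i_*$ together with conservativity of the pair $(i^*, j^*)$, both of which transfer by the fourth dictionary item and the elementary fact that a family of exact functors is conservative if and only if the induced family on homotopy categories is. Finally, $\affine^1$-invariance \nref{functors.3.G} is a full-faithfulness condition and transfers likewise.

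For part (2), recall from \nref{functors.a6} that $\phi^{*, \otimes}$ is a morphism of \coeffsysts\ if and only if, for each smooth morphism $p: X \to Y$ of $\sch[ft]{S}$, the square assembled from $p^*$ and $\phi^*$ is left adjointable in the sense of \nref{functors.a1}, i.e., the exchange transformation $\coef[dm = {\sharp}]{N}{p}\phi^*_X \to \phi^*_Y\coef[dm = {\sharp}]{M}{p}$ is an equivalence. The exchange transformation is built from the units and counits of the relevant adjunctions, which by the first dictionary item descend to and determine their counterparts in the triangulated homotopy categories. Hence it is an equivalence of exact $\infty$-functors if and only if its image in the triangulated setting is an isomorphism, which is exactly the condition that $\hocoef[um = {*, \otimes}]{\phi}{}$ be a morphism of stable homotopy $2$-functors.

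I do not expect a serious obstacle: the argument is essentially a dictionary translation, and the relevant compatibilities between $\infty$-categorical and triangulated data are either standard or have been recorded in \nref{functors.a7} and \nref{functors.a8}. The only mildly delicate point is the fiber-sequence/distinguished-triangle correspondence needed to reformulate localization \nref{functors.3.E} as in \nref{functors.a7.C}, but this is classical for stable $\infty$-categories and does not require any new ideas here.
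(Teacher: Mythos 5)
Your proof takes essentially the same approach as the paper's: the existence of the adjoints $f_*$, $\intmor{}{}$, and $f_{\sharp}$ is supplied by local presentability and the explicit hypothesis, and the remaining axioms of \nref{functors.3} — being conditions asserting that certain morphisms (exchange transformations, units, counits) are equivalences or that certain functors are fully faithful or jointly conservative — can be detected after passing to homotopy categories, with the localization axiom handled through the reformulation of \nref{functors.a7.C}. Your ``dictionary'' makes explicit the standard facts that the paper's terse proof leaves implicit, and your treatment of part (2) via the exchange transformation matches the paper's remark that the adjointability condition of \nref{functors.a6} can be checked on homotopy categories. One small quibble: the adjoints $f_*$ and $\intmor{}{}$ come from the Adjoint Functor Theorem alone (via local presentability), not Brown representability; but this does not affect the substance of the argument.
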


\begin{proof}
Aside from the existence of the adjoints $f_{\sharp}$, $f_*$ and $\intmor{}{}$, the axioms of \nref{functors.3} can be checked at the level of homotopy categories.
Here, we appeal to the reformulation of the localization axiom (\nref[Axiom]{functors.3.E}) given in \nref{functors.a7.C}.
The adjointability condition in the definition of a morphism of \coeffsysts\ can also be checked at the level of homotopy categories.
\end{proof}

\begin{rmk}
While \nref{cor:homotopy-2-functor} suggests that much of the essential data of a \locpres\ \coeffsyst\ is already encoded at the level of homotopy categories, the advantage of working with the \qcategorically\ enhanced version is that it allows us to employ the techniques of higher algebra.
In particular, these enhancements will be essential for the extension of the \qcategory\ $\DH{\spec{\complex}}$ of \nref{defn:hodge-modules} to a six-functor formalism.
\end{rmk}

\begin{rmk}[$\tatesphere$-stability revisited]
\nlabel{functors.a9}
The proofs of \cite{Ayoub_six-operationsI, Ayoub_six-operationsII} apply in the unbiased setting without essential modification.
Thus, by \nref{functors.a8}, the results of \cite{Ayoub_six-operationsI, Ayoub_six-operationsII} and \cite{Cisinski-Deglise_triangulated-categories} expressed in the language of triangulated categories apply to \coeffsysts. 
With this in mind, we now revisit \nref[Axiom]{functors.3.H} regarding $\tatesphere$-stability.

Suppose that $\coef[um = {*,\otimes}]{M}{}$ satisfies the first six conditions of \nref{functors.3}.
\begin{enumerate}
\item
\nlabel{functors.a9.A}
By \cite[1.5.7]{Ayoub_six-operationsI} or \cite[2.4.14]{Cisinski-Deglise_triangulated-categories}, \nref[Axiom]{functors.3.H} follows from \nref[Axiom]{functors.3.E}, Zariski excision, which will be established below in \nref{excision.2}, and the following ostensibly weaker condition:
for each $X \in \sch[ft]{S}$, if $s: X \to \affine^1_X$ denotes the zero section of the canonical projection $p: \affine^1_X \to X$, then $\Thom{p,s} = p_{\sharp}s_*$ is an equivalence.
\item
\nlabel{functors.a9.B}
According to \cite[2.4.13, 2.3.8]{Cisinski-Deglise_triangulated-categories}, for each $f$ and $s$ as in \nref[Axiom]{functors.3.H}, there is an equivalence $\Thom{f,s} \simeq \Thom{f,s}\1{\coef{M}{Y}} \otimes_{\coef{M}{Y}} \prns{-}$.
Consequently, $\coef[um = {*, \otimes}]{M}{}$ satisfies \nref[Axiom]{functors.3.H} in this situation if and only if $\Thom{f,s}\1{\coef{M}{Y}}$ is $\otimes$-invertible.
In fact, it suffices that $\Thom{f,s}\1{\coef{M}{Y}}$ be $\otimes$-invertible when $f$ ranges over the canonical projections $\affine^1_Y \to Y$ and $s$ ranges over the corresponding zero sections by \nref{functors.a9.A}.
\item
\nlabel{functors.a9.C}
When $f: X = \spec{\sym{\mc{E}^{\vee}}} \to Y$ is a vector bundle corresponding to the locally free $\0_Y$-module $\mc{E}$ of finite rank and $s: Y \hookrightarrow X$ the zero section, we abusively denote the Thom transformation $\Thom{f,s}$ of \nref[Axiom]{functors.3.H} by $\Thom{\mc{E}}$.

\begin{notation*}
When $f$ is the trivial bundle associated with $\0_Y$, we denote the endofunctor $M \mapsto \Thom{\0_Y}M\sus{-2}$ by $M \mapsto \twist{M}{1}$,
and we denote its quasi-inverse $M \mapsto \operator[um = {-1}]{{Th}}{\0_Y}M\sus{2}$ by $M \mapsto \twist{M}{-1}$.
When we apply these functors to the unit object, we obtain the \emph{Tate object $\tate{\coef{M}{Y}}{1}$ of $\coef{M}{Y}$} and its \emph{inverse} $\tate{\coef{M}{Y}}{-1}$, respectively.
\end{notation*}

\noindent
It follows from \nref{functors.a9.B} above that $\tate{\coef{M}{Y}}{1}$ and $\tate{\coef{M}{Y}}{-1}$ are mutual $\otimes$-inverses when $\coef[um = {*, \otimes}]{M}{}$ is a \coeffsyst. 
\item
\nlabel{functors.a9.D}
As explained in \cite[2.4.19]{Cisinski-Deglise_triangulated-categories}, the Tate object $\tate{\coef{M}{X}}{1}$ is equivalent to $K\sus{-2}$, where $K$ denotes the fiber of the counit morphism $p_{\sharp}p^*\1{\coef{M}{X}} \to \1{\coef{M}{X}}$ associated with the canonical projection $p: \projective^1_X \to X$.
\end{enumerate}
\end{rmk}

\begin{ex}
\nlabel{functors.a10}
Our first examples of \coeffsysts\ are \qcategorical\ constructions of the $\tatesphere$-stable motivic homotopy categories.
\begin{enumerate}
\item
\nlabel{functors.a10.1}
Let $\spt[u = {Nis}, dm = {\tatesphere}]{-}^{*, \wedge}: \prns{\sch[ft]{S}}\op \to \calg{\pr^{\mr{L},\otimes}_{\mr{st}}}$ denote the functor denoted by $\SH{}^{\otimes}$ in \cite[\S9.1]{Robalo_thesis}, sending each $S$-scheme $X$ to the symmetric monoidal \qcategory\ ${\spt[u = {Nis}, dm = {\tatesphere}]{X}}^{\wedge}$ of \motivicspectra\ as defined in \nref[Definition]{SH.1.J}.
By \cite[Theorem~9.4.36]{Robalo_thesis}, $\spt[u = {Nis}, dm = {\tatesphere}]{-}^{*, \wedge}$ is a \locpres[\aleph_0]\ \coeffsyst.
Alternatively, one can deduce this from \nref{sheaves.12}, \nref{SH.a8}, \nref{functors.a8}, and \cite[Th\'eor\`eme~4.5.30]{Ayoub_six-operationsII}.
\item
\nlabel{functors.a10.2}
Similar arguments show that the \'etale-local variant $\spt[u = {\'et}, dm = {\tatesphere}]{-}^{*, \wedge}$ is also a \locpres\ \coeffsyst.
If $\mc{V}$ is $\prns{\sm[ft]{S}, \tu{\'et}}$-finite, then it is even \locpres[\aleph_0].
\end{enumerate}
\end{ex}

\begin{prop}[Proper exceptional pullbacks]
\nlabel{functors.a11}
Let $\coef[um = {*, \otimes}]{M}{}: \prns{\sch[ft]{S}}\op \to \calg{\pr[um = {\tu{L}, \otimes}, d = {st}]}$ be a functor satisfying either of the following conditions:
\begin{enumerate}[label=$\tu{(\alph*)}$]
\item 
\nlabel{functors.a11.1}
$\coef[um = {*}]{M}{}$ factors through the inclusion $\pr[u = {L}, dm = {\aleph_0, \tu{st}}] \hookrightarrow \pr[u = {L}, d = {st}]$; or
\item
\nlabel{functors.a11.2}
$\coef[um = {*, \otimes}]{M}{}$ is a \locpres\ \coeffsyst.
\end{enumerate}
For each proper morphism $p: X \to Y$ in $\sch[ft]{S}$, there is a sequence of adjoint functors $p^* \dashv p_* \dashv p^!$.
\end{prop}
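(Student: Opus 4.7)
The plan is to apply the Adjoint Functor Theorem (\cite[5.5.2.9]{Lurie_higher-topos}) to deduce the existence of $p^!$ from cocontinuity of $p_*$. Since $p_*$ is a right adjoint between stable presentable \qcategories, it is automatically exact; in particular it preserves all finite colimits. Combined with the fact that small coproducts in a presentable stable \qcategory\ are detected at the level of the homotopy category, it will suffice to show that the induced triangulated functor $\ho{p_*}$ preserves small coproducts, and the existence of $p^!$ will then follow.

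For case \nref{functors.a11.1}, I would bypass the proper dévissage entirely: by hypothesis $p^*$ preserves $\aleph_0$-presentable objects for every morphism $p$, so its right adjoint $p_*$ preserves filtered colimits by \cite[5.5.7.2]{Lurie_higher-topos}. Combined with exactness, this yields cocontinuity of $p_*$ for any $p$ whatsoever; properness plays no role in this case. The properness hypothesis is essential only in case \nref{functors.a11.2}.

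For case \nref{functors.a11.2}, by \nref{cor:homotopy-2-functor} the \locpres\ \coeffsyst\ $\coef[um = {*, \otimes}]{M}{}$ descends to a symmetric monoidal stable homotopy $2$-functor $\hocoef[um = {*, \otimes}]{M}{}$, to which one applies the dévissage of \cite[\S1.7]{Ayoub_six-operationsI} (see also \cite[\S2.4]{Cisinski-Deglise_triangulated-categories}). The argument proceeds in three steps: first, closed immersions $i$ are handled directly by the localization axiom (\nref{functors.3.E}, \nref{functors.a7.C}), where the fiber sequence $j_{\sharp}j^* \to \id \to i_*i^*$ exhibits $i_*i^*$ as a cocontinuous endofunctor and, since $i^*$ is essentially surjective, yields cocontinuity of $i_*$ itself; second, the projection $\projective^n_Y \to Y$ is treated via the projective bundle formula, a consequence of $\tatesphere$-stability (\nref{functors.a9}); third, the general case is reduced to the previous two using Chow's lemma combined with a proper descent argument based on the localization axiom. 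The upshot is that $\ho{p_*}$ preserves small coproducts for every proper $p$, whence $p_*$ is cocontinuous in the \qcategorical\ sense and $p^!$ exists by the Adjoint Functor Theorem.

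The principal obstacle is the bridge between the \qcategorical\ and triangulated formulations of cocontinuity. Concretely, I need to verify that for a right adjoint functor between stable presentable \qcategories, cocontinuity is equivalent to preservation of small coproducts by the induced triangulated functor; this reduces to the fact that coproducts in the homotopy category of a stable presentable \qcategory\ are computed by the ambient \qcategorical\ coproducts, which is standard. Once this is in hand, the entire proper-dévissage argument lives at the level of homotopy categories via the formalism of stable homotopy $2$-functors, and no further \qcategorical\ input is required beyond an invocation of the Adjoint Functor Theorem at the end.
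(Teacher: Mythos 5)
Your proposal is correct and follows essentially the same route as the paper.  Case $\tu{(a)}$ is identical: $p^*$ preserves $\aleph_0$-presentable objects so $p_*$ preserves filtered colimits, stability gives exactness, hence cocontinuity, hence the right adjoint $p^!$ exists by the Adjoint Functor Theorem --- and you correctly observe that properness plays no role here.  In case $\tu{(b)}$ the paper also reduces to showing that $\ho{p_*}$ preserves small coproducts (exactness + coproducts $\Rightarrow$ cocontinuity, and coproducts are detected in the homotopy category since homotopy groups commute with products), but then simply cites \cite[2.4.26, 2.4.28, 2.4.47]{Cisinski-Deglise_triangulated-categories}, invoking Neeman's Brown representability for well-generated triangulated categories to conclude $\ho{p_*}$ has a right adjoint.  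Your three-step d\'evissage (closed immersions via the localization sequence, projective bundles via $\tatesphere$-stability, Chow's lemma plus noetherian induction for the general proper case) is precisely the argument underlying those citations, so you have unpacked rather than black-boxed the triangulated input; the two presentations are equivalent in content.  One small remark on your step for closed immersions: the cleanest way to pass from ``$i_*i^*$ is cocontinuous'' to ``$i_*$ is cocontinuous'' is to replace an arbitrary diagram $D$ in $\coef{M}{Z}$ by $i_*D$ (using that $i^*i_*\simeq\id$, since $i_*$ is fully faithful), or alternatively to test the comparison map $\colim i_*D \to i_*\colim D$ against the jointly conservative pair $\prns{i^*, j^*}$; either fix is immediate from \nref{functors.a7.C}.
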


\begin{proof}
Suppose that \nref[Hypothesis]{functors.a11.1} is satisfied.
Let $p$ be a proper morphism of $\sch[ft]{S}$.
Since the functor $p^*$ preserves $\aleph_0$-presentable objects, its right adjoint $p_*$ preserves small $\aleph_0$-filtered colimits (\cite[5.5.7.2]{Lurie_higher-topos}).
Since $p_*$ is a right-adjoint functor, it preserves finite limits (\cite[1.1.4.1]{Lurie_higher-algebra}).
Its domain and codomain are stable, so $p_*$ is exact and preserves finite colimits.
It follows that $p_*$ is cocontinuous (\cite[1.4.4.1]{Lurie_higher-algebra}).
By the Adjoint Functor Theorem (\cite[5.5.2.9.]{Lurie_higher-topos}), $p_*$ therefore admits a right adjoint $p^!$. 

Suppose now that \nref[Hypothesis]{functors.a11.2} is satisfied.
We will apply \nref{functors.a8} and the generalization of \cite[1.4.2]{Ayoub_six-operationsI} to the proper nonprojective case given in \cite{Cisinski-Deglise_triangulated-categories}.
The Adjoint Functor Theorem provides an accessible right adjoint $p^* \dashv p_*$.
The claim will now follow once we show that $p_*$ preserves small colimits.
As a continuous functor between stable \qcategories, $p_*$ is exact, so it suffices show that $p_*$ preserves small coproducts by \cite[1.4.4.1.(2)]{Lurie_higher-algebra}.

Since $p_*$ is exact, $\ho{p_*}$ is triangulated.
The homotopy categories of the stable \locpres\ \qcategories\ $\coef{M}{X}$ and $\coef{M}{Y}$ are well generated (\cite[8.1.7]{Neeman_triangulated-categories}).
Using the remark that homotopy groups commute with products, one checks that $p_*$ preserves small coproducts if and only if $\ho{p_*}$ does, so it remains to prove the latter assertion.

By the argument given for \nref{functors.a8}, $\hocoef[um = {*, \otimes}]{M}{}$ is a well-generated motivic triangulated category over $\sch[ft]{S}$ in the sense of \cite[2.4.45]{Cisinski-Deglise_triangulated-categories}.
By \cite[2.4.26, 2.4.28, 2.4.47]{Cisinski-Deglise_triangulated-categories}, $\ho{p_*}$ admits a right adjoint.
Thus, $\ho{p_*}$ does in fact preserve small coproducts.
\end{proof}

\begin{rmk}
\nlabel{functors.a12}
Suppose $\coef[um = {*, \otimes}]{M}{}$ is a \locpres\ \coeffsyst.
By \nref{functors.a11}, the restriction of the composite functor $\coef[dm = {*}]{M}{}: \sch[ft]{S} \to \pr[u = {R}, d = {st}] \hookrightarrow \QCATex$ resulting from \nref[Remark]{functors.a7.A} to the subcategory $\sch[prop]{S} \subseteq \sch[ft]{S}$ spanned by the proper morphisms factors through the inclusion $\pr[u = {L}, d = {st}] \hookrightarrow \QCATex$.
We therefore deduce a composite functor 
\[
\coef[um = {!}]{M}{}: 
  \prns{\sch[prop]{S}}\op
    \xrightarrow{\prns{\coef[dm = {*}]{M}{}}\op} \prns{\pr[u = {L}, d = {st}]}\op
      \simeq \pr[u = {R}, d = {st}]
\]
given informally by assigning to each proper morphism $p: X \to Y$ of $\sch[ft]{S}$ a functor $\coef[um = {!}]{M}{p}: \coef{M}{Y} \to \coef{M}{X}$ right adjoint to $p_*$.

\begin{notation*}
For each proper morphism $p: X \to Y$ of $\sch[ft]{S}$, we set $p^! \coloneqq \coef[um = {!}]{M}{p}: \coef{M}{Y} \to \coef{M}{X}$, so that $p_* \dashv p^!$.
\end{notation*}
\end{rmk}

\begin{thm}
[{Ayoub, Liu-Zheng, Robalo}]
\nlabel{functors.a13}
Let $\coef[um = {*, \otimes}]{M}{}: \prns{\sch[ft]{S}}\op \to \calg{\pr[um = {\tu{L}, \otimes}, d = {st}]}$ be a \locpres\ \coeffsyst.
There exists a functor $\coef[dm = {!}]{M}{}: \sch[sepft]{S} \to \pr[u = {L}, d = {st}]$ satisfying the following properties.
\begin{enumerate}[ref=Theorem~$\thethm.(\arabic*)$]
\item 
\nlabel{functors.a13.2}
\textbf{Gluing property for exceptional pushforwards:}
There is a natural equivalence between the functors $\sch[open]{S} \to \QCATex$ obtained from $\coef[dm = {!}]{M}{}$ and $\coef[dm = {\sharp}]{M}{}$ by restriction.
There is also a natural equivalence between the functors $\sch[prop]{S} \to \QCATex$ obtained from $\coef[dm = {!}]{M}{}$ and $\coef[dm = {*}]{M}{}$ by restriction.
\item
\nlabel{functors.a13.3}
\textbf{Relative purity:}
For each separated, smooth morphism $f: X \to Y$ in $\sch[ft]{S}$, letting $\delta: X \hookrightarrow X\times_YX$ denote the diagonal morphism, $p: \spec{\sym{\normalbundle{\delta}}} \to X$ the normal bundle associated with $\delta$ and $s: X \hookrightarrow \spec{\sym{\normalbundle{\delta}}}$ the zero section, there is a natural equivalence
$
f_{\sharp}
\isom
f_!\Thom{p,s}
$.
\item
\nlabel{functors.a13.4}
\textbf{Proper base change:}
Given morphisms $f: X \to Y$ and $g: Y' \to Y$ in $\sch[ft]{S}$ with $f$ separated, the functors $g^*f_!$ and $f_{\prns{Y'}!}g^*_{\prns{X}}$ are equivalent.
\item
\nlabel{functors.a13.5}
\textbf{Projection formula:}
For each morphism $f: X \to Y$ of $\sch[ft]{S}$, the functors $f_!\prns{-\otimes_{\coef{M}{X}} \fct{f^*}{-}}$ and $\fct{f_!}{-} \otimes_{\coef{M}{Y}} \prns{-}$ are equivalent.
\end{enumerate}
\end{thm}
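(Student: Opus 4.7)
The plan is to combine the smooth-pushforward functor $\coef[dm = {\sharp}]{M}{}$ (restricted to open immersions) with the pushforward $\coef[dm = {*}]{M}{}$ (restricted to proper morphisms) into a single \qcategorical\ functor, using Nagata's compactification theorem as the combinatorial scaffold and the \qcategorical\ gluing machinery of Liu-Zheng as the homotopy-coherent glue. Nagata's theorem factors each separated morphism $f: X \to Y$ of finite type as $f = p \circ j$ with $j$ an open immersion and $p$ proper, and one wants $f_! \simeq p_* j_{\sharp}$, coherently in the choice of compactification and coherently in the composition of such factorizations.

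First, I would assemble the input: \nref{functors.a7.B} provides the left-adjoint functor $\coef[dm = {\sharp}]{M}{}: \sch[sm]{S} \to \pr[u = {L}, d = {st}]$, and \nref{functors.a12} provides the cocontinuous functor $\coef[dm = {*}]{M}{}: \sch[prop]{S} \to \pr[u = {L}, d = {st}]$; these agree on the subcategory of isomorphisms. The hypotheses of Liu-Zheng's gluing theorem amount to a proper-base-change compatibility for Cartesian squares involving a proper morphism and an open immersion, together with compatibility on compositions. By \nref{cor:homotopy-2-functor}, $\hocoef[um = {*, \otimes}]{M}{}$ is a motivic triangulated category in the sense of Cisinski-D\'eglise, so the corresponding statement at the level of homotopy categories follows from the classical Ayoub and Cisinski-D\'eglise constructions in \cite{Ayoub_six-operationsI, Ayoub_six-operationsII, Cisinski-Deglise_triangulated-categories}; since all functors involved are cocontinuous between presentable stable \qcategories, the \qcategorical\ lift of each such natural isomorphism is determined up to contractible choice by its shadow on homotopy categories.

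Having applied Liu-Zheng's gluing, one obtains $\coef[dm = {!}]{M}{}: \sch[sepft]{S} \to \pr[u = {L}, d = {st}]$ satisfying \nref{functors.a13.2} by construction. Properties \nref{functors.a13.3}, \nref{functors.a13.4}, and \nref{functors.a13.5} then reduce to their classical triangulated-category analogues from the same references: each desired natural equivalence between cocontinuous functors of presentable stable \qcategories\ is determined uniquely up to contractible choice by its effect on homotopy categories, where the verification has already been carried out. The main obstacle is thus the gluing step itself: one must organize a homotopy-coherent web of compatibilities between the many possible Nagata compactifications, their compositions, and their base changes into a single coherent \qcategorical\ functor. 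This coherence is too elaborate to verify by hand and is precisely what Liu-Zheng's multisimplicial gluing framework is designed to handle, which is why we defer to it (together with Robalo's universal-property perspective from \cite{Robalo_thesis}) rather than attempting an \emph{ad hoc} construction.
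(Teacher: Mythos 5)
Your overall strategy matches the paper's exactly: glue $\coef[dm = {\sharp}]{M}{}$ on open immersions with $\coef[dm = {*}]{M}{}$ on proper morphisms via Nagata compactification, using the Liu--Zheng multisimplicial framework for the homotopy coherence and the classical Ayoub/Cisinski--D\'eglise theory (together with the passage to homotopy categories from \nref{functors.a8}) for the triangulated-level verifications. The paper's proof is nothing more than a list of the same four citations --- Ayoub \cite[Scholie 1.4.2]{Ayoub_six-operationsI}, Cisinski--D\'eglise \cite[2.4.50]{Cisinski-Deglise_triangulated-categories}, Liu--Zheng \cite[Corollary 0.3]{Liu-Zheng_gluing-restricted}, and Robalo \cite[\S9.4]{Robalo_thesis} --- so you have identified the intended argument.

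However, the justification you give for deducing \nref{functors.a13.3}, \nref{functors.a13.4}, and \nref{functors.a13.5} contains a false general principle. You assert that ``the \qcategorical\ lift of each such natural isomorphism is determined up to contractible choice by its shadow on homotopy categories'' and, later, that ``each desired natural equivalence between cocontinuous functors of presentable stable \qcategories\ is determined uniquely up to contractible choice by its effect on homotopy categories.'' This is not true: the homotopy category of $\fun[u = {L}]{\mc{C}}{\mc{D}}$ is \emph{not} the category of triangulated functors $\ho{\mc{C}} \to \ho{\mc{D}}$, so a natural transformation at the triangulated level need not lift, and when a lift exists the space of lifts is typically neither contractible nor discrete. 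What \emph{is} true, and what the argument actually needs, is much weaker and runs in the opposite direction: once a candidate \qcategorical\ natural transformation is in hand --- the exchange morphisms and purity comparison morphisms produced coherently as part of the Liu--Zheng gluing output, or assembled from units, counits, and the monoidal structure --- checking that it is an equivalence reduces to an objectwise check, and the objectwise check may be performed in homotopy categories precisely because the functor $\ho{}$ is conservative on equivalences. The classical triangulated-category verifications then finish the job. The missing ingredient in your write-up is the acknowledgment that the \qcategorical\ construction of the comparison morphisms themselves is part of what the cited machinery delivers; it does not come for free from the triangulated story.
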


\begin{proof}
The claims follow from the combination of the following results:
\cite[Scholie 1.4.2]{Ayoub_six-operationsI}, \cite[2.4.50]{Cisinski-Deglise_triangulated-categories}, \cite[Corollary 0.3]{Liu-Zheng_gluing-restricted} and  \cite[\S9.4]{Robalo_thesis}.
\end{proof}

\begin{notation*}
With the notation and hypotheses of \nref{functors.a13}, we set $f_! \coloneqq \coef[dm = {!}]{M}{f}: \coef{M}{X} \to \coef{M}{Y}$ for each morphism $f: X \to Y$ of $\sch[sepft]{S}$, and we refer to it as the \emph{exceptional pushforward along $f$}.
\end{notation*}

\begin{rmk}
\nlabel{functors.a14}
We elaborate on the above conditions and introduce some notation:
\begin{enumerate}
\item 
\nlabel{functors.a14.1}
Let $\coef[um = {*, \otimes}]{M}{}$ be a \locpres\ \coeffsyst.
As in \nref[Remark]{functors.a7.A}, we obtain a functor $\coef[um = {!}]{M}{}: \prns{\sch[sepft]{S}}\op \to \pr[u = {R}]$ such that $f_! \dashv f^! \coloneqq \coef[um = {!}]{M}{f}$.
We refer to $f^!$ as the \emph{exceptional pullback along $f$}.
By \nref[Theorem]{functors.a13.2}, the restriction of $\coef[um = {!}]{M}{}$ to $\prns{\sch[prop]{S}}\op$ is naturally equivalent to the functor denoted by the same symbol in \nref{functors.a12}.
\item
\nlabel{functors.a14.2}
By \nref{functors.a8} and \cite[1.7.4]{Ayoub_six-operationsI}, there is a lax natural transformation $\hocoef[dm = {!}]{M}{} \to \hocoef[dm = {*}]{M}{}$ of pseudofunctors from $\sch[sepft]{S}$ to the $(2,1)$-category of triangulated categories.
We were unable to locate a construction in the literature of an analogous oplax natural transformation $\coef[dm = {!}]{M}{} \to \coef[dm = {*}]{M}{}: \sch[sepft]{S} \to \QCATex$.
On the other hand, we were also unable to find essential applications of the existence of such an oplax natural transformation, even in the language of triangulated categories.
\item 
\nlabel{functors.a14.3}
One often refers to the data and compatibilities described in \nref{functors.3} and \nref{functors.a13} as a \emph{Grothendieck six-functor formalism}, although precisely which compatibilities are included in such a formalism may depend on the context.
\end{enumerate}
\end{rmk}

\begin{prop}
\nlabel{prop:exchange-for-proper-pushforward}
Let $\phi^{*, \otimes}: \coef[um = {*, \otimes}]{M}{} \to \coef[um = {*, \otimes}]{N}{}$ be a morphism of \coeffsysts\ 
and $f: X \to Y$ a proper morphism of $\sch[ft]{S}$.
The exchange morphism $\phi^*_Yf_* \to f_*\phi^*_X$ is an equivalence.
\end{prop}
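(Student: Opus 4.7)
The plan is to verify the exchange morphism $\phi^*_Y f_* \to f_* \phi^*_X$ on a few basic classes of proper morphisms and then reduce general proper $f$ to these by Chow's lemma. The three building blocks are closed immersions, smooth proper morphisms (in particular projective bundle projections $\projective^n_Y \to Y$), and their composites, which realize arbitrary projective morphisms.

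For a closed immersion $i: Z \hookrightarrow Y$ with complementary open immersion $j: U \hookrightarrow Y$, I would exploit joint conservativity of $(i^*, j^*)$ on $\coef{N}{Y}$ (\nref{functors.a7.C}) together with naturality of $\phi^*$ along $i$ and $j$. Applying $j^*$ to each side of $\phi^*_Y i_* \to i_* \phi^*_Z$ gives zero on both sides (from $j^* i_* \simeq 0$), while applying $i^*$ gives $\phi^*_Z$ on both sides (from $i^* i_* \simeq \id$); joint conservativity then forces the exchange to be an equivalence.

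For a smooth, proper $f: X \to Y$, relative purity (\nref[Theorem]{functors.a13.3}) combined with $f_* \simeq f_!$ yields $f_* \simeq f_\sharp \circ \Thom{p,s}^{-1}$, where $\Thom{p,s} = p_\sharp s_*$ is built from a smooth $p$ and a closed immersion $s$. Compatibility of $\phi^*$ with $f_\sharp$ and $p_\sharp$ is part of the definition of a morphism of \coeffsysts, while compatibility with $s_*$ was just established; invertibility of $\Thom{p,s}$ then extends the compatibility to $\Thom{p,s}^{-1}$, giving the smooth proper case. Since any projective morphism factors as a closed immersion into some $\projective^n_Y$ followed by the bundle projection, composing the two cases handles all projective morphisms.

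The hard part will be the reduction from projective to general proper morphisms. My plan is to apply Chow's lemma to produce a projective, birational $\pi: X' \to X$ with $X'$ projective over $Y$, together with the closed complement $i: Z \hookrightarrow X$ of the isomorphism locus of $\pi$ and its base change $\pi_Z: Z' \to Z$. A Noetherian induction on $\dim X$, using the fiber sequence arising from the blow-up square $\{\pi, i\}$ and the proper base change theorem (\nref[Theorem]{functors.a13.4}), reduces the claim for $f_*$ to the projective cases $(f\pi)_*$, $\pi_*$, $(\pi_Z)_*$ and the lower-dimensional proper case $(f\circ i)_*$ handled by the inductive hypothesis. A convenient shortcut is to invoke \cite[\S2.4]{Cisinski-Deglise_triangulated-categories}, already cited in the proof of \nref{functors.a13}: in essentially this axiomatic generality, morphisms compatible with $f^*$, smooth $p_\sharp$, tensor products, and localization automatically commute with $f_!$ for every separated morphism of finite type, and specialization to proper $f$ gives the desired exchange equivalence.
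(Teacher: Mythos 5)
Your proposal is correct and takes essentially the same route as the paper: the text's proof is simply a one-line citation of \cite[2.3.11, 2.4.53]{Cisinski-Deglise_triangulated-categories} (applicable after passing to homotopy categories via \nref{functors.a8}), and the d\'evissage you spell out --- closed immersions via joint conservativity of $(i^*, j^*)$ and full faithfulness of $i_*$, smooth proper morphisms via relative purity, projective morphisms by composition, and general proper morphisms via Chow's lemma, Noetherian induction and \cdh-excision --- is precisely the argument behind those references, which your closing shortcut also invokes. One point worth flagging if you carry out the d\'evissage in full: in the smooth proper step, knowing that $\phi^*$ commutes separately with $f_{\sharp}$, $p_{\sharp}$, and $s_*$ produces \emph{some} equivalence between $\phi^*_Y f_*$ and $f_*\phi^*_X$, but you still owe the verification that this coincides with the adjunction-theoretic exchange morphism of the statement, i.e., that the relative purity equivalence of \nref{functors.a13.3} is natural for morphisms of \coeffsysts; this is a routine triangle-identity check at the level of homotopy $2$-functors and is implicit in the cited results.
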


\begin{proof}
This follows from \cite[2.3.11, 2.4.53]{Cisinski-Deglise_triangulated-categories}.
\end{proof}

\section{Excision properties for \coeffsysts}

\setcounter{thm}{-1}

\begin{notation}
In this section, we fix the following notation:
\begin{itemize}
\item
$S$, a Noetherian scheme of finite dimension; and
\item
$\coef[um = {*, \otimes}]{M}{}: \prns{\sch[ft]{S}}\op \to \calg{\QCAT[um = {\tu{Ex}, \otimes}]}$, a \coeffsyst\ over $S$.
\end{itemize}
\end{notation}

\begin{motivation}
In this short section, we recall the notion of excision in a \coeffsyst\ for Nisnevich distinguished squares and for blow-ups, and we observe that the proofs of \cite[\S3.3]{Cisinski-Deglise_triangulated-categories} readily adapt to our \qcategorical\ framework to show that these excision properties follow from the localization axiom and proper base change.
\end{motivation}

\begin{defn}
\nlabel{excision.1}
A commutative square
\begin{equation}
\nlabel{excision.1.a}
\begin{tikzcd}
W
\ar[r, "j'" below, hookrightarrow]
\ar[d, "e'" left]
&
V
\ar[d, "e" right]
\\
U
\ar[r, "j" above, hookrightarrow]
&
X
\end{tikzcd}
\end{equation}
of schemes is \emph{Nisnevich distinguished} if it satisfies the following conditions:
\begin{enumerate}
\item
the square is Cartesian;
\item
$j$ is an open immersion;
\item
$e$ is \'etale; and
\item
the induced morphism $e'': e^{-1}\prns{X-U}\red \to \prns{X-U}\red$ is an isomorphism.
\end{enumerate}
\end{defn}

\begin{prop}[Nisnevich excision]
\nlabel{excision.2}
Consider a Nisnevich-distinguished square \nref{excision.1.a}, and let $M \in \coef{M}{X}$.
The essentially commutative square
\[
\begin{tikzcd}
\prns{ej'}_{\sharp}\prns{ej'}^*M
\ar[r]
\ar[d]
&
e_{\sharp}e^*M
\ar[d]
\\
j_{\sharp}j^*M
\ar[r]
&
M
\end{tikzcd}
\]
in $\coef{M}{X}$ whose arrows are the counits of the associated adjunctions is coCartesian.
\end{prop}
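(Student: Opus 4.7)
The plan is to check coCartesianness after restriction along the jointly conservative pair $(i^*, j^*)$ afforded by the localization axiom (\nref{functors.a7.C}), where $i\colon Z \hookrightarrow X$ denotes the reduced closed complement of $U$. Since $i^*$ and $j^*$ are exact functors between stable \qcategories, each preserves pushouts, so the original square is coCartesian if and only if its restrictions along $i^*$ and $j^*$ are.

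For the $j^*$-restriction, I would combine the factorisation $ej' = je'$, smooth base change (\nref{functors.3.C}) in the form $j^* e_\sharp \simeq e'_\sharp (j')^*$ and its consequence $(j')^* e^* \simeq (e')^* j^*$, together with the localization identity $j^* j_\sharp \simeq \id$, to identify the four vertices of the restricted square with $e'_\sharp (e')^* j^* M$ (the top two) and $j^* M$ (the bottom two), in such a way that both horizontal arrows become identities. The resulting square is trivially coCartesian.

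For the $i^*$-restriction, the left column vanishes because $i^* j_\sharp \simeq 0$ by localization (and $ej'$ factors through $j$). Smooth base change for $e$ on the Cartesian square of reduced closed complements gives $i^* e_\sharp \simeq e''_\sharp (i')^*$, where $i'\colon Z' \hookrightarrow V$ is the reduced closed complement of $W$ and $e''\colon Z' \to Z$ is the restriction of $e$. Combining this with $(i')^* e^* \simeq (e'')^* i^*$ coming from $ei' = ie''$ yields $i^* e_\sharp e^* M \simeq e''_\sharp (e'')^* i^* M$. Nisnevich distinguishedness enters precisely here: $e''$ is an isomorphism, so $e''_\sharp$ and $(e'')^*$ are mutually quasi-inverse, and the map on $i^*$ induced by the counit $e_\sharp e^* M \to M$ is an equivalence. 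The $i^*$-restricted square is therefore also coCartesian, and joint conservativity of $(i^*, j^*)$ completes the argument.

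The main technical subtlety I anticipate is a naturality check: the chain of smooth base change equivalences leading from $i^* e_\sharp e^* M$ to $i^* M$ must transport the counit of $e_\sharp \dashv e^*$ onto the counit of $e''_\sharp \dashv (e'')^*$. This is a Beck-Chevalley compatibility in the sense of \nref{functors.a1}; it is automatic in the triangulated framework of Cisinski-D\'eglise, but in our \qcategorical setting it calls for a brief argument with adjointable squares of left-adjoint functors as in \nref{functors.a7.A}.
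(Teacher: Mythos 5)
Your proof is correct and reproduces precisely the argument of Cisinski--D\'eglise~3.3.4, which is what the paper cites verbatim ("The proof is identical to that of [CD,~3.3.4]"): test coCartesianness against the jointly conservative pair $(i^*, j^*)$, use smooth base change and $j^*j_\sharp \simeq \id$ to trivialize the $j^*$-restriction, and use the Nisnevich isomorphism $e''$ together with $i^*j_\sharp \simeq 0$ to trivialize the $i^*$-restriction. The Beck--Chevalley compatibility you flag at the end is indeed the only point requiring care in passing from the triangulated setting to the $\infty$-categorical one, and it is handled by the adjointability formalism of \nref{functors.a1} that the paper implicitly relies on when invoking CD.
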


\begin{proof}
The proof is identical to that of \cite[3.3.4]{Cisinski-Deglise_triangulated-categories}.
\end{proof}

\begin{defn}
\nlabel{excision.3}
A commutative square
\begin{equation}
\nlabel{excision.3.a}
\begin{tikzcd}
E
\ar[r, "i'" below, hookrightarrow]
\ar[d, "p'" left]
&
Y
\ar[d, "p" right]
\\
Z
\ar[r, "i" above, hookrightarrow]
&
X
\end{tikzcd}
\end{equation}
of schemes is \emph{\cdh-distinguished} if it satisfies the following conditions:
\begin{enumerate}
\item
the square is Cartesian;
\item
$i$ is a closed immersion;
\item
$p$ is proper and surjective; and
\item
the induced morphism $p'': p^{-1}\prns{X-Z} \to X-Z$ is an isomorphism.
\end{enumerate}
\end{defn}

\begin{prop}[Excision for blow-ups]
\nlabel{excision.4}
Consider a \cdh-distinguished square \nref{excision.3.a}, and let $M \in \coef{M}{X}$.
The essentially commutative square
\[
\begin{tikzcd}
M
\ar[r]
\ar[d]
&
p_*p^*M
\ar[d]
\\
i_*i^*M
\ar[r]
&
\prns{pi'}_*\prns{pi'}^*M
\end{tikzcd}
\]
in $\coef{M}{X}$ whose arrows are the units of the associated adjunctions is Cartesian.
\end{prop}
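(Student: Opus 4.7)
The plan is to mirror the proof of \cite[3.3.15]{Cisinski-Deglise_triangulated-categories} in the \qcategorical\ setting by exploiting the recollement associated with the closed immersion $i$ and its complementary open immersion $j: U \hookrightarrow X$. By \nref{functors.a7.C}, the pair $(i^*, j^*)$ is jointly conservative on $\coef{M}{X}$, and both functors are exact between stable \qcategories, so they preserve Cartesian squares and detect them jointly. It therefore suffices to check that the square in the statement becomes Cartesian after applying each of $i^*$ and $j^*$.

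First I would pull back along $j^*$. Let $j': p^{-1}(U) \hookrightarrow Y$ denote the induced open immersion and $p_U: p^{-1}(U) \to U$ the base change of $p$ along $j$, which is an isomorphism by hypothesis. Combining proper base change \nref{functors.a13.4} with the identification $p_! \simeq p_*$ for proper morphisms \nref{functors.a13.2}, one obtains
\[
j^*p_*p^*M \simeq p_{U,*}j'^*p^*M \simeq p_{U,*}p_U^*j^*M \simeq j^*M.
\]
Meanwhile, both $i_*i^*M$ and $(pi')_*(pi')^*M \simeq i_*p'_*(pi')^*M$ are annihilated by $j^*$ because $j^*i_* \simeq 0$ by the localization axiom \nref{functors.3.E}. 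The $j^*$-pulled-back square thus has the form
\[
\begin{tikzcd}
j^*M \ar[r, "\sim" above] \ar[d] & j^*M \ar[d] \\
0 \ar[r] & 0,
\end{tikzcd}
\]
which is manifestly Cartesian in $\coef{M}{U}$.

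Next I would apply $i^*$. By the full faithfulness of $i_*$ recorded in \nref{functors.a7.C}, $i^*i_*i^*M \simeq i^*M$. Proper base change applied to the Cartesian square \nref{excision.3.a} yields $i^*p_* \simeq p'_*i'^*$, whence $i^*p_*p^*M \simeq p'_*i'^*p^*M \simeq p'_*p'^*i^*M$ using $i'^*p^* \simeq (pi')^* \simeq (ip')^* \simeq p'^*i^*$. The same identities together with full faithfulness of $i_*$ give $i^*(pi')_*(pi')^*M \simeq i^*i_*p'_*p'^*i^*M \simeq p'_*p'^*i^*M$. The $i^*$-pulled-back square is therefore
\[
\begin{tikzcd}
i^*M \ar[r] \ar[d, "\sim" left] & p'_*p'^*i^*M \ar[d, "\sim" right] \\
i^*M \ar[r] & p'_*p'^*i^*M,
\end{tikzcd}
\]
in which the vertical arrows are equivalences, so the square is Cartesian.

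Insofar as there is one, the main obstacle is coherently identifying the various base change equivalences in the \qcategorical\ setting so that applying $i^*$ and $j^*$ to the square written in the statement does yield the two squares above on the nose (up to canonical homotopy). This is essentially bookkeeping with the exchange transformations of \nref{functors.a13.4} and the fiber sequences from the localization axiom in \nref{functors.a7.C}, and introduces no difficulty beyond that already present in the triangulated case.
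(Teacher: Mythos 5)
Your proof is correct and runs along the same lines as the paper's, which simply cites \cite[3.3.10.(i)]{Cisinski-Deglise_triangulated-categories}; that argument likewise reduces to the joint conservativity of $(i^*,j^*)$ together with proper base change and the localization sequence. One remark: the coherence issues you flag at the end are resolved exactly as you suggest, since Cartesian-ness of a square in a stable $\infty$-category amounts to the vanishing of a single total cofiber, which is a statement checkable in the homotopy category via \nref{functors.a8}; this is what makes the triangulated argument of Cisinski--D\'eglise transport verbatim.
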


\begin{proof}
The proof is identical to that of \cite[3.3.10.$(i)$]{Cisinski-Deglise_triangulated-categories}.
\end{proof}

\section{Constructibility and duality}
\nlabel{construct}

\setcounter{thm}{-1}

\begin{notation}
\nlabel{construct.0}
Throughout this section, we fix:
\begin{itemize}
\item
$\mc{V}^{\otimes}$, a \locpres[\aleph_0]\ symmetric monoidal \qcategory; and
\item
$S$, a Noetherian scheme of finite dimension.
\end{itemize}
\end{notation}

\begin{motivation}
One of the important features of six-functor formalisms is their compatibility with constructible objects.
This is important, for example, in arguments involving d\'evissage, for example. 
In this section, we show that the results of Ayoub and Cisinski-D\'eglise in this direction, as well as their results on the existence of a theory of Verdier duality for constructible objects, can be translated to the \qcategorical\ framework introduced in \nref{construct}.
\end{motivation}

\begin{summary}
\
\begin{itemize}
\item
We begin by introducing the basic definitions of constructible objects, compatibility of a \coeffsyst\ with constructibility, and compatibility of a \coeffsyst\ with duality.
\item
In \nref{prop:constructible-subfunctor}, we show that the \subqcategories\ spanned by constructible objects form a \coeffsyst\ in their own right.
\item
In \nref{construct.a4}, we establish sufficient conditions for a \coeffsyst\ to be compatible with constructibility.
\item
In \nref{construct.a6}, we establish sufficient conditions for a \coeffsyst\ to be compatible with duality.
\item
In \nref{construct.a7}, we show that each \coeffsyst\ compatible with duality admits a theory of Verdier duality intertwining the ordinary and exceptional pullback and pushforward functors.
\item
In \nref{prop:morphisms-compatible-with-six-functors}, we remark that when $S$ is the spectrum of a field of characteristic zero, the techniques of the proof of \cite[4.4.25]{Cisinski-Deglise_triangulated-categories} apply in our \qcategorical\ context to show that morphisms of \coeffsysts\ commute with the six operations when restricted to constructible objects.
\item
In \nref{prop:conservativity}, we establish a sufficient criterion for a morphism of \coeffsysts\ to be conservative when restricted to constructible objects.
\end{itemize}
\end{summary}

\begin{defn}
\nlabel{construct.a1}
\nlabel{construct.a1.A}
Let $\coef[um = {*, \otimes}]{M}{}$ be a $\mc{V}^{\otimes}$-linear \locpres\ \coeffsyst.
For each $X \in \sch[ft]{S}$, the \emph{$\mc{V}^{\otimes}$-constructible objects} of $\coef{M}{X}$ are the objects of the smallest replete, idempotent-complete (\cite[\S4.4.5]{Lurie_higher-topos}) stable \subqcategory\ $\coef[d = {c}]{M}{X} \subseteq \coef{M}{X}$ containing the objects of the form $\twist{f_{\sharp}f^*\1{\coef{M}{X}}}{r} \otimes V$ for each smooth morphism $f: Y \to X$ in $\sch[ft]{S}$, each $r \in \integer$, and each $V \in \mc{V}_{\aleph_0}$.
When $\mc{V} = \spc{}^{\times}$, we speak simply of \emph{constructible objects}.
\end{defn}

\begin{defn}
\nlabel{construct.a1.B}
We say that the \coeffsyst\ $\coef[um = {*, \otimes}]{M}{}$ is \emph{$\mc{V}^{\otimes}$-quasi-constructible} if it satisfies the following conditions:
\begin{enumerate}
\item
\nlabel{construct.a1.B.a}
for each closed immersion $i: Z \hookrightarrow X$ between regular schemes in $\sch[ft]{S}$, each $r \in \integer$, and each $V \in \mc{V}_{\aleph_0}$, the object $i^!\prns{\twist{\1{\coef{M}{X}}}{r} \otimes V}$ is $\mc{V}^{\otimes}$-constructible; and
\item
\nlabel{construct.a1.B.b}
for each $X \in \sch[ft]{S}$, each $r \in \integer$, each $V \in \mc{V}_{\aleph_0}$, and each $M \in \coef[d = {c}]{M}{X}$, the object 
\[
\intmor[dm = {\coef{M}{X}}]{\tate{\coef{M}{X}}{r} \otimes V}{M}
  \in \coef{M}{X}
\]
is $\mc{V}^{\otimes}$-constructible.
\end{enumerate}
When $\mc{V} = \spc{}^{\times}$, we say that $\coef[dm = {*, \otimes}]{M}{}$ is \emph{quasi-constructible}.
\end{defn}

\begin{defn}
\nlabel{construct.a1.C}
We say that $\coef[um = {*, \otimes}]{M}{}$ is \emph{$\mc{V}^{\otimes}$-constructible} if it satisfies the following conditions:
\begin{enumerate}
\item
\nlabel{construct.a1.C.a}
for each morphism $f$, each smooth morphism $p$, and each separated morphism of finite type $g$ in $\sch[ft]{S}$, the functors $f^*$, $f_*$, $p_{\sharp}$, $g_!$ and $g^!$ preserve $\mc{V}^{\otimes}$-constructible objects; and
\item
\nlabel{construct.a1.C.b}
for each $X \in \sch[ft]{S}$, the bifunctors $\prns{-} \otimes_{\coef{M}{X}} \prns{-}$ and $\intmor[dm = {\coef{M}{X}}]{-}{-}$ send pairs of $\mc{V}^{\otimes}$-constructible objects to $\mc{V}^{\otimes}$-constructible objects.
\end{enumerate}
When $\mc{V}^{\otimes} = \spc{}^{\times}$, we say that $\coef[um = {*, \otimes}]{M}{}$ is \emph{constructible}.
\end{defn}

\begin{defn}
\nlabel{construct.a1.D}
We say that $\coef[um = {*, \otimes}]{M}{}$ is \emph{$\mc{V}^{\otimes}$-dualizable} if it satisfies the following conditions:
\begin{enumerate}
\item
\nlabel{construct.a1.D.a}
$\coef[um = {*, \otimes}]{M}{}$ is $\mc{V}^{\otimes}$-quasi-constructible;
\item
\nlabel{construct.a1.D.b}
for each closed immersion $i : Z \hookrightarrow X$ between regular schemes in $\sch[ft]{S}$, the object $i^!\1{\coef{M}{X}}$ is $\otimes$-invertible; and
\item
\nlabel{construct.a1.D.c}
for each regular scheme $X$ in $\sch[ft]{S}$, each $r \in \integer$, and each $V \in \mc{V}_{\aleph_0}$, the morphism
\[
\tate{\coef{M}{X}}{r} \otimes V
  \to \intmor[size = 0, dm = {\coef{M}{X}}]{\intmor[dm = {\coef{M}{X}}]{\tate{\coef{M}{X}}{r} \otimes V}{\1{\coef{M}{X}}}}{\1{\coef{M}{X}}}
\]
adjoint to the evaluation morphism is an equivalence.
\end{enumerate}
When $\mc{V}^{\otimes} = \spc{}^{\times}$, we say that $\coef[um = {*, \otimes}]{M}{}$ is dualizable.
\end{defn}

\begin{prop}
\nlabel{prop:constructible-subfunctor}
\nlabel{construct.a2.B}
Let $\phi^{*, \otimes}: \coef[um = {*, \otimes}]{M}{} \to \coef[um = {*, \otimes}]{N}{}$ be a morphism of $\mc{V}^{\otimes}$-constructible $\mc{V}^{\otimes}$-linear \coeffsysts.
The following properties hold:
\begin{enumerate}
\item
\nlabel{prop:constructible-subfunctor.1}
the assignment $X \mapsto \coef[d = {c}]{M}{X}$ underlies a \coeffsyst\ $\coef[um = {*, \otimes}, d = {c}]{M}{}: \prns{\sch[ft]{S}}\op \to \calg{\qcatexmon}$,
and the inclusions $\iota_X: \coef[d = {c}]{M}{X} \hookrightarrow \coef{M}{X}$ underlie a morphism of \coeffsysts\ $\iota^*_X: \coef[um = {*, \otimes}, d = {c}]{M}{} \to \coef[um = {*, \otimes}]{M}{}$; and
\item
\nlabel{prop:constructible-subfunctor.2}
\nlabel{prop:morphisms-compatible-with-six-functors.2}
$\phi^{*, \otimes}$ restricts to a morphism of \coeffsysts\ $\phi^{*, \otimes}\cnstr: \coef[um = {*, \otimes}, d = {c}]{M}{} \to \coef[um = {*, \otimes}, d = {c}]{N}{}$.
\end{enumerate}
\end{prop}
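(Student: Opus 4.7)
The plan is to derive both claims from the $\mc{V}^{\otimes}$-constructibility hypothesis, which by definition guarantees that every operation in the six-functor package preserves $\mc{V}^{\otimes}$-constructible objects, and from the adjointability built into the definition of a morphism of \coeffsysts.

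For \nref[Claim]{prop:constructible-subfunctor.1}, I would begin by invoking $\mc{V}^{\otimes}$-constructibility to restrict each $f^{*, \otimes}: \coef{M}{Y}^{\otimes} \to \coef{M}{X}^{\otimes}$ to a symmetric monoidal exact functor $\coef[d = {c}]{M}{Y}^{\otimes} \to \coef[d = {c}]{M}{X}^{\otimes}$. Assembling these into a functor $\coef[um = {*, \otimes}, d = {c}]{M}{}: \prns{\sch[ft]{S}}\op \to \calg{\qcatexmon}$ together with the fully faithful natural transformation $\iota^{*, \otimes}: \coef[um = {*, \otimes}, d = {c}]{M}{} \to \coef[um = {*, \otimes}]{M}{}$ is routine, since the smallest replete, idempotent-complete stable \subqcategory\ containing a given set of objects is a natural construction. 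Then I would verify the seven axioms of \nref{functors.3}. Axioms \nref{functors.3.A} and \nref{functors.3.B} on the existence of $f_*$ and $\intmor[dm = {\coef{M}{X}}]{-}{-}$ restrict to $\coef[d = {c}]{M}{-}$ because, by \nref[Conditions]{construct.a1.C.a} and \varnref{construct.a1.C.b}, these functors preserve $\mc{V}^{\otimes}$-constructibility; a restriction of a counit/unit of an adjunction along a pair of fully faithful inclusions still realizes the restricted pair as adjoint. Axiom \nref{functors.3.C} restricts because $p_{\sharp}$ preserves $\mc{V}^{\otimes}$-constructibility and the exchange morphism $\fct{\on{ex}^*_{\sharp}}{\alpha}$ of \nref{functors.a1} in the ambient \coeffsyst\ restricts to an equivalence on constructible objects. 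Axioms \nref{functors.3.D}, \nref{functors.a3.F}, and \nref{functors.a3.G} are inherited directly, as they are statements about specific (constructible) objects and morphisms all of which remain in $\coef[d = {c}]{M}{-}$.

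The localization axiom \nref{functors.3.E} is the step that requires the most care. Rather than arguing directly about the Cartesian square in $\QCATex$, I would use the equivalent formulation spelled out in \nref{functors.a7.C}: namely, that $i_*$ is fully faithful, $\prns{i^*, j^*}$ is conservative, and $j_{\sharp}j^*M \to M \to i_*i^*M$ is a fiber sequence natural in $M$. Each of these properties transfers to $\coef[d = {c}]{M}{-}$ because all of $i_*$, $i^*$, $j_{\sharp}$, $j^*$ preserve $\mc{V}^{\otimes}$-constructibility and because fiber sequences in a stable \subqcategory\ closed under finite limits and colimits of the ambient stable \qcategory\ are detected by the inclusion. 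Finally, $\iota^{*, \otimes}$ is a morphism of \coeffsysts\ by \nref{functors.a6}, because the exchange transformation $\coef[um = {*}, dm = {\sharp}, d = {c}]{M}{p} \circ \iota^*_X \to \iota^*_Y \circ \coef[um = {*}, dm = {\sharp}, d = {c}]{M}{p}$ is, after unwinding, the identity of the ambient functor $p_{\sharp}$.

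For \nref[Claim]{prop:constructible-subfunctor.2}, I would observe that the generators of $\coef[d = {c}]{M}{X}$ from \nref{construct.a1.A} have the form $\twist{p_{\sharp}p^*\1{\coef{M}{X}}}{r} \otimes V$ for $p: Y \to X$ smooth, $r \in \integer$, $V \in \mc{V}_{\aleph_0}$. Since $\phi^{*, \otimes}$ is a $\mc{V}^{\otimes}$-linear symmetric monoidal morphism of \coeffsysts, it commutes with $p^*$ and with $p_{\sharp}$ (left adjointability), preserves the tensor product and the monoidal unit, and commutes with Tate twists; thus it sends such a generator of $\coef[d = {c}]{M}{X}$ to the corresponding generator of $\coef[d = {c}]{N}{X}$. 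Since $\phi^{*, \otimes}$ is exact, it sends the smallest replete, idempotent-complete stable \subqcategory\ containing the generators of $\coef[d = {c}]{M}{X}$ into that of $\coef[d = {c}]{N}{X}$, producing the restricted morphism $\phi^{*, \otimes}\cnstr$. The adjointability condition of \nref{functors.a6} for $\phi^{*, \otimes}\cnstr$ is inherited from the ambient $\phi^{*, \otimes}$.

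The main obstacle is the verification of the localization axiom, where directly interpreting the Cartesian square in $\QCATex$ at the level of constructible \subqcategories\ would be delicate; the reformulation from \nref{functors.a7.C} circumvents this by reducing matters to preservation statements that are direct consequences of $\mc{V}^{\otimes}$-constructibility.
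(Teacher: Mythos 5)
Your overall strategy matches the paper's: define the constructible subfunctor, verify the axioms of \nref{functors.3} by appealing to $\mc{V}^{\otimes}$-constructibility (including the reformulation of the localization axiom from \nref{functors.a7.C}), and establish \nref[Claim]{prop:constructible-subfunctor.2} by checking that $\phi^*$ sends the generators $V \odot f_{\sharp}\tate{\coef{M}{X}}{r}$ to the corresponding generators of $\coef[d=\tu{c}]{N}{X}$ and then invoking exactness.

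However, there is a genuine gap at the first, foundational step. You write that assembling the full \subqcategories\ $\coef[d=\tu{c}]{M}{X}$ into a functor $\prns{\sch[\tu{ft}]{S}}\op \to \calg{\qcatexmon}$, along with the natural transformation $\iota^{*,\otimes}$, is ``routine, since the smallest replete, idempotent-complete stable \subqcategory\ containing a given set of objects is a natural construction.'' This is not a justification: the fact that a construction may be carried out over each $X$ does not produce a homotopy-coherent diagram. What is actually needed is a mechanism for turning a collection of full \subqcategories, each stable under the transition functors, into a sub-\emph{functor} of \qcategories\ together with a coherent inclusion. This is precisely the content of the paper's argument, which first forms the full sub-pseudofunctor $X \mapsto \ho{\coef[d=\tu{c}]{M}{X}}$ of the strict $(2,1)$-pseudofunctor $\hocoef[um={*,\otimes}]{M}{}$ --- where coherence is a finite and manageable check --- and then constructs $\coef[um={*,\otimes},d=\tu{c}]{M}{}$ and $\iota^{*,\otimes}$ as a pullback in $\psh{\sch[\tu{ft}]{S}}{\calg{\QCATmon}}$ along the unit of the adjunction $\ho[dm=\calg{}]{} \dashv \nerve[dm=\calg{}]{}$ established in \nref{rmk:ho}. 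Because full \subqcategories\ are determined by their objects, the Cartesian square recovers the desired inclusions pointwise. Your sketch simply asserts the conclusion of this step. The remainder of your verification of the axioms, and your treatment of \nref[Claim]{prop:constructible-subfunctor.2} (including the observation that the adjointability condition of \nref{functors.a6} is inherited from the ambient $\phi^{*,\otimes}$), is sound and in fact slightly more detailed than the paper's.
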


\begin{proof}
Consider \nref[Claim]{prop:constructible-subfunctor.1}.
Abusing notation slightly, identify $\coef[um = {*, \otimes}]{M}{}$ with the corresponding functor $\prns{\sch[ft]{S}}\op \to \calg{\QCATmon}$.
Let $\hocoef[um = {*, \otimes}]{M}{}: \prns{\sch[ft]{S}}\op \to \calg{\QCATexmon}$ denote the composite of $\coef[um = {*, \otimes}]{M}{}$ with $\ho[dm = {\calg{}}]{}: \calg{\QCATmon} \to \calg{\gmnerve{\CATtwo}^{\times}}$, where $\CATtwo$ denotes the strict $(2,1)$-category of large categories, functors and natural isomorphisms.
We regard $\hocoef[um = {*, \otimes}]{M}{}$ as a pseudofunctor from $\prns{\sch[ft]{S}}\op$ into the strict $\prns{2,1}$-category of large unbiased symmetric monoidal categories.

Since $\mc{V}^{\otimes}$-constructible objects are stable under tensor products and pullbacks, the assignment $X \mapsto \ho{\coef[d = {c}]{M}{X}}$ underlies a full sub-pseudofunctor of $\hocoef[um = {*, \otimes}]{M}{}$,
i.e., there is a pseudonatural transformation $\ho{\iota}^*: \hocoef[um = {*, \otimes}, d = {c}]{M}{} \to \hocoef[um = {*, \otimes}]{M}{}$ such that $\ho{\iota}^*_X$ is the inclusion $\hocoef[d = {c}]{M}{X}^{\otimes} \hookrightarrow \hocoef{M}{X}^{\otimes}$ for each $X \in \sch[ft]{S}$.

Recall from \nref{rmk:ho} that $\ho[dm = {\calg{}}]{}$ admits a right adjoint $\nerve[dm = {\calg{}}]{}$.
For the Cartesian square
\[
\begin{tikzcd}
\coef[um = {*, \otimes}, d = {c}]{M}{}
\ar[r, "\iota^*" below]
\ar[d]
&
\coef[um = {*, \otimes}]{M}{}
\ar[d, "\eta" right]
\\
\nerve[dm = {\calg{}}]{} \hocoef[um = {*, \otimes}, d = {c}]{M}{}
\ar[r, "\ho{\iota}^*" above]
&
\nerve[dm = {\calg{}}]{} \hocoef[um = {*, \otimes}]{M}{}
\end{tikzcd}
\]
in $\psh{\sch[ft]{S}}{\calg{\QCATmon}}$.
Since fiber products in functor \qcategories\ are computed pointwise, $\iota^*_X$ corresponds to the inclusion $\coef[d = {c}]{M}{X} \hookrightarrow \coef{M}{X}$ for each $X \in \sch[ft]{S}$.

It remains to check that $\coef[um = {*, \otimes}, d = {c}]{M}{}$ is a \coeffsyst.
Since $\coef[d = {c}]{M}{X} \subseteq \coef{M}{X}$ is a stable \subqcategory\ by definition, it follows that $\coef[um = {*, \otimes}, d = {c}]{M}{}$ factors through the inclusion $\calg{\QCATexmon} \hookrightarrow \calg{\QCATmon}$.
The axioms of \nref{functors.3} follow readily from the hypothesis that $\coef[um = {*, \otimes}]{M}{}$ is $\mc{V}^{\otimes}$-constructible and \nref{functors.a7.C}.

Consider \nref[Claim]{prop:constructible-subfunctor.2}.
Let $f: X \to Y$ be a smooth morphism of $\sch[ft]{S}$, $V \in \mc{V}_{\aleph_0}$ and $r \in \integer$.
Since $\phi^{*, \otimes}$ is a morphism of $\mc{V}^{\otimes}$-linear \coeffsysts, we have equivalences
\[
\fct{\phi^*_Y}{V \odot f_{\sharp}\tate{\coef{M}{X}}{r}}
  \simeq V \odot \phi^*_Yf_{\sharp}\tate{\coef{M}{X}}{r}
  \simeq V \odot f_{\sharp}\phi^*_X\tate{\coef{M}{X}}{r}
  \simeq V \odot f_{\sharp}\tate{\coef{N}{X}}{r}.
\]
Since $\phi^*_Y$ is exact, \nref[Claim]{prop:constructible-subfunctor.2} follows.
\end{proof}

\begin{rmk}
\nlabel{construct.a2}
Let $\coef[um = {*, \otimes}]{M}{}$ be a $\mc{V}^{\otimes}$-linear, \locpres\ \coeffsyst.
\begin{enumerate}
\item
\nlabel{construct.a2.A}
For each smooth morphism $f: Y \to X$ in $\sch[ft]{S}$, $f_{\sharp}$ preserves $\aleph_0$-presentable objects, since its right adjoint $f^*$ is cocontinuous.
Therefore, if $\1{\coef{M}{Y}} \in \coef{M}{Y}_{\aleph_0}$ for each smooth morphism $f: Y \to X$ in $\sch[ft]{S}$, e.g., if $\coef[um = {*, \otimes}]{M}{}$ is \locpres[\aleph_0], then $\coef[d = {c}]{M}{X} \subseteq \coef{M}{X}_{\aleph_0}$.
\item
\nlabel{construct.a2.C}
\nref[Condition]{construct.a1.B.a} is automatic if $S$ is the spectrum of a perfect field: it is equivalent to the quasi-purity (\cite[2.2.28]{Ayoub_six-operationsI}) of the class of objects of $\coef{M}{S}$ of the form $V \otimes \tate{\coef{M}{S}}{r}$ with $V \in \mc{V}_{\aleph_0}$ and $r \in \integer$, and the claim follows from \cite[2.2.29]{Ayoub_six-operationsI}.
\item
\nlabel{construct.a2.D}
\nref[Condition]{construct.a1.B.b} is automatic if $\mc{V}^{\otimes}$ is ind-rigid:
$\mc{V}_{\aleph_0}$ is stable under taking $\otimes$-duals and, for each $M \in \coef[d = {c}]{M}{X}$, each $r \in \integer$, and each $V \in \mc{V}_{\aleph_0}$, we have 
$
\intmor[dm = {\coef{M}{X}}]{\tate{\coef{M}{X}}{r} \otimes V}{M} 
\simeq
\tate{\coef{M}{X}}{-r} \otimes V^{\vee} \otimes M,
$
and, as explained in step $\tu{(a)}$ of the proof of \nref{construct.a4} below, the tensor product of the $\mc{V}^{\otimes}$-constructible objects $M$ and $\tate{\coef{M}{X}}{-r} \otimes V^{\vee}$ is $\mc{V}^{\otimes}$-constructible.
\item
\nlabel{construct.a2.E}
\nref[Condition]{construct.a1.D.b} is a euphemistic formulation of the ``absolute purity theorem'' in $\coef[um = {*, \otimes}]{M}{}$.
\item
\nlabel{construct.a2.F}
\nref[Condition]{construct.a1.D.c} is automatic if $\mc{V}^{\otimes}$ is ind-rigid:
if $M \coloneqq V \otimes \tate{\coef{M}{X}}{r}$ with $V \in \mc{V}_{\aleph_0}$ and $r \in \integer$, then the desired equivalence is just the canonical equivalence $M \isom \prns{M^{\vee}}^{\vee}$.
\item
\nlabel{construct.a2.G}
If $\phi^{*, \otimes}: \coef[um = {*, \otimes}]{M}{} \to \coef[um = {*, \otimes}]{N}{}$ is a morphism of $\mc{V}^{\otimes}$-linear \locpres\ \coeffsysts, then $\phi^*$ preserves $\mc{V}^{\otimes}$-constructible objects.
Indeed, it suffices to show that $\fct{\phi^*}{f_{\sharp}f^*\tate{\coef{M}{X}}{r} \odot V}$ is a $\mc{V}^{\otimes}$-constructible object of $\coef{N}{X}$ for each smooth morphism $f: Y \to X$, each $r \in \integer$, and each $V \in \mc{V}_{\aleph_0}$.
By $\mc{V}^{\otimes}$-linearity and the adjointability condition of \nref{functors.a6}, we have equivalences
\[
\fct{\phi^*}{f_{\sharp}f^*\tate{\coef{M}{X}}{r} \odot V}
  \simeq \fct{\phi^*}{f_{\sharp}f^*\tate{\coef{M}{X}}{r}} \odot V
  \simeq f_{\sharp}f^*\fct{\phi^*}{\tate{\coef{M}{X}}{r}} \odot V
  \simeq f_{\sharp}f^*\tate{\coef{N}{X}}{r} \odot V,
\]
and the claim follows.
\end{enumerate}
\end{rmk}

\begin{defn}
\nlabel{construct.1}
\nlabel{construct.1.2}
Let $\coef[um = {*, \otimes}]{M}{}: \prns{\sch[ft]{S}}\op \to \calg{\QCATexmon}$ be a functor.
We say that $\coef[um = {*, \otimes}]{M}{}$ is \emph{separated} \resp{\emph{semi-separated}} if, for each surjective morphism of finite type \resp{for each surjective, finite, radicial morphism} $f: X \to Y$ in $\sch[ft]{S}$, the functor $\coef[u = {*}]{M}{f}$ is conservative.
Alternatively, $\coef[um = {*, \otimes}]{M}{}$ is separated \resp{semi-separated} if the associated pseudofunctor $\hocoef[um = {*, \otimes}]{M}{}$ is separated \resp{semi-separated} in the sense of \cite[2.1.60]{Ayoub_six-operationsI}.
\end{defn}

\begin{defn}
\nlabel{construct.a3}
Let $\mc{V}^{\otimes}$ be a \locpres\ symmetric monoidal \qcategory.
We say that the pair $\prns{S, \mc{V}^{\otimes}}$ is \emph{solvent} if it satisfies \emph{one} of the following conditions:
\begin{enumerate}
\item
\nlabel{construct.a3.1}
$S = \spec{\kk}$ is the spectrum of a field $\kk$ of characteristic zero;
or
\item
\nlabel{construct.a3.2}
$S$ is excellent, $\dim{S} \leq 2$, and $\mc{V}^{\otimes}$ is $\rational$-linear.
\end{enumerate}
If $\coef[um = {*, \otimes}]{M}{}$ is a $\mc{V}^{\otimes}$-linear \locpres\ \coeffsyst, then we say that the pair $\prns{S, \coef[um = {*, \otimes}]{M}{}}$ is \emph{solvent} if $S$ satisfies \nref[Condition]{construct.a3.1} or the following condition is satisfied:
\begin{enumerate}
\setcounter{enumi}{2}
\item
\nlabel{construct.a3.3}
$\prns{S, \mc{V^{\otimes}}}$ satisfies \nref[Condition]{construct.a3.2} and $\coef[um = {*, \otimes}]{M}{}$ is separated.
\end{enumerate}
These conditions allow us to apply resolutions of singularities to prove statements about $\coef[um = {*, \otimes}]{M}{}$.
\end{defn}

\begin{rmk}
The restriction to characteristic zero in \nref{construct.a3.1} conflates two phenomena:
it guarantees all extensions of function fields are perfect which implies semi-separatedness (\cite[2.1.161]{Ayoub_six-operationsI});
and it guarantees that $\kk$ admits resolution of singularities by blow-ups.
\end{rmk}

\begin{prop}
\nlabel{construct.a4}
Let $\coef[um = {*, \otimes}]{M}{}$ be a $\mc{V}^{\otimes}$-linear \locpres\ \coeffsyst.
\begin{enumerate}
\item
\nlabel{construct.a4.1}
If $\prns{S, \coef[um = {*, \otimes}]{M}{}}$ is solvent and $\coef[um = {*, \otimes}]{M}{}$ is $\mc{V}^{\otimes}$-quasi-constructible, then $\coef[um = {*, \otimes}]{M}{}$ is $\mc{V}^{\otimes}$-constructible.
\item
\nlabel{construct.a4.2}
If $S = \spec{\kk}$ is the spectrum of a perfect field $\kk$, if $\mc{V}^{\otimes}$ is ind-rigid, and if $\prns{S, \coef[um = {*, \otimes}]{M}{}}$ is solvent, then $\coef[um = {*, \otimes}]{M}{}$ is $\mc{V}^{\otimes}$-constructible.
\end{enumerate}
\end{prop}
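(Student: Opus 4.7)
The strategy is to reduce \nref{construct.a4.2} to \nref{construct.a4.1} and then deduce \nref{construct.a4.1} from the analogous assertions in the triangulated setting via \nref{functors.a8}.

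For \nref{construct.a4.2}: when $S = \spec{\kk}$ is the spectrum of a perfect field, \nref{construct.a2.C} shows that \nref[Condition]{construct.a1.B.a} holds; when $\mc{V}^{\otimes}$ is ind-rigid, \nref{construct.a2.D} shows that \nref[Condition]{construct.a1.B.b} holds. Hence $\coef[um = {*, \otimes}]{M}{}$ is $\mc{V}^{\otimes}$-quasi-constructible, and \nref{construct.a4.1} applies.

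For \nref{construct.a4.1}, all conditions in \nref[Definition]{construct.a1.C} concern only membership of objects in the stable, idempotent-complete \subqcategory\ $\coef[d = {c}]{M}{X} \subseteq \coef{M}{X}$, which corresponds under passage to homotopy categories to membership in the analogously defined thick triangulated subcategory $\hocoef[d = {c}]{M}{X} \subseteq \hocoef{M}{X}$. By \nref{functors.a8} and \nref{cor:homotopy-2-functor}, it therefore suffices to prove the corresponding constructibility statement for the motivic triangulated category $\hocoef[um = {*, \otimes}]{M}{}$ in the sense of \cite[2.4.45]{Cisinski-Deglise_triangulated-categories}. The straightforward components are immediate: stability of constructibility under $\otimes$, $f^*$, and $p_{\sharp}$ for $p$ smooth follows from \nref[Definition]{construct.a1.A}, the smooth projection formula \nref{functors.3.D}, and the identity $p_{\sharp}f_{\sharp} = \prns{pf}_{\sharp}$; stability under $g^!$ for $g$ separated reduces via relative purity \nref{functors.a13.3} and Zariski d\'evissage through the localization sequence of \nref{functors.a7.C} to the case of closed immersions between regular schemes, which is exactly \nref[Condition]{construct.a1.B.a}; and stability under $\intmor{}{}$ is \nref[Condition]{construct.a1.B.b}.

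The main obstacle is stability under $g_!$ for $g$ separated of finite type, which by \nref{functors.a13.2} subsumes stability under $f_*$ for proper $f$ and from which one extracts stability under $f_*$ for arbitrary $f$ of finite type via Nagata compactification combined with the fiber sequence $j_{\sharp}j^*M \to M \to i_*i^*M$ of \nref{functors.a7.C}. Handling $g_!$ demands d\'evissage on the source---Chow's lemma together with the \cdh-excision square of \nref{excision.4} and blow-up invariance---to reduce to the case of a projective morphism between regular schemes and thence, via relative purity and \nref[Condition]{construct.a1.B.a}, to the Tate-twisted generators spanning $\coef[d = {c}]{M}{-}$. This cascade of reductions crucially invokes resolutions of singularities, which are guaranteed by solvency: Hironaka's theorem under \nref{construct.a3.1}, and de Jong's alterations together with the separatedness hypothesis on $\coef[um = {*, \otimes}]{M}{}$ and \nref{construct.a3.3} in the excellent $\rational$-linear setting. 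This is precisely the \qcategorical\ transcription of the arguments developed in \cite[\S4.2.5, \S4.2.7, \S4.4]{Cisinski-Deglise_triangulated-categories}, whose proofs apply without essential modification to $\hocoef[um = {*, \otimes}]{M}{}$.
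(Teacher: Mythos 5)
Your strategy matches the paper's: reduce \nref{construct.a4.2} to \nref{construct.a4.1} via \nref{construct.a2.C} and \nref{construct.a2.D}, then import the motivic-triangulated-category constructibility results of Ayoub and Cisinski--D\'eglise through \nref{functors.a8}. The first reduction is correct as you state it. The issues arise in your sketch of \nref{construct.a4.1}, where you treat the quasi-constructibility conditions as if they directly yield the full stability statements, when in fact they are only the base cases of substantial d\'evissage arguments.

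Specifically: you write that stability under $\intmor{}{}$ ``is'' \nref[Condition]{construct.a1.B.b}, but that condition only asserts constructibility of $\intmor{\tate{\coef{M}{X}}{r} \otimes V}{M}$ with a Tate-twisted unit in the first argument. Passing from this to $\intmor{M}{N}$ for arbitrary constructible $M$, $N$ requires a d\'evissage on $M$ through the $\sharp$-pushed generators, which invokes the projection formula and pushforward preservation and is the content of \cite[4.2.25]{Cisinski-Deglise_triangulated-categories} --- it cannot be cited as immediate. The same criticism applies to your treatment of $g^!$: \nref[Condition]{construct.a1.B.a} only controls $i^!$ applied to Tate-twisted units for $i$ a closed immersion between regular schemes, whereas showing that $i^!$ preserves \emph{all} constructible objects requires \cite[2.2.30, 2.2.32]{Ayoub_six-operationsI} and \cite[4.2.28]{Cisinski-Deglise_triangulated-categories}; your phrase ``which is exactly \nref[Condition]{construct.a1.B.a}'' elides this. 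Finally, your derivation of $f_*$-preservation ``for arbitrary $f$ of finite type via Nagata compactification combined with the fiber sequence'' conflates two logically distinct steps: Nagata only applies to separated $f$, and the non-separated case requires the inductive argument on $\dim{X}$ via a dense affine open (and a Zariski reduction to affine target), which you do not make explicit. None of these gaps is conceptually misguided --- the strategy and sources are right --- but as written the proposal asserts that the quasi-constructibility axioms themselves carry the full weight of three nontrivial preservation statements, which they do not.
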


\begin{proof}
Consider \nref[Claim]{construct.a4.1}.
If we restrict to the category of quasi-projective $S$-schemes, then this follows from \cite[2.2.34, 2.3.65]{Ayoub_six-operationsI}.
On the other hand, if we work with $S$-schemes of finite type, and if we assume that $\coef[um = {*, \otimes}]{M}{}$ is $\rational$-linear, is separated, and arises from a combinatorial left Quillen presheaf (\cite[2.21]{Barwick_left-and-right}), then this follows from \cite[4.2.29]{Cisinski-Deglise_triangulated-categories}.

Out of an abundance of caution, let us extract from these sources a proof of our variation on this theme.
Let $f: X \to Y$ be a morphism of $\sch[ft]{S}$.
\begin{enumerate}[label=\tu{(\alph*)}, itemsep=0ex, topsep=0ex]
\item
The proofs of \cite[4.2.3, 4.2.4]{Cisinski-Deglise_triangulated-categories} apply without modification to show that the bifunctor $\prns{-}\otimes_{\coef{M}{X}}\prns{-}$, the functor $f^*$ and, if $f$ is smooth, the functor $f_{\sharp}$ all preserve $\mc{V}^{\otimes}$-constructible objects.
\item
If $f$ is separated of finite type, the proof of \cite[3.3.10.$(i)$]{Cisinski-Deglise_triangulated-categories}, and therefore also of \cite[4.2.12]{Cisinski-Deglise_triangulated-categories}, translates directly to our situation, from which we deduce that $f_!$ preserves $\mc{V}^{\otimes}$-constructible objects.
\item
The proofs of \cite[2.2.30, 2.2.32]{Ayoub_six-operationsI} also go through without change to show that if $i: Z \hookrightarrow X$ is a closed immersion and $j: U \hookrightarrow X$ an open immersion in $\sch[ft]{S}$, then $i^!$ and $j_*$ preserve $\mc{V}^{\otimes}$-constructible objects.
Note that schemes in \cite{Ayoub_six-operationsI} are assumed to be quasi-projective, but $\mc{V}^{\otimes}$-constructibility is a Zariski-local property and $i$ is affine, so we can work affine-locally on $X$, thereby reducing to the quasi-projective case.
\item
Using $\tu{(c)}$, the proof of \cite[4.2.28]{Cisinski-Deglise_triangulated-categories} now shows that $f^!$ preserves $\mc{V}^{\otimes}$-constructible objects if $f$ is separated of finite type.
\item
If $f$ is separated of finite type, then it admits a Nagata compactification, i.e., it factors as $pj$ with $j$ an open immersion and $p$ a proper morphism.
The cases $\tu{(b)}$ and $\tu{(c)}$ show that $f_* \simeq p_*j_* \simeq p_!j_*$ preserves $\mc{V}^{\otimes}$-constructible objects.
\item
Let $f$ be arbitrary and let $M \in \coef[d = {c}]{M}{X}$.
We claim that $f_*M$ is $\mc{V}^{\otimes}$-constructible.
Let $\brc{j_{\alpha}: U_{\alpha} \hookrightarrow Y}_{\alpha \in A}$ be a Zariski cover of $Y$ by finitely many affine open subschemes.
As the proof of \cite[4.2.6]{Cisinski-Deglise_triangulated-categories} shows, it suffices to prove that $j^*_{\alpha}f_*M$ is $\mc{V}^{\otimes}$-constructible for each $\alpha \in A$.
By the smooth base change property (\nref[Axiom]{functors.3.C}), $j^*_{\alpha}f_*M \simeq f_{\prns{U_{\alpha}}*} j^*_{\alpha \prns{X}}M$ and we may henceforth assume $Y$ is affine.

We proceed by induction on $\dim{X}$.
If $\dim{X} = 0$, then the Noetherian scheme $X$ is affine.
Since $Y$ is also affine, $f$ is separated and the claim follows from $\tu{(e)}$.
Suppose that $g_*$ preserves $\mc{V}^{\otimes}$-constructible objects for each morphism $g: W \to Y$ in $\sch[ft]{S}$ such that $\dim{W} < \dim{X}$.
As $X$ is Noetherian, it admits a dense, affine open $U \subseteq X$.
Indeed, if $\brc{Z_k}_{1 \leq k \leq r}$ are the irreducible components of $X$, for each $1 \leq k \leq r$, there exists an affine open $\varnothing \neq U_k \hookrightarrow X$ such that $U_k \subseteq Z_k$ and $U_k \cap Z_{\ell} = \varnothing$ for each $1 \leq \ell \neq k \leq r$.
As $U_k \cap U_{\ell} = \varnothing$ for $k \neq \ell$, the union $U \coloneqq \bigcup_{1 \leq k \leq r} U_k$ is the desired dense, affine open subscheme of $X$.
Let $j: U \hookrightarrow X$ denote the inclusion and $i: Z \coloneqq X - U \hookrightarrow X$ the complementary reduced closed subscheme.
By construction, $\dim{Z} < \dim{X}$.
The localization property (\nref[Axiom]{functors.3.E}) gives a fiber sequence
$ f_*i_*i^!M 
\to
f_*M
\to
f_*j_*j^*M $.
Since $fj$ is separated, the $\mc{V}^{\otimes}$-constructibility of $f_*M$ follows from the equivalences $f_*i_* \simeq \prns{fi}_*$ and $f_*j_* \simeq \prns{fj}_*$, the inductive hypothesis and the cases $\tu{(a)}$, $\tu{(c)}$ and $\tu{(e)}$.
\item
Finally, $\intmor[dm = {\coef{M}{X}}]{-}{-}$ preserves $\mc{V}^{\otimes}$-constructible objects by \cite[4.2.25]{Cisinski-Deglise_triangulated-categories}.
\end{enumerate}

\nref[Claim]{construct.a4.2} follows from \nref[Claim]{construct.a4.1} and \nref[Remarks]{construct.a2.C} and \nref{construct.a2.D}.
\end{proof}

\begin{lemma}
\nlabel{construct.a5}
Suppose that $\coef[um = {*, \otimes}]{M}{}$ is a \locpres\ \coeffsyst, and that $f: X \to Y$ is a morphism of $\sch[ft]{S}$.
For each $\otimes$-dualizable object $M \in \coef{M}{Y}$, we have equivalences of the following forms: 
\begin{enumerate}
\item
\nlabel{construct.a5.1}
$f^*M \otimes_{\coef{M}{X}} \fct{f^*}{-} \simeq \fct{f^*}{M \otimes_{\coef{M}{Y}} \prns{-}}$;
\item
\nlabel{construct.a5.2}
$\fct{f_{\sharp}}{ \prns{-} \otimes_{\coef{M}{X}} f^*M } 
  \simeq \fct{f_{\sharp}}{-} \otimes_{\coef{M}{Y}} M$;
\item
\nlabel{construct.a5.3}
$\fct{f_*}{-} \otimes_{\coef{M}{Y}} M \simeq \fct{f_*}{ \prns{-} \otimes_{\coef{M}{X}} f^*M }$;
\item
\nlabel{construct.a5.4}
$\fct{f_!}{-} \otimes_{\coef{M}{Y}} M \simeq \fct{f_!}{ \prns{-} \otimes_{\coef{M}{X}} f^*M }$;
\item
\nlabel{construct.a5.5}
$f^*M \otimes_{\coef{M}{X}} \fct{f^!}{-} \simeq \fct{f^!}{ M \otimes_{\coef{M}{Y}} \prns{-} }$.
\end{enumerate}
In particular, if $\coef[um = {*, \otimes}]{M}{}$ is $\mc{V}^{\otimes}$-linear, then $f_{\sharp}$, $f^*$, $f_*$, $f_!$ and $f^!$ commute with $\prns{V \otimes \tate{\coef{M}{X}}{r}} \otimes \prns{-}$ for each $\otimes$-dualizable object $V \in \mc{V}$, and each $r \in \integer$. 
\end{lemma}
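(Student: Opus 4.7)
The plan is to reduce everything to two inputs: the symmetric monoidality of $f^*$ (which gives Claim~\nref{construct.a5.1} for free, since $f^{*,\otimes}$ is already part of the data of a \coeffsyst\ by \nref{functors.a2}) and the projection formula for $f_!$ (which is Claim~\nref{construct.a5.4}, furnished by \nref{functors.a13.5}). The remaining Claims~\nref{construct.a5.2}, \nref{construct.a5.3} and \nref{construct.a5.5} will then be deduced by Yoneda arguments that exploit the $\otimes$-duality between $M$ and $M^{\vee}$, using the crucial observation that $f^{*,\otimes}$ preserves $\otimes$-dualizable objects and their duals, so $f^*M$ is dualizable with dual $f^*M^{\vee}$.

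Concretely, for Claim~\nref{construct.a5.3}, I would test $f_*(-) \otimes M$ against an arbitrary $N \in \coef{M}{Y}$ by computing, via successive adjunctions,
\[
\map{N}{f_*(-) \otimes M}
  \simeq \map{N \otimes M^{\vee}}{f_*(-)}
  \simeq \map{f^*N \otimes f^*M^{\vee}}{-}
  \simeq \map{f^*N}{- \otimes f^*M}
  \simeq \map{N}{f_*(- \otimes f^*M)},
\]
where the first and third isomorphisms use dualizability of $M$ and $f^*M$, respectively, and the second and fourth use $f^* \dashv f_*$ together with Claim~\nref{construct.a5.1}. Claim~\nref{construct.a5.2} is obtained by the dual manipulation for the adjunction $f_\sharp \dashv f^*$ (when $f$ is smooth, so that $f_\sharp$ exists). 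Claim~\nref{construct.a5.5} is proved the same way, but one feeds the intermediate adjunction step through Claim~\nref{construct.a5.4}: testing against $N \in \coef{M}{X}$,
\[
\map{N}{f^*M \otimes f^!(-)}
  \simeq \map{f_!(N \otimes f^*M^{\vee})}{-}
  \simeq \map{f_!N \otimes M^{\vee}}{-}
  \simeq \map{N}{f^!(M \otimes -)}.
\]
In each case, Yoneda closes the argument.

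For the final assertion, when $\coef[um = {*, \otimes}]{M}{}$ is $\mc{V}^{\otimes}$-linear I take $M \coloneqq V \otimes \tate{\coef{M}{Y}}{r}$ for $V \in \mc{V}$ a $\otimes$-dualizable object and $r \in \integer$. This $M$ is $\otimes$-dualizable as a tensor product of a dualizable object with an invertible Tate twist, and $\mc{V}^{\otimes}$-linearity combined with Claim~\nref{construct.a5.1} gives $f^*M \simeq V \otimes \tate{\coef{M}{X}}{r}$, so Claims~\nref{construct.a5.1}--\nref{construct.a5.5} specialize directly.

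I do not anticipate a genuine obstacle; the entire argument is a standard bookkeeping exercise once one has the projection formula for $f_!$ in hand. The only subtlety to handle carefully is naturality: each displayed Yoneda chain must be natural in the argument $(-)$ so that an actual natural equivalence of functors is produced, rather than a merely pointwise one. This is automatic from the functorial construction of the adjunction units and counits in $\QCATex$, but it is worth recording once and appealing to in each of \nref[Claims]{construct.a5.2}, \nref{construct.a5.3} and \nref{construct.a5.5}.
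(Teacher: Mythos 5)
Your proof is correct and closely parallels the paper's: Claim~\nref{construct.a5.1} from monoidality of $f^*$, Claim~\nref{construct.a5.4} from \nref{functors.a13.5}, and Claims~\nref{construct.a5.3} and~\nref{construct.a5.5} from dualizability of $M$ and $f^*M$ combined with the relevant adjunction — the paper phrases these last two via internal-hom identities where you use mapping-space/Yoneda chains, but the content is identical. The one place you do more work than necessary is Claim~\nref{construct.a5.2}: this is \emph{verbatim} the smooth projection formula, Axiom~\nref{functors.3.D}, which holds for arbitrary (not merely dualizable) $M$, so there is no need to redevelop it by a Yoneda argument; you can simply cite the axiom, as the paper does.
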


\begin{proof}
The equivalence of \nref[Claim]{construct.a5.1} follows from the fact that $f^*$ is a symmetric monoidal functor, while \nref[Claim]{construct.a5.2} \resp{\nref[Claim]{construct.a5.4}} is a special case of \nref[Axiom]{functors.3.D} \resp{\nref{functors.a13.5}}.
Only \nref[Claims]{construct.a5.3} and \nref{construct.a5.5} require that $M$ be $\otimes$-dualizable:
\[
\begin{tikzcd}
f_*N \otimes_{\coef{M}{Y}} M
\ar[d]
\ar[r, "\sim"]
&
\intmor[dm = {\coef{M}{Y}}]{M^{\vee}}{f_*N}
\ar[d, "\sim"]
\\
\fct{f_*}{ N \otimes_{\coef{M}{X}} f^*M}
\ar[r, "\sim"]
&
f_*\intmor[dm = {\coef{M}{X}}]{f^*M^{\vee}}{N}
\end{tikzcd}
\]
and, similarly,
$
f^*M \otimes_{\coef{M}{X}} f^!N 
\simeq
\intmor[dm = {\coef{M}{X}}]{f^*M^{\vee}}{f^!N}
\simeq
f^!\intmor[dm = {\coef{M}{Y}}]{M^{\vee}}{N}
\simeq
\fct{f^!}{M \otimes_{\coef{M}{Y}} N}
$
by \nref{functors.a13.5}.
\end{proof}

\begin{prop}
[{\cite[4.4.16]{Cisinski-Deglise_triangulated-categories}}]
\nlabel{construct.a6}
Suppose that $\mc{V}^{\otimes}$ is ind-rigid \tu{(\nref{SH.10})}.
If $\coef[um = {*, \otimes}]{M}{}$ is a $\mc{V}^{\otimes}$-linear \locpres\ \coeffsyst, and if $S = \spec{\kk}$ is the spectrum of a perfect field $\kk$, then $\coef[um = {*, \otimes}]{M}{}$ is $\mc{V}^{\otimes}$-dualizable.
\end{prop}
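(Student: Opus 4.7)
The plan is to verify each of the three conditions of \nref{construct.a1.D}. Under the stated hypotheses, conditions (a) and (c) of that definition follow directly from remarks already established in the text. First I would observe that \nref[Condition]{construct.a1.B.a}, one of the two sub-conditions of $\mc{V}^{\otimes}$-quasi-constructibility, is automatic by \nref[Remark]{construct.a2.C} since $S = \spec{\kk}$ is the spectrum of a perfect field; the other sub-condition, \nref[Condition]{construct.a1.B.b}, is automatic by \nref[Remark]{construct.a2.D} because $\mc{V}^{\otimes}$ is ind-rigid, so each $V \in \mc{V}_{\aleph_0}$ admits a $\otimes$-dual $V^{\vee} \in \mc{V}_{\aleph_0}$; and \nref[Condition]{construct.a1.D.c} is automatic by \nref[Remark]{construct.a2.F}, again by ind-rigidity, since $\tate{\coef{M}{X}}{r} \otimes V$ is the image of a $\otimes$-dualizable object of $\mc{V}^{\otimes}$ under a symmetric monoidal functor.

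The substantive content is therefore \nref[Condition]{construct.a1.D.b}: for each closed immersion $i: Z \hookrightarrow X$ between regular schemes in $\sch[ft]{S}$, the object $i^!\1{\coef{M}{X}}$ must be $\otimes$-invertible. Since $\kk$ is perfect, the regularity of $X$ and $Z$ forces both to be smooth and separated over $S$, so I would let $f: X \to S$ and $g \coloneqq fi: Z \to S$ denote the smooth, separated structure morphisms. The next step is to appeal to the relative purity statement \nref{functors.a13.3}, which combined with \nref{functors.a9.B} yields $f^!\prns{-} \simeq u_X \otimes f^*\prns{-}$ for an $\otimes$-invertible dualizing object $u_X \coloneqq f^!\1{\coef{M}{S}}$, and similarly $g^!\prns{-} \simeq u_Z \otimes g^*\prns{-}$ with $u_Z$ $\otimes$-invertible.

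Functoriality of exceptional pullback along $g = fi$, guaranteed by \nref{functors.a14.1}, then gives $u_Z \simeq g^!\1{\coef{M}{S}} \simeq i^!f^!\1{\coef{M}{S}} \simeq i^!u_X$. Applying the projection-formula equivalence \nref{construct.a5.5} to the $\otimes$-dualizable object $u_X \in \coef{M}{X}$, one obtains
\[
i^!u_X \simeq i^!\prns{u_X \otimes \1{\coef{M}{X}}} \simeq i^*u_X \otimes i^!\1{\coef{M}{X}},
\]
so that $i^!\1{\coef{M}{X}} \simeq u_Z \otimes \prns{i^*u_X}^{-1}$, which is $\otimes$-invertible as a tensor product of $\otimes$-invertible objects.

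The only deep input is the relative purity theorem \nref{functors.a13.3}, which rests on substantial prior work of Ayoub, Cisinski-D\'eglise, Liu-Zheng and Robalo; granting that, the remainder of the proof is a formal manipulation within the six-functor formalism already established for $\coef[um = {*, \otimes}]{M}{}$. The one subtlety worth double-checking is the compatibility between the Thom-twist identification of $f^!$ and $g^!$ and the functoriality isomorphism $g^! \simeq i^!f^!$, but this is built into the natural-equivalence statement of \nref{functors.a13.3}.
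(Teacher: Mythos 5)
Your treatment of conditions \nref{construct.a1.D.a} and \nref{construct.a1.D.c} of $\mc{V}^{\otimes}$-dualizability matches the paper exactly, via \nref{construct.a2.C}, \nref{construct.a2.D}, and \nref{construct.a2.F}. For condition \nref{construct.a1.D.b}, you take a different route: whereas the paper first reduces to the quasi-projective case and then invokes Ayoub's purity theorem \cite[1.6.19]{Ayoub_six-operationsI} directly, you derive the same conclusion from relative purity (\nref{functors.a13.3}) applied to the structure morphisms $f$ and $g$, combined with functoriality $g^! \simeq i^!f^!$ and the projection-formula equivalence \nref{construct.a5.5} applied to the $\otimes$-invertible object $u_X$. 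This is a legitimate alternative derivation, but as written it has a gap.

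You assert that regularity of $X$ and $Z$ over the perfect field $\kk$ forces them to be separated. That is false: a regular finite-type $\kk$-scheme need not be separated (the affine line with doubled origin is the standard example), and \nref{construct.a1.D.b} quantifies over schemes in $\sch[ft]{S}$, not $\sch[sepft]{S}$. Your application of \nref{functors.a13.3} to $f: X \to S$ and $g: Z \to S$ requires these morphisms to be separated as well as smooth, and the functors $f^!$, $g^!$ are only supplied by \nref{functors.a13} for morphisms in $\sch[sepft]{S}$. The paper addresses exactly this point before applying purity: it observes, citing \cite[4.4.12]{Cisinski-Deglise_triangulated-categories}, that the $\otimes$-invertibility of $i^!\1{\coef{M}{X}}$ can be checked Zariski-locally on $Z$, and uses \nref{functors.a13.2} to identify $j^! \simeq j^*$ for open immersions $j$ so that one may replace $X$ by an affine open and hence assume $X$ (and $Z$) is quasi-projective, in particular separated. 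Once you insert that reduction, your computation $i^!\1{\coef{M}{X}} \simeq u_Z \otimes \prns{i^*u_X}^{-1}$ is sound and gives the required $\otimes$-invertibility.
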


\begin{proof}
\nref[Conditions]{construct.a1.D.a} and \nref{construct.a1.D.c} follow from \nref[Remarks]{construct.a2.C}, \nref{construct.a2.D} and \nref{construct.a2.F}.
It remains to check \nref[Condition]{construct.a1.D.b}.
Let $i: Z \hookrightarrow X$ be a closed immersion between regular schemes in $\sch[ft]{S}$, and let $p: Z \to S$ and $q: X \to Z$ denote the structure morphisms.
Let $j: U \hookrightarrow X$ be an open immersion in $\sm[ft]{S}$ with $U$ affine, and consider the Cartesian square
\[
\begin{tikzcd}
V
\ar[r, "i'" below]
\ar[d, "j'" left]
&
U
\ar[d, "j" right]
\\
Z
\ar[r, "i" above]
&
X
\end{tikzcd}
\]
in $\sm[ft]{S}$.
By \nref{functors.a13.2}, we have equivalences $j_{\sharp} \simeq j_!$ and $j'_{\sharp} \simeq j'_!$, whence equivalences of the associated right adjoints $j^* \simeq j^!$ and $j'^* \simeq j'^!$.
We therefore have equivalences
\[
j'^*i^!\1{\coef{M}{X}}
  \simeq j'^!i^!\1{\coef{M}{X}}
  \simeq i'^!j^!\1{\coef{M}{X}}
  \simeq i'^!j^*\1{\coef{M}{X}}
  \simeq i'^!\1{\coef{M}{U}}
\]
The condition that $i^!\1{\coef{M}{X}}$ be $\otimes$-invertible is Zariski-local on $Z$ by \cite[4.4.12]{Cisinski-Deglise_triangulated-categories}, so, replacing $i$ by $i'$, we may therefore assume that $X$ and, hence, $Z$ are quasi-projective.

Since $\kk$ is perfect and $X$ and $Z$ are regular, $p$ and $q$ are smooth.
By \nref{functors.a8} and \cite[1.6.19]{Ayoub_six-operationsI}, we have
\[i^!\1{\coef{M}{X}}
  \simeq i^!1^*\1{\coef{M}{S}}
  \simeq \Thom[um = {-1}]{\normalbundle{i}}p^*\1{\coef{M}{S}}
  \simeq \Thom[um = {-1}]{\normalbundle{i}}\1{\coef{M}{Z}},
\]
where $\normalbundle{i}$ is the normal bundle of the regular closed immersion $i$.
As explained in \nref{functors.a9.B}, $\Thom[um = {-1}]{\normalbundle{i}}\1{\coef{M}{Z}}$ is $\otimes$-invertible.
\end{proof}

\begin{thm}
[{\cite[2.3.75]{Ayoub_six-operationsI}, \cite[4.4.24]{Cisinski-Deglise_triangulated-categories}}]
\nlabel{construct.a7}
Let $\coef[um = {*, \otimes}]{M}{}$ be a $\mc{V}^{\otimes}$-dualizable $\mc{V}^{\otimes}$-linear \locpres\ \coeffsyst\ such that $\prns{S, \coef[um = {*, \otimes}]{M}{}}$ is solvent \tu{(\nref{construct.a3})}.
For each morphism $\pi: X \to T$ in $\sch[sepft]{S}$ with $T$ regular, and each $\otimes$-invertible, $\mc{V}^{\otimes}$-constructible object $R \in \coef{M}{T}$, the Verdier duality functor $\verdier[dm = {X}]{} \coloneqq \intmor[dm = {\coef{M}{X}}]{-}{\pi^!R}$ induces an equivalence $\coef[d = {c}]{M}{X}\op \to \coef[d = {c}]{M}{X}$ with the following properties:
\begin{enumerate}
\item
\nlabel{construct.a7.1}
the canonical morphism $M \to \verdier[dm = {X}]{\verdier[dm = {X}]{M}}$ is an equivalence for each $M \in \coef[d = {c}]{M}{X}$;
\item
\nlabel{construct.a7.2}
for each morphism $f: Y \to X$ in $\sch[sepft]{S}$, there are natural equivalences 
\[
\verdier[dm = {Y}]{}f^* 
  \simeq f^!\verdier[dm = {X}]{}:
    \coef[d = {c}]{M}{Y}\op 
      \to \coef[d = {c}]{M}{X}
\quad\text{and}\quad
\verdier[dm = {X}]{}f_! 
  \simeq f_* \verdier[dm = {Y}]{}:
    \coef[d = {c}]{M}{X}\op 
      \to \coef[d = {c}]{M}{Y};
\quad\text{and}
\]
\item
\nlabel{construct.a7.3}
there is a natural equivalence $\verdier[dm = {X}]{ \prns{-} \otimes \verdier[dm = {X}]{-}} \simeq \intmor[dm = {\coef{M}{X}}]{-}{-}: \coef{M}{X}\op \times \coef[d = {c}]{M}{X} \to \coef{M}{X}$.
\end{enumerate}
\end{thm}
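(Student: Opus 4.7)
The plan is to adapt the d\'evissage arguments of Ayoub and Cisinski--D\'eglise to the q-categorical setting using the infrastructure of Sections \nref{funct}--\nref{construct}. First, by \nref{construct.a4}, solvency together with the $\mc{V}^{\otimes}$-quasi-constructibility contained in $\mc{V}^{\otimes}$-dualizability implies that $\coef[um = {*, \otimes}]{M}{}$ is $\mc{V}^{\otimes}$-constructible, so the six functors all preserve constructibility. In particular $\pi^! R \in \coef[d = {c}]{M}{X}$ and $\verdier[dm = {X}]{}$ restricts to an endofunctor of $\coef[d = {c}]{M}{X}$.

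I would first establish the exchange equivalences of (\nref{construct.a7.2}), which do not require biduality. A Yoneda argument based on the projection formula \nref{functors.a13.5} yields the adjunction identities
\[
\intmor[dm = {\coef{M}{Y}}]{f^* M}{f^! N} \simeq f^! \intmor[dm = {\coef{M}{X}}]{M}{N}
\quad\text{and}\quad
\intmor[dm = {\coef{M}{X}}]{f_! M}{N} \simeq f_* \intmor[dm = {\coef{M}{Y}}]{M}{f^! N}
\]
for each separated morphism $f: Y \to X$ of finite type and arbitrary $M,N$. Specializing $N = \pi^! R$ and using $f^! \pi^! R \simeq (\pi f)^! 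R$ yields the two equivalences of (\nref{construct.a7.2}).

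The biduality claim (\nref{construct.a7.1}) is the main content, which I would establish by d\'evissage. The full \subqcategory\ $\mf{P} \subseteq \coef[d = {c}]{M}{X}$ of objects on which the natural transformation $\id \to \verdier[dm = {X}]{\verdier[dm = {X}]{-}}$ is an equivalence is replete, idempotent-complete, and stable under shifts and cofiber sequences. Using the exchange equivalences of (\nref{construct.a7.2}) together with the fiber sequence $j_! j^* \to \id \to i_* i^*$ associated with an open--closed decomposition, $\mf{P}$ is also preserved by $f_!$ for $f$ separated of finite type and by $i_*$ for closed immersions $i$. By Nagata compactification, blow-up excision \nref{excision.4}, and resolution of singularities (available by solvency \nref{construct.a3}), an iterated d\'evissage then reduces the statement to objects of the form $M = f_{\sharp}(\twist{\1{\coef{M}{Y}}}{r} \odot V)$ for a smooth morphism $f: Y \to X$ with $Y$ regular over $T$, $r \in \integer$ and $V \in \mc{V}_{\aleph_0}$. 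In this setting, relative purity \nref{functors.a13.3} combined with the absolute purity condition \nref[Condition]{construct.a1.D.b} identifies $f^! \pi^! R$ as an $\otimes$-invertible object of $\coef{M}{Y}$, and \nref{construct.a5} then reduces biduality for $M$ to the tautology $(N^{\vee})^{\vee} \simeq N$ for the $\otimes$-dualizable object $N = \twist{\1{\coef{M}{Y}}}{r} \odot V$ (\nref[Condition]{construct.a1.D.c}).

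The main obstacle will be this d\'evissage: one must thread Noetherian induction on dimension, Nagata compactifications, and the localization and blow-up excision fiber sequences while keeping every intermediate object inside $\mf{P}$. Once biduality is in hand, (\nref{construct.a7.3}) follows formally from the tensor-hom adjunction and biduality,
\[
\verdier[dm = {X}]{M \otimes \verdier[dm = {X}]{N}} \simeq \intmor[dm = {\coef{M}{X}}]{M}{\intmor[dm = {\coef{M}{X}}]{\verdier[dm = {X}]{N}}{\pi^! R}} \simeq \intmor[dm = {\coef{M}{X}}]{M}{N},
\]
and the equivalence $\verdier[dm = {X}]{}: \coef[d = {c}]{M}{X}\op \isom \coef[d = {c}]{M}{X}$ is immediate from self-involutivity.
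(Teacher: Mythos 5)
Your overall strategy---reduce to the exchange formulas, d\'evissage on constructible generators, and purity on regular schemes---is the same one the paper delegates to (Cisinski--D\'eglise \cite[4.4.21, 4.4.24]{Cisinski-Deglise_triangulated-categories} and Ayoub \cite[2.3.75]{Ayoub_six-operationsI}), and most of the pieces you assemble are sound: the derivation of (\nref{construct.a7.2}) from the projection formula, the closure properties of $\mf{P}$ under shifts and cofiber sequences, the passage via relative/absolute purity to identify $(\pi f)^!R$ as $\otimes$-invertible, and the formal derivation of (\nref{construct.a7.3}) from biduality. However, the single claim that $\mf{P}$ is ``preserved by $f_!$ for $f$ separated of finite type'' is a genuine gap, and the particular case that fails is $f = j$ an open immersion. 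The exchange formulas of (\nref{construct.a7.2}) give you $\verdier[dm = {X}]{}j_! \simeq j_*\verdier[dm = {U}]{}$, so after one application of $\verdier[dm = {X}]{}$ you are left with $\verdier[dm = {X}]{j_*\verdier[dm = {U}]{M}}$, and there is no available exchange formula for $\verdier[dm = {X}]{}$ against $j_*$ (open pushforward is not proper, and the ``dual'' exchange $\verdier{}f_* \simeq f_!\verdier{}$ is equivalent to the biduality you are trying to prove). Concretely, $j_!\verdier[dm = {U}]{\verdier[dm = {U}]{M}}$ has vanishing restriction to the closed complement $Z$, whereas there is no a priori reason for $i^*\verdier[dm = {X}]{j_*\verdier[dm = {U}]{M}}$ to vanish.

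The clean route, and the one the paper follows (modulo citation), avoids $f_!$-preservation entirely: one first shows that under solvency/resolution of singularities, the constructible objects of $\coef[d = {c}]{M}{X}$ are generated under shifts, finite colimits and retracts by objects $V \odot p_*\state{\coef{M}{W}}{r}{s}$ with $p: W \to X$ \emph{proper} and $W$ regular (this is the adaptation of \cite[4.4.3]{Cisinski-Deglise_triangulated-categories}, for which the paper notes one must substitute Chow's lemma and the localization triangle for the quasi-projectivity hypothesis). Because $p$ is proper, $p_! \simeq p_*$ and the exchange formula of (\nref{construct.a7.2}) \emph{does} give $\verdier[dm = {X}]{}p_* \simeq p_*\verdier[dm = {W}]{}$, so $\mf{P}$ is closed under proper pushforward, and biduality on $X$ reduces to biduality on regular $W$, which is the tautological $\otimes$-duality computation you give. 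If you replace the claim about $f_!$-preservation with this generation statement, your proof goes through.
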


\begin{proof}
As was the case for \nref{construct.a4}, the statement falls under the purview of neither \cite[2.3.75]{Ayoub_six-operationsI} nor \cite[4.4.24]{Cisinski-Deglise_triangulated-categories} unless we play fast and loose with technical hypotheses. 
Following \cite[2.3.66]{Ayoub_six-operationsI}, we say that $R \in \coef[d = {c}]{M}{X}$ is \emph{$\mc{V}^{\otimes}$-dualizing} if, for each $M \in \coef[d = {c}]{M}{X}$, the canonical morphism $M \to \intmor[dm = {\coef{M}{X}}]{\intmor[dm = {\coef{M}{X}}]{M}{R}}{R}$ is an equivalence.
If we show that, for each morphism $\pi: X \to T$ in $\sch[sepft]{S}$ with $T$ regular, the object $\pi^!\1{\coef{M}{T}}$ is $\mc{V}^{\otimes}$-dualizing, then the proof of \cite[4.4.24]{Cisinski-Deglise_triangulated-categories} goes through in our setting without modification.

In order to show that $\pi^!\1{\coef{M}{T}}$ is $\mc{V}^{\otimes}$-dualizing, we follow \cite[4.4.21]{Cisinski-Deglise_triangulated-categories}.
In following the trail of logical dependencies of \cite[4.4.21]{Cisinski-Deglise_triangulated-categories}, the only point at which the hypothesis that $\coef[um = {*, \otimes}]{M}{}$ be $\rational$-linear and given by a combinatorial left Quillen presheaf intervenes is in the proof \cite[4.4.3]{Cisinski-Deglise_triangulated-categories}.
As a workaround, one may appeal to \cite[2.2.27]{Ayoub_six-operationsI} after using Chow's lemma (\cite[5.6.1]{EGAII}) and localization (\nref[Axiom]{functors.3.E}) to eliminate the quasi-projectivity hypothesis thereof, which minor chore we entrust to the reader.
\end{proof}

\begin{prop}
\nlabel{prop:morphisms-compatible-with-six-functors}
Let $\phi^{*, \otimes}: \coef[um = {*, \otimes}]{M}{} \to \coef[um = {*, \otimes}]{N}{}$ be a morphism of $\mc{V}^{\otimes}$-linear \locpres\ \coeffsysts.
Suppose that the following conditions are satisfied:
\begin{itemize}
\item
$S = \spec{\kk}$ is the spectrum of a field of characteristic zero; and
\item
$\mc{V}^{\otimes}$ is ind-rigid.
\end{itemize}
Then the following hold:
\begin{enumerate}
\item
\nlabel{prop:morphisms-compatible-with-six-functors.1}
$\coef[um = {*, \otimes}]{M}{}$ and $\coef[um = {*, \otimes}]{N}{}$ are $\mc{V}^{\otimes}$-constructible; and
\item
\nlabel{prop:morphisms-compatible-with-six-functors.3}
$\phi^*_{\tu{c}}$ commutes up to natural equivalence with the six functors $f^*$, $f_*$, $f_!$, $f^!$, $\prns{-} \otimes \prns{-}$ and $\intmor{-}{-}$.
\end{enumerate}
\end{prop}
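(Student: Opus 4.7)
The plan is to reduce both assertions to the machinery developed earlier in this section. For \nref[Claim]{prop:morphisms-compatible-with-six-functors.1}, I will observe that since $S = \spec{\kk}$ with $\kk$ of characteristic zero, both $\prns{S, \coef[um = {*, \otimes}]{M}{}}$ and $\prns{S, \coef[um = {*, \otimes}]{N}{}}$ are solvent by \nref{construct.a3.1}. Combined with the ind-rigidity of $\mc{V}^{\otimes}$, \nref{construct.a4.2} then immediately yields $\mc{V}^{\otimes}$-constructibility of both \coeffsysts, and \nref{prop:constructible-subfunctor.2} produces the restricted morphism $\phi^{*, \otimes}_{\tu{c}}: \coef[um = {*, \otimes}, d = {c}]{M}{} \to \coef[um = {*, \otimes}, d = {c}]{N}{}$.

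For \nref[Claim]{prop:morphisms-compatible-with-six-functors.3}, the easy commutations are built in: $\phi^{*, \otimes}$ intertwines $f^*$ by virtue of being a morphism of presheaves, intertwines $\otimes$ by its symmetric monoidality, intertwines $p_{\sharp}$ for $p$ smooth by the adjointability condition of \nref{functors.a6}, and intertwines $f_*$ for $f$ proper by \nref{prop:exchange-for-proper-pushforward}. Commutation with $f_!$ for $f$ separated of finite type I will obtain via Nagata compactification $f = pj$ with $p$ proper and $j$ an open immersion, combined with the identifications $p_! \simeq p_*$ and $j_! \simeq j_{\sharp}$ from \nref{functors.a13.2}.

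The remaining operations $f^!$, general $f_*$, and $\intmor{}{}$ I will handle via Verdier duality. Since $\kk$ is perfect and $\mc{V}^{\otimes}$ is ind-rigid, \nref{construct.a6} shows both \coeffsysts\ are $\mc{V}^{\otimes}$-dualizable, so \nref{construct.a7} furnishes duality involutions $\verdier[dm = {X}]{}$ on constructible objects together with the natural equivalences $f^! \simeq \verdier[dm = {Y}]{}f^*\verdier[dm = {X}]{}$, $f_* \simeq \verdier[dm = {Y}]{}f_!\verdier[dm = {X}]{}$, and $\intmor{-}{-} \simeq \verdier[dm = {X}]{\prns{-} \otimes \verdier[dm = {X}]{-}}$. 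Each of these reductions will settle the corresponding case as soon as one knows that $\phi^{*, \otimes}_{\tu{c}}$ intertwines $\verdier[dm = {X}]{}$ for each $X \in \sch[sepft]{S}$.

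The main obstacle will therefore be establishing compatibility with Verdier duality. Following the strategy of \cite[4.4.25]{Cisinski-Deglise_triangulated-categories}, I will proceed by induction on $\dim X$, taking $T = S$ and $R = \1{\coef{M}{S}}$ in \nref{construct.a7}, so that the dualizing object is $\pi_X^!\1{\coef{M}{S}}$ where $\pi_X: X \to S$ denotes the structure morphism. For $X$ smooth over $S$, relative purity (\nref{functors.a13.3}) expresses $\pi_X^!\1{\coef{M}{S}}$ as a Thom twist of $\pi_X^*\1{\coef{M}{S}}$, so compatibility of $\phi^{*, \otimes}_{\tu{c}}$ with this dualizing object follows from the commutations already established. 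For singular $X$, resolution of singularities in characteristic zero, together with the localization axiom (\nref[Axiom]{functors.3.E}) and d\'evissage on the dimension of the singular locus, reduces the claim to the smooth case. Once $\phi^{*, \otimes}_{\tu{c}}$ is known to intertwine the dualizing objects, the defining formula $\verdier[dm = {X}]{M} \simeq \intmor[dm = {\coef{M}{X}}]{M}{\pi_X^!\1{\coef{M}{S}}}$ together with the equivalence for $\intmor{}{}$ from \nref{construct.a7.3} propagates the compatibility to all of $\coef[d = {c}]{M}{X}$, completing the proof.
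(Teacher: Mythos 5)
Your proposal follows the same broad contour as the paper's proof: \nref[Claim]{prop:morphisms-compatible-with-six-functors.1} via solvency and \nref{construct.a4}, and \nref[Claim]{prop:morphisms-compatible-with-six-functors.3} by unpacking \cite[4.4.25]{Cisinski-Deglise_triangulated-categories}, reducing the ``hard'' operations $f_*$, $f^!$, $\intmor{}{}$ to the ``easy'' ones via Verdier duality, so that everything boils down to compatibility of $\phi^*$ with dualizing objects. The treatment of the easy cases, the Nagata compactification for $f_!$, the invocation of \nref{construct.a6} and \nref{construct.a7}, and the use of relative purity for regular $X$ are all in order.

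The gap is in your reduction of the singular case of the dualizing-object compatibility, where you write ``the localization axiom (\nref[Axiom]{functors.3.E}) and d\'evissage on the dimension of the singular locus.'' Localization decomposes $X$ into a regular open $j: U \hookrightarrow X$ and a lower-dimensional closed $i: Z \hookrightarrow X$, and produces the fiber sequence $i_*D_Z \to D_X \to j_*D_U$. Applying $\phi^*$ to this sequence requires knowing that $\phi^*$ commutes with $j_*$, but $j$ is not proper, so \nref{prop:exchange-for-proper-pushforward} does not apply — and commutation of $\phi^*$ with pushforward along a general separated morphism is precisely one of the things the duality argument is designed to establish. This is a genuine circularity, not merely a cosmetic imprecision. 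The paper's own modification of \cite[4.4.25]{Cisinski-Deglise_triangulated-categories} is specifically to replace the appeal to \cite[4.4.1]{Cisinski-Deglise_triangulated-categories} by \emph{\cdh-excision} (\nref{excision.4}). The \cdh-distinguished square attached to a Hironaka resolution $p: X' \to X$ involves only proper morphisms $p$, $i$, $pi'$; applying $\intmor{-}{D_X}$ to the \cdh-excision fiber sequence $\1{\coef{M}{X}} \to p_*\1{\coef{M}{X'}} \oplus i_*\1{\coef{M}{Z}} \to \prns{pi'}_*\1{\coef{M}{E}}$ and using the projection formula and $p_* \dashv p^!$ gives the fiber sequence $\prns{pi'}_*D_E \to p_*D_{X'} \oplus i_*D_Z \to D_X$ whose every term is reachable by \nref{prop:exchange-for-proper-pushforward} together with the smooth case and the inductive hypothesis, thereby avoiding the circularity. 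You should replace your appeal to \nref[Axiom]{functors.3.E} with an appeal to \nref{excision.4}.
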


\begin{proof}
\nref[Claim]{prop:morphisms-compatible-with-six-functors.1} follows from \nref{construct.a4}.

If we work over quasi-projective $S$-schemes, then \nref[Claim]{prop:morphisms-compatible-with-six-functors.3} follows from the arguments of \cite[\S3]{Ayoub_operations-de-Grothendieck}.
In general, it suffices to proceed as in the proof of \cite[4.4.25]{Cisinski-Deglise_triangulated-categories} with the following modification:
in the last paragraph of that proof, replace the de\thinspace Jong alteration with a resolution of singularities in the sense of Hironaka, and replace the appeal to \cite[4.4.1]{Cisinski-Deglise_triangulated-categories} with $\cdh$-excision (\nref{excision.4}).
\end{proof}

\begin{prop}
\nlabel{prop:conservativity}
Let $\phi^{*, \otimes}: \coef[um = {*, \otimes}]{M}{} \to \coef[um = {*, \otimes}]{N}{}$ be a morphism of $\mc{V}^{\otimes}$-linear \locpres\ \coeffsysts.
Suppose that the following conditions are satisfied:
\begin{itemize}
\item
$S = \spec{\kk}$ is the spectrum of a field of characteristic zero;
\item
$\mc{V}^{\otimes}$ is ind-rigid; and
\item
$\phi^*_{\tu{c},S}: \coef[d = {c}]{M}{S} \to \coef[d = {c}]{N}{S}$ is conservative.
\end{itemize}
Then $\phi^*_{\tu{c},X}: \coef[d = {c}]{M}{X} \to \coef[d = {c}]{N}{X}$ is conservative for each $X \in \sch[ft]{S}$.
\end{prop}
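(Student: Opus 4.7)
Define $K(X) \coloneqq \ker(\phi^*_{c,X}) \subseteq \coef[d = {c}]{M}{X}$, a thick triangulated subcategory. By \nref{prop:morphisms-compatible-with-six-functors}, the restricted functor $\phi^*_c$ commutes with all six operations, so the assignment $X \mapsto K(X)$ is stable under $f^*$, $f_*$, $f_!$, $f^!$, $\otimes$, and $\intmor{-}{-}$. The task is to show $K(X) = 0$ for every $X \in \sch[ft]{S}$, given $K(S) = 0$.

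The plan is Noetherian induction on $\dim X$. For the inductive step with $\dim X > 0$, generic smoothness in characteristic zero yields a smooth dense open $j: U \hookrightarrow X$ with closed complement $i: Z \hookrightarrow X$ satisfying $\dim Z < \dim X$. For $M \in K(X)$, the localization fiber sequence $j_!j^*M \to M \to i_*i^*M$ of Axiom \nref{functors.3.E} (cf.~\nref{functors.a7.C}), combined with $i^*M \in K(Z) = 0$ by induction, reduces the problem to showing $K(U) = 0$; since $\dim U = \dim X$, the induction on dimension alone does not close, and a separate argument is needed for $X$ smooth of positive dimension.

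For smooth $X$ of positive dimension over $S = \spec{\kk}$, the scheme $X$ is Jacobson, so its closed points are very dense and each such point $x$ has residue field $k(x)$ finite over $\kk$. The plan is to invoke the joint conservativity of the family $\{i_x^* : x \in X \text{ closed}\}$ on constructible objects --- a standard consequence of the framework of \cite[Chapter~2]{Ayoub_six-operationsI} transferred through \nref{functors.a8} to the \qcategorical\ setting --- to reduce to showing $K(\spec{\kappa}) = 0$ for every finite extension $\kappa/\kk$. For such a $\kappa$, after passing to its Galois closure $\widetilde{\kappa}/\kk$ with group $G$ and writing $\iota: \spec{\widetilde{\kappa}} \to \spec{\kappa}$ and $\widetilde{\pi} \coloneqq \pi\iota$, the object $\iota^*M \in K(\spec{\widetilde{\kappa}})$ satisfies $\widetilde{\pi}_*(\iota^*M) \in K(S) = 0$ by \nref{prop:exchange-for-proper-pushforward}. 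Proper base change along the Galois Cartesian square $\spec{\widetilde{\kappa}}\times_S\spec{\widetilde{\kappa}} \simeq \coprod_{g\in G}\spec{\widetilde{\kappa}}$ then gives $\widetilde{\pi}^*\widetilde{\pi}_*(\iota^*M) \simeq \bigoplus_g g^*\iota^*M$, the vanishing of which forces $\iota^*M = 0$ (take $g = e$); finite étale descent along $\iota$ concludes $M = 0$.

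The principal obstacle is the joint conservativity of closed-point pullbacks on constructible objects invoked in the third paragraph, together with the finite étale descent step at its end. Both are variants of the separation property of $\coef[um = {*, \otimes}]{M}{}$, which in characteristic zero is folklore in the stable-homotopy-$2$-functor framework of Ayoub, but whose precise transfer to our \qcategorical\ axiomatization requires some care; as a backup, one may replace the finite étale descent step by the trace-map argument valid whenever the degree $[\widetilde{\kappa}:\kappa]$ is invertible in the endomorphism ring of the monoidal unit, which covers the main application to \nref[Desideratum]{desideratum.3} where $\mc{V}$ is $\rational$-linear.
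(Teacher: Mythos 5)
The paper's proof takes a completely different and substantially more elementary route. Rather than doing geometry on $X$, it works directly with the generators of $\coef[d = {c}]{M}{X}$: to show $M \simeq 0$, it suffices (by the generation of $\coef[d = {c}]{M}{X}$ under iterated finite colimits and retracts by objects $V \odot f_{\sharp}\state{\coef{M}{Y}}{r}{s}$ with $f$ smooth) to show that the mapping space out of each such generator into $M$ vanishes. Using $\mc{V}^{\otimes}$-linearity, duality for the generators, and the adjunction $f_\sharp \dashv f^*$ followed by $\pi^* \dashv \pi_*$ for the structural morphism $\pi: Y \to S$, the paper rewrites this mapping space as $\map[dm = {\coef{M}{S}}]{\1{\coef{M}{S}}}{V^{\vee} \odot \pi_*\stwist{f^*M}{-r}{-s}}$. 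The compatibility of $\phi^*$ with $\pi_*f^*$ from \nref{prop:morphisms-compatible-with-six-functors} gives $\phi^*_S\pi_*f^*M \simeq \pi_*f^*\phi^*_XM \simeq 0$, and conservativity of $\phi^*_{\tu{c},S}$ then yields $\pi_*f^*M \simeq 0$, closing the argument. No stratification, no passage to closed points, no descent.

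Your argument has genuine gaps that you correctly flag but understate. The joint conservativity of closed-point pullbacks on constructible objects is not a variant of the separation property, nor does it follow from the axioms in \nref{functors.3}: separatedness controls conservativity of $f^*$ for a \emph{single} surjective morphism of finite type, whereas a dense set of closed points indexes an infinite family whose disjoint union is not a scheme of finite type over $X$. In Ayoub's and Cisinski--D\'eglise's frameworks this kind of pointwise conservativity is deduced from a \emph{continuity} axiom (commutation with filtered limits of schemes along affine transition maps), which is not part of the notion of \coeffsyst\ used here and is not established for abstract $\mc{V}^{\otimes}$-linear \locpres\ \coeffsysts\ in this paper. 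The same applies to finite \'etale descent, which is not among the axioms either; the localization axiom \nref{functors.3.E} only gives gluing along a closed/open decomposition. Your backup via trace maps does not repair the first gap. Moreover, as you note, the Noetherian induction never closes because generic smoothness does not reduce dimension, so the entire weight of the proof rests on the two unproved properties. The paper's proof is worth internalizing precisely because it sidesteps all of this by testing against the explicit generators and pushing forward to the base.
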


\begin{proof}
Let $X \in \sch[ft]{S}$ and $M \in \coef[d = {c}]{M}{X}$ such that $\phi^*_XM \simeq 0$.
We claim that $M \simeq 0$.
By definition, the objects $V \odot f_{\sharp}\state{\coef{M}{Y}}{r}{s} \in \coef{M}{X}$ with $V \in \mc{V}_{\aleph_0}$, $f: Y \to X$ smooth and $\prns{r,s} \in \integer^2$ generate $\coef[d = {c}]{M}{X}$ under iterated finite colimits and retracts.
It therefore suffices to show that 
\[
\pi_0\map[dm = {\coef{M}{X}}]{V \odot f_{\sharp}\state{\coef{M}{Y}}{r}{s}}{M}
  \simeq 0
\]
for each $V$, $f: Y \to X$ and $\prns{r,s}$ as above.

Applying \nref{construct.a5} liberally, we have 
\begin{align*}
\map[dm = {\coef{M}{X}}]{V \odot f_{\sharp}\state{\coef{M}{Y}}{r}{s}}{M}
&\simeq \map[dm = {\coef{M}{X}}]{f_{\sharp}\prns{V \odot \state{\coef{M}{Y}}{r}{s}}}{M} \\
&\simeq \map[dm = {\coef{M}{Y}}]{V \odot \state{\coef{M}{Y}}{r}{s}}{f^*M} \\
&\simeq \map[dm = {\coef{M}{Y}}]{\1{\coef{M}{Y}}}{V^{\vee} \odot \stwist{f^*M}{-r}{-s}} \\
&\simeq \map[dm = {\coef{M}{S}}]{\1{\coef{M}{S}}}{\pi_*\prns{V^{\vee} \odot \stwist{f^*M}{-r}{-s}}} \\
&\simeq \map[dm = {\coef{M}{S}}]{\1{\coef{M}{S}}}{V^{\vee} \odot \pi_*\stwist{f^*M}{-r}{-s}},
\end{align*}
where $\pi: Y \to S$ is the structural morphism.
It therefore suffices to show that $\pi_*f^*M \simeq 0$.

By \nref{prop:morphisms-compatible-with-six-functors}, we have
$\phi^*_S\pi_*f^*M \simeq \pi_*f^*\phi^*_XM$, and $\phi^*_XM \simeq 0$ by hypothesis.
Thus, $\phi^*_S\pi_*f^*M \simeq 0$.
By hypothesis, $\phi^*_{\tu{c},S}$ is conservative, so $\pi_*f^*M \simeq 0$.
\end{proof}

\section{Scalar extension of Grothendieck's six operations}
\nlabel{scalars}

\setcounter{thm}{-1}

\begin{notation}
In this section, we fix the following notation:
\begin{itemize}
\item
$S$, a Noetherian scheme of finite dimension; and
\item
$\tau$, either the Nisnevich or the \'etale topology.
\end{itemize}
\end{notation}

\begin{motivation}
In \nref{functors.a10}, we observed that $\spt[um = {\tau}, dm = {\tatesphere}]{-}^{\wedge}$ is a \coeffsyst, and therefore admits a six-functor formalism.
In particular, it is a functor
\[
\spt[um = {\tau}, dm = {\tatesphere}]{-}^{\wedge}:
  \prns{\sch[ft]{S}}\op 
    \to \calg{\pr[um = {\tu{L}, \otimes}, d = {st}]}.
\]
We will be interested in studying two natural ways of constructing new functors from $\spt[um = {\tau}, dm = {\tatesphere}]{-}^{\wedge}$.

First, for each \locpres\ symmetric monoidal \qcategory\ $\mc{V}^{\otimes}$, one can form the $\mc{V}^{\otimes}$-linearizaion
\[
\spt[um = {\tau}, dm = {\tatesphere}]{-, \mc{V}}^{\otimes}
  \coloneqq \spt[um = {\tau}, dm = {\tatesphere}]{-}^{\wedge} \otimes \mc{V}^{\otimes},
\]
which is the natural continuation of the ideas of \nref{SH}. 
Second, for each $A \in \calg{\spt[um = {\tau}, dm = {\tatesphere}]{S}^{\wedge}}$, one has a functor
\[
\mod[dm = {A}]{\spt[um = {\tau}, dm = {\tatesphere}]{-}}^{\otimes}
  \coloneqq \spt[um = {\tau}, dm = {\tatesphere}]{-}^{\wedge} \otimes_{\spt[um = {\tau}, dm = {\tatesphere}]{S}^{\wedge}} \mod[dm = {A}]{\spt[um = {\tau}, dm = {\tatesphere}]{S}}^{\otimes}
\]
of $A$-modules in $\spt[um = {\tau}, dm = {\tatesphere}]{-}^{\wedge}$.
For example, if $A = \EM[d = {mot}]{\rational}$ is the object representing rational motivic cohomology, then $\mod[dm = {\EM[d = {mot}]{\rational}}]{\spt[um = {\tau}, dm = {\tatesphere}]{-}}^{\otimes}$ is equivalent to the symmetric monoidal \qcategory\ of Be\u{\i}linson motives of \cite{Cisinski-Deglise_triangulated-categories}.
If $A = \hodgespectrum$ is the absolute Hodge spectrum of \nref{defn:hodge-spectrum}, then
\[
\DH{-}^{\otimes}
  \coloneqq \mod[dm = {\hodgespectrum}]{\spt[u = {\'et}, dm = {\tatesphere}]{-}}^{\otimes}
\]
provides a \coeffsyst\ for mixed Hodge theory closely related to M.~Saito's derived category of mixed Hodge modules, as discussed further in \nref{intro}.

The goal of this section is to establish that $\spt[um = {\tau}, dm = {\tatesphere}]{-, \mc{V}}^{\otimes}$ and $\mod[dm = {A}]{\spt[um = {\tau}, dm = {\tatesphere}]{-}}^{\otimes}$ inherit six-functor formalisms from $\spt[dm = {\tatesphere}]{-}^{\wedge}$.
\end{motivation}

\begin{summary}
\
\begin{itemize}
\item
We begin with the general definition of scalar extension of \coeffsysts.
\item
In \nref{scalars.3}, we show that \coeffsysts\ are preserved by scalar extensions of the form $\coef[um = {*, \otimes}]{M}{} \otimes \mc{V}^{\otimes}$, i.e., that $\coef[um = {*, \otimes}]{M}{} \otimes \mc{V}^{\otimes}$ inherits a partial six-functor formalism from $\coef[um = {*, \otimes}]{M}{}$, and that the canonical morphism $\coef[um = {*, \otimes}]{M}{} \to \coef[um = {*, \otimes}]{M}{} \otimes \mc{V}^{\otimes}$ is a morphism of \coeffsysts.
\item
In \nref{scalars.5}, we list sufficient conditions for \coeffsysts\ to be preserved by scalar extensions of the form $\coef[um = {*, \otimes}]{M}{} \otimes_{\mc{V}^{\otimes}} \mc{W}^{\otimes}$.
\item
In \nref{scalars.6} and \nref{scalars.7}, we show that the conditions of \nref{scalars.5} are satisfied in particular by scalar extensions along free $A$-module functors $\mc{V}^{\otimes} \to \mod[dm = {A}]{\mc{V}}^{\otimes}$ with $A \in \calg{\mc{V}^{\otimes}}$.
\item
In \nref{ex:DH-coefficient-syst}, we introduce the six-functor formalism $\DH{-}$ and observe that it admits canonical realization functors from $\spt[u = {\'et}, dm = {\tatesphere}]{-}^{\wedge}$.
\end{itemize}
\end{summary}

\begin{defn}
\nlabel{scalars.1}
Let $v^{\otimes} : \mc{V}^{\otimes} \to \mc{W}^{\otimes}$ be a cocontinuous symmetric monoidal functor of \locpres\ symmetric monoidal \qcategories.
By \cite[4.5.3.1]{Lurie_higher-algebra}, $v^{\otimes}$ induces a symmetric monoidal functor $\mod[dm = {\mc{V}^{\otimes}}]{\pr[u = {L}]}^{\otimes} \to \mod[dm = {\mc{W}^{\otimes}}]{\pr[u = {L}]}^{\otimes}$, right adjoint to the forgetful functor.
Taking commutative algebra objects, we obtain a left adjoint $\calg{\mod[dm = {\mc{V}^{\otimes}}]{\pr[u = {L}]}^{\otimes}} \to \calg{\mod[dm = {\mc{W}^{\otimes}}]{\pr[u = {L}]}^{\otimes}}$, which, by \cite[3.4.1.7]{Lurie_higher-algebra}, corresponds to a left adjoint $\calg{\pr[um = {\tu{L}, \otimes}]}_{\mc{V}^{\otimes}/} \to \calg{\pr[um = {\tu{L}, \otimes}]}_{\mc{W}^{\otimes}/}$ to the forgetful functor.

By \cite[4.3.3]{Riehl-Verity_2-category-theory}, this induces yet another adjunction 
\begin{equation}
\nlabel{scalars.1.a}
\psh{\sch[ft]{S}}{\calg{\pr[um = {\tu{L}, \otimes}]}_{\mc{V}^{\otimes}/}}
  \rightleftarrows
  \psh{\sch[ft]{S}}{\calg{\pr[um = {\tu{L}, \otimes}]}_{\mc{W}^{\otimes}/}}.
\end{equation}
We denote the image of the functor $\coef[um = {*, \otimes}]{M}{}: \psh{\sch[ft]{S}}{\calg{\pr[um = {\tu{L}, \otimes}]}_{\mc{V}^{\otimes}/}}$ under the left adjoint in \nref{scalars.1.a} by $\coef[um = {*, \otimes}]{M}{} \otimes_{\mc{V}^{\otimes}} \mc{W}^{\otimes}$, and we refer to it as the \emph{scalar extension of $\coef[um = {*, \otimes}]{M}{}$ along $v^{\otimes}$}.
\end{defn}

\begin{rmk}
The scalar-extension construction has the following basic properties.
\begin{enumerate}
\item
\nlabel{scalars.a2}
The unit of the adjunction \nref{scalars.1.a} is a natural transformation whose component over the functor $\coef[um = {*, \otimes}]{M}{}: \prns{\sch[ft]{S}}\op \to \calg{\pr[um = {\tu{L}, \otimes}]}_{\mc{V}^{\otimes}/}$ is the natural transformation
\begin{equation}
\nlabel{scalars.a2.1}
\phi^{*, \otimes}:
  \coef[um = {*, \otimes}]{M}{} 
    \to \coef[um = {*, \otimes}]{M}{} \otimes_{\mc{V}^{\otimes}} \mc{W}^{\otimes}:
      \prns{\sch[ft]{S}}\op
        \to \calg{\pr[um = {\tu{L}, \otimes}]}_{\mc{V}^{\otimes}/}
\end{equation}
given over each $X \in \sch[ft]{S}$ by the canonical symmetric monoidal functor 
$
\id_{\coef{M}{X}^{\otimes}}:
  \coef{M}{X}^{\otimes}
    \to \coef{M}{X}^{\otimes} \otimes_{\mc{V}^{\otimes}} \mc{W}^{\otimes}
$
to the coproduct in $\calg{\mod[dm = {\mc{V}^{\otimes}}]{\pr[u = {L}]}^{\otimes}}$ (\cite[3.2.4.8]{Lurie_higher-algebra}).
\item
It follows from \nref{scalars.a3} that, for each cocontinuous symmetric monoidal functor $v^{\otimes}: \mc{V}^{\otimes} \to \mc{W}^{\otimes}$ of \locpres\ symmetric monoidal \qcategories, the adjunction \nref{scalars.1.a} restricts to an adjunction
\begin{equation}
\nlabel{scalars.a3.a}
\psh{\sch[ft]{S}}{\calg{\pr[um = {\tu{L}, \otimes}, d = {st}]}_{\mc{V}^{\otimes}/}}
  \rightleftarrows
  \psh{\sch[ft]{S}}{\calg{\pr[um = {\tu{L}, \otimes}, d = {st}]}_{\mc{W}^{\otimes}/}}.
\end{equation}
We need not require $\mc{V}^{\otimes}$ or $\mc{W}^{\otimes}$ to be stable here.
\end{enumerate}
\end{rmk}

\begin{ex}
\nlabel{ex:hodge-module-6ff}
The examples of principal interest here are the following.
\begin{enumerate}
\item
\nlabel{ex:hodge-module-6ff.1}
Scalar extension of the \coeffsyst\ $\spt[um = {\tau}, dm = {\tatesphere}]{-}^{*, \wedge}$ of \nref{functors.a10} along the unit $\spc{}^{\times} \to \mc{V}^{\otimes}$ of the \locpres\ symmetric monoidal \qcategory\ $\mc{V}^{\otimes}$ gives a presheaf 
\[
\spt[um = {\tau}, dm = {\tatesphere}]{-, \mc{V}}^{*, \otimes}:
\prns{\sch[ft]{S}}\op \to \calg{\pr[um = {\tu{L}, \otimes}, d = {st}]}_{\mc{V}^{\otimes}/}
\]
whose value at $X \in \sch[ft]{S}$ is the symmetric monoidal \qcategory\ $\spt[um = {\tau}, dm = {\tatesphere}]{X, \mc{V}}^{\otimes}$ of \nref{SH.1}.
\item
\nlabel{ex:hodge-module-6ff.2}
Scalar extension of $\spt[um = {\tau}, dm = {\tatesphere}]{-}^{*, \wedge}: \prns{\sch[ft]{S}}\op \to \calg{\pr[um = {\tu{L}, \otimes}, d = {st}]}_{\spt[um = {\tau}, dm = {\tatesphere}]{S}^{\wedge}/}$ along the realization functors $\rho^{*, \otimes}_{\tu{B}}$, $\rho^{*, \otimes}_{\tu{Hdg}}$ and $\rho^{*, \otimes}_{\tb{Hdg}}$ of \nref{ex:realizations} provides us with presheaves that we shall denote by $\DB{-}^{\otimes}$, $\DH{-}^{\otimes}$ and $\varDH{-}^{\otimes}$, respectively.
\end{enumerate}
The principal goal of this section is to show that these presheaves obtained by extension of scalars are \coeffsysts.
\end{ex}

\begin{lemma}
\nlabel{functors.10}
Let $\mc{C}$, $\mc{D}$ and $\mc{E}$ be \locpres\ \qcategories\ and $f^* : \mc{C} \rightleftarrows \mc{D} : f_*$ adjoint functors.
If $f_*$ is cocontinuous, then $f^* \otimes \id_{\mc{E}}: \mc{C} \otimes \mc{E} \to \mc{D} \otimes \mc{E}$ is left adjoint to $f_* \otimes \id_{\mc{E}}: \mc{D} \otimes \mc{E} \to \mc{C} \otimes \mc{E}$.
\end{lemma}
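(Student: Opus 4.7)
The plan is to exhibit unit and counit for the claimed adjunction by tensoring those of $f^* \dashv f_*$ with the identity on $\mc{E}$. First I would observe that, since $f_*$ is cocontinuous and both $\mc{C}$ and $\mc{D}$ are \locpres, the Adjoint Functor Theorem \cite[5.5.2.9]{Lurie_higher-topos} provides a right adjoint for $f_*$; in particular, $f_*$ is itself a morphism of $\pr[u = {L}]$, so that both $f^* \otimes \id_{\mc{E}}$ and $f_* \otimes \id_{\mc{E}}$ are well-defined morphisms of $\pr[u = {L}]$.

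Next, the unit $\eta: \id_{\mc{C}} \to f_* \circ f^*$ and counit $\varepsilon: f^* \circ f_* \to \id_{\mc{D}}$ of $f^* \dashv f_*$ are natural transformations between morphisms in $\pr[u = {L}]$.  I would tensor each with the identity $2$-cell on $\id_{\mc{E}}$ to obtain
\[
\tilde{\eta} \coloneqq \eta \otimes \id_{\id_{\mc{E}}}: \id_{\mc{C} \otimes \mc{E}} \to \prns{f_* \otimes \id_{\mc{E}}} \circ \prns{f^* \otimes \id_{\mc{E}}}
\]
and, similarly, $\tilde{\varepsilon}: \prns{f^* \otimes \id_{\mc{E}}} \circ \prns{f_* \otimes \id_{\mc{E}}} \to \id_{\mc{D} \otimes \mc{E}}$.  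The triangle identities for $\prns{\tilde{\eta}, \tilde{\varepsilon}}$ then follow formally from those of $\prns{\eta, \varepsilon}$ by functoriality of $- \otimes \id_{\mc{E}}$, giving the desired adjunction.

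The main obstacle is the assertion that the symmetric monoidal structure on $\pr[u = {L}]$ is compatible with the $(\infty,2)$-categorical enhancement of $\pr[u = {L}]$ whose $2$-morphisms are natural transformations between left-adjoint functors, so that $- \otimes \id_{\mc{E}}$ is genuinely a $2$-functor and hence preserves adjunctions. While \cite{Lurie_higher-algebra} treats $\pr[u = {L}]$ primarily as an \qcategory, the requisite compatibility is established in \cite{Gepner-Haugseng_enriched-infty-categories}. An alternative route that avoids $(\infty,2)$-categorical formalism is to use the presentation $\mc{C} \otimes \mc{E} \simeq \fun[u = {R}]{\mc{C}\op}{\mc{E}}$ of \cite[4.8.1.17]{Lurie_higher-algebra} together with its analogue for $\mc{D}$, describe $f^* \otimes \id_{\mc{E}}$ and $f_* \otimes \id_{\mc{E}}$ explicitly through this identification, and verify the adjunction by a direct mapping-space computation that reduces pointwise to the adjunction $f^* \dashv f_*$ in the target \qcategory\ $\mc{E}$.
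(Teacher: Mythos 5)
Your primary $(\infty,2)$-categorical route is genuinely different from the paper's argument, and the obstruction you flag — that $- \otimes \id_{\mc{E}}$ needs to be a $2$-functor on a suitable $(\infty,2)$-categorical enhancement of $\pr[u = {L}]$, so that it carries an adjunction to an adjunction — is indeed the crux. You should, however, double-check the Gepner-Haugseng citation; that paper is about enriched $\infty$-categories and does not obviously supply the $(\infty,2)$-categorical refinement of the Lurie tensor product on $\pr[u = {L}]$ that your argument needs.

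Your alternative route is essentially the paper's, but the final sentence misdescribes the mechanism. The paper first uses the Adjoint Functor Theorem to produce a right adjoint $f^!$ of $f_*$, and then, under the equivalences $\mc{C} \otimes \mc{E} \simeq \fun[u = {R}]{\mc{C}\op}{\mc{E}}$ and $\mc{D} \otimes \mc{E} \simeq \fun[u = {R}]{\mc{D}\op}{\mc{E}}$, identifies $f^* \otimes \id_{\mc{E}}$ and $f_* \otimes \id_{\mc{E}}$ with \emph{pre-composition} by $f_*\op$ and by $f^{!,\op}$, respectively. The adjunction is then produced from the opposite adjunction $f^{!,\op} \dashv f_*\op$ together with the fact that pre-composition preserves adjunctions (\cite[4.3.3]{Riehl-Verity_2-category-theory}), after checking that these adjoints restrict to the full \subqcategories\ of right-adjoint functors. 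So the adjunction that drives the argument is $f_* \dashv f^!$, not $f^* \dashv f_*$, and the verification is an application of the pre-composition adjunction rather than a pointwise reduction to mapping spaces in $\mc{E}$; your phrase ``reduces pointwise to the adjunction $f^* \dashv f_*$ in the target'' would only be apt for a post-composition description, which is not available here because post-composing with $f^*$ need not preserve the condition of being a right adjoint $\mc{E}\op \to \mc{C}$.
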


\begin{proof}
By the Adjoint Functor Theorem (\cite[5.5.2.9]{Lurie_higher-topos}), $f_*$ and $f^*$ both belong to $\pr^{\mr{L}}$ and $f^* \otimes \id_{\mc{E}}$ and $f_* \otimes \id_{\mc{E}}$ are well-defined.
Let $f^!$ be right adjoint to $f_*$.
By \cite[4.8.1.17]{Lurie_higher-algebra}, there are canonical equivalences $\mc{C} \otimes \mc{E} \simeq \fun[u = {R}]{\mc{C}\op}{\mc{E}}$ and $\mc{D} \otimes \mc{E} \simeq \fun[u = {R}]{\mc{D}\op}{\mc{E}}$ under which $f^* \otimes \id_{\mc{E}}$  and $f_* \otimes \id_{\mc{E}}$ correspond to the functors given by composition with $f_*\op$ and $f^{!,\varop}$, respectively.
Since $f^{!,\varop} \dashv f_*\op$, $\prns{-} \circ f_*\op$ is left adjoint to $\prns{-} \circ f^{!,\varop}$ as functors between $\fun{\mc{C}\op}{\mc{E}}$ and $\fun{\mc{D}\op}{\mc{E}}$ by \cite[4.3.3]{Riehl-Verity_2-category-theory}.
As these adjoint functors respect the full \subqcategories\ $\fun[u = {R}]{\mc{C}\op}{\mc{E}}$ and $\fun[u = {R}]{\mc{D}\op}{\mc{E}}$ and the claim follows.
\end{proof}

\begin{prop}
\nlabel{scalars.3}
Let $\coef[um = {*, \otimes}]{M}{}$ be a \locpres\ \coeffsyst, and $\mc{V}^{\otimes}$ a \locpres\ symmetric monoidal \qcategory.
Then $\coef[um = {*, \otimes}]{N}{} \coloneqq \coef[um = {*, \otimes}]{M}{} \otimes \mc{V}^{\otimes}$ is a $\mc{V}^{\otimes}$-linear \locpres\ \coeffsyst\ and the morphism $\phi^{*, \otimes}: \coef[um = {*, \otimes}]{M}{} \to \coef[um = {*, \otimes}]{N}{}$ of \nref{scalars.1.a} is a morphism of \coeffsysts. 
\end{prop}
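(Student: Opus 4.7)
The plan is to verify the seven axioms of \nref{functors.3} for $\coef[um = {*, \otimes}]{N}{}$ and the adjointability condition of \nref{functors.a6} for $\phi^{*, \otimes}$. By construction, for each morphism $f: X \to Y$ in $\sch[ft]{S}$, the induced pullback in $\coef[um = {*, \otimes}]{N}{}$ is $\coef[um = {*}]{M}{f} \otimes \id_{\mc{V}}$, and by \nref{scalars.a3}, each $\coef{N}{X}^{\otimes}$ is stable and \locpres. \nref[Axiom]{functors.3.A} follows from the Adjoint Functor Theorem applied to the cocontinuous functor $f^* \otimes \id_{\mc{V}}$; \nref[Axiom]{functors.3.B} follows from the cocontinuity of the tensor product on $\coef{N}{X}^{\otimes}$; and, for smooth $p$, the left adjoint $p_{\sharp}$ in $\coef[um = {*, \otimes}]{N}{}$ is identified with $p_{\sharp} \otimes \id_{\mc{V}}$ by functoriality of $(-) \otimes \id_{\mc{V}}$ on $\pr[u = {L}]$. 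By \nref{functors.a11}, the pushforward $i_*$ along a closed immersion $i$ and the pushforward $s_*$ along a section $s$ of a smooth morphism are cocontinuous in $\coef[um = {*, \otimes}]{M}{}$, so \nref{functors.10} identifies the corresponding functors in $\coef[um = {*, \otimes}]{N}{}$ with $i_* \otimes \id_{\mc{V}}$ and $s_* \otimes \id_{\mc{V}}$, respectively.

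Given these identifications, \nref[Axioms]{functors.3.C} and \varnref{functors.3.D} reduce to the observation that the relevant exchange morphisms in $\coef[um = {*, \otimes}]{N}{}$ are of the form $\mr{ex}_{\coef{M}{}} \otimes \id_{\mc{V}}$ and are therefore equivalences whenever their counterparts in $\coef[um = {*, \otimes}]{M}{}$ are. \nref[Axiom]{functors.3.F} follows because tensoring with $\mc{V}^{\otimes}$ in $\pr[u = {L}]$ preserves fully faithful left adjoints (equivalently, reflective localizations), so the full faithfulness of the unit $\id \to p_* p^*$ passes to $\coef[um = {*, \otimes}]{N}{}$. \nref[Axiom]{functors.3.G} ($\tatesphere$-stability) then reduces to the fact that $\Thom{f,s} \otimes \id_{\mc{V}}$ is an equivalence, which follows from the analogous axiom in $\coef[um = {*, \otimes}]{M}{}$ via the identifications above. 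Finally, the adjointability condition defining $\phi^{*, \otimes}$ as a morphism of \coeffsysts\ reads $p_{\sharp}\phi^*_X \simeq \phi^*_Y p_{\sharp}$, which is immediate since $\phi^*_X$ is informally $m \mapsto m \otimes \1_{\mc{V}}$ and $p_{\sharp}$ in $\coef[um = {*, \otimes}]{N}{}$ is $p_{\sharp} \otimes \id_{\mc{V}}$.

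The main obstacle is \nref[Axiom]{functors.3.E} (localization), since the defining Cartesian square lives in $\QCATex$ and scalar extension does not obviously preserve such limits. I therefore invoke the equivalent reformulation of \nref{functors.a7.C}: the axiom amounts to $(a)$ $i_*$ being fully faithful, $(b)$ $(i^*, j^*)$ being jointly conservative, and $(c)$ the counit-unit sequence $j_{\sharp}j^*M \to M \to i_*i^*M$ being a fiber sequence natural in $M$. Item $(a)$ is handled as in \nref[Axiom]{functors.3.F}. For $(c)$ in $\coef{N}{X}$, the three endofunctors $j_{\sharp}j^*$, $\id$, and $i_*i^*$ are cocontinuous by the identifications above, so the class of objects on which $(c)$ holds is closed under small colimits in $\coef{N}{X}$; on generators of the form $m \odot v$ with $m \in \coef{M}{X}$ and $v \in \mc{V}$, the sequence in question is the tensor of the fiber sequence of \nref{functors.a7.C} in $\coef{M}{X}$ with $v$, which remains exact because $(-) \odot v$ is an exact endofunctor of $\coef{N}{X}$. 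Item $(b)$ is then a direct consequence of $(c)$: if $i^*M \simeq 0$ and $j^*M \simeq 0$ in $\coef{N}{X}$, then $(c)$ forces $M \simeq 0$.
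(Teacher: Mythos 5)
Your overall strategy — verifying each axiom by showing that the relevant adjoints, counits, and exchange morphisms are obtained from those of $\coef[um = {*, \otimes}]{M}{}$ by tensoring with $\id_{\mc{V}}$ — is exactly the one the paper uses, and the verifications of the pushforward, internal-hom, smooth base change, projection formula, $\affine^1$-invariance, $\tatesphere$-stability and adjointability axioms proceed along the same lines.

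The one genuine divergence is in the treatment of the localization axiom (\nref[Axiom]{functors.3.E}). The paper's argument is shorter and more structural: it passes through the equivalence $\pr[u = {L}] \simeq \prns{\pr[u = {R}]}\op$ to reformulate the Cartesian square in $\QCATex$ as a coCartesian square in $\pr[u = {L}, d = {st}]$ involving $j_{\sharp}$ and $i^*$, and then observes that $\prns{-} \otimes \mc{V}$ is cocontinuous and therefore preserves this coCartesian square. Your argument instead unpacks the axiom via \nref{functors.a7.C}: you show $i_*$ is fully faithful, verify the counit--unit fiber sequence $(c)$ on generators of the form $m \odot v$ and extend by cocontinuity, and deduce joint conservativity of $\prns{i^*, j^*}$ from $(c)$. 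This works, and it has the mild pedagogical advantage of being more explicit about what the localization axiom actually buys you, but it is noticeably longer and requires you to first observe that $i_*$ (hence $i_*i^*$) is cocontinuous by \nref{functors.a11}, which the paper's dual-square argument sidesteps entirely.

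One small slip worth flagging: in your treatment of $\affine^1$-invariance and of full faithfulness of $i_*$, you invoke the unit $\id \to p_*p^*$ and speak of ``fully faithful left adjoints.'' The functor $p_*$ is \emph{not} in general cocontinuous, so the unit $\id \to p_*p^*$ does not obviously commute with $\prns{-} \otimes \mc{V}$. The correct invariant is the counit $p_{\sharp}p^* \to \id$ of the adjunction $p_{\sharp} \dashv p^*$ (both cocontinuous), which is an equivalence if and only if $p^*$ is fully faithful, and which manifestly passes through $\prns{-} \otimes \id_{\mc{V}}$ by \nref{functors.10}. This is exactly the form of the argument the paper gives. Similarly, for full faithfulness of $i_*$ you want the counit $i^*i_* \to \id$ of $i^* \dashv i_*$, with both functors cocontinuous since $i$ is proper, not an argument involving $i^!$. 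The underlying idea in your write-up is sound, but the justification should run through the cocontinuous adjoint.
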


\begin{proof}
It follows from the techniques of \cite[4.8.2.18]{Lurie_higher-algebra} that, for each $X \in \sch[ft]{S}$, the \locpres\ symmetric monoidal \qcategory\ $\coef{M}{X}^{\otimes} \otimes \mc{V}^{\otimes}$ inherits stability from $\coef{M}{X}$.
Thus, $\coef[um = {*, \otimes}]{N}{}$ is a functor $\prns{\sch[ft]{S}}\op \to \calg{\pr[um = {\tu{L}, \otimes}, d = {st}]}_{\mc{V}^{\otimes}/}$.

\nref[Axioms]{functors.3.A} and \nref{functors.3.B} follow from the \locpresbility\ hypothesis as explained in \nref[Remarks]{functors.a7.A} and \nref{functors.a7.B}.

By \nref{functors.10}, $\coef[dm = {\sharp}]{M}{f} \otimes \id_{\mc{V}} \dashv \coef[um = {*}]{M}{f} \otimes \id_{\mc{V}}$, which proves the first condition of \nref[Axiom]{functors.3.C}.
This construction of the left adjoint $\coef[dm = {\sharp}]{N}{}{f}$ of $\coef[um = {*}]{N}{f}$ for each smooth morphism $f$ of $\sch[ft]{S}$ implies that the exchange transformation $\coef[dm = {\sharp}]{N}{f_{\prns{Y'}}} \coef[um = {*}]{N}{g_{\prns{X}}} \to \coef[um = {*}]{N}{g} \coef[dm = {\sharp}]{N}{f}$ arising in the left adjointability condition of \nref[Axiom]{functors.3.C}, interpreted as a natural transformation between functors $\fun[u = {R}]{\coef{M}{X}\op}{\mc{V}} \to \fun[u = {R}]{\coef{M}{Y'}\op}{\mc{V}}$ is obtained by composing with the transpose (\nref{functors.a1.3}) of the analogous exchange transformation in $\coef[um = {*, \otimes}]{M}{}$, and is thus an equivalence.
The same argument establishes \nref[Axiom]{functors.3.D} for $\coef[um = {*, \otimes}]{N}{}$.

Limits in $\pr^{\mr{R}}$ correspond to colimits in $\pr^{\mr{L}}$ via the equivalence $\pr[u = {L}] \simeq \prns{\pr[u = {R}]}\op$ of \cite[5.5.3.4]{Lurie_higher-topos}, and so \nref[Axiom]{functors.3.E} is equivalent to the requirement that the square
\[
\begin{tikzcd}
\coef{N}{U}
\ar[r, "j_{\sharp}" below]
\ar[d]
&
\coef{N}{X}
\ar[d, "i^*" right]
\\
{*}
\ar[r]
&
\coef{N}{Z}
\end{tikzcd}
\]
be coCartesian for each closed immersion $i: Z \hookrightarrow X$ with complementary open immersion $j: U \hookrightarrow X$.
By \nref{functors.10}, this square is the image of the analogous square in $\coef[um = {*, \otimes}]{M}{}$ under the cocontinuous functor $\prns{-} \otimes \mc{V}$, so it is indeed coCartesian.

\nref[Axiom]{functors.3.G} follows from the observation that if $p: \affine^1_X \to X$ is the canonical projection, then the counit $\coef[dm = {\sharp}]{N}{p} \coef[um = {*}]{N}{p} \to \id_{\coef{N}{X}}$ is equivalent to $\prns{\coef[dm = {\sharp}]{M}{p} \coef[um = {*}]{M}{p}} \otimes \id_{\mc{V}} \to \id_{\coef{M}{X}} \otimes \id_{\mc{V}}$, which is an equivalence since $\coef[um = {*}]{M}{p}$ is fully faithful.

\nref[Axiom]{functors.3.H} holds for $\coef[um = {*, \otimes}]{N}{}$ because $\coef[dm = {\sharp}]{N}{f} \coef[dm = {*}]{N}{s} \simeq \prns{\coef[dm = {\sharp}]{M}{f} \coef[dm = {*}]{M}{s}} \otimes \id_{\mc{V}}$ is an equivalence whenever $\coef[dm = {\sharp}]{M}{f} \coef[dm = {*}]{M}{s}$ is and so, in particular, whenever $f: X \to Y$ is a smooth morphism with a section $s: Y \to X$.

Finally, $\phi^{*, \otimes}$ is a morphism of \coeffsysts\ because, for each smooth morphism $p: X \to Y$ of $\sch[ft]{S}$, $\coef[dm = {\sharp}]{N}{p} \phi^*_X \simeq \phi^*_Y\coef[dm = {\sharp}]{M}{p}$ by definition of $\coef[dm = {\sharp}]{N}{p}$.
\end{proof}

\begin{ex}
\nlabel{scalars.4}
By \nref{functors.a10} and \nref{scalars.3}, for each \locpres\ symmetric monoidal \qcategory\ $\mc{V}^{\otimes}$, the presheaf $\spt[um = {\tau}, dm = {\tatesphere}]{-, \mc{V}}^{*, \otimes} \coloneqq \spt[um = {\tau}, dm = {\tatesphere}]{-}^{*,\wedge} \otimes \mc{V}^{\otimes}$ of \nref{ex:hodge-module-6ff.1} is a $\mc{V}^{\otimes}$-linear \locpres\ \coeffsyst. 
\end{ex}

\begin{prop}
\nlabel{scalars.a6}
Let $\coef[um = {*, \otimes}]{M}{}$ be a \locpres\ \coeffsyst, $\mc{V}^{\otimes}$ a \locpres\ symmetric monoidal \qcategory, and $\tau$ a Grothendieck topology on $\sch[ft]{S}$.
Suppose that $\coef[um = {*, \otimes}]{M}{}$ is $\tau$-local.
\begin{enumerate}
\item
\nlabel{scalars.a6.1}
If $\tau$ is the Zariski, Nisnevich or \'etale topology, then $\coef[um = {*, \otimes}]{M}{} \otimes \mc{V}^{\otimes}$ is $\tau$-local.
\item
\nlabel{scalars.a6.2}
If $\coef[um = {*}]{M}{}: \prns{\sch[ft]{S}}\op \to \pr[u = {L}]$ factors through the inclusion $\pr[u = {L}, dm = {\aleph_0, \tu{st}}]$, then $\coef[um = {*, \otimes}]{M}{} \otimes \mc{V}^{\otimes}$ is $\tau$-local.
\end{enumerate}
\end{prop}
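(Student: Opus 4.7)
The strategy is to verify that, for each $\tau$-hypercover $U_{\bullet} \to X$ in $\sch[ft]{S}$, the canonical functor
\[
\coef{M}{X}^{\otimes} \otimes \mc{V}^{\otimes} \to \lim_{\brk{n} \in \Delta} \prns{\coef{M}{U_n}^{\otimes} \otimes \mc{V}^{\otimes}}
\]
is an equivalence in $\calg{\pr[um = {\tu{L}, \otimes}, d = {st}]}$.
Since $\coef[um = {*, \otimes}]{M}{} \otimes \mc{V}^{\otimes}$ is already a $\mc{V}^{\otimes}$-linear \locpres\ \coeffsyst\ by \nref{scalars.3}, and since the forgetful functor $\calg{\pr[um = {\tu{L}, \otimes}, d = {st}]} \to \pr[u = {L}, d = {st}]$ creates limits, the verification reduces to the underlying \qcategories.

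For \nref[Claim]{scalars.a6.2}, the $\locpres[\aleph_0]$ hypothesis on $\coef{M}{-}$ permits the use of the tensor-product formula $\mc{A} \otimes \mc{V} \simeq \fun[u = {lex}]{\prns{\mc{A}_{\aleph_0}}\op}{\mc{V}}$, valid for $\locpres[\aleph_0]$ $\mc{A}$ and \locpres\ $\mc{V}$, which is the symmetric counterpart of \nref{sheaves.3.2}.
Because the transition functors $\coef[um = {*}]{M}{f}$ preserve $\aleph_0$-presentable objects by hypothesis, the $\aleph_0$-presentable cores $\coef{M}{U_n}_{\aleph_0}$ assemble into a cosimplicial diagram of small left-exact stable \qcategories.
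Under the equivalence between $\pr[u = {L}, dm = {\aleph_0, \tu{st}}]$ and small stable left-exact \qcategories\ via $\ind{-}$, the limit $\coef{M}{X} \simeq \lim_{\brk{n}} \coef{M}{U_n}$ granted by $\tau$-locality corresponds to a colimit expression for $\coef{M}{X}_{\aleph_0}$; applying the contravariant functor $\fun[u = {lex}]{-}{\mc{V}}$, which converts such colimits into limits, produces the desired descent for $\coef{M}{-} \otimes \mc{V}$.

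For \nref[Claim]{scalars.a6.1}, the finitary character of $\tau$-descent for the named topologies is the essential ingredient.
For the Zariski and Nisnevich topologies, $\tau$-descent of a \coeffsyst\ is equivalent in the stable \locpres\ setting to excision for distinguished squares (\nref{excision.2}), a property that any \coeffsyst\ satisfies automatically; since $\coef[um = {*, \otimes}]{M}{} \otimes \mc{V}^{\otimes}$ is a \coeffsyst\ by \nref{scalars.3}, descent follows at once.
For the \'etale topology, one reduces \'etale descent over a Noetherian finite-dimensional base to Nisnevich descent together with descent for finite \'etale Galois covers; the latter is a finite limit in $\pr[u = {L}, d = {st}]$ that, by stability, coincides with a finite colimit and is therefore preserved by the cocontinuous functor $\prns{-} \otimes \mc{V}$, applied to the finite Galois descent diagram for $\coef{M}{-}$ supplied by the $\tau$-locality hypothesis.

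The main obstacle is the \'etale case of \nref[Claim]{scalars.a6.1}: justifying the reduction of arbitrary \'etale descent to Nisnevich descent plus finite Galois descent in full generality, without a $\rational$-linearity hypothesis or a bound on \'etale cohomological dimension, requires a careful analysis of \'etale covers of Noetherian schemes along the lines of \cite[\S 4.5]{Ayoub_six-operationsII}; once this reduction is in hand, the colimit-preservation argument completes the proof.
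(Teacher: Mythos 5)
Your proposal takes a genuinely different route from the paper's, and there are concrete gaps in both parts. The paper's proof of \nref[Claim]{scalars.a6.1} is uniform across the Zariski, Nisnevich, and \'etale topologies and rests on an observation you miss: a hypercover for any of these topologies consists of \'etale, hence smooth, morphisms, so restricting the presheaf to the small \'etale site $X_{\tu{\'et}}$ makes every transition functor $f^*$ also a right adjoint (it admits a left adjoint $f_{\sharp}$ by \nref[Axiom]{functors.3.C}); passing through $\pr[u = {L}] \simeq \prns{\pr[u = {R}]}\op$ and \nref{functors.10}, the limit diagram of pullbacks dualizes to a colimit diagram of the $f_{\sharp}$'s, and the cocontinuity of $\prns{-}\otimes\mc{V}$ then carries the day. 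Your Zariski/Nisnevich argument, by contrast, asserts that excision for distinguished squares (automatic by \nref{scalars.3} and \nref{excision.2}) implies $\tau$-hyperdescent of the $\pr[u = {L}, d = {st}]$-valued presheaf; this is both unproven (the usual passage from excision to hyperdescent applies to presheaves valued in a \emph{presentable} $\infty$-category, not the very large $\pr[u = {L}, d = {st}]$) and suspicious (if it held unconditionally, the hypothesis that $\coef[um = {*, \otimes}]{M}{}$ itself is $\tau$-local would be superfluous for those topologies). Your \'etale case invokes ``finite limits coincide with finite colimits by stability,'' but $\pr[u = {L}, d = {st}]$ is not a stable $\infty$-category (it is only semiadditive), so that identification fails, and you concede the reduction to finite Galois descent is itself incomplete.

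For \nref[Claim]{scalars.a6.2}, the lex-functor strategy stalls at its central step: the claim that the limit $\coef{M}{X} \simeq \lim_n \coef{M}{U_n}$ ``corresponds to a colimit expression for $\coef{M}{X}_{\aleph_0}$'' does not hold---passing to $\aleph_0$-presentable cores is an equivalence between $\pr[u = {L}, dm = {\aleph_0, \tu{st}}]$ and small idempotent-complete stable $\infty$-categories and therefore sends limits to limits, not to colimits. The paper's argument instead observes that the compactness hypothesis forces each $f_*$ to be cocontinuous (it preserves $\aleph_0$-filtered colimits because $f^*$ preserves compact objects, and it preserves finite colimits by exactness), so $\coef[dm = {*}]{M}{}$ factors through $\pr[u = {L}, d = {st}]$; the $p$-locality of $\coef[um = {*}]{M}{}$ then transfers to $\coef[dm = {*}]{M}{}$ via $\pr[u = {L}] \simeq \prns{\pr[u = {R}]}\op$, is preserved by $\prns{-}\otimes\mc{V}$, and transfers back by \nref{functors.10}.
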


\begin{proof}
Consider \nref[Claim]{scalars.a6.1}.
We claim that $\coef[um = {*}]{M}{} \otimes \mc{V}$ is $p$-local for each $\tau$-hypercover $p: U_{\bullet} \to X$.
For that, it suffices to show that the restriction of $\coef[um = {*}]{M}{} \otimes \mc{V}$ to the small \'etale site $X_{\tu{\'et}}$ is $p'$-local, where $p'$ is the restriction of $p$ to $X_{\tu{\'et}}$.
Each morphism in $X_{\tu{\'et}}$ is smooth, so the composite $\coef[um = {*}]{M}{} \otimes \mc{V}: X_{\tu{\'et}}\op \to \pr[u = {L}, d = {st}] \to \QCATex$ factors through the inclusion $\pr[u = {R}, d = {st}] \hookrightarrow \QCATex$ by \nref[Axiom]{functors.3.C}.
Using the equivalence $\pr[u = {L}] \simeq \prns{\pr[u = {R}]}\op$ of \cite[5.5.3.4]{Lurie_higher-topos} and \nref{functors.10}, it is equivalent to show that the associated functor $\coef[dm = {\sharp}]{M}{} \otimes \mc{V}: X_{\tu{\'et}} \to \pr[u = {L}, d = {st}]$ is $p'$-local, i.e., that its left Kan extension along the Yoneda embedding $X_{\tu{\'et}} \hookrightarrow \psh{X_{\tu{\'et}}}{\spc{}}$ sends the augmented simplicial diagram $p'_+$ to a colimit diagram.
Since $\coef[um = {*}]{M}{}$ is $p$-local, and its restriction to $X_{\tu{\'et}}$ is therefore $p'$-local, the preceding remarks imply that the restriction of $\coef[dm = {\sharp}]{M}{}$ to $X_{\tu{\'et}}$ is $p'$-local.
It therefore remains to note that $\prns{-} \otimes \mc{V}: \pr[u = {L}, d = {st}] \to \pr[u = {L}, d = {st}]$ is cocontinuous.

Consider \nref[Claim]{scalars.a6.2}.
The hypothesis that $\coef[um = {*}]{M}{}$ factors through $\pr[u = {L}, dm = {\aleph_0, \tu{st}}]$ implies that $\coef[dm = {*}]{M}{}$ factors through $\pr[u = {L}, d = {st}]$.
Indeed, a left-adjoint functor between stable \locpres[\aleph_0]\ \qcategories\ preserves $\aleph_0$-presentable objects if and only if its right adjoint preserves small $\aleph_0$-filtered colimits.
That right adjoint also preserves finite colimits by the stability hypothesis, so it is cocontinuous.
Since $\coef[um = {*}]{M}{}$ is $p$-local and $\pr[u = {L}] \simeq \prns{\pr[u = {R}]}\op$, it follows that $\coef[dm = {*}]{M}{}$ is $p$-local.
Thus, $\coef[dm = {*}]{M}{} \otimes \mc{V}$ is $p$-local.
Using the equivalence $\pr[u = {L}] \simeq \prns{\pr[u = {R}]}\op$ and \nref{functors.10}, we deduce that $\coef[um = {*}]{M}{} \otimes \mc{V}$ is $p$-local.
\end{proof}

\begin{prop}
\nlabel{scalars.5}
Assume the following:
\begin{enumerate}
\item
\nlabel{scalars.5.1}
$v^{\otimes}: \mc{V}^{\otimes} \to \mc{W}^{\otimes}$ is a cocontinuous symmetric monoidal functor of \locpres\ symmetric monoidal \qcategories;
\item
\nlabel{scalars.5.2}
$\coef[um = {*, \otimes}]{M}{}: \prns{\sch[ft]{S}}\op \to \calg{\pr[um = {\tu{L}, \otimes}, d = {st}]}$ is a \locpres\ \coeffsyst;
\item
\nlabel{scalars.5.3}
$\phi^{*, \otimes}: \coef[um = {*, \otimes}]{M}{} \to \coef[um = {*, \otimes}]{N}{} \coloneqq \coef[um = {*, \otimes}]{M}{} \otimes_{\mc{V}^{\otimes}} \mc{W}^{\otimes}$ is the morphism of \tu{\nref{scalars.a2.1}};
\item
\nlabel{scalars.5.4}
$\phi_*: \coef[dm = {*}]{N}{} \to \coef[dm = {*}]{M}{}$ is the natural transformation corresponding to $\phi^*$ under the equivalence
\[
\psh{\sch[ft]{S}}{\pr[u = {L}, d = {st}]}
  \simeq \psh{\sch[ft]{S}}{\prns{\pr[u = {R}, d = {st}]}\op}
  \simeq \fun{\sch[ft]{S}}{\pr[u = {R}, d = {st}]}\op;
\]
\item 
\nlabel{scalars.5.5}
the commutative square
\begin{equation}
\nlabel{scalars.5.a}
\begin{tikzcd}
\coef{M}{Y}
\ar[r, "\phi^*_Y" below]
\ar[d, "f^*" left]
&
\coef{M}{Y} \otimes_{\mc{V}} \mc{W}
\ar[d, "f^*" right]
\\
\coef{M}{X}
\ar[r, "\phi^*_X" above]
&
\coef{M}{X} \otimes_{\mc{V}} \mc{W}
\end{tikzcd}
\end{equation}
is right adjointable whenever the morphism $f:X \to Y$ of $\sch[ft]{S}$ is smooth or a closed immersion;
\item
\nlabel{scalars.5.6}
for each $X \in \sch[ft]{S}$, $\coef{N}{X}$ is generated under small colimits by objects of the form $\phi^*_XM$ with $M \in \coef{M}{X}$.
\end{enumerate}
Then $\coef[um = {*, \otimes}]{N}{}$ is a $\mc{W}^{\otimes}$-linear \locpres\ \coeffsyst, and $\phi^{*, \otimes}$ is a morphism of such.
\end{prop}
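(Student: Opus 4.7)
The plan is to verify each of the axioms of \nref{functors.3} for $\coef[um = {*, \otimes}]{N}{}$ and to deduce from the verification that $\phi^{*,\otimes}$ is a morphism of \coeffsysts. Two observations drive the argument: hypothesis \nref{scalars.5.5} supplies the Beck-Chevalley compatibilities needed to transfer adjointability statements from $\coef[um = {*, \otimes}]{M}{}$ to $\coef[um = {*, \otimes}]{N}{}$ along $\phi^{*,\otimes}$ for the two classes of morphisms (smooth maps and closed immersions) that govern the six-functor formalism; and hypothesis \nref{scalars.5.6} allows us to promote a natural transformation of cocontinuous functors from $\coef{N}{X}$ to an equivalence as soon as it is an equivalence on the essentially surjective family $\brc{\phi^*_X M}_{M \in \coef{M}{X}}$. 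The overall structure of the proof is parallel to that of \nref{scalars.3}.

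First I would dispose of the formal axioms. Each $\coef{N}{X}^{\otimes}$ inherits stability and local presentability from $\coef{M}{X}^{\otimes}$ and $\mc{W}^{\otimes}$ by \nref{scalars.a3}, so Axioms \nref{functors.3.A} and \nref{functors.3.B} follow from the Adjoint Functor Theorem as in \nref{functors.a7.A} and \nref{functors.a7.B}. For the localization axiom \nref{functors.3.E}, I would follow the strategy from the proof of \nref{scalars.3}: translate the Cartesian-in-$\QCATex$ condition into a coCartesian condition in $\pr[um = {\tu{L}, \otimes}, d = {st}]$ via the equivalence $\pr[u = {L}] \simeq \prns{\pr[u = {R}]}\op$, and invoke the cocontinuity of the scalar-extension functor $\prns{-} \otimes_{\mc{V}^{\otimes}} \mc{W}^{\otimes}$, which is the left adjoint of the adjunction \nref{scalars.1.a}.

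The heart of the proof is the smooth base change axiom \nref{functors.3.C}. Given a smooth $p: X \to Y$, I would construct $p_{\sharp,N}$ by applying \nref{functors.10} to the adjunction $p_{\sharp,M} \dashv p^*_M$, noting that the cocontinuity hypothesis on the putative right adjoint $p^*_M$ holds automatically since $p^*_M$ is a morphism in $\pr[u = {L}]$. Hypothesis \nref{scalars.5.5}, which by \nref{functors.a1.3} is equivalent to left adjointability of the transpose square once all adjoints exist, then yields the natural equivalence $p_{\sharp,N}\phi^*_X \simeq \phi^*_Y p_{\sharp,M}$ that witnesses $\phi^{*,\otimes}$ as a morphism of \coeffsysts; the Beck-Chevalley equivalence for a Cartesian square with $p$ smooth arises by scalar extension from $\coef[um = {*, \otimes}]{M}{}$. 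The remaining axioms \nref{functors.3.D}, \nref{functors.3.G}, and \nref{functors.3.H} all reduce to their analogues in $\coef[um = {*, \otimes}]{M}{}$ via \nref{scalars.5.6}: in each case the relevant exchange transformation becomes an equivalence after precomposition with $\phi^*_X$ by virtue of the corresponding axiom for $\coef[um = {*, \otimes}]{M}{}$ combined with symmetric monoidality of $\phi^{*,\otimes}$ and the compatibility of $\phi^{*,\otimes}$ with smooth and closed pushforwards supplied by \nref{scalars.5.5}, and hypothesis \nref{scalars.5.6} then upgrades each such equivalence to all of $\coef{N}{X}$. The main obstacle is Axiom \nref{functors.3.C}, where the new left adjoints must be produced and their compatibility with $\phi^{*,\otimes}$ established; the other axioms are then formal consequences of cocontinuity.
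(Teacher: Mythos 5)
There is a genuine gap, and it stems from conflating two different scalar-extension constructions.

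\nref{functors.10} is a statement about the \emph{absolute} tensor product $\otimes$ on $\pr[u = {L}]$: if $f^* \dashv f_*$ with $f_*$ cocontinuous, then $f^*\otimes\id_{\mc{E}} \dashv f_*\otimes\id_{\mc{E}}$. The proof of \nref{scalars.3} can invoke it directly because there one extends scalars along $\spc{}^{\times}\to\mc{V}^{\otimes}$, which \emph{is} the absolute tensor product with $\mc{V}$. But in \nref{scalars.5} the scalar extension is the \emph{relative} tensor product $\prns{-}\otimes_{\mc{V}^{\otimes}}\mc{W}^{\otimes}$, for which \nref{functors.10} says nothing. You write ``construct $p_{\sharp,N}$ by applying \nref{functors.10} to the adjunction $p_{\sharp,M}\dashv p^*_M$,'' but the conclusion of that lemma is an adjunction between $\coef{M}{Y}\otimes\mc{E}$ and $\coef{M}{X}\otimes\mc{E}$, not between $\coef{M}{Y}\otimes_{\mc{V}}\mc{W}$ and $\coef{M}{X}\otimes_{\mc{V}}\mc{W}$. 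To get a relative analogue one would need to know that $p_{\sharp,M}$ is $\mc{V}^{\otimes}$-linear (which requires the projection formula) and then cite something like \cite[4.8.4.6]{Lurie_higher-algebra} — which is exactly what the paper does in \nref{scalars.6.3}, but only for the special free-module situation, and it is certainly not part of the stated hypotheses of \nref{scalars.5}. The same issue infects your treatment of the localization axiom: you cannot simply ``invoke cocontinuity of $\prns{-}\otimes_{\mc{V}^{\otimes}}\mc{W}^{\otimes}$'' to transport the coCartesian square, because you have not identified $j_{\sharp,N}$ with any scalar extension of $j_{\sharp,M}$.

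The paper's route is structurally different and makes the hypotheses earn their keep in a more direct way. The first step is to extract from hypothesis \nref{scalars.5.6} that $\phi_{X*}$ is conservative for each $X$ — this is the workhorse of the whole proof, and your proposal never isolates it. Armed with conservativity, the existence of $p_{\sharp,N}$ for $p$ smooth is proved by showing $p^*_N$ is \emph{continuous} (hence has a left adjoint by the Adjoint Functor Theorem): one checks the canonical map $p^*_N\lim N_{\alpha}\to\lim p^*_N N_{\alpha}$ becomes an equivalence after $\phi_{X*}$, using the right-adjointability hypothesis \nref{scalars.5.5} to commute $\phi_*$ past $p^*$. Every remaining axiom is handled the same way: verify the required exchange morphism is an equivalence after $\phi_*$, then invoke conservativity. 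In particular, for the localization axiom the paper does \emph{not} pass through the coCartesian square at all; it uses the equivalent reformulation of \nref{functors.a7.C} (joint conservativity of $(i^*,j^*)$ and full faithfulness of $i_*$), both of which are verified via $\phi_*$ and the closed-immersion case of hypothesis \nref{scalars.5.5}. Your remark that axioms \nref{functors.3.D}, \nref{functors.3.G}, \nref{functors.3.H} ``reduce to their analogues via \nref{scalars.5.6}'' is directionally right but elides the real work: for \nref{functors.3.D}, for example, one must use that $\eta_{\phi^*_Y M}$ commutes with $\phi_{Y*}$ precisely because $\phi^*_{Y}$ is symmetric monoidal, then reduce to $\phi^*_Y M$ by \nref{scalars.5.6} \emph{and} cocontinuity of $\coef[um={*}]{N}{f}$, and then apply conservativity of $\phi_{X*}$ — none of which follows merely by ``precomposition with $\phi^*_X$.''
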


\begin{proof}
For future reference, note that \nref[Hypothesis]{scalars.5.6} has the following consequence:
\begin{enumerate}
\setcounter{enumi}{6}
\item
\nlabel{scalars.5.7}
\emph{for each $X \in \sch[ft]{S}$, the functor $\phi_{X*}$ is conservative.}
\end{enumerate}
Indeed, an exact functor between stable \qcategories\ is conservative if and only if it reflects zero objects.
Suppose $\phi_{X*}N \simeq 0$ for some $N \in \coef{N}{X}$.
We claim that $N \simeq 0$.
By \nref[Hypothesis]{scalars.5.6}, we may write each object of $\coef{N}{X}$ as a colimit of the form $\colim_{\alpha \in A}\phi^*_XM_{\alpha}$ for some small diagram $\alpha \mapsto M_{\alpha}: A \to \coef{M}{X}$.
Then 
\[
\map[dm = {\coef{N}{X}}]{\colim_{\alpha \in A}\phi^*_X M_{\alpha}}{N} 
\simeq 
\lim_{\alpha \in A} \map[dm = {\coef{N}{X}}]{\phi^*_X M_{\alpha}}{N} 
\simeq
\lim_{\alpha \in A} \map[dm = {\coef{M}{X}}]{M_{\alpha}}{\phi_{X*}N}
\simeq 
\pt
\]
and so, by the \qcategorical\ Yoneda lemma (\cite[5.1.3.1]{Lurie_higher-topos}), $N \simeq 0$.

By \nref{scalars.1} and \nref[Remark]{scalars.a3.3}, $\coef[um = {*, \otimes}]{N}{}$ is a diagram of stable \locpres\ symmetric monoidal \qcategories.
\nref[Remark]{functors.a7.A} therefore implies \nref[Axioms]{functors.3.A} and \nref{functors.3.B}. 

Consider \nref[Axiom]{functors.3.C}.
First, we claim that $\coef[um = {*}]{N}{f}$ admits a left adjoint for each smooth morphism $f: X \to Y$ of $\sch[ft]{S}$.
Since $\coef[um = {*}]{N}{f}$ is a morphism of $\pr^{\mr{L}}$, it is accessible, so by the Adjoint Functor Theorem (\cite[5.5.2.9]{Lurie_higher-topos}), it suffices to check that $\coef[um = {*}]{N}{f}$ is continuous.
Let $\alpha \mapsto N_{\alpha}: A \to \coef{N}{Y}$ be a small diagram.
We have a commutative diagram
\[
\begin{tikzcd}
\phi_{X*} \coef[um = {*}]{N}{f} \lim_{\alpha \in A} N_{\alpha}
\ar[r, "a" below]
&
\phi_{X*} \lim_{\alpha \in A} \coef[um = {*}]{N}{f} N_{\alpha}
\ar[r, "b" below]
&
\lim_{\alpha \in A} \phi_{X*} \coef[um = {*}]{N}{f} N_{\alpha}
\\
\coef[um={*}]{M}{f} \phi_{Y*} \lim_{\alpha \in A} N_{\alpha}
\ar[u, "c" left]
\ar[r, "a'" above]
&
\coef[um={*}]{M}{f} \lim_{\alpha \in A} \phi_{Y*} N_{\alpha}
\ar[r, "b'" above]
&
\lim_{\alpha \in A} \coef[um={*}]{M}{f} \phi_{Y*} N_{\alpha}
\ar[u, "c'" right]
\end{tikzcd}
\]
in which $a$ is the image of the canonical morphism under $\phi_{X*}$, $c$ and $c'$ are the equivalences by \nref[Hypothesis]{scalars.5.5}, and $a'$ and $b'$ are equivalences, as $\coef[um={*}]{M}{f}$ and $\phi_{X*}$ are both right adjoints and thus continuous.
It now follows from \nref[Hypothesis]{scalars.5.7} that $\coef[um = {*}]{N}{f}$ is continuous, and $\coef[um = {*}]{N}{f}$ is thus a right adjoint.

Let $f: X \to Y$ and $g: Y' \to Y$ be morphisms in $\sch[ft]{S}$ with $f$ smooth.
As noted in \nref{functors.a1}, the exchange transformation $\coef[dm = {\sharp}]{N}{f_{\prns{Y'}}} \coef[um = {*}]{N}{g_{\prns{X}}} \to \coef[um = {*}]{N}{g} \coef[dm = {\sharp}]{N}{f}$ is an equivalence if and only if the transpose (\nref[Definition]{functors.a1.3}) $\alpha: \coef[um = {*}]{N}{f} \coef[dm = {*}]{N}{g} \to \coef[dm = {*}]{N}{g_{\prns{X}}'} \coef[um = {*}]{N}{f_{\prns{Y'}}}$ is.
It suffices to check this after passing to homotopy categories via \nref{functors.a8}.
Applying \cite[1.1.12]{Ayoub_six-operationsI}, we find that the diagram of homotopy categories underlying
\[
\begin{tikzcd}
\coef[um={*}]{M}{f} \phi_{Y*} \coef[dm = {*}]{N}{g}
\ar[r, "a" below]
\ar[d, "c" left]
&
\phi_{X*} \coef[um = {*}]{N}{f} \coef[dm = {*}]{N}{g}
\ar[r, "\phi_{X*}\alpha" below]
&
\phi_{X*} \coef[dm = {*}]{N}{g_{\prns{X}}} \coef[um = {*}]{N}{f_{\prns{Y'}}}
\ar[d, "c'" right]
\\
\coef[um={*}]{M}{f} \coef[dm={*}]{M}{g} \phi_{Y'*}
\ar[r, "a'" above]
&
\coef[dm={*}]{M}{g_{\prns{X}}} \coef[um={*}]{M}{f_{\prns{Y'}}} \phi_{Y*}
\ar[r, "b'" above]
&
\coef[dm={*}]{M}{g_{\prns{X}}} \phi_{X\times_SY*} \coef[um = {*}]{N}{f_{\prns{Y'}}}
\end{tikzcd}
\]
is essentially commutative.
By \nref[Hypothesis]{scalars.5.5}, $a$ and $b'$ are an equivalences;
$c$ and $c'$ are equivalences because $\phi_*$ is a natural transformation;
and $a'$ is an equivalence by \nref[Hypothesis]{scalars.5.2}.
Thus, $\phi_{X*}\alpha$ is an equivalence.
By \nref[Hypothesis]{scalars.5.7}, $\alpha$ is as well, and \nref[Axiom]{functors.3.C} holds in $\coef[um = {*, \otimes}]{N}{}$.

The proof of \nref[Axiom]{functors.3.D} is in the same spirit.
Let $f: X \to Y$ be a smooth morphism of $\sch[ft]{S}$.
We claim that, for each $\prns{M,N} \in \coef{N}{Y}^2$, the transposed exchange transformation 
\[
\coef[um = {*}]{N}{f} \intmor[dm = {\coef{N}{Y}}]{M}{N} 
  \to \intmor[dm = {\coef{N}{X}}]{\coef[um = {*}]{N}{f}M}{\coef[um = {*}]{N}{f}N}
\]
is an equivalence. 
For each $Y' \in \sch[ft]{S}$ and each $M \in \coef{M}{Y'}$, let $\eta_M$ denote the endofunctor $\intmor[dm = {\coef{M}{Y'}}]{M}{-}$ of $\coef{M}{Y'}$ and, similarly, $\eta_N \coloneqq \intmor[dm = {\coef{N}{Y'}}]{N}{-}$ for $N \in \coef{N}{Y'}$.
With this notation, we claim that the natural transformation $\beta_M: \coef[um = {*}]{N}{f}\eta_M \to \eta_{\coef[um = {*}]{N}{f}M}\coef[um = {*}]{N}{f}$ is an equivalence for each $M \in \coef{N}{Y}$.
Since $\coef[um = {*}]{N}{f}$ is bicontinuous and $\eta_{\colim_{\alpha \in A}M_{\alpha}} \simeq \lim_{\alpha \in A} \eta_{M_{\alpha}}$ for each small diagram $\alpha \mapsto M_{\alpha}: A \to \coef{N}{Y}$, it suffices by \nref[Hypothesis]{scalars.5.6} to show that $\beta_{\phi^*_YM}$ is an equivalence for each $M \in \coef{M}{Y}$.
Since $\phi^*_{Y'}$ is a symmetric monoidal functor for each $Y' \in \sch[ft]{S}$, we have a canonical equivalence
\[
\intmor[dm = {\coef{M}{Y'}}]{-}{\fct{\phi_{Y'*}}{-}} 
  \isom \phi_{Y'*}\intmor[dm = {\coef{N}{Y'}}]{\fct{\phi^*_{Y'}}{-}}{-},
\]
and thus $\eta_M \phi_{Y'*} \isom \phi_{Y'*}\eta_{\phi^*_{Y'}M}$ for each $M \in \coef{M}{Y'}$.
For each $M \in \coef{M}{Y}$, we therefore have an essentially commutative diagram
\[
\begin{tikzcd}
\coef[um={*}]{M}{f} \eta_{M} \phi_{Y*}
\ar[r]
\ar[d]
&
\coef[um={*}]{M}{f} \phi_{Y*} \eta_{\phi^*M}
\ar[r]
&
\phi_{X*} \coef[um = {*}]{N}{f} \eta_{\phi^*_YM}
\ar[d, "a" right]
\\
\eta_{\coef[um={*}]{M}{f}M} \coef[um={*}]{M}{f} \phi_{Y*}
\ar[r]
&
\eta_{\coef[um={*}]{M}{f}M} \phi_{X*} \coef[um = {*}]{N}{f}
\ar[r]
&
\phi_{X*} \eta_{\phi^*_X\coef[um={*}]{M}{f}M} \coef[um = {*}]{N}{f}
\end{tikzcd}
\]
in which $a$ is equivalent to $\phi_{X*}\beta_{\phi^*_YM}$ when we identify $\eta_{\phi^*_X\coef[um={*}]{M}{f}M}$ with $\eta_{\coef[um={*}]{M}{f}\phi^*_YM}$.
The preceding remarks imply under the given hypotheses that each of the other arrows in this diagram is an equivalence.
By \nref[Hypothesis]{scalars.5.7}, $\beta_{\phi^*_Y M}$ is an equivalence, which establishes \nref[Axiom]{functors.3.D}.

Consider \nref[Axiom]{functors.3.E}.
By \cite[2.3.3]{Cisinski-Deglise_triangulated-categories} and \cite[Proposition 9.4.20]{Robalo_thesis}, it will suffice to check that, for each closed immersion $i: Z \hookrightarrow X$ and complementary open immersion $j: U \hookrightarrow X$ in $\sch[ft]{S}$, the functors $\coef[um = {*}]{N}{i}$ and $\coef[um = {*}]{N}{j}$ are jointly conservative, and that $\coef[dm = {*}]{N}{i}$ is fully faithful.
Let $M \in \coef{N}{X}$ and suppose that $\coef[um = {*}]{N}{i}M = 0$ and that $\coef[um = {*}]{N}{j}M = 0$.
Then 
\[
\phi_{Z*} \coef[um = {*}]{N}{i} M 
  \simeq \coef[um={*}]{M}{i} \phi_{X*} M 
  = 0
\quad\text{and}\quad
\phi_{U*} \coef[um = {*}]{N}{j} M 
  \simeq \coef[um={*}]{M}{j} \phi_{X*} M 
  = 0,
\]
which implies that $\phi_{X*} M = 0$ and therefore $M = 0$.
This is the desired joint conservativity.
We now claim that the counit $\varepsilon: \coef[um = {*}]{N}{i} \coef[dm = {*}]{N}{i} \to \id_{\coef{N}{Z}}$ is an equivalence.
It suffices to check this after passing to homotopy categories via \nref{functors.a8}.
By \cite[1.1.9]{Ayoub_six-operationsI}, we have an essentially commutative square of homotopy categories underlying
\[
\begin{tikzcd}
\coef[um={*}]{M}{i} \phi_{X*} \coef[dm = {*}]{N}{i}
\ar[r, "a" above]
\ar[d, "b" left]
&
\coef[um={*}]{M}{i} \coef[dm={*}]{M}{i} \phi_{Z*}
\ar[d, "b'" right]
\\
\phi_{Z*} \coef[um = {*}]{N}{i} \coef[dm = {*}]{N}{i}
\ar[r, "\phi_{Z*}\varepsilon" above]
&
\phi_{Z*}
\end{tikzcd}
\]
in which $b$ and $b'$ are equivalences by \nref[Hypotheses]{scalars.5.2} and \nref{scalars.5.5}, and in which $a$ is an equivalence because $\phi_*$ is a natural transformation.
By \nref[Hypothesis]{scalars.5.7}, $\varepsilon$ is an equivalence.

An argument analogous but dual to that furnished for the full faithfulness of $\coef[dm = {*}]{N}{i}$ above shows that $\coef[um = {*}]{N}{p}$ is fully faithful for $p: \affine^1_X \to X$ the canonical projection, so $\coef[um = {*, \otimes}]{N}{}$ satisfies \nref[Axiom]{functors.3.G}.

While we need to establish \nref[Axiom]{functors.3.H} before we can say that its codomain is a \coeffsyst, we can at this point remark that $\phi^{*, \otimes}$ satisfies the adjointability condition of \nref{functors.a6}: this follows from \nref[Claim]{scalars.5.5} and the fact, mentioned in \nref{functors.a1}, that the square of \nref{scalars.5.a} is right adjointable if and only if its transpose is left adjointable.
We will use this remark to establish \nref[Axiom]{functors.3.H}, which will complete the proof.

As explained in \nref[Remark]{functors.a9.B}, in our situation, \nref[Axiom]{functors.3.H} is equivalent to the $\otimes$-invertibility of $\Thom{f,s}\1{\coef{N}{Y}}$ for each smooth morphism $f: X \to Y$ with a section $s: Y \to X$ in $\sch[ft]{S}$.
Since $\phi^{*,\otimes}_Y$ is symmetric monoidal and therefore preserves $\otimes$-invertibility, it will suffice to check that $\phi^*_Y \Thom{f,s}\1{\coef{M}{Y}} \simeq \Thom{f,s} \1{\coef{N}{Y}}$.
We have
\begin{align*}
\Thom{f,s}\1{\coef{N}{Y}}
&\coloneqq
\coef[dm = {\sharp}]{N}{} \coef[dm = {*}]{N}{s} \1{\coef{N}{Y}} \\
&\simeq
\coef[dm = {\sharp}]{N}{} \coef[dm = {*}]{N}{s} \phi^*_Y \1{\coef{M}{Y}} \\
&\xleftarrow{\alpha}
\coef[dm = {\sharp}]{N}{}{f} \phi^*_X \coef[dm={*}]{M}{s} \1{\coef{M}{Y}} \\
&\xrightarrow{\beta}
\phi^*_Y \coef[dm={\sharp}]{M}{f} \coef[dm={*}]{M}{s} \1{\coef{M}{Y}}
\eqqcolon
\phi^*_Y \Thom{f,s}\1{\coef{M}{Y}},
\end{align*}
where $\alpha$ is an equivalence by \cite[2.3.11]{Cisinski-Deglise_triangulated-categories} and $\beta$ is the equivalence in the adjointability condition of \nref{functors.a6}.
\end{proof}

\begin{lemma}
\nlabel{scalars.6}
Let $\mc{V}^{\otimes}$ be a \locpres\ symmetric monoidal \qcategory, let $A \in \calg{\mc{V}^{\otimes}}$, and let $\phi^{*,\otimes}: \mc{V}^{\otimes} \to \mod[dm = {A}]{\mc{V}^{\otimes}}$ be the symmetric monoidal free $A$-module functor.
\begin{enumerate}
\item
\nlabel{scalars.6.1}
The essential image of $\phi^*$ generates $\mod[dm = {A}]{\mc{V}}$ under small colimits.
\item
\nlabel{scalars.6.2}
The right adjoint $\phi_*$ of $\phi^*$ underlies a morphism of $\mod[dm = {\mc{V}^{\otimes}}]{\pr[u = {L}]}$.
\item
\nlabel{scalars.6.3}
If $\mc{M}^{\otimes} \in \mod[dm = {\mc{V}^{\otimes}}]{\pr[u = {L}]}$, then $\id_{\mc{M}} \otimes_{\mc{V}} \phi^*: \mc{M} \otimes_{\mc{V}} \mc{V} \to \mc{M} \otimes_{\mc{V}} \mod[dm = {A}]{\mc{V}}$ is left adjoint to $\id_{\mc{M}} \otimes_{\mc{V}} \phi_*$.
\item
\nlabel{scalars.6.4}
If $f^{*, \otimes}: \mc{M}^{\otimes} \to \mc{N}^{\otimes}$ is a morphism of $\mod[dm = {\mc{V}^{\otimes}}]{\pr[u = {L}]}$, then the commutative square
\[
\begin{tikzcd}
\mc{M}
\ar[rr, "\id_{\mc{M}} \otimes_{\mc{V}} \phi^*" below]
\ar[d, "f^*" left]
&
&
\mc{M} \otimes_{\mc{V}} \mod[dm = {A}]{\mc{V}}
\ar[d, "f^* \otimes_{\mc{V}} \id_{\mod[dm = {A}]{\mc{V}}}" right]
\\\
\mc{N}
\ar[rr, "\id_{\mc{N}} \otimes_{\mc{V}} \phi^*" above]
&
&
\mc{N} \otimes_{\mc{V}} \mod[dm = {A}]{\mc{V}}
\end{tikzcd}
\]
is right adjointable.
\end{enumerate}
\end{lemma}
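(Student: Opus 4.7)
I will establish the four claims in the order $(2) \Rightarrow (1) \Rightarrow (3) \Rightarrow (4)$, since $(1)$ is most conveniently obtained after $(2)$.

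I begin with $(2)$. The functor $\phi_* : \mod[dm = {A}]{\mc{V}} \to \mc{V}$ is the forgetful functor, and by \cite[4.2.3.5]{Lurie_higher-algebra} colimits in $\mod[dm = {A}]{\mc{V}}$ are created by $\phi_*$, so $\phi_*$ is cocontinuous. The $\mc{V}^{\otimes}$-module structure on $\mod[dm = {A}]{\mc{V}^{\otimes}}$ is obtained by restricting its canonical self-action along the symmetric monoidal functor $\phi^{*,\otimes}$, so $\phi_*$ is tautologically $\mc{V}^{\otimes}$-linear. Thus $\phi_*$ underlies a morphism of $\mod[dm = {\mc{V}^{\otimes}}]{\pr[u = {L}]}$.

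For $(1)$, I appeal to Barr--Beck--Lurie (\cite[4.7.3.5]{Lurie_higher-algebra}): the adjunction $\phi^* \dashv \phi_*$ is monadic, since $\phi_*$ is conservative and preserves all colimits (in particular those of $\phi_*$-split simplicial diagrams). Each $M \in \mod[dm = {A}]{\mc{V}}$ is therefore the geometric realization of the canonical bar-construction simplicial object $B_\bullet(\phi^*, \phi^*\phi_*, M)$ whose terms all lie in the essential image of $\phi^*$, which proves the generation statement.

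For $(3)$, by $(2)$ together with the Adjoint Functor Theorem, $\phi^* \dashv \phi_*$ is an adjunction in the $(\infty,2)$-category $\mod[dm = {\mc{V}^{\otimes}}]{\pr[u = {L}]}$. Since $\mc{M} \otimes_{\mc{V}} - : \mod[dm = {\mc{V}^{\otimes}}]{\pr[u = {L}]} \to \mod[dm = {\mc{V}^{\otimes}}]{\pr[u = {L}]}$ is a $2$-functor and $2$-functors preserve adjunctions, we obtain $\id_{\mc{M}} \otimes_{\mc{V}} \phi^* \dashv \id_{\mc{M}} \otimes_{\mc{V}} \phi_*$. For $(4)$, using the bifunctoriality of $- \otimes_{\mc{V}} -$ on $\mod[dm = {\mc{V}^{\otimes}}]{\pr[u = {L}]}$ and the canonical identification $\mc{M} \otimes_{\mc{V}} \mc{V} \simeq \mc{M}$, the two composites
\[
f^* \circ (\id_{\mc{M}} \otimes_{\mc{V}} \phi_*)
\quad\text{and}\quad
(\id_{\mc{N}} \otimes_{\mc{V}} \phi_*) \circ (f^* \otimes_{\mc{V}} \id_{\mod[dm = {A}]{\mc{V}}})
\]
are each canonically equivalent to $f^* \otimes_{\mc{V}} \phi_*$. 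Since these are the right adjoints (by $(3)$) of the two compositions $(f^* \otimes \id) \circ (\id_{\mc{M}} \otimes \phi^*) \simeq (\id_{\mc{N}} \otimes \phi^*) \circ f^*$ in the square, the exchange morphism is the resulting canonical equivalence.

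The main obstacle is step $(1)$: while the final claim is intuitively the statement that every $A$-module admits a bar resolution by free $A$-modules, justifying it rigorously at the $\infty$-categorical level requires invoking Barr--Beck--Lurie and carefully identifying the bar object as a simplicial diagram in the essential image of $\phi^*$. The remaining claims $(2)$, $(3)$, $(4)$ are then essentially formal consequences of general $2$-categorical principles applied to $\mod[dm = {\mc{V}^{\otimes}}]{\pr[u = {L}]}$.
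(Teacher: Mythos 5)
Your approach to Claims~(1) and (4) matches the paper's, and your observation that $\phi_*$ is cocontinuous in (2) is correct.  However, two steps need more justification.

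First, in your proof of (2), the assertion that $\phi_*$ is ``tautologically $\mc{V}^{\otimes}$-linear'' is the crux and is not a tautology. Restricting the self-action of $\mod[dm = {A}]{\mc{V}}$ along $\phi^{*,\otimes}$ produces the $\mc{V}^{\otimes}$-module structure $V \odot M = \fct{\phi^*}{V} \otimes_A M$; for $\phi_*$ to be a morphism of $\mod[dm = {\mc{V}^{\otimes}}]{\pr[u = {L}]}$ one still needs a coherent family of equivalences $V \otimes \fct{\phi_*}{M} \simeq \fct{\phi_*}{V \odot M}$, i.e., a $\mc{V}^{\otimes}$-linear structure satisfying the higher coherences required by Lurie's framework. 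This is the projection formula, and while it is obvious on underlying homotopy categories, establishing it $\infty$-coherently is exactly the content the paper extracts from \cite[4.5.1.6, 4.5.2.1, 4.8.4.10]{Lurie_higher-algebra}: one passes through the left-tensored equivalence $\mod[dm = {A}]{\mc{V}}^{\otimes} \simeq \rmod[dm = {A}]{\mc{V}}^{\otimes}$ and invokes the $\mc{V}^{\otimes}$-linear adjunction of \cite[4.8.4.10]{Lurie_higher-algebra} for the right-module free/forgetful pair. You should either supply these references or give the coherence argument explicitly.

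Second, in your proof of (3), you appeal to the principles that $\mod[dm = {\mc{V}^{\otimes}}]{\pr[u = {L}]}$ is an $(\infty,2)$-category, that $\phi^* \dashv \phi_*$ lifts to an adjunction in it, and that $\mc{M} \otimes_{\mc{V}} \prns{-}$ is a $2$-functor preserving adjunctions. All three statements are morally correct, but they presuppose an $(\infty,2)$-categorical framework whose rigorous development is nontrivial, and the paper explicitly notes in \nref{funct} that it avoids such arguments for want of written references. Knowing that both $\phi^*$ and $\phi_*$ are $1$-morphisms of $\mod[dm = {\mc{V}^{\otimes}}]{\pr[u = {L}]}$ and are adjoint in $\QCAT$ does not automatically make them adjoint in the putative $(\infty,2)$-category: one must check that the unit and counit are themselves $\mc{V}^{\otimes}$-linear $2$-morphisms. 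The paper sidesteps this entirely by citing part $(b)$ of the proof of \cite[4.8.4.6]{Lurie_higher-algebra}, which directly establishes $\id_{\mc{M}} \otimes_{\mc{V}} \rho^* \dashv \id_{\mc{M}} \otimes_{\mc{V}} \rho_*$ for the right-module functors, then transports along the equivalence $\psi$ and composes adjunctions. I would recommend following that route unless you are prepared to set up the $(\infty,2)$-categorical machinery carefully.
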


\begin{proof}
\nref[Claim]{scalars.6.1} follows from the \qcategorical\ Barr-Beck Theorem (\cite[4.7.3.14, 4.7.3.5, 4.5.1.5]{Lurie_higher-algebra}).

By \cite[4.5.1.6, 4.5.2.1]{Lurie_higher-algebra}, there is a canonical equivalence $\psi_*: \mod[dm = {A}]{\mc{V}}^{\otimes} \isom \rmod[dm = {A}]{\mc{V}}^{\otimes}$ of \qcategories\ left tensored over $\mc{V}^{\otimes}$.
Let $\rho^*: \mc{V} \to \rmod[dm = {A}]{\mc{V}}^{\otimes}$ denote the free-right-$A$-module functor, $\rho_*$ its right adjoint, and $\psi^*$ a functor quasi-inverse to $\psi_*$.
By \cite[4.8.4.10]{Lurie_higher-algebra}, $\rho^* \dashv \rho_*$ underlies a $\mc{V}^{\otimes}$-linear adjunction and, in particular, $\rho_*$ underlies a morphism of $\lmod[dm = {\mc{V}^{\otimes}}]{\pr[u = {L}]}$.
As $\phi_* \simeq \rho_* \psi_*$, the functor $\phi_*$ underlies a morphism of $\lmod[dm = {\mc{V}^{\otimes}}]{\pr[u = {L}]}$.
Using \cite[4.5.1.6]{Lurie_higher-algebra} again, $\mod[dm = {\mc{V}^{\otimes}}]{\pr[u = {L}]} \simeq \lmod[dm = {\mc{V}^{\otimes}}]{\pr[u = {L}]}$, proving \nref[Claim]{scalars.6.2}. 

As explained in part $(b)$ of the proof of \cite[4.8.4.6]{Lurie_higher-algebra}, $\id_{\mc{M}} \otimes_{\mc{V}} \rho^* \dashv \id_{\mc{M}} \otimes_{\mc{V}} \rho_*$.
Moreover, $\id_{\mc{M}} \otimes_{\mc{V}} \psi^* \dashv \id_{\mc{M}} \otimes_{\mc{V}} \psi_*$ is a pair of quasi-inverse adjoint functors.
As 
\[
\id_{\mc{M}} \otimes_{\mc{V}} \phi^* 
  \simeq \prns{\id_{\mc{M}} \otimes_{\mc{V}} \psi^*}\prns{\id_{\mc{M}} \otimes_{\mc{V}} \rho^*}
\quad\text{and}\quad
\id_{\mc{M}} \otimes_{\mc{V}} \phi_* 
  \simeq \prns{\id_{\mc{M}} \otimes_{\mc{V}} \rho_*}\prns{\id_{\mc{M}} \otimes_{\mc{V}} \psi_*},
\]
\nref[Claim]{scalars.6.3} follows from the stability of adjunctions under composition (\cite[5.2.2.6]{Lurie_higher-topos}).

\nref[Claim]{scalars.6.4} follows from \nref[Claim]{scalars.6.3} after identifying $f^*: \mc{M} \to \mc{N}$ with $f^* \otimes_{\mc{V}} \id_{\mc{V}}: \mc{M} \otimes_{\mc{V}} \mc{V} \to \mc{N} \otimes_{\mc{V}} \mc{V}$.
\end{proof}

\begin{thm}
\nlabel{scalars.7}
Consider the following data:
\begin{itemize}
\item
$\mc{V}^{\otimes}$, a \locpres\ symmetric monoidal \qcategory;
\item
$\coef[um={*,\otimes}]{M}{}$, a $\mc{V}^{\otimes}$-linear \locpres\ \coeffsyst;
\item
$A \in \calg{\mc{V}^{\otimes}}$;
\item
$v^{*,\otimes}: \mc{V}^{\otimes} \to \mod[dm = {A}]{\mc{V}}^{\otimes}$, the symmetric monoidal free-$A$-module functor \tu{(\cite[4.5.3.1]{Lurie_higher-algebra})}; and 
\item
$\mod[dm = {A}]{\coef[um={*}]{M}{}}^{\otimes} 
  \coloneqq \coef[um={*,\otimes}]{M}{} \otimes_{\mc{V}^{\otimes}} \mod[dm = {A}]{\mc{V}}^{\otimes}$.
\end{itemize}
Then $\mod[dm = {A}]{\coef[um={*}]{M}{}}^{\otimes}$ is a $\mod[dm = {A}]{\mc{V}}^{\otimes}$-linear \locpres\ \coeffsyst, and the morphism $\phi^{*, \otimes}: \coef[um={*, \otimes}]{M}{} \to \mod[dm = {A}]{\coef[um={*}]{M}{}}^{\otimes}$ of \tu{\nref{scalars.a2}} is a morphism of \coeffsysts.
\end{thm}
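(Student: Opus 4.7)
The plan is to verify that the six hypotheses of \nref{scalars.5} are satisfied for $v^{*, \otimes}: \mc{V}^{\otimes} \to \mod[dm = {A}]{\mc{V}}^{\otimes}$ and $\coef[um={*, \otimes}]{M}{}$, and then invoke that proposition to conclude. The functor $v^{*, \otimes}$ is cocontinuous and symmetric monoidal by \cite[4.5.3.1]{Lurie_higher-algebra}, and $\mod[dm = {A}]{\mc{V}}^{\otimes}$ is \locpres, so \nref[Hypothesis]{scalars.5.1} holds. \nref[Hypotheses]{scalars.5.2} and \nref{scalars.5.3} are, respectively, the hypothesis on $\coef[um={*, \otimes}]{M}{}$ and the definition of $\phi^{*, \otimes}$ via \nref{scalars.a2.1}. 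For \nref[Hypothesis]{scalars.5.4}, the pointwise Adjoint Functor Theorem (\cite[5.5.2.9]{Lurie_higher-topos}), combined with the equivalence $\pr[u = {L}, d = {st}] \simeq \prns{\pr[u = {R}, d = {st}]}\op$ and the fact that each $\phi^*_X$ is cocontinuous (as a morphism in $\pr[u = {L}]$), supplies the natural transformation $\phi_*$.

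The remaining hypotheses, which are the substantive ones, are handled by \nref{scalars.6}. \nref[Hypothesis]{scalars.5.6}, asserting that $\mod[dm = {A}]{\coef{M}{X}}$ is generated under small colimits by the essential image of $\phi^*_X$, is the pointwise content of \nref{scalars.6.1}, after identifying $\phi^*_X$ with $\id_{\coef{M}{X}} \otimes_{\mc{V}} v^*$ via \nref{scalars.6.3}. For \nref[Hypothesis]{scalars.5.5}, we must establish right adjointability of the square
\[
\begin{tikzcd}
\coef{M}{Y}
\ar[r, "\phi^*_Y"]
\ar[d, "f^*" left]
&
\coef{M}{Y} \otimes_{\mc{V}} \mod[dm = {A}]{\mc{V}}
\ar[d, "f^*" right]
\\
\coef{M}{X}
\ar[r, "\phi^*_X"]
&
\coef{M}{X} \otimes_{\mc{V}} \mod[dm = {A}]{\mc{V}}
\end{tikzcd}
\]
when $f$ is smooth or a closed immersion. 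In both cases, $f^*: \coef{M}{Y} \to \coef{M}{X}$ is a morphism of $\mod[dm = {\mc{V}^{\otimes}}]{\pr[u = {L}]}$ that admits a right adjoint---by \nref[Axiom]{functors.3.A} for closed immersions, and by \nref[Axiom]{functors.3.C} together with \locpresbility\ for smooth morphisms---so \nref{scalars.6.4} applies directly and delivers the required right adjointability.

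With all six hypotheses verified, \nref{scalars.5} yields simultaneously that $\mod[dm = {A}]{\coef[um={*}]{M}{}}^{\otimes}$ is a $\mod[dm = {A}]{\mc{V}}^{\otimes}$-linear \locpres\ \coeffsyst\ and that $\phi^{*, \otimes}$ is a morphism of such. The main subtlety is not an analytic one but a bookkeeping one: all of the axioms of \nref{functors.3} save for \nref{functors.3.H} are either formal or reduce, via the conservativity consequence \nref{scalars.5.7} of \nref[Hypothesis]{scalars.5.6}, to the corresponding axioms in $\coef[um={*, \otimes}]{M}{}$; the $\tatesphere$-stability axiom \nref{functors.3.H} is extracted in \nref{scalars.5} by transporting Thom invertibility from $\coef{M}{Y}$ to $\mod[dm = {A}]{\coef{M}{Y}}$ along the symmetric monoidal functor $\phi^{*, \otimes}_Y$. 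The essential content of \nref{scalars.7} is thus simply that the free-$A$-module adjunction is sufficiently well-behaved---conservative pushforward, compatible base change---to allow this transport, which is precisely what \nref{scalars.6} packages.
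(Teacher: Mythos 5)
Your proof is correct and takes essentially the same approach as the paper's one-line proof, which simply invokes \nref{scalars.6} to verify the hypotheses of \nref{scalars.5}. One minor simplification: for \nref[Hypothesis]{scalars.5.5}, the invocation of \nref[Axioms]{functors.3.A} and \varnref{functors.3.C} and the remark that $f^*$ admits a right adjoint are unnecessary, since $\mc{V}^{\otimes}$-linearity already guarantees that every pullback $f^*$ is a morphism of $\mod[dm = {\mc{V}^{\otimes}}]{\pr[u = {L}]}$, which is all that \nref{scalars.6.4} requires.
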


\begin{proof}
By \nref{scalars.6}, each hypothesis of \nref{scalars.5} is satisfied.
\end{proof}

\begin{prop}
\nlabel{construct.3}
With the notation and hypotheses of \tu{\nref{scalars.5}}, suppose that the square \nref{scalars.5.a} is right adjointable whenever $f$ is surjective of finite type \resp{surjective, finite, radicial}.
\begin{enumerate}
\item
\nlabel{construct.3.1}
If $\coef[um={*, \otimes}]{M}{}{}$ is separated \resp{semi-separated}, then so is $\coef[um = {*, \otimes}]{N}{}$.
\item
\nlabel{construct.3.2}
If $\prns{S,\coef[um = {*, \otimes}]{M}{}}$ is solvent \tu{(\nref{construct.a3})}, then $\prns{S, \coef[um = {*, \otimes}]{N}{}}$ is solvent.
\end{enumerate}
\end{prop}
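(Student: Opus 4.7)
The plan is first to handle claim~(1) and then to deduce claim~(2) from it. For claim~(1), I would take a surjective morphism of finite type $f : X \to Y$ (resp.\ a surjective, finite, radicial morphism) in $\sch[ft]{S}$ and an object $N \in \coef{N}{Y}$ with $f^{*}N \simeq 0$, and aim to show that $N \simeq 0$. Applying $\phi_{X*}$ and invoking the new right-adjointability hypothesis on the square \nref{scalars.5.a} for this $f$, we would obtain
\[
0 \simeq \phi_{X*}f^{*}N \simeq f^{*}\phi_{Y*}N \in \coef{M}{X},
\]
whence $\phi_{Y*}N \simeq 0$ by the (semi-)separation of $\coef[um = {*, \otimes}]{M}{}$. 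The conservativity of $\phi_{Y*}$ extracted as property \nref{scalars.5.7} in the proof of \nref{scalars.5} (which in turn rests on hypothesis \nref{scalars.5.6}) then forces $N \simeq 0$, completing the argument.

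For claim~(2), I would reduce formally to claim~(1). Solvency of $\prns{S, \coef[um = {*, \otimes}]{M}{}}$ obtains either via \nref{construct.a3.1}---a condition on $S$ alone that transfers tautologically to $\prns{S, \coef[um = {*, \otimes}]{N}{}}$---or via \nref{construct.a3.3}, in which case $\prns{S, \mc{V}^{\otimes}}$ satisfies \nref{construct.a3.2} and $\coef[um = {*, \otimes}]{M}{}$ is separated. In the latter case, the cocontinuous symmetric monoidal functor $v^{\otimes} : \mc{V}^{\otimes} \to \mc{W}^{\otimes}$ transports the $\rational$-linear structure from $\mc{V}^{\otimes}$ to $\mc{W}^{\otimes}$, so that $\prns{S, \mc{W}^{\otimes}}$ still satisfies \nref{construct.a3.2}; the remaining separation of $\coef[um = {*, \otimes}]{N}{}$ is supplied by claim~(1).

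The hard part, such as it is, lies only in noticing that the right-adjointability postulated here genuinely extends the content of hypothesis \nref{scalars.5.5}, which covers smooth morphisms and closed immersions but says nothing about arbitrary surjective (or surjective finite radicial) morphisms. Once that observation is made, the rest of the proof is purely formal and reuses the same transport-of-conservativity technique already employed throughout the proof of \nref{scalars.5}.
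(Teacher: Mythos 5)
Your proof is correct and matches the paper's argument: the paper likewise observes that right adjointability of the square \nref{scalars.5.a} together with conservativity of $\phi_{Y*}$ (property \nref{scalars.5.7}) transfers conservativity of $f^*$ from $\coef[um={*}]{M}{}$ to $\coef[um = {*}]{N}{}$, and then deduces \nref[Claim]{construct.3.2} as a formal consequence. Your elaboration of the solvency transfer (case-splitting on \nref{construct.a3.1} versus \nref{construct.a3.3} and noting that $v^{\otimes}$ carries $\rational$-linearity forward) merely unpacks what the paper leaves implicit.
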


\begin{proof}
By \nref{construct.1}, $\coef[um = {*, \otimes}]{N}{}$ being separated \resp{semi-separated} amounts to $\coef[um = {*}]{N}{f}$ being conservative for each $f$ as in the statement of the proposition.
However, right adjointability of \nref{scalars.5.a} and conservativity $\phi_{Y*}$ imply that $\coef[um = {*}]{N}{f}$ reflects equivalences whenever $\coef[um={*}]{M}{f}$ does, proving \nref[Claim]{construct.3.1}.
\nref[Claim]{construct.3.2} follows.
\end{proof}

\begin{ex}
\nlabel{ex:DH-coefficient-syst}
It follows from \nref{scalars.4} and \nref{scalars.7} that the presheaves $\DB{-}^{\otimes}$, $\DH{-}^{\otimes}$ and $\varDH{-}^{\otimes}$ of \nref{ex:hodge-module-6ff.2} are \locpres\ \coeffsysts. 
In particular, they admit six-functor formalisms and morphisms of \coeffsysts\ 
\begin{align*}
\rho^{*, \otimes}_{\tu{B}}:
&\spt[um = {\tau}, dm = {\tatesphere}]{-}^{*, \wedge}
  \to \DB{-}^{\otimes} \\
\rho^{*, \otimes}_{\tu{Hdg}}:
&\spt[um = {\tau}, dm = {\tatesphere}]{-}^{*, \wedge}
  \to \DH{-}^{\otimes} \\
\rho^{*, \otimes}_{\tb{Hdg}}:
&\spt[um = {\tau}, dm = {\tatesphere}]{-, \D{\ind{\mhsp[\rational]}}}^{*, \otimes}
  \to \varDH{-}^{\otimes}.
\end{align*}
By \nref{prop:morphisms-compatible-with-six-functors}, $\rho^{*, \otimes}_{\tu{B}}$ and $\rho^{*, \otimes}_{\tu{Hdg}}$ commute with each of the six functors when restricted to constructible objects, as does $\rho^{*, \otimes}_{\tb{Hdg}}$ when restricted to $\D{\ind{\mhsp[\rational]}}^{\otimes}$-constructible objects.

The symmetric monoidal functors
\[
\DH{\spec{\complex}}^{\otimes}
  \hookrightarrow \varDH{\spec{\complex}}^{\otimes}
  \to \DB{\spec{\complex}}^{\otimes}.
\]
of \nref{ex:naturality} induce morphisms of \coeffsysts\ 
\[
\DH{-}^{\otimes}
  \xrightarrow{\iota^{*, \otimes}} \varDH{-}^{\otimes}
  \xrightarrow{\omega^{*, \otimes}} \DB{-}^{\otimes},
\]
and $\omega^{*, \otimes}$ and $\omega^{*, \otimes}\iota^{*, \otimes}$ are conservative when restricted to $\D{\ind{\mhsp[\rational]}}^{\otimes}$-constructible objects and constructible objects, respectively, by \nref{prop:conservativity}.
By \cite[17.1.7]{Cisinski-Deglise_triangulated-categories}, there is a fully faithful morphism of \coeffsysts\ $\DB[d = {c}]{-}^{\otimes} \hookrightarrow \D[u = {b}, d = {c}]{\prns{-}^{\tu{an}}, \rational}^{\otimes}$.

Combining these observations, we find that both $\DH[d = {c}]{-}^{\otimes}$ and $\varDH[d = {c}]{-}^{\otimes}$ have the properties predicted in \nref[Desiderata]{desideratum.1} through \varnref{desideratum.4}.
They also establish part of \varnref{desideratum.6}.
It remains to establish a theory of weights for these \coeffsysts, which we shall turn to next in \nref{weight}.
\end{ex}

\begin{rmk}
\nlabel{rmk:representables}
It follows from \cite[4.5.10]{Ayoub_six-operationsII} and the subsequent constructions that $\Sigma^{\infty}_{\tatesphere}\yona[um = {\tau}]{Y}{X}_+ \simeq f_{\sharp}\1{X}$ in $\spt[um = {\tau}, dm = {\tatesphere}]{Y}$ for each smooth morphism $f: X \to Y$ of $\sch[ft]{S}$.
If $\rho^{*, \otimes}: \spt[um = {\tau}, dm = {\tatesphere}]{-}^{\wedge} \to \coef[um = {*, \otimes}]{M}{}$ is a morphism of \coeffsysts, then
\[
\rho^*_Y \Sigma^{\infty}_{\tatesphere} \yona[um = {\tau}]{Y}{X}
  \simeq \rho^*_Y f_{\sharp} \1{\coef{M}{X}}
  \simeq f_{\sharp} \rho^*_X \1{\coef{M}{X}}
  \simeq f_{\sharp} \1{\coef{M}{X}}.
\]
In particular, absolute Hodge cohomology is represented by extensions of Tate objects in $\DH{X}$ and $\varDH{X}$ for each $X \in \sm[ft]{\complex}$.
Indeed, if $f: X \to \spec{\complex}$ denotes the structural morphism, then
\begin{align*}
\pi_0\map[dm = {\DH{X}}]{\1{\DH{X}}}{\state{\DH{X}}{r}{s}}
& \simeq \pi_0\map[dm = {\DH{X}}]{f^*\1{\DH{\spec{\complex}}}}{f^*\state{\DH{\spec{\complex}}}{r}{s}} \\
& \simeq \pi_0\map[dm = {\DH{\spec{\complex}}}]{f_{\sharp}f^*\1{\DH{\spec{\complex}}}}{\state{\DH{\spec{\complex}}}{r}{s}} \\
& \simeq \pi_0\map[dm = {\DH{\spec{\complex}}}]{\rho^*_{\tu{Hdg}} \Sigma^{\infty}_{\tatesphere} \yona[u = {\'et}]{\spec{\complex}}{X}}{\state{\DH{\spec{\complex}}}{r}{s}} \\
& \simeq \absolutehodge[um = {s}]{X, \twist{\rational}{r}}
\end{align*}
by \nref{rep.3.2b}.
\end{rmk}

\section{Weight structures}
\nlabel{weight}

\setcounter{thm}{-1}

\begin{notation}
Throughout this section, we fix an \emph{excellent} Noetherian scheme $S$ of finite dimension.
\end{notation}

\begin{motivation}
The stable \qcategories\ $\spt[u = {\'et}, dm = {\tatesphere}]{X, \D{\mod[dm = {\rational}]{}}}$ and $\D[u = {b}]{\mhm{X}}$ both admit weight structures compatible with the associated six-functor formalisms.
The conjectural realization functor $\DA[u = {\'et}]{X, \rational}_{\aleph_0} \to \D[u = {b}]{\mhm{X}}$ should preserve weights.
In this section, we establish the analogous statement for $\DH{X}$ by providing a general criterion for a \coeffsyst\ to admit a weight structure compatible with the six-functor formalism.

The idea is to generalize the arguments of \cite[\S3]{Hebert_structure-de-poids} in the case of $\spt[u = {\'et}, dm = {\tatesphere}]{X, \D{\mod[dm = {\rational}]{}}}$.
H\'ebert's proof hinges on the identification
\begin{equation}
\nlabel{weight.0.a}
\pi_0\map[dm = {\spt[u = {\'et}, dm = {\tatesphere}]{X, \D{\mod[dm = {\rational}]{}}}}]{\1{X}}{\state{X}{r}{s}}
  \simeq \gr[um = {r}, dm = {\gamma}]{\operator[dm = {2r-s}]{K}{X}_{\rational}}
\end{equation}
and the vanishing of $\gr[um = {r}, dm = {\gamma}]{\operator[dm = {2r-s}]{K}{Y}_{\rational}}$ for $s > 2r$.
An analogous vanishing statement holds for absolute Hodge cohomology.
\end{motivation}

\begin{summary}
\
\begin{itemize}
\item
In \nref{weight.2a}, we recall the expected compatibilities between weight structures and Grothendieck's six functors.
\item
In \nref{weight.1}, we introduce a key lemma that allows us to deduce a fundamental vanishing result for separated morphisms of finite type to the analogous vanishing result for closed immersions between regular schemes.
\item
In \nref{weight.2}, we establish the main result regarding the existence of weight structures on \coeffsysts\ satisfying a vanishing condition analogous to that of \eqref{weight.0.a}, and the compatibility of these weight structures with the six functors.
\item
In \nref{weight.5}, we remark that the weight structures resulting from \nref{weight.2} are suitably natural with respect to morphisms of \coeffsysts.
\item
In \nref{prop:hodge-weights}, we conclude by observing that $\DH{-}$ satisfies the conditions of \nref{weight.2} and that the realization $\rho^*_X: \spt[u = {\'et}, dm = {\tatesphere, \tu{c}}]{X, \D{\mod[dm = {\rational}]{}}} \to \DH[d = {c}]{X}$ is therefore weight exact for each $X \in \sch[ft]{S}$.
\end{itemize}
\end{summary}

\begin{defn}
\nlabel{defn:purity-structure}
Let $\mc{V}^{\otimes}$ be a stable \locpres[\aleph_0]\ symmetric monoidal \qcategory.
A \emph{purity structure on $\mc{V}^{\otimes}$} consists of a set $\mf{G}$ of equivalence classes of $\aleph_0$-presentable, $\otimes$-dualizable objects of $\mc{V}^{\otimes}$ satisfying the following conditions: 
\begin{enumerate}
\item
$\mf{G}$ contains $\1{\mc{V}}$;
\item
$\mf{G}$ is stable under $\prns{-} \otimes \prns{-}$ and $\prns{-}^{\vee}$; and
\item
$\mf{G}$ generates $\mc{V}_{\aleph_0}$ under $\sphere^1$-suspensions, $\sphere^1$-desuspensions, iterated finite colimits, and retracts.
\end{enumerate}
\end{defn}

\begin{defn}
\nlabel{weight.2a}
Let $\coef[um = {*, \otimes}]{M}{}: \sch[ft]{S} \to \calg{\QCATexmon}$ be a \coeffsyst.
\begin{enumerate}
\item
\nlabel{weight.2a.1}
A \emph{weight structure $\prns{\coef[um = {\mf{w} \leq 0}]{M}{}, \coef[um = {\mf{w} \geq 0}]{M}{}}$ on $\coef[um = {*, \otimes}]{M}{}$} is the data of a weight structure $\prns{\coef{M}{X}^{\mf{w} \leq 0}, \coef{M}{X}^{\mf{w} \geq 0}}$ on the stable \qcategory\ $\coef{M}{X}$ for each $X \in \sch[ft]{S}$.
\item
\nlabel{weight.2a.2}
Let $\mc{V}^{\otimes}$ be a stable \locpres[\aleph_0]\ symmetric monoidal \qcategory,
$\mf{G}$ a purity structure on $\mc{V}^{\otimes}$,
and suppose that  $\coef[um = {*, \otimes}]{M}{}$ is a $\mc{V}^{\otimes}$-linear \locpres\ \coeffsyst.
A weight structure $\prns{\coef[um = {\mf{w} \leq 0}, d = {c}]{M}{}, \coef[um = {\mf{w} \geq 0}, d = {c}]{M}{}}$ on $\coef[um = {*, \otimes}, d = {c}]{M}{}$ is \emph{$\mf{G}$-constructible} if it satisfies the following conditions:
\begin{enumerate}
\item
for each regular $X \in \sch[ft]{S}$ and each $V \in \mf{G}$, $V \odot \1{\coef{M}{X}} \in \coef[d = {c}]{M}{X}^{\mf{w} = 0}$;
\item
for each morphism $f: X \to Y$ of $\sch[ft]{S}$, $f^*$ preserves objects of weight $\leq 0$, and $f_*$ preserves objects of weight $\geq 0$;
\item
if $f: X \to Y$ is a morphism of $\sch[sepft]{S}$, then $f_!$ preserves objects of weight $\leq 0$, and $f^!$ preserves objects of weight $\geq 0$;
\item
for each $X \in \sch[ft]{S}$, the tensor product and internal morphisms-object bifunctors restrict to bifunctors
\begin{align*}
\prns{-} \otimes_{\coef[d = {c}]{M}{X}} \prns{-}
  &: \coef[d = {c}]{M}{X}^{\mf{w} \leq 0} \times \coef[d = {c}]{M}{X}^{\mf{w} \leq 0}
    \to \coef[d = {c}]{M}{X}^{\mf{w} \leq 0} \\
\intmor[dm = {\coef[d = {c}]{M}{X}}]{-}{-}
  &: \coef[d = {c}]{M}{X}^{\mf{w} \leq 0} \times \coef[d = {c}]{M}{X}^{\mf{w} \geq 0}
    \to \coef[d = {c}]{M}{X}^{\mf{w} \geq 0}; \quad \text{and}
\end{align*}
\item
for each $r \in \integer$, the functor $\stwist{\prns{-}}{r}{2r}: \coef[d = {c}]{M}{X} \to \coef[d = {c}]{M}{X}$ is \emph{weight exact}, i.e., it preserves objects of weight $\leq 0$ and objects of weight $\leq 0$.
\end{enumerate}
\end{enumerate}
\end{defn}

\begin{lemma}
\nlabel{weight.1}
Let $S$ be an excellent, Noetherian scheme, 
$\mc{V}^{\otimes}$ a \locpres\ symmetric monoidal \qcategory,
and $\coef[um = {*, \otimes}]{M}{}$ a $\mc{V}^{\otimes}$-linear \locpres\ \coeffsyst.
Consider the following property of separated morphisms $f: X \to Y$ of $\sch[ft]{S}$ and pairs $\prns{V, W}$ of $\otimes$-dualizable objects of $\mc{V}^{\otimes}$:
\begin{enumerate}
\item[\tu{$\boldoperator{van}{f,V,W}$:}]
if $\prns{r, s} \in \integer^2$, and if $s > 2r$, then 
\[
\pi_0\map[dm = {\coef{M}{Y}}]{V \odot f_!\1{\coef{M}{X}}}{W \odot \state{\coef{M}{Y}}{r}{s}} = 0.
\]
\end{enumerate}
Fix a pair $\prns{V,W}$ of $\otimes$-dualizable objects of $\mc{V}^{\otimes}$.
If $\boldoperator{van}{i,V,W}$ holds for each closed immersion $i: Z \hookrightarrow T$ between regular schemes in $\sch[ft]{S}$ such that the normal bundle $\normalbundle{i}$ is trivial,
then $\boldoperator{van}{f,V,W}$ holds for each separated morphism $f: X \to Y$ in $\sch[ft]{S}$ with $Y$ regular.
\end{lemma}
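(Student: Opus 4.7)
The plan is to reduce $\boldoperator{van}{f,V,W}$ for a general separated morphism $f: X \to Y$ of $\sch[ft]{S}$ with $Y$ regular to the hypothesis on closed immersions between regular schemes with trivial normal bundle, by a sequence of d\'evissages along compactifications, resolutions, and projective-bundle factorizations. The basic observation is the two-out-of-three property of the condition $\boldoperator{van}{-,V,W}$ in fiber sequences of the form $f_!(-)$: for an open immersion $j: U \hookrightarrow X$ with closed complement $i: Z \hookrightarrow X$, the localization axiom (\nref[Axiom]{functors.3.E}) combined with $i_! \simeq i_*$ and the composition laws produces a fiber sequence
\[
(fj)_!\1{\coef{M}{U}}
  \to f_!\1{\coef{M}{X}}
  \to (fi)_!\1{\coef{M}{Z}}
\]
in $\coef{M}{Y}$, so $\boldoperator{van}{f,V,W}$ follows from $\boldoperator{van}{fj,V,W}$ and $\boldoperator{van}{fi,V,W}$. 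The $\cdh$-excision property (\nref{excision.4}) gives the analogous stability under blow-ups with regular centers.

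Next I would run a Noetherian induction on $\dim X$. Since $S$ is excellent, Hironaka-style resolution of singularities provides a proper birational modification $\pi: \tilde X \to X$ with $\tilde X$ regular and an isomorphism away from a nowhere-dense closed subscheme of strictly smaller dimension, and the d\'evissage above together with the inductive hypothesis reduces us to the case $X$ regular. Nagata compactification followed by another resolution of the boundary lets us assume $f$ proper between regular schemes, and Chow's lemma combined with the blow-up stability above reduces further to $f$ projective. Factoring $f = p \circ \iota$ with $\iota: X \hookrightarrow \projective^n_Y$ a closed immersion between regular schemes and $p: \projective^n_Y \to Y$ the smooth projective-bundle structure map, the projective-bundle formula splits $p_!\1{\coef{M}{\projective^n_Y}}$ as a finite direct sum of shifted Tate twists of $\1{\coef{M}{Y}}$ of bidegree $(-k,-2k)$ for $0 \leq k \leq n$. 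The inequality $s > 2r$ is preserved under each such shift (since $s - 2k > 2(r-k)$ iff $s > 2r$), so $\boldoperator{van}{p,V,W}$ reduces to $\boldoperator{van}{\id_Y, V, W}$, itself a trivial instance of the hypothesis applied to the identity closed immersion, whose normal bundle is rank zero and hence trivial.

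The remaining step, and the main obstacle, is to handle the closed immersion $\iota: X \hookrightarrow \projective^n_Y$ between regular schemes when its normal bundle $\normalbundle{\iota}$ is nontrivial. My plan is to invoke the splitting principle: pass to the flag bundle $\varpi: X' \to X$ of $\normalbundle{\iota}$, over which $\varpi^*\normalbundle{\iota}$ acquires a complete filtration by line subbundles, and then eliminate the line-bundle components one at a time by iterated blow-ups along successive zero sections, in the spirit of the deformation to the normal cone, replacing each line bundle by a trivial one. After these modifications, the normal bundle becomes trivial, and the hypothesis applies. The projective-bundle formulas for the flag bundle $\varpi$ and for the intermediate blow-ups supply retractions through which the vanishing descends back to $\iota$; the delicate point, which I expect to be the subtlest bookkeeping issue of the argument, is to check that the Tate twists and shifts introduced at each step all have bidegree of the form $(-k,-2k)$, so that they stay compatible with the numerical condition $s > 2r$ throughout the descent.
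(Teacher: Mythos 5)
Your d\'evissage scaffolding is sound as far as it goes: the fiber sequence $(fj)_!\1 \to f_!\1 \to (fi)_!\1$ from the localization axiom, the use of Nisnevich excision for disjoint decompositions, and $\cdh$-excision for blow-ups are all part of the paper's argument. But there are two genuine gaps, the second of which is the more serious.

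First, you invoke Hironaka-style resolution of singularities and Chow's lemma over an excellent Noetherian base $S$, but \nref{weight.1} is stated for \emph{arbitrary} excellent Noetherian $S$, not for the spectrum of a characteristic-zero field (that hypothesis only enters later, in \nref{weight.2}). Resolution is not available in that generality. The paper sidesteps this entirely: after reducing to $X$ integral, excellence of $S$ guarantees that $\operator{Reg}{X}$ is open (\cite[Scholie~7.8.3.(iv)]{EGAIVb}), and since $X$ is integral it is also dense and nonempty; the localization axiom plus Noetherian induction on $\dim X$ (applied to the complementary closed immersion from $\operator{Sing}{X}$) then lets one replace $X$ by $\operator{Reg}{X}$. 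No resolution is needed.

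Second, and this is the crux, your treatment of the nontrivial normal bundle — flag bundles, iterated blow-ups along zero sections, a splitting-principle descent — is a large detour that misses the much simpler mechanism the hypothesis is designed to be paired with. Having reduced to $X$ regular, affine and integral, the argument that let you pass to $\operator{Reg}{X}$ also lets you replace $X$ by \emph{any} nonempty open subscheme. After shrinking $Y$ to an affine open $U'$ over which $f$ factors, you factor $f$ through a closed immersion $i: X \hookrightarrow \affine^n_{U'}$ (affine, not projective — there is no need for Nagata or a projective-bundle formula; the adjunction $h_!p_!j_!i_! \dashv i^!j^*p^!h^*$ together with relative purity (\nref{functors.a13.3}) for the smooth projection $p$ converts the twist by $(r,s)$ into $(r+n, s+2n)$, which preserves $s > 2r$). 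Since $\normalbundle{i}$ is locally free, you may shrink the ambient affine space to an open $U''$ over which $\normalbundle{i}$ is trivial, replace $X$ by $X \cap U''$, and note by \cite[16.2.2.(iii)]{EGAIVd} that the restricted bundle is the normal bundle of the restricted closed immersion. The hypothesis then applies directly. No flag bundles, no deformation to the normal cone, and none of the bookkeeping you flag as the "subtlest issue."

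In short: the reason the hypothesis only demands vanishing for closed immersions with \emph{trivial} normal bundle is precisely because, once you may freely shrink $X$, trivializing a locally free sheaf Zariski-locally is automatic, and that shrinking is compatible with the numerics via the affine factorization. Your proposed route would require rewriting the lemma's hypothesis in characteristic zero and proving a considerable amount of splitting-principle machinery that the statement is engineered to make unnecessary.
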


\begin{proof}
Let $f: X \to Y$ be a separated morphism of $\sch[ft]{S}$, and let $\prns{V,W} \in \mc{V}^2$ be a pair of $\otimes$-dualizable objects.
We claim that $\boldoperator{van}{f,V,W}$ holds.
Since
\[
\pi_0\map[dm = {\coef{M}{Y}}]{V \odot f_!\1{\coef{M}{X}}}{W \odot \state{\coef{M}{Y}}{r}{s}}
  \simeq \pi_0\map[dm = {\coef{M}{Y}}]{f_!\1{\coef{M}{X}}}{\prns{V^{\vee} \otimes W} \odot \state{\coef{M}{Y}}{r}{s}},
\]
it suffices to establish the equivalent condition $\boldoperator{van}{f,\1{\mc{V}},V}$.

If $X = X_1 \amalg X_2$ is a disjoint union of open subsets, and if $j_{\alpha}: X_{\alpha} \hookrightarrow X$ denotes the associated open immersion for $\alpha \in \brc{1,2}$, then $\boldoperator{van}{fj_1,\1{\mc{V}},V}$ and $\boldoperator{van}{fj_2,\1{\mc{V}},V}$ together imply $\boldoperator{van}{f,\1{\mc{V}},V}$.
Indeed, Nisnevich excision (\nref{excision.2}) implies that $\coef{M}{X} \simeq \coef{M}{X_1} \times \coef{M}{X_2}$.
We may therefore assume $X$ is connected.

We may assume $X$ is irreducible.
Indeed, if $X = \bigcup^n_{\alpha = 1} X_{\alpha}$ is a finite union of irreducible components, then the Cartesian square
\[
\begin{tikzcd}
\coprod^n_{\alpha = 1} \prns{X_{\alpha} \cap X_n}
\ar[r, "i'_n" below]
\ar[d, "p'" right]
&
\coprod^n_{\alpha = 1} X_{\alpha}
\ar[d, "p" left]
\\
X_n
\ar[r, "i_n" above]
&
X
\end{tikzcd}
\]
is $\cdh$-distinguished (\nref{excision.3}).
By $\cdh$-excision (\nref{excision.4}), we have a fiber sequence
\[
f_!\1{\coef{M}{X}}
  \to f_!i_{n,*}i^*_n\1{\coef{M}{X}} \oplus f_!p_*p^*\1{\coef{M}{X}}
  \to f_!q_*q^*\1{\coef{M}{X}},
\]
where $q = pi'_n = i_np': \coprod^n_{\alpha = 1}\prns{X_{\alpha} \cap X_n} \to X$. 
Since $i_n$, $p$, and $q$ are proper, the long exact sequence associated with the image of this fiber sequence under $\map[dm = {\coef{M}{Y}}]{-}{V \odot \state{\coef{M}{Y}}{r}{s}}$ shows that $\boldoperator{van}{f,\1{\mc{V}},V}$ follows from $\boldoperator{van}{fi_n,\1{\mc{V}},V}$, $\boldoperator{van}{fp,\1{\mc{V}},V}$ and $\boldoperator{van}{fq,\1{\mc{V}},V}$.
Since the domains of $i_n$, $p$, and $q$ are disjoint unions of their respective irreducible components, we may assume that $X$ is irreducible by our previous reduction to the case in which $X$ is connected.

We may furthermore assume that $X$ is reduced, hence integral.
Indeed, if $i: X_{\tu{red}} \hookrightarrow X$ is the reduction, then $i^*$ is an equivalence by \cite[2.1.163]{Ayoub_six-operationsI}, so 
\[
\prns{fi}_!\1{\coef{M}{X_{\tu{red}}}} 
  \simeq f_!i_!\1{\coef{M}{X_{\tu{red}}}}
  \simeq f_!i_!i^*\1{\coef{M}{X}}
  \simeq f_!\1{\coef{M}{X}}
\]
and we may assume $X$ is reduced.

Since $S$ is excellent, the regular locus $\operator{Reg}{X} \subseteq X$ is open by \cite[Scholie~7.8.3.(iv)]{EGAIVb}.
Since $X$ is integral, its generic point belongs to $\operator{Reg}{X}$, which is therefore nonempty, hence dense and open in $X$.

Finally, if $i: \operator{Sing}{X} \hookrightarrow X$ is the reduced subscheme structure on the singular locus of $X$, then the localization axiom (\nref[Axiom]{functors.3.E}) applied to $i$ and its complementary open immersion $j: \operator{Reg}{X} \hookrightarrow X$ provides us with a fiber sequence
\[
f_!j_!\1{\coef{M}{\operator{Reg}{X}}}
  \to f_!\1{\coef{M}{X}}
  \to f_!i_*\1{\coef{M}{\operator{Sing}{X}}}.
\]
The long exact sequence associated to the image of this fiber under $\map[dm = {\coef{M}{Y}}]{-}{V \odot \state{\coef{M}{Y}}{r}{s}}$ shows that $\boldoperator{van}{f,\1{\mc{V}},V}$ follows from $\boldoperator{van}{fj,\1{\mc{V}},V}$ and $\boldoperator{van}{fi,\1{\mc{V}},V}$.
The latter is true by our inductive hypothesis. 
We may therefore replace $X$ by $\operator{Reg}{X}$, i.e., we may assume that $X$ is regular and integral.

By the same argument that allows us to replace $X$ by $\operator{Reg}{X}$, we may replace $X$ by any nonempty open subscheme.
In particular, we may assume that $X$ is affine, regular and integral.

Let $h: U' \hookrightarrow Y$ be an affine open subscheme such that $X \times_Y U' \neq \varnothing$.
Replacing $X$ by any nonempty affine open $j: U \hookrightarrow X$ whose image is contained in $X \times_Y U'$, we may assume $X$ is regular, affine and integral, and that $f$ factors through $h$. 
We may thus factor $f$ as 
\[
X 
  \xrightarrow{i} \affine^n_{U'} 
  \xrightarrow{p} U' 
  \xrightarrow{h} Y,
\]
where $i$ is a closed immersion, and $p$ is the canonical projection for some $n \in \integer_{\geq0}$.

We may also choose an open subscheme $j': U'' \hookrightarrow \affine^n_{U'}$ such that $X' \coloneqq X \times_{\affine^n_{U'}} U'' \neq \varnothing$ and the restriction $j'^*\normalbundle{i}$ of the normal bundle of $i$ to $X'$ is trivial.
By \cite[16.2.2.(iii)]{EGAIVd}, $j'^*\normalbundle{i}$ is isomorphic to the normal bundle of the closed immersion $i': X' \hookrightarrow U''$.
Replacing $X$ by $X'$, we may assume that $f$ factors as
\[
X
  \xrightarrow{i} U''
  \xrightarrow{j} \affine^n_{U'}
  \xrightarrow{p} U'
  \xrightarrow{h} Y
\] 
with $X$ regular, $i$ a closed immersion with trivial normal bundle, $j$ and $h$ open immersions, and $p$ the projection.

We have equivalences
\begin{align*}
&\map[dm = {\coef{M}{Y}}]{f_!\1{\coef{M}{X}}}{V \odot \state{\coef{M}{Y}}{r}{s}} \\
&\qquad \simeq
\map[dm = {\coef{M}{Y}}]{h_!p_!j_!i_!\1{\coef{M}{X}}}{V \odot \state{\coef{M}{Y}}{r}{s}}
\\
& \qquad \simeq
\map[dm = {\coef{M}{U''}}]{i_!\1{\coef{M}{X}}}{j^*p^!h^*\prns{V \odot  \state{\coef{M}{Y}}{r}{s}}}
&&
\text{adjunction}
\\
& \qquad \simeq
\map[dm = {\coef{M}{U''}}]{i_!\1{\coef{M}{X}}}{j^*p^*h^*\prns{V \odot  \state{\coef{M}{Y}}{r + n}{s + 2n}}}
&&
\text{\nref{functors.a13.3}}
\\
& \qquad \simeq
\map[dm = {\coef{M}{U''}}]{i_!\1{\coef{M}{X}}}{V \odot \state{\coef{M}{U''}}{r + n}{s + 2n}}
&&
\text{$\mc{V}^{\otimes}$-linearity}.
\end{align*}
Since $X$ and $U''$ are regular and $\normalbundle{i}$ is trivial, the claim now follows, as $\boldoperator{van}{i,\1{\mc{V}},V}$ holds by fiat.
\end{proof}

\begin{rmk}
\nlabel{weight.a4}
Let $S = \spec{\kk}$ be the spectrum of a perfect field,
$\coef[um = {*, \otimes}]{M}{}$ a \locpres\ \coeffsyst,
$i: Z \hookrightarrow X$ a closed immersion of $\sch[ft]{S}$ with $X$ and $Z$ regular.
Suppose the normal bundle $\normalbundle{i}$ is trivial.
Absolute purity in $\coef[um = {*, \otimes}]{M}{}$ is automatic in this case.
In other words, letting $p: Z \to S$ and $q: X \to S$ denote the structure morphisms, by the results of \cite[\S1.6.1]{Ayoub_six-operationsI}, there are equivalences
\[
i^!\1{\coef{M}{X}}
\simeq i^!q^*\1{\coef{M}{S}} 
\simeq \Thom[um = {-1}]{\normalbundle{i}}p^*\1{\coef{M}{S}}
\simeq \state{\coef{M}{Z}}{-c}{-2c}
\]
where $c$ is the codimension of $i$.
\end{rmk}

\begin{thm}
[D.~H\'ebert]
\nlabel{weight.2}
Consider the following data:
\begin{itemize}
\item
$S = \spec{\kk}$, the spectrum of a field $\kk$ of characteristic zero;
\item
$\mc{V}^{\otimes}$, a stable \locpres[\aleph_0]\ symmetric monoidal \qcategory;
\item
$\mf{G}$, a purity structure on $\mc{V}^{\otimes}$; and
\item
$\coef[um = {*, \otimes}]{M}{}$, a $\mc{V}^{\otimes}$-linear \locpres\ \coeffsyst.
\end{itemize}
Suppose that $\prns{S, \coef[um = {*, \otimes}]{M}{}}$ is solvent, $\coef[um = {*, \otimes}]{M}{}$ is $\mc{V}^{\otimes}$-constructible, and the following condition is satisfied:
\begin{enumerate}[ref=$\boldoperator{van}{A}$]
\item[$\boldoperator{van}{\coef[um = {*, \otimes}]{M}{},\mf{G}}$:]
\nlabel{weight.2.1}
for each object $f: Y \to S$ of $\sm[ft]{S}$, 
each $V \in \mf{G}$, and
each $\prns{r,s} \in \integer^2$ with $s > 2r$,
\[
\pi_0 \map[dm = {\coef{M}{S}}]{V \odot \1{S}}{f_*f^*\state{S}{r}{s}}
  = 0.
\]
\end{enumerate}
Then $\coef[um = {*, \otimes}, d = {c}]{M}{}$ admits a unique $\mf{G}$-constructible weight structure.
\end{thm}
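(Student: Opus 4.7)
The plan is to adapt the argument of D.~H\'ebert in \cite{Hebert_structure-de-poids} — originally a construction of the weight structure on $\SH[d = {c}]{X, \D{\mod[dm = {\rational}]{}}}$ — to the axiomatic setting at hand, by invoking the \qcategorical\ version of M.~Bondarko's theorem on weight structures generated by a negative family of objects. For each $X \in \sch[ft]{S}$, define $\mc{P}_X \subseteq \coef[d = {c}]{M}{X}$ to be the closure under finite direct sums and retracts of the family of objects of the form $f_*\prns{V \odot \state{\coef{M}{Y}}{r}{2r}}$ with $f \colon Y \to X$ projective, $Y$ smooth (equivalently, regular, since $\mathrm{char}\,\kk = 0$), $V \in \mf{G}$, and $r \in \integer$. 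Each such object belongs to $\coef[d = {c}]{M}{X}$ by the $\mc{V}^{\otimes}$-constructibility of $\coef[um = {*, \otimes}]{M}{}$ provided by \nref{construct.a4}. The two main tasks are then to prove that $\mc{P}_X$ is negative (i.e., $\pi_0\map[dm = {\coef{M}{X}}]{M}{N\sus{s}} = 0$ for $M, N \in \mc{P}_X$ and $s > 0$) and to prove that $\mc{P}_X$ generates $\coef[d = {c}]{M}{X}$ as an idempotent-complete stable \qcategory.

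The negativity is the crux. For $M = f_!\prns{V_Y\twist{r}\brk{2r}}$ and $N = g_!\prns{W_Z\twist{r'}\brk{2r'}}$ with $f \colon Y \to X$, $g \colon Z \to X$ projective and $Y, Z$ smooth, the adjunction $f_! \dashv f^!$ combined with proper base change (\nref{functors.a13.4}) applied to the Cartesian square with $f', g'$ the two projections from $Y \times_X Z$, together with the projection formula of \nref{construct.a5.5}, identifies $\pi_0\map[dm = {\coef{M}{X}}]{M}{N\sus{s}}$ with
\[
\pi_0\map[dm = {\coef{M}{Z}}]{f'_!\1{\coef{M}{Y\times_X Z}}}{W'_Z\twist{r'-r}\brk{2(r'-r)+s}},
\]
where $W' \coloneqq V^{\vee}\otimes W \in \mf{G}$. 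Lemma \nref{weight.1} then reduces its vanishing, for $s > 0$, to the verification of $\boldoperator{van}{i, \1{\mc{V}}, W'}$ for each closed immersion $i \colon Z' \hookrightarrow T'$ of codimension $c$ between regular schemes with trivial normal bundle; and in that situation, the adjunction $i_! \dashv i^!$, \nref{construct.a5.5}, and the purity identification $i^!\1{\coef{M}{T'}} \simeq \state{\coef{M}{Z'}}{-c}{-2c}$ of \nref{weight.a4} transform the question into a vanishing of the form $\pi_0\map[dm = {\coef{M}{S}}]{\prns{W'}^{\vee}\odot\1{\coef{M}{S}}}{\pi_*\pi^*\state{\coef{M}{S}}{R}{S''}} = 0$ for $S'' > 2R$ and $\pi \colon Z' \to S$ smooth, which is precisely $\boldoperator{van}{\coef[um = {*, \otimes}]{M}{}, \mf{G}}$ applied to $\prns{W'}^{\vee} \in \mf{G}$.

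For the generation of $\coef[d = {c}]{M}{X}$ by $\mc{P}_X$ — the remaining hypothesis required by Bondarko's theorem — it suffices to express the standard generators $p_{\sharp}V_Y\twist{r}\brk{s}$ with $p \colon Y \to X$ smooth in terms of $\mc{P}_X$ up to iterated cofibers, shifts, and retracts. For this, choose a projective compactification $\bar p \colon \bar Y \to X$ with $\bar Y$ smooth and complement $D \subset \bar Y$ a simple normal crossings divisor, which exists by Hironaka since $\mathrm{char}\,\kk = 0$. Relative purity (\nref{functors.a13.3}) replaces $p_{\sharp}$ with $p_!$ up to a Tate twist-shift controlled by the relative tangent bundle, and iterated application of the localization triangle (\nref[Axiom]{functors.3.E}) along the stratification of $D$ expresses $p_!V_Y\twist{r}\brk{s}$ as an iterated fiber of objects of the form $\bar p_{I,*}V_{D_I}\twist{r_I}\brk{s_I}$ with $D_I$ the smooth projective strata of $D$, each of which lies in $\mc{P}_X$ after a Tate twist-shift.

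Bondarko's theorem then produces a unique weight structure on $\coef[d = {c}]{M}{X}$ whose heart is the idempotent completion of $\mc{P}_X$, and the six-functor compatibilities of \nref{weight.2a.2} are verified on generators: $f_*$ for proper $f$ and $f^*$ for arbitrary $f$ preserve $\mc{P}_{\prns{-}}$ essentially by construction; the behaviour of $f_{\sharp}$, $f_!$, and $f^!$ for general smooth or separated $f$ reduces to the proper and smooth cases via Nagata compactification and relative purity; tensor products preserve $\mc{P}_{\prns{-}}$ because $\mf{G}$ is closed under $\otimes$ and projective pushforward satisfies a K\"unneth-type formula via base change; and Tate twists by $\twist{r}\brk{2r}$ act on $\mc{P}_{\prns{-}}$ by construction. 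Uniqueness follows because any $\mf{G}$-constructible weight structure must contain $\mc{P}_X$ in its heart, and such a weight structure is determined by its heart together with generation. The chief obstacle is the negativity verification in Step 2: Lemma \nref{weight.1} is the decisive device that allows the self-orthogonality of $\mc{P}_X$ to be reduced, through regular closed immersions with trivial normal bundle, to the sole vanishing hypothesis $\boldoperator{van}{\coef[um = {*, \otimes}]{M}{}, \mf{G}}$.
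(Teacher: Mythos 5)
Your proposal takes essentially the same route as the paper: you establish $\boldoperator{van}{i,\1_{\mc{V}},W'}$ for regular closed immersions with trivial normal bundle via the absolute purity identification of \nref{weight.a4} and the hypothesis $\boldoperator{van}{\coef[um = {*, \otimes}]{M}{},\mf{G}}$, feed this into \nref{weight.1} to handle arbitrary separated morphisms, and then unpack H\'ebert's construction (Bondarko's theorem applied to the negative generating family of proper pushforwards of Tate twists, with generation established via Nagata compactification, Hironaka resolution, relative purity and the localization triangle) — which is precisely what the paper's citation of \cite[3.3, 3.8]{Hebert_structure-de-poids} with the indicated substitutions amounts to. The only quibble is a citation slip: the projection formula you need to pull $V^{\vee} \odot \prns{-}$ inside $g_!$ is \nref{construct.a5.4}, not \nref{construct.a5.5}.
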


\begin{proof}
Let $\prns{V,W} \in \mf{G}^2$, and let $i: Z \hookrightarrow X$ be a closed immersion of codimension $c$ between regular schemes in $\sch[ft]{S}$ such that the normal bundle $\normalbundle{i}$ is trivial.
We claim that the condition $\boldoperator{van}{i,V,W}$ of \nref{weight.1} is satisfied.
By the dual of the argument given at the beginning of the proof of \nref{weight.1}, this is equivalent to proving $\boldoperator{van}{i,V,\1{\mc{V}}}$ for each $V \in \mf{G}$.

If $\prns{r, s} \in \integer^2$ with $s > 2r$ and $f: Z \to S$ is the structural morphism, which is smooth under our assumptions, then 
\begin{align*}
\map[dm = {\coef{M}{X}}]{V \odot i_!\1{\coef{M}{Z}}}{\state{\coef{M}{X}}{r}{s}}
  &\simeq \map[dm = {\coef{M}{X}}]{i_!\prns{V \odot \1{\coef{M}{Z}}}}{\state{\coef{M}{X}}{r}{s}} 
  && \nref{construct.a5} \\
  &\simeq \map[dm = {\coef{M}{Z}}]{V \odot \1{\coef{M}{Z}}}{i^!\state{\coef{M}{X}}{r}{s}} \\
&\simeq \map[dm = {\coef{M}{Z}}]{V \odot \1{\coef{M}{Z}}}{\state{\coef{M}{Z}}{r-c}{s-2c}} 
&&\text{\nref{weight.a4}} 
\\
  &\simeq \map[dm = {\coef{M}{Z}}]{f^*\prns{V \odot \1{\coef{M}{S}}}}{f^*\state{\coef{M}{S}}{r-c}{s-2c}} 
\\
  &\simeq \map[dm = {\coef{M}{S}}]{V \odot \1{\coef{M}{S}}}{f_*f^*\state{\coef{M}{S}}{r-c}{s-2c}}
\end{align*}
It remains to note that $s-2c > 2(r-c)$, so $\boldoperator{van}{\coef[um = {*, \otimes}]{M}{},\mf{G}}$ implies $\boldoperator{van}{i,V,\1{\mc{V}}}$.

Thus, $\boldoperator{van}{i,V,W}$ holds for $i$ as above and $\prns{V,W} \in \mf{G}^2$.
The assumptions of \nref{weight.1} are therefore satisfied, and we deduce that $\boldoperator{van}{f,V,W}$ holds for each separated morphism $f: X \to Y$ in $\sch[ft]{S}$ with $Y$ regular and each $\prns{V,W} \in \mf{G}^2$.

The existence of the desired weight structure now follows from this observation by the arguments of \cite[3.3, 3.8]{Hebert_structure-de-poids} after substituting \nref{weight.1} for \cite[3.2]{Hebert_structure-de-poids}, and \nref{excision.4} and Hironaka's resolution of singularities for \htop-descent and de\thinspace Jong alterations.
The uniqueness of this weight structure follows from \cite[4.3.2.II.1]{Bondarko_weight-structures-vs} (see also \cite[1.9]{Hebert_structure-de-poids}).
\end{proof}

\begin{rmk}
\nlabel{rmk:weight-generators}
With the notation and hypotheses of \nref{weight.2}, the arguments of \cite[3.2, 3.6]{Hebert_structure-de-poids} provides the following explicit description of the weight structure on $\coef[d = {c}]{M}{X}$ for each $X \in \sch[ft]{S}$:
\begin{enumerate}
\item
the heart is the smallest replete, idempotent-complete full \subqcategory\ of $\coef[d = {c}]{M}{X}$ containing each finite coproduct of objects of the form $V \odot f_*\state{\coef{M}{Y}}{r}{2r}$ for each $V \in \mf{G}$, each proper morphism $f: Y \to X$ with $Y$ regular, and each $r \in \integer$; and
\item
$\coef[um = {\mf{w} \leq 0}, d = {c}]{M}{X} \subseteq \coef[d = {c}]{M}{X}$ \resp{$\coef[um = {\mf{w} \geq 0}, d = {c}]{M}{X} \subseteq \coef[d = {c}]{M}{X}$} is the smallest replete, idempotent-complete full \subqcategory\ stable under finitely iterated extensions containing the objects of the form $V \odot f_*\state{\coef{M}{Y}}{r}{s}$ for each $V \in \mf{G}$, each proper morphism $f: Y \to X$ with $Y$ regular, and each $\prns{r,s} \in \integer^2$ such that $s \leq 2r$ \resp{$s \geq 2r$}.
\end{enumerate}
\end{rmk}

\begin{prop}
\nlabel{weight.5}
Consider the following data:
\begin{itemize}
\item
$S = \spec{\kk}$, the spectrum of a field $\kk$ of characteristic zero;
\item
$v^{\otimes}: \mc{V}^{\otimes} \to \mc{W}^{\otimes}$, a cocontinuous symmetric monoidal functor between stable \locpres[\aleph_0]\ symmetric monoidal \qcategories;
\item
$\mf{G}$ and $\mf{H}$, purity structures on $\mc{V}^{\otimes}$ and $\mc{W}^{\otimes}$, respectively;
\item
$\coef[um = {*, \otimes}]{M}{}$, a $\mc{V}^{\otimes}$-constructible $\mc{V}^{\otimes}$-linear \locpres\ \coeffsyst;
\item
$\coef[um = {*, \otimes}]{N}{}$, a $\mc{W}^{\otimes}$-constructible $\mc{W}^{\otimes}$-linear \locpres\ \coeffsyst; and
\item
$\phi^{*, \otimes}: \coef[um = {*, \otimes}]{M}{} \to \coef[um = {*, \otimes}]{N}{}$, a morphism of $\mc{V}^{\otimes}$-linear \coeffsysts.
\end{itemize}
If the conditions $\boldoperator{van}{\coef[um = {*, \otimes}]{M}{}, \mf{G}}$ and $\boldoperator{van}{\coef[um = {*, \otimes}]{N}{}, \mf{H}}$ of \nref{weight.2} are satisfied, and if $v$ sends $\mf{G}$ into $\mf{H}$, then $\phi^*_{\tu{c},X}: \coef[d = {c}]{M}{X} \to \coef[d = {c}]{N}{X}$ is weight exact for each $X \in \sch[ft]{S}$.
\end{prop}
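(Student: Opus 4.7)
My plan is to use the explicit description of the generators of the weight structures provided by Remark \nref{rmk:weight-generators} and then check by direct computation that $\phi^{*,\otimes}_{\tu{c},X}$ sends generators to generators of the same sign of weight. First, since $S = \spec{\kk}$ with $\kk$ of characteristic zero, the pairs $\prns{S, \coef[um = {*, \otimes}]{M}{}}$ and $\prns{S, \coef[um = {*, \otimes}]{N}{}}$ are automatically solvent by \nref{construct.a3.1}, and the vanishing hypotheses of \nref{weight.2} apply to both sides, so $\coef[um = {*, \otimes}, d = {c}]{M}{}$ and $\coef[um = {*, \otimes}, d = {c}]{N}{}$ admit $\mf{G}$- and $\mf{H}$-constructible weight structures, respectively. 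Fix $X \in \sch[ft]{S}$. By \nref{rmk:weight-generators}, the subcategory $\coef[um = {\mf{w} \leq 0}, d = {c}]{M}{X}$ (respectively $\coef[um = {\mf{w} \geq 0}, d = {c}]{M}{X}$) is the smallest replete, idempotent-complete full \subqcategory\ stable under finitely iterated extensions containing the objects $V \odot f_*\state{\coef{M}{Y}}{r}{s}$ for $V \in \mf{G}$, $f: Y \to X$ proper with $Y$ regular, and $s \leq 2r$ (respectively $s \geq 2r$); and analogously on the $\coef{N}{X}$ side with $\mf{H}$ in place of $\mf{G}$.

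Next, I would compute the image of such a generator under $\phi^{*,\otimes}_{\tu{c},X}$. By $\mc{V}^{\otimes}$-linearity of $\phi^{*, \otimes}$ and by the fact that $\phi^{*, \otimes}$, being symmetric monoidal and exact, preserves the unit, the Tate object (see \nref{construct.a5}), and $\sphere^1$-shifts, we obtain
\[
\phi^*_{\tu{c},X}\prns{V \odot f_*\state{\coef{M}{Y}}{r}{s}}
  \simeq \fct{v}{V} \odot \phi^*_X f_* \state{\coef{M}{Y}}{r}{s}.
\]
Since $f$ is proper, \nref{prop:exchange-for-proper-pushforward} furnishes an equivalence $\phi^*_X f_* \simeq f_* \phi^*_Y$, and $\phi^*_Y \state{\coef{M}{Y}}{r}{s} \simeq \state{\coef{N}{Y}}{r}{s}$ for the same reason as above. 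Combining, we get
\[
\phi^*_{\tu{c},X}\prns{V \odot f_*\state{\coef{M}{Y}}{r}{s}}
  \simeq \fct{v}{V} \odot f_*\state{\coef{N}{Y}}{r}{s},
\]
which, since $\fct{v}{V} \in \mf{H}$ by hypothesis, is one of the tautological generators of $\coef[um = {\mf{w} \leq 0}, d = {c}]{N}{X}$ (respectively $\coef[um = {\mf{w} \geq 0}, d = {c}]{N}{X}$) identified by \nref{rmk:weight-generators}, with the same sign of weight since the criterion depends only on the integers $\prns{r,s}$.

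To conclude, since $\phi^*_{\tu{c},X}$ is exact, it preserves fiber sequences and hence finitely iterated extensions, and it trivially preserves retracts and equivalences. Therefore the preimage under $\phi^*_{\tu{c},X}$ of $\coef[um = {\mf{w} \leq 0}, d = {c}]{N}{X}$ is a replete, idempotent-complete full \subqcategory\ of $\coef[d = {c}]{M}{X}$ stable under iterated extensions that contains all of the generators listed above; by the minimality clause of \nref{rmk:weight-generators}, it contains $\coef[um = {\mf{w} \leq 0}, d = {c}]{M}{X}$ entirely. The identical argument on the $\mf{w} \geq 0$ side finishes the proof. The argument is essentially bookkeeping for how the generating families transform, and I do not foresee a serious obstacle: the only nonformal input is the proper-base-change equivalence $\phi^*_X f_* \simeq f_* \phi^*_Y$ of \nref{prop:exchange-for-proper-pushforward}, which is available without any additional hypotheses.
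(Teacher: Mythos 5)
Your proposal is correct and takes essentially the same approach as the paper's proof: the paper's argument is a terse two-sentence invocation of \nref{prop:exchange-for-proper-pushforward} and \nref{rmk:weight-generators}, which is exactly the content you spell out in detail. The only very minor imprecision is that \nref{construct.a5} is not really the right citation for $\phi^*$ preserving Tate objects and shifts — this is more directly a consequence of $\phi^*$ being an exact, symmetric monoidal morphism of coefficient systems compatible with $p_\sharp$ (cf.\ \nref{functors.a9.D}) — but this does not affect the validity of the argument.
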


\begin{proof}
Under our hypotheses, $\mc{V}^{\otimes}$ is ind-rigid.
The claim therefore follows from the compatibility of $\phi^*$ with $f_*$ for $f$ proper (\nref{prop:exchange-for-proper-pushforward}) and the description of the generators of the weight structures given in \nref{rmk:weight-generators}.
\end{proof}

\begin{prop}
\nlabel{prop:hodge-weights}
Let $S = \spec{\kk}$ be the spectrum of a field $\kk$ of characteristic zero,
let $\mf{G} \coloneqq \brc{\brk{\1{\mod[dm = {\rational}]{}}}}$, and let $\mf{H}$ denote the set of equivalence classes of the objects of $\D{\ind{\mhsp[\rational]}}$ of the form $V\brk{-r}$, where $V$ is a polarizable pure Hodge $\rational$-structure of weight $r$ regarded as a cochain complex in degree $0$.
Then we have the following:
\begin{enumerate}
\item
\nlabel{prop:hodge-weights.1}
$\boldoperator{van}{\spt[u = {\'et}, dm = {\tatesphere}]{-, \D{\mod[dm = {\rational}]{}}}^{\otimes}, \mf{G}}$, $\boldoperator{van}{\DH{-}^{\otimes}, \mf{G}}$, and $\boldoperator{van}{\varDH{-}^{\otimes}, \mf{H}}$ hold;
\item
\nlabel{prop:hodge-weights.2}
the \coeffsysts\ $\spt[u = {\'et}, dm = {\tatesphere, \tu{c}}]{-, \D{\mod[dm = {\rational}]{}}}^{\otimes}$ and $\DH[d = {c}]{-}^{\otimes}$ admit unique $\mf{G}$-constructible weight structures, and the morphism of \coeffsysts\ $\rho^{*, \otimes}_{\tu{Hdg}, \tu{c}}$ is weight exact; and
\item
\nlabel{prop:hodge-weights.3}
the \coeffsyst\ $\varDH[d = {c}]{-}^{\otimes}$ admits a unique $\mf{H}$-constructible weight structure, and the morphism of \coeffsysts\ $\iota^{*, \otimes}\cnstr: \DH[d = {c}]{-}^{\otimes} \to \varDH[d = {c}]{-}^{\otimes}$ is weight exact.
\end{enumerate}
\end{prop}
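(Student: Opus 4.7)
The strategy is to verify the three vanishing statements in~(1) and then harvest~(2) and~(3) as mechanical applications of \nref{weight.2} and \nref{weight.5}.

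For each of the three coefficient systems, the adjunction $f^{*} \dashv f_{*}$ combined with \nref{rmk:representables} reduces $\pi_{0}\map[dm = {\coef{M}{S}}]{V \odot \1{S}}{f_{*}f^{*}\state{S}{r}{s}}$ to a cohomology group on $Y \in \sm[ft]{S}$. For $\spt[u = {\'et}, dm = {\tatesphere}]{-, \D{\mod[dm = {\rational}]{}}}^{\otimes}$ and $V = \1 \in \mf{G}$, this group is rational motivic cohomology of $Y$ in bidegree $(s, r)$, which by the identification recalled in the motivation of this section equals $\gr^{r}_{\gamma} K_{2r-s}(Y)_{\rational}$, and hence vanishes for $s > 2r$ by vanishing of negative $K$-theory of the regular Noetherian scheme $Y$. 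For $\DH{-}^{\otimes}$ and $V = \1 \in \mf{G}$, the same manipulation combined with \nref{rep.3.2b} identifies the group with absolute Hodge cohomology $\absolutehodge[um={s}]{Y, \twist{\rational}{r}}$, and the Be\u{\i}linson short exact sequence
\[
0 \to \operatorname{Ext}^{1}_{\mhsp[\rational]}\bigl(\rational, H^{s-1}_{B}(Y, \twist{\rational}{r})\bigr) \to \absolutehodge[um={s}]{Y, \twist{\rational}{r}} \to \operatorname{Hom}_{\mhsp[\rational]}\bigl(\rational, H^{s}_{B}(Y, \twist{\rational}{r})\bigr) \to 0,
\]
together with Deligne's bound placing the weights of $H^{j}_{B}(Y, \rational)$ in $[j, 2j]$ for smooth $Y$, forces both outer terms to vanish when $s > 2r$ by a direct weight inspection.

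The main technical obstacle is the vanishing for $\varDH{-}^{\otimes}$ against the enlarged purity structure $\mf{H}$. Here $V = W\brk{-k}$ with $W$ polarizable pure of weight $k$, and via the fully faithful embedding of \nref{rep.6} the mapping space identifies with $\operatorname{Ext}^{s+k}_{\ind{\mhsp[\rational]}}\bigl(W, \enhancedbetti{Y}(r)\bigr)$. Applying the hyper-Ext spectral sequence
\[
E_{2}^{p,q} = \operatorname{Ext}^{p}_{\ind{\mhsp[\rational]}}\bigl(W, H^{q}_{B}(Y, \twist{\rational}{r})\bigr) \Rightarrow \operatorname{Ext}^{p+q}_{\ind{\mhsp[\rational]}}\bigl(W, \enhancedbetti{Y}(r)\bigr),
\]
Be\u{\i}linson's Ext-dimension-one result for $\mhsp[\rational]$ kills the rows $p \geq 2$. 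For $s > 2r$, the weight window $[s+k-2r,\, 2(s+k)-2r]$ of $H^{s+k}_{B}(Y, \twist{\rational}{r})$ lies strictly above $k$, so $\operatorname{Hom}_{\mhsp[\rational]}(W, -) = 0$ there; and the weight window of $H^{s+k-1}_{B}(Y, \twist{\rational}{r})$ lies entirely in weights $\geq k$, so the target has no weight-$< k$ graded piece and $\operatorname{Ext}^{1}_{\mhsp[\rational]}(W, -) = 0$ as well.

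With~(1) in hand, part~(2) follows at once: \nref{weight.2} applied to $\spt[u = {\'et}, dm = {\tatesphere, \tu{c}}]{-, \D{\mod[dm = {\rational}]{}}}^{\otimes}$ and $\DH[d = {c}]{-}^{\otimes}$ with $\mf{G}$ yields the unique $\mf{G}$-constructible weight structures, where solvency and $\mc{V}^{\otimes}$-constructibility are ensured by \nref{construct.a4} together with $\tu{char}\,\kk = 0$ and ind-rigidity of $\D{\mod[dm = {\rational}]{}}^{\otimes}$; \nref{weight.5} applied to $\rho^{*, \otimes}_{\tu{Hdg}, \tu{c}}$ then yields weight exactness, its underlying coefficient functor being $\id_{\D{\mod[dm = {\rational}]{}}}$, which trivially preserves $\mf{G}$. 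Part~(3) is analogous: \nref{weight.2} gives the $\mf{H}$-constructible weight structure on $\varDH[d = {c}]{-}^{\otimes}$, and \nref{weight.5} applied to $\iota^{*, \otimes}\cnstr$ gives weight exactness because $\iota^{*, \otimes}$ arises by scalar extension along $\alpha^{*, \otimes}: \D{\mod[dm = {\rational}]{}} \to \D{\ind{\mhsp[\rational]}}$ of \nref{rep.4}, which sends the unit $\rational \in \mf{G}$ to the trivial pure Hodge structure of weight~$0$ placed in degree~$0$, i.e., to an element of $\mf{H}$.
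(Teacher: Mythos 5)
Your proposal is correct and follows essentially the same route as the paper's proof: the motivic vanishing via the identification with $\gr^r_{\gamma}K_{2r-s}(Y)_{\rational}$ and vanishing of negative $K$-theory, and the Hodge-theoretic vanishings via the weight bound $\operatorname{wt}\, H^n_{B}(Y, \rational) \geq n$ for smooth $Y$, followed by \nref{weight.2} and \nref{weight.5}. The only noteworthy differences are matters of explicitness: you unwind the argument for $\boldoperator{van}{\varDH{-}^{\otimes}, \mf{H}}$ via the hyper-Ext spectral sequence and Be\u{\i}linson's Ext-dimension-one bound where the paper compresses all of this into a single sentence, and you correctly invoke \nref{weight.2} for the existence of the weight structures in parts (2) and (3), a citation the paper's own proof actually omits (it only cites \nref{weight.5}, which asserts weight exactness but not existence).
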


\begin{proof}
Consider $\boldoperator{van}{\spt[u = {\'et}, dm = {\tatesphere}]{-, \D{\mod[dm = {\rational}]{}}}^{\otimes}, \mf{G}}$.
It follows from the arguments of \cite[5.3.35, 16.2.18, 14.2.14]{Cisinski-Deglise_triangulated-categories} that
\[
\pi_0\map[dm = {\spt[u = {\'et}, dm = {\tatesphere}]{X, \D{\mod[dm = {\rational}]{}}}}]{\1{X}}{\state{X}{r}{s}}
  \simeq \gr[um = {r}, dm = {\gamma}]{\Ktheory[dm = {2r-s}]{X}_{\rational}}
\]
for each regular $X \in \sch[ft]{S}$,
where $\gr[um = {r}, dm = {\gamma}]{\Ktheory[dm = {2r-s}]{X}_{\rational}}$ denotes the graded piece of rationalized algebraic $\Ktheory{}$-theory of $X$ with respect to the $\gamma$-filtration.
Since $\Ktheory[dm = {n}]{X} = 0$ for $n < 0$, $\boldoperator{van}{\spt[u = {\'et}, dm = {\tatesphere}]{-, \D{\mod[dm = {\rational}]{}}}^{\otimes}, \mf{G}}$ follows.

By \nref{rmk:representables}, $\boldoperator{van}{\DH{-}^{\otimes}, \mf{G}}$ and $\boldoperator{van}{\varDH{-}^{\otimes}, \mf{H}}$ follow from \nref{rep.3.2b} and the fact that the mixed Hodge structure on $\h^n\enhancedbetti{X}$ is of weight $\geq n$ for each $n \in \mb{Z}$ and each $X \in \sm[ft]{\complex}$.

This proves \nref[Claim]{prop:hodge-weights.1}, and \nref[Claims]{prop:hodge-weights.2} and \varnref{prop:hodge-weights.3} follow immediately from \nref{weight.5}.
\end{proof}

\begin{rmk}
\nref{prop:hodge-weights} establishes \nref[Desideratum]{desideratum.5} for $\DH[d = {c}]{-}$ and $\varDH[d = {c}]{-}$.
Combined with \nref{ex:DH-coefficient-syst}, it also established \nref[Desideratum]{desideratum.6}.
This completes the proof of \nref{mainthm}:
both $\DH[d = {c}]{-}$ and $\varDH[d = {c}]{-}$ satisfy the properties expected of any reasonable theory of constructible coefficients for mixed Hodge theory, with the exception of those properties that involve the existence of a t-structure lifting the perverse t-structure on $\D[u = {b}, d = {c}]{\prns{-}^{\tu{an}}, \rational}$. 
\end{rmk}

\bibliographystyle{alpha}
\bibliography{all}
\end{document}